\newtheorem{thm}{Theorem}[section]
\newtheorem*{thm*}{Theorem}
\newtheorem{prop}[thm]{Proposition}
\newtheorem{lem}[thm]{Lemma}
\newtheorem{cor}[thm]{Corollary}
\theoremstyle{definition}
\newtheorem{defn}[thm]{Definition}
\newtheorem{assump}[thm]{Assumption}
\newtheorem{exa}[thm]{Example}
\newtheorem{rmk}[thm]{Remark}
\newtheorem{notn}[thm]{Notation}
\newtheorem*{ack}{Acknowledgments}
\newtheorem{para}[thm]{}
\setlist[enumerate]
{leftmargin=56pt,labelsep=8pt,itemsep=0pt,label=\upshape{(\thethm.\arabic*)}}
\numberwithin{equation}{section}
\newcommand{\vo}{\boldsymbol 0}
\newcommand{\vc}{\boldsymbol c}
\newcommand{\ve}{\boldsymbol e}
\newcommand{\vj}{\boldsymbol j}
\newcommand{\vq}{\boldsymbol q}
\newcommand{\vr}{\boldsymbol r}
\newcommand{\vs}{\boldsymbol s}
\newcommand{\vu}{\boldsymbol u}
\newcommand{\vv}{\boldsymbol v}
\newcommand{\vw}{\boldsymbol w}
\newcommand{\rf}{\boldsymbol r\!}
\newcommand{\RomI}{\uppercase\expandafter{\romannumeral 1}}
\newcommand{\RomII}{\uppercase\expandafter{\romannumeral 2}}
\newcommand{\bC}{\mathbb C}
\newcommand{\bN}{\mathbb N}
\newcommand{\bQ}{\mathbb Q}
\newcommand{\bZ}{\mathbb Z}
\newcommand{\bnnZ}{\mathbb Z_{\ge 0}}
\newcommand{\bpZ}{\mathbb Z_{> 0}}
\newcommand{\bR}{\mathbb R}
\newcommand{\bpR}{\mathbb R_{> 0}}
\newcommand{\ki}{1, 2, \dots, k}
\newcommand{\ski}{\{\ki\}}
\newcommand{\cC}{\mathcal C}
\newcommand{\cL}{\mathcal L}
\newcommand{\cM}{\mathcal M}
\newcommand{\cN}{\mathcal N}
\newcommand{\cO}{\mathcal O}
\newcommand{\ca}{complex analytic }
\newcommand{\cmh}{cohomological mixed Hodge }
\newcommand{\coh}{{\rm H}}
\newcommand{\gp}{^{\rm gp}}
\newcommand{\nc}{normal crossing }
\newcommand{\snc}{simple normal crossing }
\newcommand{\ssls}{semistable log smooth }
\newcommand{\ula}{\underline{\lambda}}
\newcommand{\umu}{\underline{\mu}}
\newcommand{\unu}{\underline{\nu}}
\DeclareMathOperator{\an}{an}
\DeclareMathOperator{\dlog}{dlog}
\DeclareMathOperator{\ext}{Ext}
\DeclareMathOperator{\gr}{Gr}
\DeclareMathOperator{\id}{id}
\DeclareMathOperator{\image}{Image}
\DeclareMathOperator{\kernel}{Ker}
\DeclareMathOperator{\kos}{Kos}
\DeclareMathOperator{\rank}{rank}
\DeclareMathOperator{\rec}{rec}
\DeclareMathOperator{\res}{Res}
\DeclareMathOperator{\spec}{Spec}
\begin{document}

\title
{Limiting mixed Hodge structures
on the relative log de Rham cohomology groups
of a projective semistable log smooth degeneration}
\author{Taro Fujisawa \\
 \\
Tokyo Denki University \\
e-mail: fujisawa@mail.dendai.ac.jp}
\date{\today}
\footnotetext[0]
{\hspace{-18pt}2020 {\itshape Mathematics Subject Classification}.
Primary 14C30; Secondary 14A21, 32S35. \\
{\itshape Key words and phrases}.
limiting mixed Hodge structure, monodromy weight filtrations.}

\maketitle

\begin{abstract}
We prove that
the relative log de Rham cohomology groups
of a projective semistable log smooth degeneration
admit a natural \textit{limiting} mixed Hodge structure.
More precisely,
we construct a family of increasing filtrations
and a family of nilpotent endomorphisms
on the relative log de Rham cohomology groups
and show that they satisfy
a part of good properties
of a nilpotnet orbit in several variables.
\end{abstract}

\tableofcontents

\section{Introduction}

A morphism from a complex manifold
to a polydisc
is said to be semistable,
if it is locally isomorphic
to a product of semistable degenerations over the unit disc
(cf. Example \ref{exa:1}
and \cite[Lemma 3.3]{FujisawaLHSSV2}).
The notion of \ssls degeneration
is an abstraction of the central fiber of a semistable morphism
in the context of log geometry.
Namely,
a \ssls degeneration
is a log \ca space $(X, \cM_X)$ over the log point $(\ast, \bN^k)$
(i.e. a morphism of log \ca space
$f \colon (X, \cM_X) \longrightarrow (\ast,\bN^k)$),
which is locally isomorphic to the central fiber of
a semistable morphism to the $k$-dimensional polydisc
in the category of log \ca spaces
(cf. local description in \ref{para:1}).
For the precise definition of \ssls degeneration,
see Definition \ref{defn:19}.

One of the main results of this paper is the following.

\begin{thm}
\label{thm:9}
Let $f \colon (X, \cM_X) \longrightarrow (\ast, \bN^k)$
be a projective \ssls degeneration.
Then the relative log de Rham cohomology groups
$\coh^n(X, \Omega_{X/\ast}(\log (\cM_X/\bN^k)))$
admit a \text{limiting} mixed Hodge structure,
whose Hodge filtration $F$ is induced from
the stupid filtration
$($filtration b\^ete in \textup{\cite[(1.4.7)]{DeligneII})}
on $\Omega_{X/\ast}(\log (\cM_X/\bN^k))$.
\end{thm}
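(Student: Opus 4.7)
The plan is to imitate Steenbrink's construction of the limit mixed Hodge structure on nearby cycles, generalized from one parameter to $k$. Concretely, I would first build a complex of sheaves $A^\bullet$ on $X$ together with a quasi-isomorphism $A^\bullet \simeq \Omega_{X/\ast}(\log(\cM_X/\bN^k))$, equipped with: the stupid filtration $F$; an increasing weight filtration $W$ measuring log-pole order along the boundary; and $k$ commuting nilpotent endomorphisms $\nu_1,\dots,\nu_k$ given by wedge product with $\dlog q_i$, where $q_1,\dots,q_k$ are the images in $\cM_X$ of the canonical generators of $\bN^k$. This is the obvious Koszul-type analogue of Steenbrink's double complex, adapted to $k$ parameters in the spirit of \cite{FujisawaLHSSV2}.

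Second, I would analyze the graded pieces $\gr_r^W A^\bullet$. Using the local model of a \ssls degeneration recorded in \ref{para:1}, these identify via iterated Poincar\'e residue maps with shifted, Tate-twisted de Rham complexes on the smooth projective strata --- the normalizations of multi-fold intersections of the irreducible components of the boundary. Classical Hodge theory on these strata then yields $E_1$-degeneration for the Hodge-to-de Rham spectral sequence of each $\gr_r^W A^\bullet$ and produces a pure Hodge structure of weight $r$ on the abutment.

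Third, I would invoke Deligne's two-filtration lemma: the purity of $\gr^W$ together with the bifiltration $(F, W)$ forces the Hodge-to-de Rham spectral sequence for $A^\bullet$ itself to degenerate at $E_1$. Consequently $F$ passes strictly to $\coh^n(X, \Omega_{X/\ast}(\log(\cM_X/\bN^k)))$, producing a candidate mixed Hodge structure $(F, W[n])$, and the $\nu_i$ descend to nilpotent endomorphisms $N_i$ on cohomology of type $(-1,-1)$ with respect to $F$.

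Finally, to upgrade this to a \emph{limiting} MHS, I would verify the several-variable nilpotent-orbit axioms: for every $\vc = (c_1,\dots,c_k) \in \bpR^k$, the filtration $W[n]$ should coincide with the monodromy weight filtration of $N(\vc) := \sum c_i N_i$ centered at $n$, and relative monodromy filtrations should exist along the faces of the positive cone. Locally this reduces to an $\mathfrak{sl}_2$-representation computation on a product of one-variable semistable degenerations, where the statement is classical; globally it follows from a Cattani--Kaplan style argument on sums of commuting nilpotents, once the weights on $A^\bullet$ are correctly bookkept. The hardest step will be this last one: transferring the sharp $\mathfrak{sl}_2$-structure from the residues on $\gr^W$ to the abutment, and verifying compatibility of $W$ with \emph{every} positive combination $N(\vc)$ rather than a single one, is what distinguishes the several-variable situation from Steenbrink's original one-parameter result and is the essential content of the theorem.
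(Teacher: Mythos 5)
Your first three steps track the paper's actual route reasonably well: the paper builds $A_{\bC}$ as a quotient of $\bC[\vu]\otimes_{\bC}\Omega_X(\log\cM_X)$ quasi-isomorphic to $\Omega_{X/\ast}(\log(\cM_X/\bN^k))$, identifies the $\gr^L$ pieces via residues with (Tate-twisted, shifted) de Rham complexes of the smooth strata $X_{\vr}$, and obtains the mixed Hodge structure and the degeneration statements from El Zein's formalism of filtered cohomological mixed Hodge complexes (Theorems \ref{thm:1} and \ref{thm:6}). One correction even at this stage: the nilpotent endomorphisms $\nu_i$ are \emph{not} wedge product with $\dlog t_i$ --- that operation is a summand $d_i$ of the differential of $A_{\bC}$. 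The monodromy operators are multiplication by $u_i$ (Definition \ref{defn:14}), i.e.\ the shift in the auxiliary polynomial variable, exactly as in Steenbrink's double complex.

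The genuine gap is in your final step. The assertion that the identification of $L[n]$ with the monodromy weight filtration of every $N(\vc)$, $\vc\in(\bpR)^k$, ``reduces locally to an $\mathfrak{sl}_2$-computation'' and then ``follows from a Cattani--Kaplan style argument'' does not work: the monodromy weight filtration lives on global cohomology and cannot be assembled from local models, and Cattani--Kaplan--Schmid theory takes a polarized nilpotent orbit as \emph{input}, which is precisely what is to be established. What the paper actually does (Sections \ref{sec:a Cech type complex}--\ref{sec:proofs-main-theorems}) is to endow $V_{\bR}\subset\bigoplus_{a,b}E_1^{a,b}(A_{\bC},L)$ with the structure of a polarized differential $\bZ\oplus\bZ^k$-graded Hodge--Lefschetz module and invoke the Guill\'en--Navarro Aznar type result (Proposition \ref{prop:1}) that the cohomology of such a module with respect to its differential $d_1$ is again polarized Hodge--Lefschetz; since $E_2=E_\infty$, this gives the hard Lefschetz isomorphisms $N(\vc)^l\colon\gr_l^L\overset{\simeq}{\to}\gr_{-l}^L$ characterizing the monodromy weight filtration. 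The whole difficulty, which your proposal does not address, is constructing a polarization on $V_{\bR}$ \emph{compatible with the differential} $d_1$ of the $E_1$-terms; the paper needs the \v{C}ech-type complex $\cC(\Omega_{X_{\bullet}}(\log\cM_{X_{\bullet}}))$, a product $\widetilde\Psi$ on the filtered complex level, and the trace map $\Theta$ with $\Theta\cdot d_1=0$ precisely to obtain this compatibility (Lemma \ref{lem:22}), because a pairing defined stratum-by-stratum from classical Hodge theory is not visibly compatible with $d_1$. Without this ingredient your argument does not close.
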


Here, a $\bQ$-mixed Hodge structure
$((V_{\bQ}, W), (V_{\bC}, W, F))$
is called a \textit{limiting} mixed Hodge structure,
if there exists a nilpotent endomorphism $N$ of $V_{\bQ}$
with $W=W(N)[k]$ for some $k \in \bZ$,
where $W(N)$ denotes the monodromy weight filtration of $N$
(cf. \cite[p. 90]{GreenGriffithsDTLMHS}).
Theorem \ref{thm:9} is deduced from the following theorem:

\begin{thm}[cf. Theorems \ref{thm:6} and \ref{thm:4}]
\label{thm:8}
On $\coh^n(X, \Omega_{X/\ast}(\log (\cM_X/\bN^k)))$,
we can construct a finite increasing filtration $L(I)$
for all $I \subset \ski$
and nilpotent endomorphisms $N_1, \dots, N_k$ such that
the following is satisfied$:$
\begin{enumerate}
\item
\label{item:34}
By setting $L=L(\ski)$,
the triple
$(\coh^n(X, \Omega_{X/\ast}(\log (\cM_X/\bN^k))), L[n], F)$
underlies a $\bQ$-mixed Hodge structure.
\item
\label{item:39}
$L(I)$ coincides with the monodromy weight filtration
of the nilpotent endomorphism $N_I(c_I)=\sum_{i \in I}c_iN_i$
for all $c_I=(c_i)_{i \in I} \in (\bpR)^I$.
\end{enumerate}
\end{thm}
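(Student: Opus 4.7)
The plan is to work with an explicit bi-filtered complex $A^\bullet$ on $X$ that is filtered quasi-isomorphic to $\Omega_{X/\ast}(\log(\cM_X/\bN^k))$ with its stupid filtration $F$, and that carries a canonical increasing weight filtration $W$ together with commuting nilpotent chain endomorphisms $N_1, \dots, N_k$. Morally $A^\bullet$ is a Steenbrink-type complex in $k$ variables, obtained from a Koszul-type construction generated by the $\dlog$'s of the images of the generators of $\bN^k$, with the $N_i$'s realized as multiplication by $\dlog t_i$. The filtrations $L(I)$ on $\coh^n(X, \Omega_{X/\ast}(\log(\cM_X/\bN^k)))$ and the $N_i$'s will then be read off from $W$ and from the chain-level $N_i$'s on $A^\bullet$, and the two items of the theorem become structural statements about this induced data. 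Since the statement points to Theorems \ref{thm:6} and \ref{thm:4} as the sources for items \eqref{item:34} and \eqref{item:39} respectively, the proof amounts to assembling those results.

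For item \eqref{item:34} I would analyze the spectral sequence of $(A^\bullet, W)$. The local model of \ref{para:1} identifies each $\gr^W_\bullet A^\bullet$ with a direct sum of shifted log de Rham complexes on intersections of irreducible components of the special fiber; under the \ssls and projectivity hypotheses these intersections are smooth and projective, so classical Hodge theory (\cite{DeligneII}) endows their cohomologies with pure Hodge structures of the expected weight. A Steenbrink--Saito style analysis of the $E_1$-differentials shows that the $E_1$-page is a complex of pure Hodge structures, hence its cohomology at $E_2$ is already pure, and together with $E_2$-degeneration this yields that $(\coh^n(X, \Omega_{X/\ast}(\log(\cM_X/\bN^k))), L[n], F)$ underlies a $\bQ$-mixed Hodge structure with $L=L(\ski)$ the filtration induced by $W$.

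For item \eqref{item:39} I would first invoke the general fact that the monodromy weight filtration of a sum $\sum_{i \in I} c_i N_i$ of commuting nilpotent operators is independent of the choice of strictly positive coefficients (Cattani--Kaplan), so it suffices to check the case $c_i = 1$ for all $i \in I$. The crux is then a several-variable Hard Lefschetz type statement: $N_I(c_I)^j$ must induce isomorphisms between appropriately shifted graded pieces of $L(I)$. I would argue by induction on $|I|$. The base case $|I|=1$ reduces, via the identification of $\gr^W$ with cohomologies of smooth projective intersections of components, to the classical Hard Lefschetz theorem applied with a suitable Kähler class; this is the one-variable Steenbrink input. The inductive step relies on Kashiwara's theorem on the behavior of relative monodromy weight filtrations under commuting $(-1,-1)$-morphisms of mixed Hodge structures, applied to the MHS produced in part \eqref{item:34}.

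The main obstacle is the inductive step in \eqref{item:39}: one has to track the interaction of the $N_i$'s on the $E_1$-page carefully enough that Kashiwara's criterion applies uniformly in $I$ and that the partial relative monodromy filtrations exist and can be iterated consistently in any order. This is the content expected from Theorem \ref{thm:4}, and once it is secured the deduction of Theorem \ref{thm:9} from Theorem \ref{thm:8} is immediate by taking $I=\ski$ with any $c_I \in (\bpR)^{\ski}$.
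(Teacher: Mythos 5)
Your treatment of item \eqref{item:34} matches the paper: one replaces $\Omega_{X/\ast}(\log(\cM_X/\bN^k))$ by a Steenbrink-type complex $A_{\bC}$ (Lemma \ref{lem:15}), uses residue isomorphisms to identify $\gr^L$ with cohomology of the smooth projective strata $X_{\vr}$, and concludes that $(A,L(I),L,F)$ is a filtered cohomological mixed Hodge complex (Theorem \ref{thm:1}), whence item \eqref{item:34} by El Zein's theorem. That part is fine.

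For item \eqref{item:39} there are two genuine gaps. First, your opening reduction --- invoking Cattani--Kaplan to say that the weight filtration of $\sum_{i\in I}c_iN_i$ is independent of $c_I\in(\bpR)^I$, so that one may assume $c_i=1$ --- is circular at this stage. That independence holds for nilpotent orbits (equivalently, once one knows the relevant relative weight filtration/polarization package), but for abstract commuting nilpotents on a vector space it is false in general; here the independence is a \emph{consequence} of Theorem \ref{thm:4}, not an available input, and the paper proves the isomorphism $N_I(\vc_I)^l\colon\gr_l^{L(I)}\overset{\simeq}{\to}\gr_{-l}^{L(I)}$ directly for every $\vc_I$. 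Second, and more seriously, your inductive step via ``Kashiwara's theorem'' presupposes exactly the structure whose construction is the heart of the paper: to run either Kashiwara's relative-monodromy machinery or the Guill\'en--Navarro Aznar argument on $V=\bigoplus_{a,b}E_1^{a,b}(A_{\bC},L)$, one needs a polarization on the $E_1$-page that is compatible with the differential $d_1$. The paper devotes Sections \ref{sec:a Cech type complex}--\ref{sec:bilin-form} to this: a product $\widetilde{\Psi}$ on the \emph{filtered complex} $(A_{\bC},L)$ with values in the \v{C}ech-type complex $\cC(\Omega_{X_{\bullet}}(\log\cM_{X_{\bullet}}))$, a trace map $\Theta$ with $\Theta\cdot d_1=0$ (Lemma \ref{lem:13}), and the resulting form $S$ satisfying $S\cdot(d_1\otimes\id)=S\cdot(\id\otimes d_1)$ (Lemma \ref{lem:22}); only then does Proposition \ref{prop:1} (a multi-graded generalization of the Guill\'en--Navarro Aznar theorem, applied all at once rather than by induction on $|I|$) yield Theorems \ref{thm:3}, \ref{thm:4}, \ref{thm:2} and \ref{thm:5}. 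You correctly identify this as ``the main obstacle'' but supply no mechanism for it, so the proposal as written does not close the argument for item \eqref{item:39}.
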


The case of $I=\ski$ in \ref{item:39}
together with \ref{item:34}
implies Theorem \ref{thm:9}.
Moreover, \ref{item:39} claims that
the monodromy weight filtration of $N_I(c_I)$
is independent of the choice of $c_I \in (\bpR)^I$.
The following theorem states the relation between
the filtrations $L$ and $L(I)$.

\begin{thm}[cf. Theorem \ref{thm:2}]
\label{thm:10}
On $\coh^n(X, \Omega_{X/\ast}(\log (\cM_X/\bN^k)))$,
the filtration $L$ is the monodromy weight filtration
of $N(c)=\sum_{i=1}^{k}c_iN_i$
relative to $L(I)$
for all $c=(c_i)_{i=1}^k \in (\bpR)^k$.
\end{thm}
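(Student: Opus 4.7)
The plan is to verify the two axioms that characterize a relative monodromy weight filtration for the pair $(L, N(c))$ over $L(I)$, and then to appeal to its uniqueness. Recall that an increasing filtration $L$ on a vector space $V$ equals the monodromy weight filtration of a nilpotent endomorphism $N$ relative to an increasing filtration $M$ if and only if (i) $N L_\ell \subset L_{\ell-2}$ for every $\ell$, and (ii) for every $k \in \bZ$, the filtration induced by $L$ on $\gr^M_k V$ coincides with $W(\overline N)[k]$, where $\overline N$ is the induced nilpotent on $\gr^M_k V$. Since such a relative weight filtration is unique whenever it exists, checking (i) and (ii) for $L$ and $N(c)$ is enough.

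Axiom (i) follows from Theorem \ref{thm:8}(\ref{item:39}) applied with $I = \ski$: in this case $L(\ski) = L$ and $N_I(c_I) = N(c)$, so $L = W(N(c))$, and in particular $N(c) L_\ell \subset L_{\ell-2}$ for every $\ell$ and every $c \in (\bpR)^k$.

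For axiom (ii), set $J = \ski \setminus I$ and decompose $N(c) = N_I(c_I) + N_J(c_J)$. Theorem \ref{thm:8}(\ref{item:39}) applied to the subset $I$ yields $L(I) = W(N_I(c_I))$, so $N_I(c_I)$ annihilates each $\gr^{L(I)}_k$; hence the nilpotent induced on $\gr^{L(I)}_k$ by $N(c)$ is exactly the one induced by $N_J(c_J)$. It therefore remains to prove that the filtration induced by $L$ on $\gr^{L(I)}_k$ coincides with $W(\overline{N_J(c_J)})[k]$.

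This last identification is the main obstacle. The plan is to establish it at the level of the resolution of $\Omega_{X/\ast}(\log(\cM_X/\bN^k))$ used earlier in the paper to construct $L$ and the $L(I)$, where both filtrations arise as Koszul-type filtrations indexed by subsets of $\ski$ in the Steenbrink paradigm. On each bigraded piece $\gr^L \gr^{L(I)}$, repeated application of Theorem \ref{thm:8}(\ref{item:34}) together with \ref{item:39} produces a pure Hodge structure, and the action induced by the $N_j$ with $j \in J$ is described explicitly through the residual logarithms attached to the indices in $J$. Comparing this combinatorial description with the $\mathfrak{sl}_2$-structure on the resulting polarized Hodge--Lefschetz module then yields the required identification on each $\gr^{L(I)}_k$; equivalently one may argue by induction on $|J|$, peeling off the indices in $J$ one at a time and reducing to the one-variable situation treated by Steenbrink.
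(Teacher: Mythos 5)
Your reduction of the statement to the two defining axioms of the relative monodromy filtration, together with its uniqueness, is the right skeleton and matches the paper's intent: the paper deduces Theorem \ref{thm:10} from Theorem \ref{thm:2}, which asserts exactly that $N(\vc)^l$ induces isomorphisms $\gr_{l+m}^L\gr_m^{L(I)}\coh^q(X,A_{\bC})\overset{\simeq}{\longrightarrow}\gr_{-l+m}^L\gr_m^{L(I)}\coh^q(X,A_{\bC})$, i.e.\ your axiom (ii). Your observation that $N_I(\vc_I)$ acts as zero on $\gr^{L(I)}$, so that the induced nilpotent is $\overline{N_J(\vc_J)}$, is also correct and consistent with how the paper separates the roles of $I$ and $J$ in Section \ref{sec:proofs-main-theorems}.

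The genuine gap is in your last paragraph: axiom (ii) is precisely the hard content of the theorem, and you neither invoke Theorem \ref{thm:2} nor supply a proof of it. The paper obtains this isomorphism only after a substantial detour: it constructs a product $\widetilde{\Psi}$ on $(A_{\bC},L)$ and a trace map $\Theta$ (Sections \ref{sec:a Cech type complex}--\ref{sec:bilin-form}) to produce a polarization $S$ on $V_{\bR}=\bigoplus E_1^{a,b}(A_{\bQ},L)\otimes\bR$ compatible with $d_1$ (Lemma \ref{lem:22}), verifies that $(V_{\bR},\{l_0,\{l_i\}\},S,\{d'_i\})$ is a polarized differential $\bZ\oplus\bZ^k$-graded Hodge--Lefschetz module, and then applies Proposition \ref{prop:1} and Lemma \ref{lem:24} to pass from the $E_1$-level isomorphisms to the abutment. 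Your proposed shortcut --- ``induction on $|J|$, peeling off the indices in $J$ one at a time and reducing to the one-variable situation treated by Steenbrink'' --- does not work as stated: after one application of $\coh(-,d'_j)$ one must re-establish the Lefschetz isomorphisms \emph{and} the polarization on the cohomology of the previous stage before the next index can be peeled off, and this is exactly what Proposition \ref{prop:1} (via the theorem of Guill\'en--Navarro Aznar) provides; without it the induction has no base to stand on. As written, the final step of your argument is a plan rather than a proof, so the proposal is incomplete precisely where the theorem is nontrivial.
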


Theorems \ref{thm:8} and \ref{thm:10}
are consequences of
Theorems \ref{thm:6}, \ref{thm:4} and \ref{thm:2}.
Theorem \ref{thm:6}
follows directly from Theorem \ref{thm:1},
which will be proved in Section \ref{sec:proof-theor-refthm:1}.
Theorems \ref{thm:4} and \ref{thm:2}
will be proved together with Theorems \ref{thm:3} and \ref{thm:5}
in Section \ref{sec:proofs-main-theorems}.
Theorem \ref{thm:3}
claims the $E_2$-degeneracy
of the spectral sequence associated to the filtration $L(I)$.
This is a generalization of the result on $E_2$-degeneracy
for a projective semistable morphism
in \cite{FujisawaDWSS} and \cite{FujisawaLHSSV2}
to the case of a projective \ssls degeneration.
Theorem \ref{thm:5},
which is a by-product of the proof of Theorems  \ref{thm:4} and \ref{thm:2},
states that
the analogue of the hard Lefschetz theorem
for $\coh^{\ast}(X, \Omega_{X/\ast}(\log (\cM_X/\bN^k)))$ holds true.
In \cite{NakkajimaSWSS},
Y. Nakkajima stated the log hard Lefschetz conjecture
and proved it for a projective SNCL variety over the standard log point
(cf. Conjecture 9.5 and Theorem 9.14 in \cite{NakkajimaSWSS}).
Theorem \ref{thm:5} is the affirmative answer
to an analogue of the log hard Lefschetz conjecture
for a projective \ssls degeneration.

\begin{para}
\label{para:14}
This paper is partially motivated
by Theorems \textup{\RomI} and \textup{\RomI'} of Green and Griffiths
\cite{GreenGriffithsDTLMHS}.
Let $X$ be a reduced \ca space,
which is locally isomorphic to a product of \nc varieties
as in (\RomI.2) of \cite{GreenGriffithsDTLMHS}.
Then Green and Griffiths claimed that
a certain type of infinitesimal deformation of $X$
(cf. p.100 and p.108 of \cite{GreenGriffithsDTLMHS})
canonically yields a polarized limiting mixed Hodge structure
under the appropriate projectivity assumption.
In fact,
the existence of a good infinitesimal deformation of $X$,
which is assumed in \cite{GreenGriffithsDTLMHS} as above,
implies that there exists a log structure $\cM_X$
such that $(X, \cM_X)$ becomes a \ssls degeneration.
Thus Theorem \ref{thm:9} above
is an analogue of Theorems \textup{\RomI} and \textup{\RomI'}
of \cite{GreenGriffithsDTLMHS}
in the context of log geometry.
We note that the difference between
Theorems \textup{\RomI} and \textup{\RomI'}
of \cite{GreenGriffithsDTLMHS}
and Theorem \ref{thm:9} above
is about polarization.
We will return to this point later.
\end{para}

\begin{para}
For the case of $k=1$,
a \ssls degeneration
is called a log deformation
by Steenbrink in \cite{SteenbrinkLE}.
The relative log de Rham cohomology groups
of a projective strict log deformation
are thoroughly studied in \cite{SteenbrinkLE}, \cite{Fujisawa-Nakayama}
and \cite{FujisawaPLMHS}.
In particular, it is proved in \cite{FujisawaPLMHS}
that they admit a natural \textit{polarized} limiting mixed Hodge structure.
Thus the result of this paper is a partial generalization
of results in \cite{SteenbrinkLE}, \cite{Fujisawa-Nakayama}
and \cite{FujisawaPLMHS}
to a projective \ssls degeneration.

Study of limiting mixed Hodge structures
for a projective semistable degeneration over the unit disc
originated from Steenbrink \cite{SteenbrinkLHS},
in which he proved that such a morphism
yields a natural limiting mixed Hodge structure
on the relative log de Rham cohomology groups of the central fiber
(cf. \cite{ElZeinCE},
\cite{SaitoMorihikoMHP},
\cite{Guillen-NavarroAznarCI},
\cite{UsuiMTTS}).
His results were generalized by the author's previous works
\cite{FujisawaLHSSV}, \cite{FujisawaDWSS} and \cite{FujisawaLHSSV2}
to the case of a projective semistable morphism
over a higher dimensional polydisc.
The other motivation of this paper
is to generalize these results
to a projective \ssls degeneration.
\end{para}

\begin{para}
We briefly explain the outline of this paper.
In Section \ref{sec:preliminaries},
we fix notation
and collect several preliminary definitions and results
for the later use.
In Section \ref{sec:semist-log-smooth},
we introduce the notion of a \ssls degeneration.
Hereafter, a \ssls degeneration
$f \colon (X, \cM_X) \longrightarrow (\ast,\bN^k)$ is fixed.
In \ref{para:1},
we give a local description of a \ssls degeneration,
which is constantly used throughout this paper.
Some notation and results
on Koszul complexes are briefly recalled in \ref{para:11}.
In Definition \ref{defn:24},
we construct
$((A_{\bQ}, L(I)), (A_{\bC}, L(I), F), \alpha)$
consisting of 
a complex of $\bQ$-sheaves $A_{\bQ}$
equipped with an increasing filtration $L(I)$,
a complex of $\bC$-sheaves $A_{\bC}$
equipped with an increasing filtration $L(I)$
and an decreasing filtration $F$,
and a morphism of complexes of $\bQ$-sheaves
$\alpha \colon A_{\bQ} \longrightarrow A_{\bC}$
preserving the filtrations $L(I)$ for all $I \subset \ski$.
(We set $L=L(\ski)$ as in \ref{item:34}.)
These data play a central role in this paper.
In fact, Lemma \ref{lem:15} states that
$(A_{\bC}, F)$ is filtered quasi-isomorphic
to $(\Omega_{X/\ast}(\log (\cM_X/\bN^k)), F)$.
Therefore the filtered vector space
$(\coh^{\ast}(\Omega_{X/\ast}(\log (\cM_X/\bN^k))), F)$
is replaced by
$(\coh^{\ast}(X, A_{\bC}), F)$ in what follows
(cf. Corollary \ref{cor:3}).
Section \ref{sec:main-results}
is devoted to state the main results of this paper,
Theorems \ref{thm:1}, \ref{thm:6}, \ref{thm:3},
\ref{thm:4}, \ref{thm:2}, and \ref{thm:5}.
An endomorphism $\nu_i$ on $A_{\bQ}$ and $A_{\bC}$,
which induces the nilpotent endomorphism $N_i$ in Theorem \ref{thm:8},
is defined in Definition \ref{defn:14} for $i=\ki$.
In Section \ref{sec:proof-theor-refthm:1},
Theorem \ref{thm:1} is proved.
We first construct log complex manifolds
$(X_{\vr}, \cM_{X_{\vr}})$ in Definition \ref{defn:28}.
Then we define the residue morphism \eqref{eq:111}
for the log de Rham complex
in Definition \ref{defn:23},
and \eqref{eq:112} for the Koszul complex
in Definition \ref{defn:30} respectively.
Once these residue morphisms are obtained,
Theorem \ref{thm:1}
is a consequence of the classical Hodge theory
on $\coh^{\ast}(X_{\vr}, \varepsilon_{\vr} \otimes_{\bZ} \bC)$,
where $\varepsilon_{\vr}$ is a locally free $\bZ$-module of rank one
admitting a positive definite symmetric bilinear form
(see Definition \ref{defn:6} for $\varepsilon_{\vr}$).

To prove the remaining theorems,
we will apply a result
on a polarized differential multi-graded Hodge-Lefschetz module
to $V_{\bC}=\bigoplus_{a,b}E_1^{a,b}(A_{\bC}, L)$.
(Precisely, we will apply Proposition \ref{prop:1}
to the real form $V_{\bR}$ of $V_{\bC}$.)
To prove that $V_{\bR}$ is
a polarized differential
$\bZ \oplus \bZ^k$-graded Hodge-Lefschetz module,
the most subtle point is to construct
a polarization on $V_{\bR}$.
Apparently it looks possible
to obtain such a polarization directly
from the fact that $V_{\bC}$ is expressed as
a direct sum of cohomology groups
$\coh^{\ast}(X_{\vr}, \varepsilon_{\vr} \otimes_{\bZ} \bC)$.
Actually we can obtain a bilinear form
on $V_{\bC}$ by using the polarization
on $\coh^{\ast}(X_{\vr}, \varepsilon_{\vr} \otimes_{\bZ} \bC)$
as in \cite[(3.4)]{Guillen-NavarroAznarCI}.
However, it is difficult to prove
that the bilinear form obtained in this way
is compatible with
the morphism $d_1$ of $E_1$-terms
of the spectral sequence $E_r^{\ast,\ast}(A_{\bC}, L)$

The idea to avoid this difficulty
is to construct a product on the \textit{filtered complex} $(A_{\bC}, L)$,
which induces the desired bilinear form on $V_{\bC}$.
The fact that the bilinear form comes from
a product on $(A_{\bC}, L)$
enables us to analyze the relation
between this bilinear form
and the morphism $d_1$ of the $E_1$-terms.
To carry out this idea,
we follow the arguments in \cite{FujisawaPLMHS}
and adapt them to the case of a \ssls degeneration.
In Section \ref{sec:a Cech type complex},
we construct
a \v{C}ech type filtered complex
$(\cC(\Omega_{X_{\bullet}}(\log \cM_{X_{\bullet}})),\delta W)$
and a product on it.
Moreover, under the assumption that $X$ is of pure dimension,
we construct a morphism
$\Theta \colon
E_1^{-k, 2\dim X+2k} \longrightarrow \bC$,
where $E_1^{p,q}$ denotes
the $E_1$-terms of the spectral sequence associated to
the filtered complex
$(R\Gamma_c(X, \cC(\Omega_{X_{\bullet}}(\log \cM_{X_{\bullet}}))), \delta W)$.
In Section \ref{sec:constr-bilin-forms},
a product
$A_{\bC} \otimes_{\bC} A_{\bC} \longrightarrow
\cC(\Omega_{X_{\bullet}}(\log \cM_{X_{\bullet}}))[k]$
is constructed
by using the residue morphism on $A_{\bC}$
and the product on $\cC(\Omega_{X_{\bullet}}(\log \cM_{X_{\bullet}}))$.
This product induces a product on
$E_1^{\ast,\ast}(A_{\bC}, L)$
with values in
$E_1^{\ast,\ast}(
\cC(\Omega_{X_{\bullet}}(\log \cM_{X_{\bullet}})), \delta W)$.
In Section \ref{sec:bilin-form},
under the assumption that $X$ is projective and of pure dimension,
a bilinear form
on $V_{\bC}=\bigoplus_{a,b}E_1^{a,b}(A_{\bC}, L)$
is constructed as follows.
For two elements of $V_{\bC}$,
the product of these elements
is contained in
$\bigoplus_{p,q}E_1^{p,q}(
\cC(\Omega_{X_{\bullet}}(\log \cM_{X_{\bullet}})), \delta W)$.
Then taking its image by the projection
to the direct summand
$E_1^{-k,2\dim X+2k}(
\cC(\Omega_{X_{\bullet}}(\log \cM_{X_{\bullet}})), \delta W)$,
and evaluate it by the morphism $\Theta$ above
with an appropriate sign.
(For the precise definition,
see Definition \ref{defn:2}.)
Lemma \ref{lem:22} shows the compatibility
of this bilinear form
with the morphism $d_1$ of the $E_1$-terms
of the spectral sequence $E_r^{p,q}(A_{\bC}, L)$.
This lemma follows form the fact that
the product on $V_{\bC}$ is induced from
the product on the
\textit{filtered complex} $(A_{\bC},L)$
and from the equality $\Theta \cdot d_1=0$ in Lemma \ref{lem:13}.
Restricting this bilinear form
to the real form $V_{\bR}$,
it turns out to be 
a polarized differential $\bZ \oplus \bZ^k$-graded
Hodge-Lefschetz module as expected.
Section \ref{sec:multi-graded-hodge}
is devoted to the arguments on
polarized differential multi-graded Hodge-Lefschetz modules,
which is a slight generalization
of polarized differential bigraded Hodge-Lefschetz modules
in \cite{Guillen-NavarroAznarCI}
(cf. \cite[Section 4]{SaitoMorihikoMHP}).
As already mentioned above,
we prove Theorems \ref{thm:3}, \ref{thm:4}, \ref{thm:2}, and \ref{thm:5}
all together in Section \ref{sec:proofs-main-theorems}
by applying Proposition \ref{prop:1}
to $V_{\bR}$.
\end{para}

\begin{para}
We discuss about remaining problems.
Compared to the results of Green and Griffiths mentioned in \ref{para:14},
the limiting mixed Hodge structure in Theorem \ref{thm:9}
is expected to be polarized.
To this end,
we have to lift the polarization on $V_{\bC}$
to a bilinear form on $\coh^{\ast}(X, A_{\bC})$.
It seems possible
to obtain such a lifting
by following the arguments in \cite{FujisawaPLMHS}.

Theorem \ref{thm:10} states the relation between
the filtrations $L$ and $L(I)$.
It is natural to consider the relation between $L(I)$ and $L(J)$
for $J \subset I \subset \ski$
as in the theory of nilpotent orbits in several variables
(see e.g. \cite{CKS}).
Namely,
$L(I)$ is expected to be the monodromy weight filtration of $N_I(c_I)$
relative to the filtration $L(J)$ for every $c_I \in (\bpR)^I$.
Furthermore,
Theorems \ref{thm:8} and \ref{thm:10}
show that the limiting mixed Hodge structure
$(\coh^n(X, A), L, F)$
equipped with the nilpotent endomorphisms $N_1, \dots, N_k$
satisfy a part of good properties
of a nilpotent orbit in several variables.
Thus it is hoped that 
$(\coh^n(X, A_{\bC}), F, N_1, \dots, N_k)$
generates a nilpotent orbit
in $k$-variables.
If this is the case,
then we can prove
that a projective \ssls degeneration
yields polarized log Hodge structures
on the log point $(\ast,\bN^k)$.
This partially generalize the result in \cite{FN2020}
to the case over a base with higher log rank.

Since a \ssls degeneration
is a special case of a log smooth degeneration
defined in \cite[Definition 4.3]{FujisawaMHSLSD},
it is already proved that
the relative log de Rham cohomology groups
of a projective \ssls degeneration
carries a $\bQ$-mixed Hodge structure
if all the irreducible components of $X$ are smooth.
Although the two constructions,
one in \cite{FujisawaMHSLSD} and the other in this paper,
are rather different,
it should be proved that
these two mixed Hodge structures
are the same.
\end{para}

\begin{ack}
The author would like to thank Y. Nakkajima
for stimulating and helpful discussion.
The author was partially supported
by JSPS KAKENHI Grant Number JP16K05107.
\end{ack}

\section{Preliminaries}
\label{sec:preliminaries}

\begin{para}
The cardinality of a finite set $A$
is denoted by $|A|$.
\end{para}

\begin{para}
The set of the positive integers (resp. the positive real numbers)
is denoted by $\bpZ$ (resp. $\bpR$).
\end{para}

\begin{para}
For two sets $A$ and $B$,
the set of all maps from $A$ to $B$
is denoted by $B^A$.
\end{para}

\begin{para}
\label{para:7}
Let $A$ be a finite set.
Then $\bZ^A$ is a free $\bZ$-module of rank $|A|$,
whose canonical $\bZ$-basis is denoted by $\{\ve_a\}_{a \in A}$.
For a subset $B \subset A$,
we have the canonical direct sum decomposition
$\bZ^A=\bZ^B \oplus \bZ^{A \setminus B}$,
which induces the canonical surjection
$\bZ^A \longrightarrow \bZ^B$.
For an element $\vq \in \bZ^A$,
its image by this canonical surjection
is denoted by $\vq_B \in \bZ^B$.
We set $\ve=\sum_{a \in A}\ve_a \in \bZ^A$.
Then $\ve_B=\sum_{a \in B}\ve_a \in \bZ^B$.
For $\vq=\sum_{a \in A}q_a\ve_a \in \bZ^A$,
we set $|\vq|=\sum_{a \in A}q_a \in \bZ$.
For the case of $A=\ski$,
we use $\bZ^k$ instead of $\bZ^A$.
As usual, we write $\vq=(q_1,q_2, \dots, q_k)$
for $\vq=\sum_{i=1}^{k}q_i\ve_i \in \bZ^k$.

A partial order $\ge$ on $\bZ^A$ is defined by
\begin{equation}
\label{eq:3}
\vq=\sum_{a \in A}q_a\ve_a \ge \vq'=\sum_{a \in A}q'_a\ve_a
\Longleftrightarrow
q_a \ge q'_a \quad \text{for all $a \in A$}.
\end{equation}
We set
$\bZ^A_{\ge \vq}=\{\vr \in \bZ^A \mid \vr \ge \vq\}$
for $\vq \in \bZ^A$.
For the case of $\vq=\vo$,
we use $\bN^A=\bZ^A_{\ge \vo}$.
Then $\bN^A$ is a monoid admitting a direct sum decomposition
$\bN^A=\bigoplus_{a \in A}\bN \ve_a$ as monoids.
\end{para}

\begin{para}
Let $\Lambda$ be a finite set.
We sometimes use
$\ula, \umu, \unu, \dots$ for subsets of $\Lambda$.
The set of all subsets of $\Lambda$ 
is denoted by $S(\Lambda)$.
For the case where a partition into disjoint union
\begin{equation}
\label{eq:18}
\Lambda=\coprod_{i=1}^k\Lambda_i
\end{equation}
is given,
we set
\begin{equation}
S_{\vr}(\Lambda)
=\{\ula \in S(\Lambda); |\ula \cap \Lambda_i|=r_i
\text{ for all $i=\ki$}\}
\end{equation}
for $\vr=(r_i)_{i=1}^k \in \bZ^k$.
Note that $|\ula|=|\vr|$
for $\ula \in S_{\vr}(\Lambda)$.
\end{para}


\begin{para}
\label{para:13}
For a finite set $\Lambda$,
we set
$\varepsilon(\Lambda)=\bigwedge^{|\Lambda|}\bZ^{\Lambda}$,
which is a free $\bZ$-module of rank one.
We note $\varepsilon(\emptyset)=\bZ$ by definition.
Moreover, we set
$\bigwedge\bZ^{\Lambda}
=\bigoplus_{m \ge 0}\bigwedge^m\bZ^{\Lambda}$.
Then the equality
\begin{equation}
\label{eq:17}
\bigwedge \bZ^{\Lambda}
=\bigoplus_{\ula \in S(\Lambda)}\varepsilon(\ula)
\end{equation}
holds.
\end{para}

\begin{para}[Two products $\chi$ and $\overline{\chi}$ on $\bigwedge \bZ^\Lambda$]
Let $\Lambda$ be a finite set.
A morphism
$\chi(\Lambda)
\colon \bigwedge\bZ^{\Lambda} \otimes_{\bZ} \bigwedge\bZ^{\Lambda}
\longrightarrow
\bigwedge\bZ^{\Lambda}$
is defined by
$\chi(\Lambda)(\vv \otimes \vw)=\vv \wedge \vw$
for $\vv, \vw \in \bigwedge\bZ^{\Lambda}$.
Via the direct sum decomposition \eqref{eq:17},
the restriction of $\chi(\Lambda)$
on the direct summand $\varepsilon(\ula) \otimes_{\bZ} \varepsilon(\umu)$
induces an isomorphism
\begin{equation}
\label{eq:44}
\chi(\ula, \umu) \colon
\varepsilon(\ula) \otimes_{\bZ} \varepsilon(\umu)
\overset{\simeq}{\longrightarrow}
\varepsilon(\ula \cup \umu)
\end{equation}
if $\ula \cap \umu = \emptyset$.
Similarly, an isomorphism
\begin{equation}
\label{eq:97}
\ve_{\lambda} \wedge \colon
\varepsilon(\umu) \longrightarrow \varepsilon(\{\lambda\} \cup \umu)
\end{equation}
is defined by sending $\vv \in \varepsilon(\umu)$ to
$\ve_{\lambda} \wedge \vv \in \varepsilon(\{\lambda\} \cup \umu)$
for $\lambda \in \Lambda \setminus \umu$.

Now, we consider the case where $\Lambda$
is equipped with a partition \eqref{eq:18}.
For $\ula, \umu \subset \Lambda$
with $|\ula \cap \umu \cap \Lambda_i|=1$
for all $i=\ki$,
we set $\{\lambda_i\}=\ula \cap \umu \cap \Lambda_i$
for each $i$,
and obtain an isomorphism
$(\ve_{\lambda_k}\wedge)^{-1} \cdots (\ve_{\lambda_1}\wedge)^{-1}
\colon
\varepsilon(\umu)
\longrightarrow
\varepsilon(\umu \setminus \{\lambda_1, \dots, \lambda_k\})$,
where $\ve_{\lambda_i}\wedge$ is the isomorphism \eqref{eq:97}.
Then a morphism
$\overline{\chi}(\ula, \umu) \colon
\varepsilon(\ula) \otimes \varepsilon(\umu)
\longrightarrow
\bigwedge\bZ^{\Lambda}$
is defined by
\begin{equation}
\overline{\chi}(\ula, \umu)=
\chi(\ula, \umu \setminus \{\lambda_1, \lambda_2, \dots, \lambda_k\})
\cdot (\id \otimes (\ve_{\lambda_k}\wedge)^{-1}
\cdots (\ve_{\lambda_1}\wedge)^{-1}).
\end{equation}
For the case where $|\ula \cap \umu \cap \Lambda_i| \not= 1$
for some $i \in \ski$,
we set
$\overline{\chi}(\ula, \umu)=0$
as a morphism from $\varepsilon(\ula) \otimes \varepsilon(\umu)$
to $\bigwedge\bZ^{\Lambda}$.
Thus we obtain a morphism
\begin{equation}
\label{eq:45}
\overline{\chi}(\Lambda)
=\bigoplus\overline{\chi}(\ula, \umu) \colon
\bigwedge\bZ^{\Lambda} \otimes \bigwedge\bZ^{\Lambda}
\longrightarrow
\bigwedge\bZ^{\Lambda}
\end{equation}
via the direct sum decomposition \eqref{eq:17}.
For $\vv \in \bigwedge^p\bZ^{\Lambda}, \vw \in \bigwedge^q\bZ^{\Lambda}$,
the equality
\begin{equation}
\label{eq:20}
\overline{\chi}(\Lambda)(\vw \otimes \vv)
=(-1)^{(p-k)(q-k)}\overline{\chi}(\Lambda)(\vv \otimes \vw)
\end{equation}
can be easily checked.
\end{para}

\begin{rmk}
\label{rmk:2}
Let $\Lambda$ be as above
and $\Gamma \subset \Lambda$.
Then $\Gamma$ has a partition $\Gamma=\coprod_{i=1}\Gamma \cap \Lambda_i$.
For the morphisms $\overline{\chi}(\Lambda)$
and $\overline{\chi}(\Gamma)$ defined above,
the diagram
\begin{equation}
\begin{CD}
\bigwedge \bZ^{\Lambda} \otimes \bigwedge \bZ^{\Lambda}
@>{\overline{\chi}(\Lambda)}>>
\bigwedge \bZ^{\Lambda} \\
@VVV @VVV \\
\bigwedge \bZ^{\Gamma} \otimes \bigwedge \bZ^{\Gamma}
@>{\overline{\chi}(\Gamma)}>>
\bigwedge \bZ^{\Gamma},
\end{CD}
\end{equation}
is commutative,
where the vertical arrows
are the morphisms induced from the canonical surjection
$\bZ^{\Lambda} \longrightarrow \bZ^{\Gamma}$
in \ref{para:7}.
\end{rmk}

\subsection*{Finitely generated free monoids}

\begin{defn}[A finitely generated free monoid]
\label{notn:3}
In this paper,
a monoid $P$ is called a finitely generated free monoid
if there exists an isomorphism of monoids
$P \simeq \bN^{\Lambda}$ for some {\itshape finite} set $\Lambda$.
\end{defn}

\begin{rmk}
\label{rmk:1}
In the situation above,
the finite set $\Lambda$
is uniquely determined by $P$
up to the unique isomorphism
in the following sense.
Let $\Lambda$ and $\Gamma$ be finite sets,
and $\xi_1 \colon P \overset{\simeq}{\longrightarrow} \bN^{\Lambda}$
and $\xi_2 \colon P \overset{\simeq}{\longrightarrow} \bN^{\Gamma}$
isomorphisms of monoids.
Then there exists a unique bijection
$\sigma \colon \Lambda \longrightarrow \Gamma$
such that
$(\xi_2 \cdot \xi_1^{-1})(\ve_{\lambda})=e_{\sigma(\lambda)}$
for all $\lambda \in \Lambda$.
\end{rmk}

\begin{defn}[The canonical bilinear form on a finitely generated free monoid]
\label{defn:9}
Let $P$ be a finitely generated free monoid.
Fix an isomorphism $\xi \colon P \overset{\simeq}{\longrightarrow} \bN^{\Lambda}$
for a finite set $\Lambda$.
Then $\xi$ induces an isomorphism $\xi\gp \colon P\gp \simeq \bZ^{\Lambda}$.
On $\bZ^{\Lambda}$, there exists the canonical bilinear form
$(\ ,\ ) \colon \bZ^{\Lambda} \otimes_{\bZ} \bZ^{\Lambda} \longrightarrow \bZ$
defined by $(\ve_{\lambda}, \ve_{\lambda})=1$
and $(\ve_{\lambda}, \ve_{\mu})=0$ for $\lambda \not= \mu$.
Via the isomorphism $\xi\gp$ above,
a symmetric bilinear form
$P\gp \otimes_{\bZ} P\gp \longrightarrow \bZ$ is induced.
By Remark \ref{rmk:1} above,
this bilinear form is independent
of the isomorphism $\xi$.
This bilinear form
$P\gp \otimes_{\bZ} P\gp \longrightarrow \bZ$
is called the canonical bilinear form associated to $P$.
Trivially, the induced bilinear form on $\bR \otimes_{\bZ} P\gp$
is symmetric and positive definite.
\end{defn}

\begin{defn}[A semistable morphism to a finitely generated free monoid]
\label{defn:1}
Let $\Lambda$ be a finite set.
A morphism of monoids
$\varphi \colon \bN^k \longrightarrow \bN^\Lambda$
is said to be semistable
if there exists a partition
$\Lambda=\coprod_{i=1}^k\Lambda_i$
such that
$\varphi(\ve_i)=\sum_{\lambda \in \Lambda_i}\ve_{\lambda}$
for all $i=\ki$.
The partition $\Lambda=\coprod_{i=1}^k\Lambda_i$
is called the partition associated to $\varphi$.
More generally, a morphism of monoids
$\varphi \colon \bN^k \longrightarrow P$
to a finitely generated free monoid $P$
is said to be semistable
if there exist a finite set $\Lambda$
and an isomorphism
$\xi \colon P \overset{\simeq}{\longrightarrow} \bN^{\Lambda}$
such that the composite
$\xi \cdot \varphi$
is semistable in the sense defined above.
\end{defn}

\begin{rmk}
\label{rmk:3}
For a semistable morphism $\varphi \colon \bN^k \longrightarrow P$,
the finite set $\Lambda$ equipped with the partition
$\Lambda=\coprod_{i=1}^k\Lambda_i$ above
is uniquely determined by $\varphi$ in the following sense.
Let $\Lambda$ and $\Gamma$ be finite sets,
and $\xi_1 \colon P \overset{\simeq}{\longrightarrow} \bN^{\Lambda}$
and $\xi_2 \colon P \overset{\simeq}{\longrightarrow} \bN^{\Gamma}$
isomorphisms
such that
$\xi_1 \cdot \varphi$ and $\xi_2 \cdot \varphi$
are semistable.
Then the bijection
$\sigma \colon \Lambda \longrightarrow \Gamma$
in Remark \ref{rmk:1}
preserves the partitions of $\Lambda$ and $\Gamma$
associated to $\xi_1 \cdot \varphi$ and $\xi_2 \cdot \varphi$
respectively.
\end{rmk}

\begin{defn}[The direct sum decomposition associated to a semistable morphism]
\label{defn:8}
Let $P$ be a finitely generated free monoid
and $\varphi \colon \bN^k \longrightarrow P$
a semistable morphism.
Take a finite set $\Lambda$,
an isomorphism $\xi \colon P \longrightarrow \bN^{\Lambda}$
and a partition $\Lambda=\coprod_{i=1}^k\Lambda_i$
associated to $\xi \cdot \varphi$
as in Definition \ref{defn:1}.
Then a finitely generated free monoid
$P_i=\xi^{-1}(\bN^{\Lambda_i})$
is independent of the choice of $\xi$
by the remark above.
Thus we obtain a direct sum decomposition of monoids
$P=\bigoplus_{i=1}^kP_i$,
called the decomposition associated to $\varphi$.
\end{defn}

\begin{defn}[The product $\overline{\chi}$ associated to a semistable morphism]
\label{defn:7}
Let $P$ be a finitely generated free monoid
and $\varphi \colon \bN^k \longrightarrow P$
a semistable morphism.
Take $\xi \colon P \longrightarrow \bN^{\Lambda}$
and $\Lambda=\coprod_{i=1}^k\Lambda_i$ as above.
Via the isomorphism
$\bigwedge \xi\gp:
\bigwedge P\gp \overset{\simeq}{\longrightarrow} \bigwedge\bZ^{\Lambda}$,
the morphism $\overline{\chi}(\Lambda)$ in \eqref{eq:45}
gives us a morphism
$\bigwedge P\gp \otimes_{\bZ} \bigwedge P\gp \longrightarrow \bigwedge P\gp$
which is independent of $\xi$
by Remark \ref{rmk:3}.
This morphism is denoted by
$\overline{\chi}(\varphi)$.
\end{defn}

\subsection*{Filtered complexes}

\begin{notn}[Finiteness for filtrations]
Because we mainly use finite filtrations in this paper,
we usually omit the adjective ``finite''
for filtrations.
\end{notn}

\begin{notn}[Spectral sequences]
We follow the notation in \cite[(1.3.1)]{DeligneII}
for the spectral sequence associated to a filtered complex.
Let $(K_1, F)$ and $(K_2, F)$ be decreasingly filtered complexes.
A morphism
$f \colon (K_1, F) \longrightarrow (K_2, F)$
in the filtered derived category
induces a morphism of spectral sequences
$E_r^{p,q}(K_1, F) \longrightarrow E_r^{p,q}(K_2, F)$,
denoted by $E_r^{p,q}(f)$,
for all $p,q$ and for all $r$ with $1 \le r \le \infty$.
We often use $E_r(f)$ instead of $E_r^{p,q}(f)$ for short.
The morphism $E_{\infty}^{p,q}(f)$ coincides with
$\gr_F^p\coh^{p+q}(f)$
via the isomorphisms
$E_{\infty}^{p,q}(K_i, F) \simeq \gr_F^p\coh^{p+q}(K_i)$
for $i=1, 2$.
\end{notn}

\begin{para}[Tensor product of complexes]
\label{para:6}
For two complexes $K$ and $L$,
the differential of the complex $K \otimes L$
is given by
$d=d \otimes \id +(-1)^p\id \otimes d$
on the direct summand $K^p \otimes L^q$ of
$(K \otimes L)^{p+q}$.
An identification
$K \otimes L
\overset{\simeq}{\longrightarrow}
L \otimes K$
is given by
$x \otimes y \mapsto (-1)^{pq} y \otimes x$
on $K^p \otimes L^q$
as in \cite[p. 11]{Conrad}.
For $a, b \in \bZ$,
an identification
$K[a] \otimes L[b] \overset{\simeq}{\longrightarrow} (K \otimes L)[a+b]$
is given by
\begin{equation}
\label{eq:80}
x \otimes y \mapsto (-1)^{pb} x \otimes y
\end{equation}
on $K[a]^p \otimes L[b]^q =K^{p+a} \otimes L^{q+b}$
as in \cite[(1.3.6)]{Conrad}.
\end{para}

\begin{defn}
\label{defn:10}
For two complexes $K_1$ and $K_2$,
a morphism
$\coh^a(K_1) \otimes \coh^b(K_2) \longrightarrow \coh^{a+b}(K_1 \otimes K_2)$
is canonically induced
for all $a, b \in \bZ$.
For a morphism of complexes
$f \colon K_1 \otimes K_2 \longrightarrow K_3$,
the composite
\begin{equation}
\coh^a(K_1) \otimes \coh^b(K_2)
\longrightarrow
\coh^{a+b}(K_1 \otimes K_2)
\xrightarrow{\coh^{a+b}(f)}
\coh^{a+b}(K_3)
\end{equation}
is denoted by $\coh^{a,b}(f)$ in this paper.
For the case where $K_1, K_2, K_3$ are complexes of abelian sheaves
on a topological space $X$,
morphisms
$\coh^a(X, K_1) \otimes \coh^b(X, K_2)
\longrightarrow \coh^{a+b}(X, K_1 \otimes K_2)$
and
$\coh^{a,b}(X,f) \colon
\coh^a(X, K_1) \otimes \coh^b(X,K_2)
\longrightarrow \coh^{a+b}(X, K_3)$
are defined similarly.
\end{defn}

\begin{defn}[Filtration on the tensor product]
\label{defn:11}
Let $(K_1, F)$ and $(K_2, F)$
be two decreasingly filtered complexes.
A decreasing filtration $F$ on $K_1 \otimes K_2$
is defined by
\begin{equation}
F^r(K_1^p \otimes K_2^q)
=\sum_{a+b=r}
\image(F^aK_!^p \otimes F^bK_2^q \longrightarrow K_1^p \otimes K_2^q)
\end{equation}
for all $p,q \in \bZ$.
There exists the canonical morphism
$\gr_F^aK_1 \otimes \gr_F^bK_2 \longrightarrow \gr_F^{a+b}(K_1 \otimes K_2)$
for $a, b \in \bZ$.
For a morphism of filtered complexes
$f \colon (K_1 \otimes K_2, F) \longrightarrow (K_3, F)$,
the composite
\begin{equation}
\gr_F^aK_1 \otimes \gr_F^bK_2
\longrightarrow
\gr_F^{a+b}(K_1 \otimes K_2)
\xrightarrow{\gr_F^{a+b}f}
\gr_F^{a+b}K_3
\end{equation}
is denoted by $\gr_F^{a,b}f$.
\end{defn}

\begin{defn}
\label{defn:12}
For two decreasingly filtered complex $(K_1, F)$ and $(K_2, F)$,
a morphism
\begin{equation}
\rho_r^{a,b,c,d} \colon
E_r^{a,b}(K_1, F) \otimes E_r^{c,d}(K_2, F)
\longrightarrow
E_r^{a+c,b+d}(K_1 \otimes K_2, F)
\end{equation}
is canonically induced for all $0 \le r \le \infty$
and for all $a,b,c,d \in \bZ$.
For the morphism $d_r$ of $E_r$-terms,
the equality
\begin{equation}
\label{eq:106}
d_r \cdot \rho_r^{a,b,c,d}
=\rho_r^{a+r,b-r+1,c,d}
\cdot
(d_r \otimes \id)
+(-1)^{a+b}\rho_r^{a,b,c+r,d-r+1}
\cdot
(\id \otimes d_r)
\end{equation}
holds on $E_r^{a,b}(K_1,W) \otimes E_r^{c,d}(K_2,W)$
for all $a,b,c,d \in \bZ$.
%
For a morphism $f \colon (K_1 \otimes K_2, F) \longrightarrow (K_3, F)$
in the filtered derived category,
the composite
\begin{equation}
E_r^{a,b}(K_1, F) \otimes E_r^{c,d}(K_2, F)
\xrightarrow{\rho_r^{a,b,c,d}}
E_r^{a+c,b+d}(K_1 \otimes K_2, F)
\xrightarrow{E_r(f)}
E_r^{a+c, b+d}(K_3, F)
\end{equation}
is simply denoted by $E_r(f)$ for $1 \le r \le \infty$ by abuse of notation.
\end{defn}

\begin{para}[Gysin morphism for a bifiltered complex]
\label{para:10}
Let $F$ and $G$ be two decreasing filtrations on a complex $K$.
The short exact sequence
\begin{equation}
\begin{CD}
0 @>>> \gr_G^{a+1}K
@>>> G^aK\bigl/G^{a+2}K
@>>> \gr_G^aK
@>>> 0
\end{CD}
\end{equation}
defines a morphism
\begin{equation}
\gamma_G \colon \gr_G^aK \longrightarrow \gr_G^{a+1}K[1]
\end{equation}
in the derived category for all $a \in \bZ$.
In fact, this morphism $\gamma_G$ underlies
a morphism
\begin{equation}
\gamma_G \colon (\gr_G^aK, F) \longrightarrow (\gr_G^{a+1}K[1], F),
\end{equation}
denoted by the same letter $\gamma_G$,
in the filtered derived category
because
\begin{equation}
\begin{CD}
0 @>>> \gr_F^p\gr_G^{a+1}K
@>>> \gr_F^p(G^aK\bigl/G^{a+2}K)
@>>> \gr_F^p\gr_G^aK
@>>> 0
\end{CD}
\end{equation}
is exact for all $p$.
Thus $\gamma_G$ induces a morphism of spectral sequences
\begin{equation}
\label{eq:92}
E_r(\gamma_G) \colon
E_r^{p,q}(\gr_G^aK, F) \longrightarrow E_r^{p,q+1}(\gr_G^{a+1}K,F)
\end{equation}
for $1 \le r \le \infty$
by the identification
$E_r^{p,q}(\gr_G^{a+1}K[1], F) \simeq E_r^{p,q+1}(\gr_G^{a+1}K, F)$.
Here we note that $E_r(\gamma_G)$ is anti-commutative
with $d_r$
because the morphism $d_r$ on $E_r^{p,q}(\gr_G^{a+1}K[1], F)$
is identified with the morphism $-d_r$  on $E_r^{p,q+1}(\gr_G^{a+1}K,F)$.
\end{para}

\begin{para}[Convolution of two filtrations]
\label{para:9}
Let $K$ be a complex.
For two decreasing filtrations
$F$ and $G$ on $K$,
a decreasing filtration $F \ast G$ on $K$
is defined by
\begin{equation}
(F \ast G)^pK^n
=\sum_{a+b=p}F^aK^n \cap G^bK^n
\end{equation}
for all $n,p \in \bZ$
as in \cite[(1.4) Definition]{Steenbrink-Zucker}
and \cite[Definition 1.3.1]{KashiwaraABVPHS}.
Then the canonical injection
$F^aK \cap G^bK \hookrightarrow (F \ast G)^{a+b}K$
induces an isomorphism of complexes
\begin{equation}
\bigoplus_{a+b=p}\gr_F^a\gr_G^bK
\overset{\simeq}{\longrightarrow}
\gr_{F \ast G}^pK
\end{equation}
for all $p$,
under which we have the identification
\begin{equation}
\bigoplus_{\substack{a+b=p \\ a \ge k}}\gr_F^a\gr_G^bK
\overset{\simeq}{\longrightarrow}
F^k\gr_{F \ast G}^pK
\end{equation}
for all $k$.
Thus we obtain identifications
\begin{equation}
E_1^{p,q}(K, F \ast G)
\simeq
\coh^{p+q}(\gr_{F \ast G}^pK)
\simeq
\bigoplus_{a+b=p}\coh^{p+q}(\gr_F^a\gr_G^bK)
\simeq
\bigoplus_{a+b=p}E_1^{a,b+q}(\gr_G^bK, F)
\end{equation}
for all $p,q$,
under which $F^kE_1^{p,q}(K, F \ast G)$
is identified with $\bigoplus_{a+b=p,a \ge k}E_1^{a,b+q}(\gr_G^bK, F)$.
A morphism
$d_1' \colon
E_1^{p,q}(K, F \ast G) \longrightarrow E_1^{p+1,q}(K, F \ast G)$
is defined to be a direct sum of the morphisms of $E_1$-terms
$E_1^{a,b+q}(\gr_G^bK, F) \longrightarrow E_!^{a+1,b+q}(\gr_G^bK, F)$.
Similarly, a morphism
$d_1'' \colon
E_1^{p,q}(K, F \ast G) \longrightarrow E_1^{p+1,q}(K, F \ast G)$
is defined to be a direct sum of the morphisms
$E_1(\gamma_G) \colon
E_1^{a,b+q}(\gr_G^bK, F) \longrightarrow E_1^{a,b+q+1}(\gr_G^{b+1}K, F)$
in \eqref{eq:92}.
\end{para}

The following lemma is easily checked by definition.

\begin{lem}
\label{lem:23}
For the morphism of $E_1$-terms
$d_1 \colon E^{p,q}(K, F \ast G) \longrightarrow E_1^{p+1,q}(K, F \ast G)$,
the equality
$d_1=d_1'+d_1''$
holds for all $p,q$.
\end{lem}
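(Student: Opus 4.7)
The plan is to compute $d_1$ at the cochain level with representatives chosen compatibly with the canonical decomposition $\gr_{F\ast G}^p K\simeq\bigoplus_{a+b=p}\gr_F^a\gr_G^b K$ recalled in \ref{para:9}. For such a representative, the differential will automatically split into one piece landing in $\gr_F^{a+1}\gr_G^b K$ and another landing in $\gr_F^a\gr_G^{b+1}K$; these two pieces will be matched with $d_1'$ and $d_1''$ respectively.

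Concretely, for $\xi\in E_1^{p,q}(K, F\ast G)$, I decompose $\xi=\sum_{a+b=p}\xi_{a,b}$ with $\xi_{a,b}\in E_1^{a,b+q}(\gr_G^b K, F)$ via the identifications in \ref{para:9}. Each $\xi_{a,b}$ is represented by a cocycle in $\gr_F^a\gr_G^b K^{p+q}$, which I lift to some $y_{a,b}\in F^a K\cap G^b K$. The cocycle condition in $\gr_F^a\gr_G^b K$ is precisely $dy_{a,b}\in F^{a+1}G^b K+F^a G^{b+1}K$, so I fix a decomposition $dy_{a,b}=u_{a,b}+v_{a,b}$ with $u_{a,b}\in F^{a+1}G^b K$ and $v_{a,b}\in F^a G^{b+1}K$. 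Setting $x=\sum_{a+b=p}y_{a,b}\in(F\ast G)^pK$, the standard recipe for the differential of the spectral sequence gives $d_1\xi=[dx]=\bigl[\sum(u_{a,b}+v_{a,b})\bigr]$ in $\gr_{F\ast G}^{p+1}K$. Since $u_{a,b}$ represents a class in $\gr_F^{a+1}\gr_G^b K$ and $v_{a,b}$ in $\gr_F^a\gr_G^{b+1}K$, the $(a',b')$-component of $d_1\xi$ in $E_1^{a',b'+q}(\gr_G^{b'}K,F)$ with $a'+b'=p+1$ is represented by $u_{a'-1,b'}+v_{a',b'-1}$.

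It then remains to identify $[u_{a'-1,b'}]$ with $d_1\xi_{a'-1,b'}$ in the spectral sequence of $(\gr_G^{b'}K, F)$ and $[v_{a',b'-1}]$ with $E_1(\gamma_G)\xi_{a',b'-1}$. The first identification is immediate: reducing $dy_{a'-1,b'}=u_{a'-1,b'}+v_{a'-1,b'}$ modulo $G^{b'+1}K$ kills the second summand (since $v_{a'-1,b'}\in G^{b'+1}K$) and recovers the standard $E_1$-representative of $d_1\xi_{a'-1,b'}$, which matches the definition of $d_1'$ in \ref{para:9}. For the second identification, the short exact sequence $0\to\gr_G^{b'}K\to G^{b'-1}K/G^{b'+1}K\to\gr_G^{b'-1}K\to 0$ defining $\gamma_G$ in \ref{para:10} remains short exact after taking $\gr_F^{a'}$, so its connecting morphism on cohomology computes $E_1(\gamma_G)$ on the relevant component; tracing the lift $y_{a',b'-1}$ through this sequence and applying the shift identification in \eqref{eq:92} produces exactly $[v_{a',b'-1}]$, with independence of the splitting $dy=u+v$ controlled by the identity $F^{a+1}G^b K\cap F^a G^{b+1}K=F^{a+1}G^{b+1}K$, whose class vanishes in both of the bigraded pieces that appear. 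The main obstacle is this last identification, because $y_{a',b'-1}$ is only a cocycle modulo $F^{a'+1}$ and not in $\gr_G^{b'-1}K$ itself, so the naive pointwise description of $\gamma_G$ does not apply directly and the computation has to be carried out inside the $F$-graded version of the connecting sequence throughout.
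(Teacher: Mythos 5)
Your computation is correct and is exactly the definition-chasing the paper omits (the lemma is stated with only the remark that it ``is easily checked by definition,'' with no written proof). Your choice of lifts $y_{a,b}\in F^aK\cap G^bK$, the splitting $dy_{a,b}=u_{a,b}+v_{a,b}$, the matching of the two pieces with $d_1'$ and $E_1(\gamma_G)$ via the $\gr_F$-graded connecting sequence from \ref{para:10}, and the well-definedness check via $F^{a+1}G^bK\cap F^aG^{b+1}K=F^{a+1}G^{b+1}K$ together constitute a complete verification.
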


\begin{notn}[Decreasing filtration and increasing filtration]
A decreasing filtration $F$ induces an increasing filtration $W$
by $W_m=F^{-m}$ for all $m \in \bZ$, and vice versa.
We interchanges decreasing and increasing filtrations
by this rule.
For a decreasing filtration $F$,
we use the notation $F[n]=F^{p+n}$.
Hence, we use $W[n]_m=W_{m-n}$
for an increasing filtration $W$.
Note that this notation for the shift of an increasing filtration
coincides with the one
in \cite{DeligneII} and \cite{ElZeinbook},
and different from the one in \cite{CKS}.
\end{notn}

\subsection*{Log \ca spaces}
%

\begin{notn}
\label{notn:1}
Let $(X, \cM_X)$ be a log \ca space.
For an open subset $V \subset X$, 
the restriction $\cM_X|_V$ is denoted by $\cM_V$ for short.
The monoid sheaf $\cM_X/\cO^{\ast}_X$ is denoted by $\overline{\cM}_X$
as in \cite{OgusLogBook}.
The canonical morphism $\cM_X \longrightarrow \overline{\cM}_X$
is denoted by $\pi_X$.
The log de Rham complex of $(X, \cM_X)$
is denoted by $\Omega_X(\log \cM_X)$.

For an effective divisor $D$
on a complex manifold $X$,
a log structure $\cM_X(D)$
is defined by
$\cM_X(D)=j_{\ast}\cO^{\ast}_{X \setminus D} \cap \cO_X$,
where $j \colon X \setminus D \hookrightarrow X$ is the open immersion
(cf. \cite[(1.5)]{KazuyaKato}).
For the case where $D$ is a \nc divisor on $X$,
the log de Rham complex $\Omega_X(\log \cM_X(D))$
coincides with the usual log de Rham complex $\Omega_X(\log D)$.
\end{notn}

\begin{notn}
Let $f \colon (X, \cM_X) \longrightarrow (Y, \cM_Y)$
be a morphism of log \ca spaces.
Then the morphism of monoid sheaves
$f^{\flat} \colon f^{-1}\cM_Y \longrightarrow \cM_X$
induces a morphism of monoid sheaves
$f^{-1}\overline{\cM}_Y \longrightarrow \overline{\cM}_X$ on $X$,
denoted by $\overline{f^{\flat}}$ in this paper.
Thus a morphism of monoids
$\overline{f^{\flat}_x} \colon
\overline{\cM}_{Y,f(x)} \longrightarrow \overline{\cM}_{X,x}$
is induced for every $x \in X$.
\end{notn}

\begin{notn}
For two morphisms of log \ca spaces
$(X, \cM_X) \longrightarrow (Z, \cM_Z)$
and $(Y, \cM_Y) \longrightarrow (Z, \cM_Z)$,
we denote by
$(X, \cM_X) \times_{(Z, \cM_Z)} (Y, \cM_Y)$
the fiber product in the category of log \ca spaces.
\end{notn}

\begin{defn}[Weight filtration on log de Rham complex]
\label{defn:29}
Let $(X, \cM_X)$ be a log \ca space.
A monoid subsheaf $\cN$ with $\cO^{\ast}_X \subset \cN \subset \cM_X$
defines a log structure on $X$
by restricting the structure morphism
$\cM_X \longrightarrow \cO_X$ to $\cN$.
Then the identity map of $X$ induces a morphism of log \ca spaces
$(X, \cM_X) \longrightarrow (X, \cN)$,
which gives us the canonical morphism
of the log de Rham complexes
$\Omega_X(\log \cN) \longrightarrow \Omega_X(\log \cM_X)$.
For $m \in \bZ$, an $\cO_X$-submodule
$W(\cN)_m\Omega^n_X(\log \cM_X)$ is defined by
\begin{equation*}
W(\cN)_m\Omega^n_X(\log \cM_X)
=\image(\Omega^{n-m}_X(\log \cN) \otimes_{\cO_X}
\Omega^m_X(\log \cM_X)
\overset{\wedge}{\longrightarrow}
\Omega^n_X(\log \cM_X)),
\end{equation*}
where the morphism $\wedge$ on the right hand side
is induced from the wedge product on
$\Omega_X(\log \cM_X)$.
It is easy to see that $W(\cN)$ defines an increasing filtration
on the complex $\Omega_X(\log \cM_X)$.
By definition $W(\cM_X)$ is the trivial filtration.
For the case of $\cN=\cO^{\ast}_X$,
we use $W$ instead of $W(\cO^{\ast}_X)$.
\end{defn}


\section{Semistable log smooth degenerations}
\label{sec:semist-log-smooth}

In this section,
we first introduce the notion of a \ssls degeneration.
Then, we construct
$((A_{\bQ}, L(I), L), (A_{\bC}, L(I), L, F), \alpha)$,
which is the object to be studied throughout this paper,
for a \ssls degeneration.

\begin{notn}
Let $k$ be a positive integer.
A pre-log structure
$\beta: \bN^k \longrightarrow \bC$ over the point $(\spec\bC)_{\an}$
is given by $\beta(\vo)=1$
and $\beta(\vv)=0$ for $\vv \in \bN^k \setminus \{\vo\}$.
The log structure associated to the pre-log structure $\beta$
is $\bC^{\ast} \oplus \bN^k \longrightarrow \bC$
sending $(a, \vv) \in \bC^{\ast} \oplus \bN^k$ to $a\beta(\vv) \in \bC$.
The point equipped with
this log structure is called the $\bN^k$-log point
and simply denoted by $(\ast, \bN^k)$.
The $\bN$-log point $(\ast, \bN)$ is called the standard log point
in \cite{SteenbrinkLE}.
\end{notn}

\begin{notn}
For a finitely generated monoid $P$,
the \ca space $(\spec \bC[P])_{\an}$
carries the log structure associated to the pre-log structure
induced by the morphism $P \longrightarrow \bC[P]$.
This log \ca space is denoted by
$((\spec \bC[P])_{\an}, P)$ for short.
For a finite set $\Lambda$, 
the log \ca space $((\spec \bC[\bN^{\Lambda}])_{\an},\bN^{\Lambda})$
is simply denoted by $(\bC^{\Lambda}, \bN^{\Lambda})$.
For the case of $\Lambda=\ski$,
we use $(\bC^k, \bN^k)$ instead of $(\bC^{\Lambda}, \bN^{\Lambda})$.
We have the canonical strict closed immersion
$\iota \colon (\ast, \bN^k) \longrightarrow (\bC^k, \bN^k)$,
which sends the point $\ast$ to the origin $0 \in \bC^k$.

A morphism of finitely generated monoid
$h \colon Q \longrightarrow P$
induces a morphism of log \ca spaces
$((\spec \bC[P])_{\an}, P)
\longrightarrow
((\spec \bC[Q])_{\an}, Q)$
denoted by $\tilde{h}$ throughout this paper.
\end{notn}

\begin{defn}[Semistable log smooth degeneration]
\label{defn:19}
Let $(X, \cM_X)$ be an fs log \ca space.
A morphism of log \ca spaces
$f \colon (X, \cM_X) \longrightarrow (\ast, \bN^k)$
is called a \ssls degeneration
if the following three conditions are satisfied:
\begin{enumerate}
\item
$f$ is log smooth.
\item
$\overline{\cM}_{X,x}$ is a finitely generated free monoid
for all $x \in X$
(cf. Definition \ref{notn:3}).
\item
The morphism
$\overline{f^{\flat}_x} \colon \bN^k \longrightarrow \overline{\cM}_{X,x}$
is semistable for all $x \in X$
(cf. Definition \ref{defn:1}).
\end{enumerate}
Moreover,
a \ssls degeneration $f$
is said to be projective (resp. proper),
if $X$ is projective (resp. compact).
\end{defn}

\begin{notn}
\label{notn:4}
Let $f \colon (X, \cM_X) \longrightarrow (\ast, \bN^k)$
be a \ssls degeneration.
The relative log de Rham complex of $f$
is denoted by $\Omega_{X/\ast}(\log \cM_X/\bN^k)$
as in \cite[(1.7)]{KazuyaKato}.
The image of $\ve_i \in \Gamma(X, \bN^k_X)$
by the morphism
$f^{\flat} \colon f^{-1}\bN^k=\bN^k_X \longrightarrow \cM_X$
is denoted by $t_i \in \Gamma(X, \cM_X)$
for $i=\ki$.
This gives us a global section
$\dlog t_i \in \Gamma(X, \Omega^1_X(\log \cM_X))$.
\end{notn}

\begin{exa}
\label{exa:1}
Let $\Delta^k$ be the $k$-dimensional polydisc
with the coordinates $(t_1, \dots, t_k)$
and $g \colon \mathcal{X} \longrightarrow \Delta^k$
be a surjective morphism of complex manifolds.
Let $E$ be the divisor on $\Delta^k$ defined by $t_1 \cdots t_k$.
Assume that $D=f^{\ast}E$ is \textit{reduced} \snc divisor on $\mathcal{X}$.
Then $X=g^{-1}(0) \longrightarrow \{0\}$
underlies a \ssls degeneration
once we equip the log structures on $X$ and $\{0\}$
induced from $\cM_\mathcal{X}(D)$ and $\cM_{\Delta^k}(E)$ respectively.
Here we remark that the morphism $g$ as above
is called a semistable morphism in \cite{FujisawaLHSSV2}.
\end{exa}

The following proposition
shows that a \ssls degeneration is
locally isomorphic to the one obtained in the example above.

\begin{prop}
\label{prop:2}
Let $f \colon (X, \cM_X) \longrightarrow (\ast, \bN^k)$
be a \ssls degeneration.
For every $x \in X$,
there exist
\begin{enumerate}
\item
\label{item:2}
an open neighborhood $V$ of $x$,
\item
a finite set $\Lambda$,
\item
\label{item:3}
a semistable morphism of monoids
$\varphi \colon \bN^k \longrightarrow \bN^{\Lambda}$, and
\item
\label{item:4}
a commutative diagram of log \ca spaces
\begin{equation}
\label{eq:114}
\vcenter{
\xymatrix{
(V, \cM_V)
\ar[r]^-{(\dagger)} \ar[rd]_-{f|_V}
& (\ast, \bN^k) \times_{(\bC^k, \bN^k)} (\bC^{\Lambda}, \bN^{\Lambda})
\ar[r]^-{(\dagger \dagger)} \ar[d]
& (\bC^{\Lambda}, \bN^{\Lambda})
\ar[d]^-{\widetilde{\varphi}} \\
& (\ast, \bN^k) \ar[r]_-{\iota}
& (\bC^k, \bN^k)
}
}
\end{equation}
in which the morphism $(\dagger)$
on the top horizontal line
is strict and log smooth.
\end{enumerate}
Moreover, these data can be taken
such that the composite of $(\dagger)$ and $(\dagger \dagger)$
in \eqref{eq:114}
sends $x \in V$ to the origin of $\bC^{\Lambda}$.
\end{prop}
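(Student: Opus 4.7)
The plan is to construct a chart of $\cM_X$ near $x$ that is compatible with $f^\flat$, after which the desired diagram and the strict log smoothness of $(\dagger)$ follow from Kato's local criterion for log smooth morphisms.

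First, using that $\overline{\cM}_{X,x}$ is a finitely generated free monoid and that $\overline{f^\flat_x}$ is semistable, I would choose a finite set $\Lambda$, an isomorphism $\xi \colon \overline{\cM}_{X,x} \overset{\simeq}{\longrightarrow} \bN^{\Lambda}$, and a partition $\Lambda=\coprod_{i=1}^{k}\Lambda_i$ such that $\varphi=\xi\circ\overline{f^\flat_x}\colon \bN^k\to\bN^{\Lambda}$ is the semistable morphism $\varphi(\ve_i)=\sum_{\lambda\in\Lambda_i}\ve_{\lambda}$. Each $\Lambda_i$ is nonempty, because $t_i=f^\flat(\ve_i)$ lies outside $\cO^*_{X,x}$: by construction of the log structure on $(\ast,\bN^k)$, the image of $t_i$ in $\cO_{X,x}$ is $0$.

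The main step is to lift $\xi^{-1}$ to a chart compatible with $f^\flat$. I would first choose arbitrary lifts $s'_{\lambda}\in\cM_{X,x}$ of $\xi^{-1}(\ve_{\lambda})$. Since $\bN^{\Lambda}$ is sharp and the composite to $\overline{\cM}_{X,x}$ is an isomorphism, the resulting monoid morphism $\bN^{\Lambda}\to\cM_{X,x}$ extends to a chart $\bN^{\Lambda}\to\cM_V$ on some open neighborhood $V$ of $x$. Now the products $\prod_{\lambda\in\Lambda_i}s'_{\lambda}$ and $t_i$ have the same image in $\overline{\cM}_{X,x}$, hence differ by a unique unit $u_i\in\cO^*_{V,x}$ (after possibly shrinking $V$). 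Fixing any $\lambda_i\in\Lambda_i$ and replacing $s'_{\lambda_i}$ by $u_i^{-1}s'_{\lambda_i}$ yields a modified family $s_{\lambda}$ that still defines a chart (the modification is by units only) and satisfies $\prod_{\lambda\in\Lambda_i}s_{\lambda}=t_i$ for all $i$.

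With this compatible chart in place, the induced morphism $(V,\cM_V)\to(\bC^{\Lambda},\bN^{\Lambda})$ is strict by the defining property of a chart, and the identity $\prod_{\lambda\in\Lambda_i}s_{\lambda}=t_i$ makes the outer rectangle of \eqref{eq:114} commute in the category of log complex analytic spaces, producing the factorization $(\dagger)$ through the fiber product. To conclude that $(\dagger)$ is strict and log smooth, I would invoke Kato's local criterion for log smoothness applied to the chart $\varphi\colon\bN^k\to\bN^{\Lambda}$: since $\varphi\gp\colon\bZ^k\to\bZ^{\Lambda}$ is injective with torsion-free cokernel, log smoothness of $f|_V$ is equivalent to $(\dagger)$ being strict with smooth underlying morphism, which is precisely strict log smoothness. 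The origin condition is then automatic, since each $s_{\lambda}$ lies outside $\cO^*_{V,x}$ and so the function $\alpha(s_{\lambda})$ vanishes at $x$, forcing the image of $x$ in $\bC^{\Lambda}$ to be the origin. The principal difficulty is the unit adjustment securing the compatibility of the chart with $f^\flat$; once this is done, the remainder of the argument is formal.
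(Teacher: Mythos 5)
Your proof is correct and follows essentially the same route as the paper, whose own argument is only a sketch citing Ogus's chart-lifting theorem together with the vanishing of $\ext^1(G,\cO^{\ast}_{X,x})$. Your direct choice of lifts of the generators of the free monoid $\bN^{\Lambda}$ (which makes the $\ext^1$ splitting automatic), the unit adjustment forcing $\prod_{\lambda\in\Lambda_i}s_{\lambda}=t_i$, and the appeal to Kato's criterion via the neat chart are exactly the content of that reference, so there is no gap.
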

\begin{proof}
This is an analogue of Theorem 1.2.7
of \cite{OgusLogBook}
in the analytic context.
By definition,
there exist a finite set $\Lambda$
and an isomorphism
$\xi \colon \overline{\cM}_{X,x}
\overset{\simeq}{\longrightarrow} \bN^{\Lambda}$
such that
$\xi \cdot \overline{f^{\flat}_x}$
is semistable.
On the other hand,
$\ext^1(G, \cO_{X,x}^{\ast})=0$
for any finitely generated abelian group $G$
because $\cO_{X,x}^{\ast}$ is $n$-divisible for all $n \in \bpZ$.
Then the proof is similar to the argument in \cite{OgusLogBook}.
\end{proof}

\begin{para}[\textbf{Local description of a \ssls degeneration}]
\label{para:1}
From the proposition above,
we obtain a local description of a \ssls degeneration
$f \colon (X, \cM_X) \longrightarrow (\ast, \bN^k)$
as follows.

For any $x \in X$,
take the data in \ref{item:2}--\ref{item:4}.
Moreover,
the partition associated to $\varphi$
is denoted by $\Lambda=\coprod_{i=1}^k\Lambda_i$.
The morphism $(\dagger)$ in \ref{item:4} is smooth
in the usual sense
because it is strict and log smooth.
Therefore
by shrinking $V$ sufficiently small,
the morphism $(\dagger)$
induces an strict open immersion
\begin{equation}
(V, \cM_V)
\longrightarrow
(U, \cM_U)
=
(\ast,\bN^k)
\times_{(\bC^k,\bN^k)} (\bC^{\Lambda}, \bN^{\Lambda})
\times (\bC^l, \cO^{\ast}_{\bC^l})
\end{equation}
for some $l \in \bnnZ$.
We may assume that the natural morphism
$(V, \cM_V) \longrightarrow
(\bC^{\Lambda}, \bN^{\Lambda}) \times (\bC^l, \cO^{\ast}_{\bC^l})$
sends $x \in V$ to the origin of $\bC^{\Lambda} \times \bC^l$
because of the latter part of Proposition \ref{prop:2}.
Such $(V, \cM_V)$ (or $(U, \cM_U)$)
is called a local model of
$f \colon (X, \cM_X) \longrightarrow (\ast, \bN^k)$.

The coordinate function of $\bC^k$
corresponding to $\ve_i \in \bN^k$
is denoted by $t_i$ for $i=\ki$.
Then the log \ca space $(\bC^k,\bN^k)$
is the \ca space $\bC^k$
equipped with the log structure
associated to the divisor $E=\{t_1t_2 \cdots t_k=0\}$.
The coordinate function of $\bC^{\Lambda} \times \bC^l$
corresponding to $\ve_{\lambda} \in \bN^{\Lambda}$
is denoted by $x_{\lambda}$
and the divisor on $\bC^{\Lambda} \times \bC^l$
defined by $x_{\lambda}$ is denoted
by $D_{\lambda}$ for $\lambda \in \Lambda$.
For $i=\ki$ and $I \subset \ski$,
we set
\begin{gather}
D_i=\sum_{\lambda \in \Lambda_i}D_{\lambda} \quad (i=\ki),
\qquad
D_I=\sum_{i \in I}D_i.
\end{gather}
We use $D$ instead of $D_{\ski}$.
Then
$(\bC^{\Lambda}, \bN^{\Lambda}) \times (\bC^l, \cO^{\ast}_{\bC^l})
=(\bC^{\Lambda} \times \bC^l, \cM_{\bC^{\Lambda} \times \bC^l}(D))$
where $\cM_{\bC^{\Lambda} \times \bC^l}(D)$ denotes the log structure
associated to the divisor $D$
(cf. Notation \ref{notn:1}).
The composite of the projection
$\bC^{\Lambda} \times \bC^l \longrightarrow \bC^{\Lambda}$
and the morphism
$\widetilde{\varphi} \colon \bC^{\Lambda} \longrightarrow \bC^k$
coincides with the morphism given by
$t_i=\prod_{\lambda \in \Lambda_i}x_{\lambda}$ for $i=\ki$.
Therefore
\begin{equation}
U=\bigcap_{i=1}^k\{\prod_{\lambda \in \Lambda_i}x_{\lambda}=0\}
=\bigcap_{i=1}^k D_i
\end{equation}
by definition.
Because $\iota \colon (\ast, \bN^k) \longrightarrow (\bC^k, \bN^k)$
is strict,
$\cM_U$ coincides
with the pull-back of
$\cM_{\bC^{\Lambda} \times \bC^l}(D)$
by the closed immersion $U \hookrightarrow \bC^{\Lambda} \times \bC^l$.
Then $V$ is identified
with an open neighborhood of $0$ in $U$.
\end{para}

\begin{rmk}
\label{rmk:12}
Let $f \colon (X, \cM_X) \longrightarrow (\ast, \bN^k)$ be
a \ssls degeneration.
Then the local description above
shows that
$f$ is a log smooth degeneration
defined in \cite[Definition 4.3]{FujisawaMHSLSD}.
Moreover, we can see that
the underlying \ca space $X$ is locally isomorphic to
a product of normal crossing varieties
as in \cite[(\RomI.2)]{GreenGriffithsDTLMHS}.
\end{rmk}

\begin{lem}
\label{lem:1}
For a \ssls degeneration
$f \colon (X, \cM_X) \longrightarrow (\ast, \bN^k)$,
there exists a unique monoid sheaf $\cM(i)_X$
with $\cO^{\ast}_X \subset \cM(i)_X \subset \cM_X$
for every $i=\ki$,
such that the direct sum decomposition
of $\overline{\cM}_{X,x}$ associated
to $\overline{f^{\flat}_x}$
\textup{(}cf. Definition \textup{\ref{defn:8})}
is given by
$\overline{\cM}_{X,x}=\bigoplus_{i=1}^k\overline{\cM(i)}_{X,x}$
for all $x \in X$.
\end{lem}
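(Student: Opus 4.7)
The plan is to define $\cM(i)_X$ via a stalkwise membership condition and then to verify, using the local description in \ref{para:1}, that the resulting sheaf really does have the prescribed image in $\overline{\cM}_X$. Uniqueness will be formal; the only nontrivial ingredient is a compatibility check between the Definition \ref{defn:8} decompositions at a point and at nearby points.

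\emph{Uniqueness.} Any monoid subsheaf $\cN$ with $\cO^{\ast}_X \subset \cN \subset \cM_X$ satisfies $\cN = \pi_X^{-1}(\pi_X(\cN))$, so it is determined by its stalks $\pi_X(\cN_x) \subset \overline{\cM}_{X,x}$. Since the lemma pins down these stalks as the $i$-th summands from Definition \ref{defn:8}, uniqueness follows.

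\emph{Construction.} Writing $\overline{\cM(i)}_{X,x}$ for the $i$-th summand in the decomposition of $\overline{\cM}_{X,x}$ associated to $\overline{f^{\flat}_x}$ (available because $\overline{\cM}_{X,x}$ is a finitely generated free monoid and $\overline{f^{\flat}_x}$ is semistable by Definition \ref{defn:19}), set
\begin{equation*}
\cM(i)_X(W) = \{\, s \in \cM_X(W) \mid \pi_X(s_x) \in \overline{\cM(i)}_{X,x} \text{ for every } x \in W \,\}.
\end{equation*}
This is manifestly a submonoid sheaf of $\cM_X$ containing $\cO^{\ast}_X$, so everything reduces to checking the stalk equality $\cM(i)_{X,x} = \pi_X^{-1}(\overline{\cM(i)}_{X,x})$, of which the inclusion $\subset$ is immediate.

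\emph{Key step.} For the reverse inclusion, pick a local model $(V, \cM_V)$ of $f$ around $x$ as in Proposition \ref{prop:2} and \ref{para:1}, with associated partition $\Lambda = \coprod_{i=1}^k \Lambda_i$. For any $y \in V$, setting $\Lambda(y) = \{\lambda \in \Lambda \mid y \in D_\lambda\}$, the stalk $\overline{\cM}_{V,y}$ is freely generated by the classes $[x_\lambda]$ with $\lambda \in \Lambda(y)$, and $\overline{f^{\flat}_y}$ sends $\ve_i$ to $\sum_{\lambda \in \Lambda_i \cap \Lambda(y)} [x_\lambda]$. Hence $\overline{f^{\flat}_y}$ is semistable with partition $\Lambda(y) = \coprod_i (\Lambda_i \cap \Lambda(y))$, and its Definition \ref{defn:8} decomposition is the image of the one at $x$ under the generization $\overline{\cM}_{V,x} \to \overline{\cM}_{V,y}$. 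A germ $s \in \cM_{V,x}$ with $\pi_X(s) \in \overline{\cM(i)}_{V,x}$ can therefore be written as $s = u \prod_{\lambda \in \Lambda_i} x_\lambda^{n_\lambda}$ with $u \in \cO^{\ast}_{V,x}$ and $n_\lambda \in \bnnZ$; after shrinking $V$, this formula defines a section of $\cM_V$ around $x$ whose image in $\overline{\cM}_{V,y}$ sits inside $\overline{\cM(i)}_{V,y}$ for every $y$, providing the required lift.

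The main obstacle is precisely this compatibility of the decompositions under generization: one must know that the combinatorial splitting of $\overline{\cM}_{X,x}$ prescribed by Definition \ref{defn:8} propagates in the expected way to nearby stalks, so that a local expression built from the distinguished coordinates $x_\lambda$ for $\lambda \in \Lambda_i$ actually lands in $\cM(i)_V$ on a whole neighborhood of $x$, and not merely at $x$ itself. Once this propagation is confirmed through the local model, the remainder of the proof is formal bookkeeping.
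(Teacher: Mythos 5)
Your proposal is correct and rests on the same essential computation as the paper's proof: reducing to the local model of \ref{para:1} and identifying the $i$-th summand of $\overline{\cM}_{X,y}$ with the submonoid generated by the classes of $x_{\lambda}$, $\lambda \in \Lambda_i$, uniformly over a neighborhood. The only difference is packaging — you define $\cM(i)_X$ globally as $\pi_X^{-1}$ of the stalkwise summands and verify its stalks, whereas the paper constructs it in each local model as the pullback of $\cM_{\bC^{\Lambda}\times\bC^l}(D_i)$ and glues by uniqueness — and these amount to the same thing.
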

\begin{proof}
The uniqueness is clear.
Therefore we may assume that
$(X, \cM_X)$ is an open neighborhood of the origin of a local model
$(U, \cM_U)$ as in \ref{para:1}.
Then the pull-back of the log structure
$\cM_{\bC^{\Lambda} \times \bC^l}(D_i)$
by the closed immersion $U \hookrightarrow \bC^{\Lambda} \times \bC^l$
gives us the desired monoid sheaf $\cM(i)_X$ for $i=\ki$.
\end{proof}

\begin{rmk}
\label{rmk:4}
We have
$\overline{\cM}_X=\bigoplus_{i=1}^k\overline{\cM(i)}_X$
by definition.
Moreover, $t_i$ in Notation \ref{notn:4}
is contained in $\Gamma(X, \cM(i)_X)$,
because $\overline{f^{\flat}_x}(\ve_i) \in \overline{\cM(i)}_{X,x}$
for all $x \in X$.
\end{rmk}

\begin{defn}
For $I \subset \ski$,
a monoid subsheaf $\cM(I)_X$ of $\cM_X$
is defined by
$\cM(I)_X=\pi_X^{-1}(\bigoplus_{i \in I}\overline{\cM(i)}_X)$.
We set $\cM(\emptyset)_X=\cO^{\ast}_X$.
\end{defn}

\begin{defn}
\label{defn:3}
Let $I \subset \ski$.
By setting $J=\ski \setminus I$,
the monoid sheaf $\cM(J)_X$
satisfies the condition
$\cO_X^{\ast} \subset \cM(J)_X \subset \cM_X$
and gives us the filtration $W(\cM(J)_X)$ on $\Omega_X(\log \cM_X)$
as in Definition \ref{defn:29}.
We denote it by $W(I)$ for short.
By definition $W(\ski)$ coincides with $W$
in Definition \ref{defn:29}.
We use $W(i)$ instead of $W(\{i\})$ for $i=\ki$.
The properties
\begin{equation}
\label{eq:104}
\dlog t_i \wedge W(I)_m\Omega^n_X(\log \cM_X)
\subset
\begin{cases}
W(I)_{m+1}\Omega^{n+1}_X(\log \cM_X)
&\quad \text{if } i \in I \\
W(I)_m\Omega^{n+1}_X(\log \cM_X)
&\quad \text{if } i \notin I
\end{cases}
\end{equation}
can be easily seen
from the fact $t_i \in \Gamma(X, \cM(i)_X)$.
\end{defn}

\begin{para}[\textbf{Local description of the log de Rham complex}]
We consider a local model $(U, \cM_U)$
and use the notation in \ref{para:1}.
Then the log structure $\cM(I)_U$
coincides with the pull-back of
$\cM_{\bC^{\Lambda} \times \bC^l}(D_I)$
by the closed immersion $U \hookrightarrow \bC^{\Lambda} \times \bC^l$.
On the other hand
\begin{equation}
\label{eq:10}
\Omega^n_U(\log \cM_U)
\simeq
\cO_U
\otimes_{\cO_{\bC^{\Lambda} \times \bC^l}}
\Omega^n_{\bC^{\Lambda} \times \bC^l}(\log D)
\end{equation}
for every $n$
by Lemma (3.6)(2) of \cite{KatoNakayama}.
Via the identification above,
$W(I)_m\Omega^n_U(\log \cM_U)$
coincides with the image of 
$\cO_U
\otimes_{\cO_{\bC^{\Lambda} \times \bC^l}}
W(D_I)_m\Omega^n_{\bC^{\Lambda} \times \bC^l}(\log D)$
in
$\cO_U
\otimes_{\cO_{\bC^{\Lambda} \times \bC^l}}
\Omega^n_{\bC^{\Lambda} \times \bC^l}(\log D)$
for all $m$.
\end{para}



\begin{para}
\label{para:11}
A rational structure on $\Omega_X(\log \cM_X)$
can be constructed by using the Koszul complex
as in \cite{SteenbrinkLE}, \cite{FujisawaLHSSV}, \cite{FujisawaMHSLSD}
and \cite{Peters-SteenbrinkMHS}.
Here, we make a list of 
definitions and elementary properties
about Koszul complexes,
which will be used throughout this paper.
The main reference is Sections 1 and 2 of \cite{FujisawaMHSLSD}.
Let $(X, \cM_X)$ be a log \ca space.
\begin{enumerate}
\item
A complex of $\bQ$-sheaves $\kos_X(\cM_X)$
is defined in \cite[(2.3)]{FujisawaMHSLSD}.
For the case of $\cM_X=\cM_X(D)$ as in Notation \ref{notn:1},
we use $\kos_X(D)$ instead of $\kos_X(\cM_X(D))$.
\item
For a morphism of log \ca spaces
$f \colon (X, \cM_X) \longrightarrow (Y, \cM_Y)$,
there exists the canonical morphism
$f^{-1}\kos_Y(\cM_Y) \longrightarrow \kos_X(\cM_X)$
of the complexes of $\bQ$-sheaves.
\item
\label{item:5}
A morphism of complexes of $\bQ$-sheaves
$\psi_{(X, \cM_X)} \colon \kos_X(\cM_X) \longrightarrow \Omega_X(\log \cM_X)$
is defined in \cite[(2.4)]{FujisawaMHSLSD}.
For the case of $\cM_X=\cM_X(D)$,
we use $\psi_{(X, D)}$ instead of $\psi_{(X, \cM_X(D))}$.
Moreover, we use $\psi_X$ instead of $\psi_{(X,\cM_X)}$
if there is no danger of confusion.
\item
\label{item:41}
For the case of trivial log structure $\cM_X=\cO^{\ast}_X$,
there exists a quasi-isomorphism
$\bQ_X \longrightarrow \kos_X(\cO^{\ast}_X)$
such that the diagram
\begin{equation}
\begin{CD}
\bQ_X @>>> \kos_X(\cO^{\ast}_X) \\
@VVV @VV{\psi_{(X, \cO^{\ast}_X)}}V \\
\cO_X @>>> \Omega_X
\end{CD}
\end{equation}
is commutative
(cf. \cite[Lemma 3.12]{FujisawaPLMHS}).
\end{enumerate}
For a \ssls degeneration
$f \colon (X, \cM_X) \longrightarrow (\ast, \bN^k)$,
we have the following:
\begin{enumerate}[resume]
\item
For every $I \subset \ski$,
a finite increasing filtration $W(I)$ on $\kos_X(\cM_X)$
is defined as
$W(\cM(J)_{X,\bQ}\gp)$ in \cite[Definition 1.8]{FujisawaMHSLSD},
where $J=\ski \setminus I$.
\item
\label{item:6}
The morphism $\psi_X$ in \ref{item:5}
preserves the filtration $W(I)$.
\item
\label{item:7}
A morphism of complexes of $\bQ$-sheaves
$t_i \wedge \colon \kos_X(\cM_X) \longrightarrow \kos_X(\cM_X)[1]$
is defined in \cite[(1.11)]{FujisawaMHSLSD}.
Then
$(t_i \wedge) \cdot (t_J \wedge)+(t_j \wedge) \cdot (t_i \wedge)=0$
for all $i,j \in \ski$
(cf. \cite[(3.29)]{FujisawaMHSLSD}).
For $I \subset \ski$,
\begin{equation}
\label{eq:65}
(t_i \wedge)(W(I)_m\kos_X(\cM_X))
\subset
\begin{cases}
W(I)_{m+1}\kos_X(\cM_X)[1]
&\quad \text{if $i \in I$} \\
W(I)_m\kos_X(\cM_X)[1]
&\quad \text{if $i \notin I$}
\end{cases}
\end{equation}
for all $m$
(cf. \cite[(1.12)]{FujisawaMHSLSD}).
Moreover, the diagram
\begin{equation}
\label{eq:66}
\begin{CD}
\kos_X(\cM_X) @>{t_i \wedge}>> \kos_X(\cM_X)[1] \\
@V{\psi_X}VV @VV{(2\pi\sqrt{-1})\psi_X}V \\
\Omega_X(\log \cM_X) @>>{\dlog t_i \wedge}> \Omega_X(\log \cM_X)[1]
\end{CD}
\end{equation}
is commutative
(cf. \cite[3.13]{FujisawaPLMHS}).
\end{enumerate}
\end{para}

\begin{para}[\textbf{The stalk of the Koszul complex}]
Here we look at $\kos_X(\cM_X)$ and $\psi_X$ stalkwise.
It is enough to consider the origin $x=0$
of a local model $(U, \cM_U)$
in \ref{para:1}.
We use the same notation as in \ref{para:1}.
In particular,
the partition $\Lambda=\coprod_{i=1}^k\Lambda_i$ is given.
The global sections $x_{\lambda} \in \Gamma(U, \cM_U)$
for all $\lambda \in \Lambda$
gives us a decomposition
$\cM_{U,x}=\cO^{\ast}_{U,x} \oplus \bN^{\Lambda}$ at the origin $x$.
Then
\begin{equation}
\label{eq:23}
\kos_U(\cM_U)^n_x
\simeq
\bigoplus_{a \in \bZ}
\bigwedge^a \bZ^{\Lambda} \otimes_{\bZ} \kos(\cO^{\ast}_{U,x})^{n-a}
\simeq
\bigoplus_{\ula \in S(\Lambda)}
\varepsilon(\ula)
\otimes_{\bZ}
\kos(\cO^{\ast}_{U,x})^{n-|\ula|}
\end{equation}
for all $n \in \bZ$
by the definition of $\kos_U(\cM_U)$
and by \eqref{eq:17}.
Under the identification above,
\begin{equation}
\label{eq:109}
W(I)_m\kos_U(\cM_U)^n_x
\simeq
\bigoplus_{|\ula \cap \Lambda_I| \le m}
\varepsilon(\ula)
\otimes_{\bZ}
\kos(\cO^{\ast}_{U,x})^{n-|\ula|}
\end{equation}
for all $m$, where $\Lambda_I=\coprod_{i \in I}\Lambda_i$.
Therefore
$W(I)_m\kos_U(\cM_U)^n_x=\kos_U(\cM_U)^n_x$
if $m \ge |\Lambda_I|$.
Via the identification \eqref{eq:23},
the restriction of $\psi_{U,x}$
on the dirct summand
$\varepsilon(\ula)
\otimes_{\bZ}
\kos(\cO^{\ast}_{U,x})^{n-|\ula|}$
is given by
\begin{equation}
\ve_{\lambda_1} \wedge \ve_{\lambda_2} \wedge
\cdots \wedge \ve_{\lambda_p} \otimes \eta
\mapsto
(2\pi\sqrt{-1})^{-p}\dlog x_{\lambda_1} \wedge \dlog x_{\lambda_2} \wedge
\cdots \wedge \dlog x_{\lambda_p}
\wedge \psi_{(U,\cO^{\ast}_U),x}(\eta),
\end{equation}
where $p=|\ula|$ and $\ula=\{\lambda_1, \lambda_2, \dots, \lambda_p\}$.
Note that $\psi_{(U,\cO^{\ast}_U),x}(\eta) \in \Omega^{n-p}_{X,x}$.
\end{para}

\begin{notn}
Throughout this paper,
the polynomial rings $\bQ[u_1, u_2, \dots, u_k]$
and $\bC[u_1, u_2, \dots, u_k]$
are simply denoted by $\bQ[\vu]$ and $\bC[\vu]$ respectively.
We use the multi-index notation as usual.
\end{notn}

\begin{defn}
We set
$d_0=\id \otimes d \colon
\bC[\vu] \otimes_{\bC} \Omega^n_X(\log \cM_X)
\longrightarrow
\bC[\vu] \otimes_{\bC} \Omega^{n+1}_X(\log \cM_X)$
for all $n \in \bZ$.
For $i=\ki$, a morphism
$d_i \colon
\bC[\vu] \otimes \Omega^n_X(\log \cM_X)
\longrightarrow
\bC[\vu] \otimes_{\bC} \Omega^{n+1}_X(\log \cM_X)$
is defined by
\begin{equation}
d_i(P \otimes \omega)=u_iP \otimes \dlog t_i \wedge \omega
\end{equation}
for $P \in \bC[\vu]$ and $\omega \in \Omega^n_X(\log \cM_X)$.
Then these morphisms satisfy
\begin{equation}
\label{eq:25}
d_id_j+d_jd_i=0
\end{equation}
for all $i, j \in \{0,\ki\}$.
Thus a complex
$\bC[\vu] \otimes_{\bC} \Omega_X(\log \cM_X)$
of $\bC$-sheaves on $X$
is obtained
by setting $d=\sum_{i=0}^{k}d_i$.
\end{defn}

\begin{defn}
\label{defn:4}
A decreasing filtration $F$
on $\bC[\vu] \otimes_{\bC} \Omega_X(\log \cM_X)$
is defined by
\begin{equation}
F^p(\bC[\vu] \otimes_{\bC} \Omega^n_X(\log \cM_X))
=\bigoplus_{\vq \in \bN^k}
\bC \vu^{\vq} \otimes_{\bC} F^{p+|\vq|+k}\Omega^n_X(\log \cM_X)
\end{equation}
for all $n, p$,
where $F$ denotes the stupid filtration
on $\Omega_X(\log \cM_X)$.
By definition,
we have
\begin{equation}
\label{eq:1}
F^p(\bC[\vu] \otimes_{\bC} \Omega^n_X(\log \cM_X))
=\bigoplus_{|\vq| \le n-p-k}
\bC \vu^{\vq} \otimes_{\bC} \Omega^n_X(\log \cM_X)
\end{equation}
for all $n, p$.
For every $I \subset \ski$,
increasing filtrations
$W(I)$ and $L(I)$
on $\bC[\vu] \otimes_{\bC} \Omega_X(\log \cM_X)$ are defined by
\begin{gather}
W(I)_m(\bC[\vu] \otimes_{\bC} \Omega^n_X(\log \cM_X))
=\bigoplus_{\vq \in \bN^k}
\bC \vu^{\vq} \otimes_{\bC} W(I)_{m+|\vq_I|+|I|}\Omega^n_X(\log \cM_X) \\
\label{eq:2}
L(I)_m(\bC[\vu] \otimes_{\bC} \Omega^n_X(\log \cM_X))
=\bigoplus_{\vq \in \bN^k}
\bC \vu^{\vq} \otimes_{\bC} W(I)_{m+2|\vq_I|+|I|}\Omega^n_X(\log \cM_X)
\end{gather}
for all $m,n$,
where $\vq_I \in \bN^I$ denotes the image of $\vq \in \bN^k$
by the projection $\bZ^k \longrightarrow \bZ^I$.
Actually, they define subcomplexes
of $\bC[\vu] \otimes_{\bC} \Omega_X(\log \cM_X)$
because of \eqref{eq:104}.
We use
$W=W(\ski), L=L(\ski), W(i)=W(\{i\})$ and $L(i)=L(\{i\})$
for short.
\end{defn}

\begin{defn}
We set
$d_0=\id \otimes d \colon
\bQ[\vu] \otimes_{\bQ} \kos_X(\cM_X)^n
\longrightarrow
\bQ[\vu] \otimes_{\bQ} \kos_X(\cM_X)^{n+1}$.
For $i=\ki$, a morphism
$d_i \colon
\bQ[\vu] \otimes_{\bQ} \kos_X(\cM_X)^n
\longrightarrow
\bQ[\vu] \otimes_{\bQ} \kos_X(\cM_X)^{n+1}$
is defined by
\begin{equation}
d_i(P \otimes \eta)=u_iP \otimes t_i \wedge \eta
\end{equation}
for $P \in \bQ[\vu]$ and $\eta \in \kos_X(\cM_X)^n$,
where $t_i \wedge$ is the morphism in \ref{item:7}.
Then these morphisms satisfy the same equalities as \eqref{eq:25}.
Thus a complex of $\bQ$-sheaves
$\bQ[\vu] \otimes_{\bQ} \kos_X(\cM_X)$
is obtained
by setting $d=\sum_{i=0}^{k}d_i$.
For a subset $I \subset \ski$,
increasing filtrations
$W(I)$ and $L(I)$
on $\bQ[\vu] \otimes_{\bQ} \kos_X(\cM_X)$ are defined by
\begin{gather}
W(I)_m(\bQ[\vu] \otimes_{\bQ} \kos_X(\cM_X)^n)
=\bigoplus_{\vq \in \bN^k}
\bQ \vu^{\vq} \otimes_{\bQ} W(I)_{m+|\vq_I|+|I|}\kos_X(\cM_X)^n \\
L(I)_m(\bQ[\vu] \otimes_{\bQ} \kos_X(\cM_X)^n)
=\bigoplus_{\vq \in \bN^k}
\bQ \vu^{\vq} \otimes_{\bQ} W(I)_{m+2|\vq_I|+|I|}\kos_X(\cM_X)^n
\end{gather}
for all $m,n$.
By \eqref{eq:65},
these are actually increasing filtrations on the complex
$\bQ[\vu] \otimes_{\bQ} \kos_X(\cM_X)$.
We use $W=W(\ski)$, $L=L(\ski)$, $W(i)=W(\{i\})$ and $L(i)=L(\{i\})$
as in Definition \ref{defn:4}.
\end{defn}

\begin{defn}
\label{defn:5}
A morphism of $\bQ$-sheaves
$\alpha \colon \bQ[\vu] \otimes_{\bQ} \kos_X(\cM_X)^n
\longrightarrow
\bC[\vu] \otimes_{\bC} \Omega^n_X(\log \cM_X)$
is defined by
\begin{equation}
\alpha(u^{\vq} \otimes \eta)
=(2\pi\sqrt{-1})^{|\vq|+k}u^{\vq} \otimes \psi_X(\eta),
\end{equation}
which turns out to be a morphism of complexes
by the commutativity of \eqref{eq:66}.
The morphism $\alpha$ preserves
the filtrations $W(I)$ and $L(I)$
for any $I \subset \ski$ by \ref{item:6}.
\end{defn}

\begin{defn}
\label{defn:24}
Complexes of $\bC$-sheaves
$A_{\bC}$
and of $\bQ$-sheaves $A_{\bQ}$ on $X$
are defined by
\begin{gather}
A_{\bC}
=\bigl(\bC[\vu] \otimes_{\bC} \Omega_X(\log \cM_X)
/\sum_{i=1}^{k}W(i)_{-1}\bigr)[k], \\
A_{\bQ}
=\bigl(\bQ[\vu] \otimes_{\bQ} \kos_X(\cM_X)
/\sum_{i=1}^{k}W(i)_{-1}\bigr)[k].
\end{gather}
The filtrations on $A_{\bC}$ and on $A_{\bQ}$
induced by $L(I)$ on $\bC[\vu] \otimes_{\bC} \Omega_X(\log \cM_X)$
and on $\bQ[\vu] \otimes_{\bQ} \kos_X(\cM_X)$
are denoted by $L(I)$ again.
We use $L=L(\ski)$ as before.
The filtration on $A_{\bC}$
induced by $F$ on $\bC[\vu] \otimes_{\bC} \Omega_X(\log \cM_X)$
is denoted by $F$ again.
The morphism $\alpha$ in Definition \ref{defn:5}
induces a morphism of complexes
$A_{\bQ} \longrightarrow A_{\bC}$,
which is denoted by the same letter $\alpha$.
The morphism $\alpha$ preserves the filtrations $L(I)$
for any $I \subset \ski$.
\end{defn}

\begin{rmk}
\label{rmk:8}
By definition, we have
\begin{equation}
\label{eq:58}
A_{\bC}^n
=\bigoplus_{\vq \in \bN^k}
\bC \vu^{\vq} \otimes_{\bC}
\bigl(\Omega^{n+k}_X(\log \cM_X)
/\sum_{i=1}^{k}W(i)_{q_i}\bigr)
\simeq
\bigoplus_{\vq \in \bN^k}
\bigl(\Omega^{n+k}_X(\log \cM_X)
/\sum_{i=1}^{k}W(i)_{q_i}\bigr)
\end{equation}
for all $n$.
For the later use, we set
\begin{gather}
(A^n_{\bC})_{\vq}=\Omega^{n+k}_X(\log \cM_X)/\sum_{i=1}^{k}W(i)_{q_i} \\
\label{eq:16}
L(I)_m(A^n_{\bC})_{\vq}
=W(I)_{m+2|\vq_I|+|I|}
\bigl(\Omega^{n+k}_X(\log \cM_X)/\sum_{i=1}^{k}W(i)_{q_i}\bigr)
\end{gather}
for $I \subset \ski$, $\vq \in \bN^k$ and $m,n \in \bZ$.
Then we simply have
\begin{gather}
\label{eq:105}
A^n_{\bC}=
\bigoplus_{\vq \in \bN^k}
\bC\vu^{\vq} \otimes_{\bC}(A^n_{\bC})_{\vq} \\
\label{eq:50}
F^pA_{\bC}^n
=
\bigoplus_{|\vq| \le n-p}
\bC \vu^{\vq} \otimes_{\bC} (A^n_{\bC})_{\vq} \\
L(I)_mA^n_{\bC}
=
\bigoplus_{\vq \in \bN^k}
\bC\vu^{\vq} \otimes_{\bC}
L(I)_m(A^n_{\bC})_{\vq}
\end{gather}
for all $m,n,p$.
In the notation above,
we use $L$ instead of $L(\ski)$ as before.

If $X$ is of finite dimension,
then $(A_{\bC}^n)_{\vq} \not= 0$ implies
$|\vq| \le n \le \dim X$.
Therefore $F$ and $L(I)$ on $A_{\bC}$
are finite filtration for any $I \subset \ski$.
Similarly, we can check that $L(I)$ on $A_{\bQ}$ is finite
by using \eqref{eq:109}
if $X$ is of finite dimension.
\end{rmk}

\begin{assump}
In the remainder of this paper,
we assume that $X$ is of finite dimension.
\end{assump}

\begin{rmk}
For the case of $k=1$,
the bifiltered complex $(A_{\bC}, L, F)$
coincides with $(A^{\bullet}, L, F)$ in \cite[(5.3)]{SteenbrinkLE}
except for the sign of the differentials.
For the case where $f \colon (X, \cM_X) \longrightarrow (\ast, \bN^k)$
is the central fiber
of a morphism $g \colon \mathcal{X} \longrightarrow \Delta^k$
as in Example \ref{exa:1},
the complex $A_{\bC}$ is isomorphic
to the complex $sB(g)$ defined in \cite[Definition 4.3]{FujisawaLHSSV2}.
\end{rmk}

\begin{defn}
\label{defn:27}
A morphism of $\cO_X$-modules
\begin{equation}
\dlog t_1 \wedge \dlog t_2 \wedge \cdots \wedge \dlog t_k \wedge
\colon
\Omega^n_X(\log \cM_X)
\longrightarrow
\Omega^{n+k}_X(\log \cM_X)
\end{equation}
is defined by
\begin{equation}
\Omega^n_X(\log \cM_X) \ni \omega
\mapsto
\dlog t_1 \wedge \dlog t_2 \wedge \cdots \wedge \dlog t_k \wedge \omega
\in \Omega^{n+k}_X(\log \cM_X)
\end{equation}
for all $n$.
The composite of this morphism with
the inclusion
\begin{equation}
\Omega^{n+k}_X(\log \cM_X)
\simeq
\bC u^{\vo} \otimes_{\bC} \Omega^{n+k}_X(\log \cM_X)
\hookrightarrow
\bC[\vu] \otimes_{\bC} \Omega^{n+k}_X(\log \cM_X)
\end{equation}
and with the canonical surjection
$\bC[\vu] \otimes_{\bC} \Omega^{n+k}_X(\log \cM_X)
\longrightarrow
A^n_{\bC}$,
defines a morphism of $\cO_X$-modules
$\Omega^n_X(\log \cM_X)
\longrightarrow
A^n_{\bC}$,
which is compatible with the differential $d$
on the both sides.
Thus a morphism of complexes
\begin{equation}
\label{eq:73}
\Omega_X(\log \cM_X) \longrightarrow A_{\bC}
\end{equation}
is obtained.
Moreover, this morphism factors through the canonical surjection
$\Omega_X(\log \cM_X) \longrightarrow \Omega_{X/\ast}(\log (\cM_X/\bN^k))$.
Thus we obtain a morphism of filtered complexes
\begin{equation}
\label{eq:28}
\theta \colon
(\Omega_{X/\ast}(\log (\cM_X/\bN^k)), F)
\longrightarrow
(A_{\bC}, F),
\end{equation}
where $F$ on the left hand side
denotes the stupid filtration
on $\Omega_{X/\ast}(\log (\cM_X/\bN^k))$.
\end{defn}

The following lemma
shows that $(A_{\bC},F)$ is a substitute
for $(\Omega_{X/\ast}(\log (\cM_X/\bN^k)),F)$.

\begin{lem}
\label{lem:15}
The morphism $\theta$
is a filtered quasi-isomorphism.
\end{lem}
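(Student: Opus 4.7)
The plan is to show that $\gr_F^p \theta$ is a quasi-isomorphism for every $p$; since $F$ is finite on both sides (on $A_\bC$ by Remark \ref{rmk:8} and the finite-dimensionality assumption, and on $\Omega_{X/\ast}(\log(\cM_X/\bN^k))$ trivially), this will imply that $\theta$ is a filtered quasi-isomorphism by a standard spectral-sequence argument. On the left-hand side, the stupid filtration yields $\gr_F^p\Omega_{X/\ast}(\log(\cM_X/\bN^k)) = \Omega^p_{X/\ast}(\log(\cM_X/\bN^k))[-p]$, concentrated in degree $p$.

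For the right-hand side, from \eqref{eq:105} and \eqref{eq:50} one reads
\[
\gr_F^p A_\bC^n = \bigoplus_{|\vq|=n-p} \bC\vu^\vq \otimes_\bC (A^n_\bC)_\vq.
\]
I would next examine the differential piece by piece. The component $d_0 = \id \otimes d$ preserves $\vq$, and a quick check against $F^{p+1}A^{n+1}_\bC$ (where the condition $|\vq| \le n-p$ is still allowed) shows that $d_0(F^pA^n_\bC) \subset F^{p+1}A^{n+1}_\bC$, so $d_0$ induces the zero map on $\gr_F^p$. For $i \ge 1$ the component $d_i$ increases $|\vq|$ by $1$ and descends to the map $(A^n_\bC)_\vq \to (A^{n+1}_\bC)_{\vq+\ve_i}$ given by wedging with $\dlog t_i$. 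Thus $\gr_F^p A_\bC$ is a Koszul-type complex indexed by the lattice $\bN^k$, whose vertex $\vq=\vo$ is $(A^p_\bC)_\vo = \Omega^{p+k}_X(\log\cM_X)/\sum_i W(i)_0$ in cohomological degree $p$, and $\gr_F^p\theta$ is precisely the inclusion of the degree-$p$ piece obtained by wedging with $\dlog t_1 \wedge \cdots \wedge \dlog t_k$.

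It therefore suffices to verify locally that this Koszul-type complex is quasi-isomorphic to $\Omega^p_{X/\ast}(\log(\cM_X/\bN^k))[-p]$, via $\omega \mapsto \dlog t_1 \wedge \cdots \wedge \dlog t_k \wedge \omega$. I would work on a local model $(U,\cM_U)$ as in \ref{para:1}, where the splitting $\Omega^n_U(\log\cM_U) \simeq \bigoplus_{\ula \subset \Lambda,\,|\ula|=n} \cO_U\,\dlog x_\ula$ holds and $W(i)_m$ corresponds to the subsets $\ula$ with $|\ula\cap\Lambda_i|\le m$. The locally free $\cO_U$-structure together with the product decomposition $\cO_U \simeq \bigotimes_{i=0}^k \cO^{(i)}$ (with $\cO^{(i)}$ the local ring of the $i$-th factor in the local model) lets one identify $(A_\bC|_U,F)$ with the tensor product (over $\bC$, with appropriate shifts) of the $k=1$ complexes attached to the factors. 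This reduces the assertion to the classical computation for log deformations in one variable, which is Steenbrink's result recovering $\gr_F^p$ of the relative log de Rham complex from his complex $A^\bullet$. Alternatively, one may verify the claim directly by a Koszul-acyclicity argument: for each fixed $\ula \subset \Lambda$ with $|\ula\cap\Lambda_i|\ge 1$ for all $i$, the $(\bN^k$-indexed) subcomplex of $\gr_F^p A_\bC$ built from the orbit of $\cO_U\dlog x_\ula$ under the operators $d_1,\ldots,d_k$ factors as a tensor product (one factor per $i$) of Koszul complexes for the linear functional $\dlog t_i = \sum_{\lambda \in \Lambda_i}\dlog x_\lambda$ acting on exterior powers of the span of $\{\dlog x_\lambda\}_{\lambda\in\Lambda_i}$, and each such factor has cohomology concentrated in the bottom degree.

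The main obstacle is the local Koszul calculation: although no single step uses anything more subtle than classical Koszul acyclicity, the complex $\gr_F^p A_\bC$ interleaves the $\bN^k$-grading in $\vq$ with the growing subset $\ula \subset \Lambda$, with $k$ partial differentials that simultaneously shift both indices. Organizing the bookkeeping so that the factorization into a tensor product of $k$ one-variable Koszul complexes is visible—and so that the identification of the resulting $H^p$ with $\Omega^p_{X/\ast}(\log(\cM_X/\bN^k))$ is natural and matches the map induced by $\theta$—is where the technical effort lies.
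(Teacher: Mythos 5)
Your proposal is correct and, at bottom, travels the same road as the paper: the paper's own proof simply reduces to the local model of \ref{para:1} via \cite[Lemma (3.6)(2)]{KatoNakayama} and then cites \cite[Corollary 4.13]{FujisawaLHSSV2}, which encapsulates exactly the graded-piece Koszul computation you carry out by hand, so your argument is a self-contained unfolding of that citation rather than a new method. Your primary route is sound: on $\gr_F^p$ the component $d_0$ dies, the surviving $\cO_X$-linear differentials $d_1,\dots,d_k$ preserve the quantities $|\ula\cap\Lambda_i|-q_i$, the complex splits as a direct sum over these conserved data into tensor products of truncations $\bigwedge^{\ge c}\bC^{\Lambda_i}$ of the (exact, since $\dlog t_i\neq 0$ at every stalk) Koszul complex for $\dlog t_i=\sum_{\lambda\in\Lambda_i}\dlog x_{\lambda}$, each factor has cohomology only at the bottom, and K\"unneth identifies $H^p$ with $\Omega^p_{X/\ast}(\log(\cM_X/\bN^k))$ compatibly with $\gr_F^p\theta$. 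One small caution: your ``alternative'' formulation, decomposing $\gr_F^pA_{\bC}$ by ``the orbit of $\cO_U\dlog x_{\ula}$ under $d_1,\dots,d_k$'', is not literally a direct-sum decomposition, since $\dlog t_i\wedge$ spreads $\ula$ over all of $\Lambda_i$ and the orbits of different $\ula$ overlap; the correct splitting is the one by the conserved quantities $|\ula\cap\Lambda_i|-q_i$ (together with the transverse data), which is precisely your tensor-product route, so you should present that as the actual proof and drop or rephrase the alternative.
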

\begin{proof}
We may work in the local situation
as in \ref{para:1}.
Then we obtain the conclusion
by Lemma (3.6)(2) of \cite{KatoNakayama}
and by Corollary 4.13 of \cite{FujisawaLHSSV2}.
\end{proof}

\begin{cor}
\label{cor:3}
The morphism $\theta$
induces an isomorphism of filtered $\bC$-vector spaces
\begin{equation}
\label{eq:115}
(\coh^n(X, \Omega_{X/\ast}(\log (\cM_X/\bN^k)), F)
\overset{\simeq}{\longrightarrow}
(\coh^n(X, A_{\bC}), F)
\end{equation}
for all $n \in \bZ$.
\end{cor}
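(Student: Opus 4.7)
\emph{Proof proposal.} The plan is to deduce the statement as a formal consequence of the filtered quasi-isomorphism $\theta\colon (\Omega_{X/\ast}(\log(\cM_X/\bN^k)), F) \longrightarrow (A_{\bC}, F)$ supplied by Lemma \ref{lem:15}, together with the biregularity of the stupid filtration on both sides. Biregularity follows from our standing assumption that $X$ is of finite dimension: on the target this is visible from the explicit formula \eqref{eq:50} and from the observation (noted at the end of Remark \ref{rmk:8}) that $(A_{\bC}^n)_{\vq}$ vanishes unless $|\vq| \le n \le \dim X$.

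Concretely, I would proceed in two steps. First, since $\gr_F^p \theta$ is a quasi-isomorphism for every $p$, descending induction on $p$ using the short exact sequences $0 \to F^{p+1} \to F^p \to \gr_F^p \to 0$ and the associated long exact cohomology sequences shows that the restricted morphism $\theta\colon F^p\Omega_{X/\ast}(\log(\cM_X/\bN^k)) \longrightarrow F^p A_{\bC}$ is itself a quasi-isomorphism for every $p$; the induction terminates because the filtrations are finite. Applying $R\Gamma(X,-)$, which preserves quasi-isomorphisms, and taking cohomology then yields an isomorphism $\coh^n(X, F^p\Omega_{X/\ast}(\log(\cM_X/\bN^k))) \overset{\simeq}{\longrightarrow} \coh^n(X, F^p A_{\bC})$ for every $p$ and every $n$; the case of the smallest $p$ (where $F^p$ is already the whole complex) gives the underlying vector-space isomorphism in \eqref{eq:115}.

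Second, since the filtration $F$ on each hypercohomology group is by definition the image of $\coh^n(X, F^p-) \longrightarrow \coh^n(X, -)$, a small diagram chase on the evident commutative square, combined with the isomorphisms obtained in the previous step, identifies $F^p\coh^n(X, \Omega_{X/\ast}(\log(\cM_X/\bN^k)))$ with $F^p\coh^n(X, A_{\bC})$ under \eqref{eq:115}. There is no genuine obstacle here: the corollary is essentially tautological modulo Lemma \ref{lem:15} and the biregularity check, both of which are already in place.
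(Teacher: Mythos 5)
Your proposal is correct and is exactly the standard deduction that the paper leaves implicit: Corollary \ref{cor:3} is stated as an immediate consequence of the filtered quasi-isomorphism of Lemma \ref{lem:15}, using the finiteness (biregularity) of $F$ noted in Remark \ref{rmk:8}. The descending induction on $p$ and the identification of $F^p\coh^n$ as the image of $\coh^n(X,F^p-)$ are precisely the routine steps one would supply.
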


\section{Main results}
\label{sec:main-results}

In this section,
we state all the main results of this paper.

\begin{assump}
\label{assump:1}
The \ssls degeneration
$f \colon (X, \cM_X) \longrightarrow (\ast, \bN^k)$
is assumed to be projective
throughout this section.
\end{assump}

\begin{notn}
For a subset $I \subset \ski$,
we set
\begin{gather}
(A, L(I), L, F)=((A_{\bQ},L(I), L), (A_{\bC}, L(I), L, F), \alpha), \\
(\coh^n(X, A), L(I), L, F)
=((\coh^n(X,A_{\bQ}), L(I), L),
(\coh^n(X,A_{\bC}), L(I), L, F), \coh^n(X, \alpha))
\end{gather}
and
\begin{equation}
\label{eq:113}
(E_r^{p,q}(A, L(I)), L_{\rec}, F_{\rec})
=((E_r^{p,q}(A_{\bQ}, L(I)), L_{\rec}),
(E_r^{p,q}(A_{\bC}, L(I)), L_{\rec}, F_{\rec}),
E_r^{p,q}(\alpha)).
\end{equation}
For the case of $I=\ski$,
we omit $L_{\rec}$ in \eqref{eq:113}.
\end{notn}

\begin{thm}
\label{thm:1}
For all $I \subset \ski$,
the quadruple $(A, L(I), L, F)$
is a filtered cohomological $\bQ$-mixed Hodge complex on $X$
in the sense of El Zein
{\normalfont \cite[6.1.5]{ElZeinbook}}.
\end{thm}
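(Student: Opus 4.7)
The plan is to verify the three conditions of El Zein's definition of a filtered cohomological mixed Hodge complex: bounded cohomology, compatibility of the comparison morphism $\alpha$ with the filtration $L(I)$ up to filtered quasi-isomorphism after extension of scalars, and the assertion that each graded piece $\gr_m^{L(I)}(A, L, F)$ underlies a cohomological Hodge complex of pure weight (shifted appropriately by $m$ and the other indices). Since $X$ is finite-dimensional, $A_{\bQ}$ and $A_{\bC}$ are bounded, and by Lemma~\ref{lem:15} the stupid filtration $F$ on $A_{\bC}$ induces the usual Hodge filtration on $\coh^*(X,\Omega_{X/\ast}(\log(\cM_X/\bN^k)))$, which is known to be finite-dimensional by Corollary~\ref{cor:3} and standard log de Rham finiteness (since $f$ is projective). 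So the main task is to identify the graded pieces for $L(I)$.

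The first technical step is to introduce the ``strata'' $(X_{\vr},\cM_{X_{\vr}})$ for $\vr \in \bN^k$, which on a local model $(U,\cM_U)$ correspond to intersections $\bigcap_{i}(D_{\lambda_{i,1}}\cap\cdots\cap D_{\lambda_{i,r_i}})$ indexed by an appropriate choice of $\ula \in S_{\vr}(\Lambda)$. In parallel one introduces the rank-one $\bZ$-module $\varepsilon_{\vr}$ of orientations. I would then define a residue morphism on the log de Rham side (the analog of Deligne's $\Res$ in \cite{DeligneII}) and on the Koszul side (modeled on \cite[\S 2]{FujisawaMHSLSD}), and verify, using the local description of $\kos_U(\cM_U)$ in \eqref{eq:23}-\eqref{eq:109} and of $\Omega_U(\log\cM_U)$ in \eqref{eq:10}, that after passing to a suitable graded piece for $L(I)$ in $A_{\bQ}$ or $A_{\bC}$, the residue gives a (bi)filtered quasi-isomorphism
\[
\gr_m^{L(I)} A_{\bC}
\xrightarrow{\sim}
\bigoplus_{\vr,\vq} \Omega_{X_{\vr}}[-|\vr|-k]\otimes_{\bZ}\varepsilon_{\vr}
\]
(with explicit constraints on $\vr,\vq,m$ and an index shift), and similarly for $A_{\bQ}$ with $\kos_{X_{\vr}}(\cO^*_{X_{\vr}})\otimes\varepsilon_{\vr}$. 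The shift and twist are governed by the definitions \eqref{eq:2} and \eqref{eq:16}, together with the extra $[k]$ and the $\sum W(i)_{-1}$ quotient in Definition~\ref{defn:24}; careful bookkeeping of these is the combinatorial heart of the step.

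Granted the residue identifications, the conclusion reduces to classical Hodge theory on the smooth projective $X_{\vr}$ applied to the local system $\varepsilon_{\vr}\otimes_{\bZ}\bC$. Using the quasi-isomorphism $\bQ_{X_{\vr}}\xrightarrow{\sim}\kos_{X_{\vr}}(\cO^*_{X_{\vr}})$ from \ref{para:11}\ref{item:41} and its compatibility with $\psi$, one obtains that $(\gr_m^{L(I)}A,\ \text{induced } F)$, shifted by the appropriate integer $m+|I|+\sum$ (made precise by \eqref{eq:2}), is isomorphic in the (filtered) derived category to a direct sum of shifts of $(R\Gamma(X_{\vr},\varepsilon_{\vr})\otimes\bQ,\ R\Gamma(X_{\vr},\Omega_{X_{\vr}}\otimes\varepsilon_{\vr}),F)$; these are pure Hodge complexes of the expected weight. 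This in turn gives the El Zein data of a $\bQ$-mixed Hodge complex at each graded level.

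The main obstacles are twofold. The first is notational and combinatorial: writing down the residue morphism on the $\bQ$-Koszul side in a way that is manifestly compatible with $\alpha$ requires tracking powers of $2\pi\sqrt{-1}$ (as in Definition~\ref{defn:5}) and the sign conventions for $\overline{\chi}$ in \eqref{eq:45}-\eqref{eq:20}; I would exploit Remark~\ref{rmk:2} and the semistability of $\overline{f^{\flat}_x}$ to reduce everything to the standard local model. The second, and the main substantive obstacle, is to check that after passing to $\gr^{L(I)}$ the polynomial variables $u_i$ and the correction $\sum_{i}W(i)_{-1}$ conspire to kill the ``non-$I$'' part of the differential (those terms $d_j$ with $j\notin I$), so that the residue morphism is actually a morphism of complexes at the graded level and the resulting identification is a filtered quasi-isomorphism; this essentially amounts to Corollary 4.13 of \cite{FujisawaLHSSV2} generalized to the present setting, and once verified locally via Proposition~\ref{prop:2} the global statement follows by sheafification.
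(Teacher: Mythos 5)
Your overall strategy---introducing the strata $X_{\vr}$ with the orientation modules $\varepsilon_{\vr}$, defining residue morphisms on both the log de Rham and the Koszul sides, and reducing to classical Hodge theory on the smooth projective $X_{\vr}$ with coefficients in $\varepsilon_{\vr}\otimes_{\bZ}\bC$---is the paper's. But there is a genuine structural error in where you locate the purity, and as written the argument would fail. In the quadruple $(A,L(I),L,F)$ the filtration $L(I)$ is the \emph{extra} filtration and $L=L(\ski)$ is the \emph{weight} filtration: what El Zein's definition requires is that for all $a\le b$ the quotient $(L(I)_bA/L(I)_aA,\,L,\,F)$ be a cohomological \emph{mixed} Hodge complex, i.e.\ that its $\gr^L$-pieces be pure. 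You instead assert that $\gr_m^{L(I)}(A,L,F)$ is pure and that the residue identifies $\gr_m^{L(I)}A_{\bC}$ with a direct sum of shifted $\Omega_{X_{\vr}}$'s, and your ``main substantive obstacle'' is to show that the differentials $d_j$ with $j\notin I$ die on $\gr^{L(I)}$. They do not: by \eqref{eq:104} and \eqref{eq:2}, for $j\notin I$ the operator $d_j$ (multiplication by $u_j$ combined with $\dlog t_j\wedge$) \emph{preserves} $L(I)_m$, since it changes neither $|\vq_I|$ nor the $W(I)$-degree, so it induces a nonzero map on $\gr_m^{L(I)}$, and $\gr_m^{L(I)}A_{\bC}$ is a genuinely mixed complex. (This is precisely why the $E_2$-degeneracy of $E_r(A_{\bC},L(I))$ in Theorem \ref{thm:3} requires the Hodge--Lefschetz machinery of Sections \ref{sec:multi-graded-hodge} and \ref{sec:proofs-main-theorems} rather than following from purity.)

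The correct reduction, which is the one the paper carries out, is to pass to $\gr^L$ of the quotients $L(I)_bA/L(I)_aA$: every $d_i$ ($i=\ki$) lowers $L$ by one, because replacing $\vq$ by $\vq+\ve_i$ raises $2|\vq|$ by two while $\dlog t_i\wedge$ raises the $W$-degree by at most one, so \emph{all} the Koszul-type differentials vanish on $\gr_m^L$, and the residue morphisms of Lemmas \ref{lem:25} and \ref{lem:26} then identify $\gr_m^L(L(I)_bA_{\bC}/L(I)_aA_{\bC})$ with a direct sum of $(a_{\vr})_{\ast}(\varepsilon_{\vr}\otimes_{\bZ}\Omega_{X_{\vr}})[-m-2|\vq|]$ over the index set \eqref{eq:52}, compatibly with $F$ and with $\alpha$; purity comes from classical Hodge theory on $X_{\vr}$ via the positive definite form $\vartheta_{\vr}$ on $\varepsilon_{\vr}$. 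If you replace your $\gr^{L(I)}$-purity claim by this $\gr^L$-statement for the $L(I)$-quotients, the rest of your outline (local models, bookkeeping of shifts and of powers of $2\pi\sqrt{-1}$) goes through essentially as you describe.
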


The following is a direct consequence of the theorem above
by \cite[6.1.8 Th\'eor\`em]{ElZeinbook}.

\begin{thm}
\label{thm:6}
We have the following\textup{:}
\begin{enumerate}
\item
$(\coh^n(X,A), L[n], F)$
is a mixed Hodge structure
for all $n$.
\item
The spectral sequence
$E_r^{p,q}(A, F)$ degenerates at $E_1$-terms.
\item
The spectral sequence
$E_r^{p,q}(A, L)$ degnerates at $E_2$-terms.
\item
\label{item:9}
$(E_r^{p,q}(A, L), F_{\rec})$ is a $\bQ$-Hodge structure of weight $q$
for $r=1,2$,
and the morphism of $E_1$-terms
$d_1 \colon E_1^{p,q}(A, L) \longrightarrow E_1^{p+1,q}(A, L)$
is a morphism of $\bQ$-Hodge structures.
\end{enumerate}
Moreover, for $I \subset \ski$,
we have the following:
\begin{enumerate}[resume]
\item
\label{item:8}
The spectral sequence
$E_r^{p,q}(\gr_m^{L(I)}A, L)$
degenerates at $E_2$-terms for all $m \in \bZ$.
\item
\label{item:10}
$(E_r^{p,q}(A, L(I)), L_{\rec}[p+q], F_{\rec})$
is a $\bQ$-mixed Hodge structure
and the morphism of $E_r$-terms
$d_r \colon E_r^{p,q}(A, L(I)) \longrightarrow E_r^{p+r,q-r+1}(A, L(I))$
is a morphism of $\bQ$-mixed Hodge structures
for all $p,q,r$ with $1 \le r \le \infty$.
\item
$L(I)_m\coh^n(X, A(f))$
is a sub mixed Hodge structure of $\coh^n(X, A(f))$
for all $m$.
Moreover, we have the canonical isomorphism of mixed Hodge structures
\begin{equation}
(\gr_{-p}^{L(I)}\coh^{p+q}(X, A), L[p+q], F)
\simeq
(E_{\infty}^{p,q}(A, L(I)), L_{\rec}[p+q], F_{\rec})
\end{equation}
for all $p,q$.
\end{enumerate}
\end{thm}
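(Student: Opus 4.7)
The plan is to apply El Zein's general theorem \cite[6.1.8 Th\'eor\`eme]{ElZeinbook} for filtered cohomological mixed Hodge complexes to the input produced by Theorem \ref{thm:1}. Since Theorem \ref{thm:1} asserts precisely that $(A, L(I), L, F)$ is a filtered cohomological $\bQ$-mixed Hodge complex in El Zein's sense, every conclusion of Theorem \ref{thm:6} should be read off from El Zein's theorem together with the standard properties of mixed Hodge complexes. The only real work is checking that the filtration shifts in the statement match those in El Zein's conventions.

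I would proceed by treating first the case $I = \ski$, where $L(I) = L$ and the filtered MHC reduces to an ordinary cohomological MHC $(A, L, F)$. The classical theorem of Deligne--El Zein then simultaneously gives items (i)--(iv): the triple $(\coh^n(X,A), L[n], F)$ is a $\bQ$-mixed Hodge structure, the Hodge-to-de Rham spectral sequence $E_r(A,F)$ degenerates at $E_1$, the weight spectral sequence $E_r(A,L)$ degenerates at $E_2$, and the $E_1$-term of $E_r(A,L)$ (hence also the $E_2$-term) is a pure $\bQ$-Hodge structure of weight $q$ on which the differential $d_1$ is a morphism of Hodge structures.

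For a general $I \subset \ski$, I would then invoke the filtered version of El Zein's theorem. The existence of the additional filtration $L(I)$ compatible with the MHC structure means that each subquotient $\gr_m^{L(I)} A$ inherits a cohomological MHC structure whose weight filtration is precisely the one induced by $L$. Applying the unfiltered case recursively to these graded pieces yields the $E_2$-degeneracy of $E_r(\gr_m^{L(I)} A, L)$ in (v). For (vi), El Zein's theorem identifies $E_r^{p,q}(A, L(I))$ with the cohomology of a MHC (up to shifts), and the recurrent filtration $L_{\rec}$ is exactly the one induced by $L$ on these graded pieces; the shift $[p+q]$ absorbs the usual weight-shift convention for the induced MHS on $E_r$-terms, and the differentials $d_r$ become morphisms of MHS because they come from morphisms of MHCs in the derived category. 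Item (vii) then follows from (vi) by identifying $\gr_{-p}^{L(I)} \coh^{p+q}(X,A)$ with $E_\infty^{p,q}(A, L(I))$ via the abutment isomorphism of the spectral sequence and observing that the induced filtrations $L$ and $F$ on both sides coincide under this identification.

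The anticipated main obstacle is merely bookkeeping: making the shift conventions match. In particular, one must verify carefully that the convention $W[n]_m = W_{m-n}$ fixed in the preliminaries is consistent with the shifts appearing in El Zein's statement, so that $L[n]$ in (i) and $L_{\rec}[p+q]$ in (vi), (vii) really are the weight filtrations of the asserted MHS. Once this matching is in place, no further argument is required.
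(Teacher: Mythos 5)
Your proposal matches the paper exactly: the paper gives no separate proof of this theorem, stating only that it is a direct consequence of Theorem \ref{thm:1} via El Zein's \cite[6.1.8 Th\'eor\`eme]{ElZeinbook}, which is precisely the route you take. Your additional remarks on shift conventions and on deducing (v)--(vii) from the filtered version are consistent elaborations of that same one-line argument.
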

%

\begin{thm}
\label{thm:3}
For any $I \subset \ski$,
the spectral sequence $E_r^{p,q}(A_{\bC}, L(I))$
degenerates at $E_2$-terms.
\end{thm}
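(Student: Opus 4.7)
The plan is to prove Theorem \ref{thm:3} simultaneously with Theorems \ref{thm:4}, \ref{thm:2}, and \ref{thm:5} by exploiting the polarized differential $\bZ \oplus \bZ^k$-graded Hodge-Lefschetz module structure on the real form $V_{\bR}$ of $V_{\bC} = \bigoplus_{a,b} E_1^{a,b}(A_{\bC}, L)$, and then appealing to the abstract Proposition \ref{prop:1}, as outlined in the introduction.

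The starting point is that \ref{item:10} of Theorem \ref{thm:6} already endows each $E_r^{p,q}(A_{\bC}, L(I))$ with a mixed Hodge structure whose weight filtration is $L_{\rec}[p+q]$, and guarantees that every differential $d_r$ is a morphism of MHS, hence strictly compatible with the weight filtration. The $E_2$-degeneracy will therefore follow by forcing $d_r = 0$ for $r \geq 2$ through a weight argument backed by the multi-graded Hodge-Lefschetz structure on $V_{\bC}$. Concretely, I would first identify the filtration induced on $V_{\bC}$ by $L(I)$ with a filtration coming from the $I$-coordinates of the $\bZ \oplus \bZ^k$-multi-grading, via the Lefschetz operators $N_i$ for $i \in I$; this is the same identification that underlies \ref{item:39} of Theorem \ref{thm:8}.

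With that identification, Proposition \ref{prop:1} applied to $V_{\bR}$ supplies a primitive Lefschetz-type decomposition adapted to each subset $I$ of the multi-grading indices. Combined with strictness of the $d_r$ with respect to the weight filtration on $E_r(A, L(I))$, this decomposition forces $d_r = 0$ for $r \geq 2$: the extreme weights attained on primitive components, together with the weight shift imposed on a morphism of MHS of bidegree $(r, -r+1)$, preclude nontrivial contributions. This argument is run in parallel with the identifications behind Theorems \ref{thm:4} and \ref{thm:2} and the hard Lefschetz conclusion of Theorem \ref{thm:5}, all of which are simultaneous outputs of the same application of Proposition \ref{prop:1}.

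The principal obstacle is not this final deduction but the earlier construction of the polarization on $V_{\bC}$, carried out via a product on the \emph{filtered} complex $(A_{\bC}, L)$ in Section \ref{sec:constr-bilin-forms} and its compatibility with $d_1$ at the $E_1$-page (Lemma \ref{lem:22}, which in turn rests on the equality $\Theta \cdot d_1 = 0$ of Lemma \ref{lem:13}). Once the polarized Hodge-Lefschetz structure on $V_{\bR}$ is established, the $E_2$-degeneracy of $E_r^{p,q}(A_{\bC}, L(I))$ for arbitrary $I \subset \ski$ is a uniform consequence of Proposition \ref{prop:1}.
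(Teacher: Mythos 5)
Your proposal is correct and follows the paper's own route: Theorems \ref{thm:3}, \ref{thm:4}, \ref{thm:2} and \ref{thm:5} are indeed proved together by applying Proposition \ref{prop:1} to the polarized differential $\bZ \oplus \bZ^k$-graded Hodge--Lefschetz module $V_{\bR}$, with the polarization coming from the product on the filtered complex $(A_{\bC},L)$ via Lemmas \ref{lem:22} and \ref{lem:13}. Your closing weight/strictness step is exactly what the paper isolates as Lemma \ref{lem:24} (the monodromy weight filtration's centering shifts by $2$ along $d_r$ while strict compatibility from \ref{item:10} of Theorem \ref{thm:6} only permits a shift of $1$, forcing $d_r=0$ for $r\ge 2$), the one ingredient left implicit in your sketch being the convolution $L=L(I)\ast L(\ski\setminus I)$ of Corollary \ref{cor:1}, which is what lets Proposition \ref{prop:1} with $B=\ski\setminus I$ say anything about $E_2^{p,q}(A_{\bC},L(I))$.
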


\begin{defn}
\label{defn:14}
A morphism
$\nu_i \colon
\bC[\vu] \otimes_{\bC} \Omega^n_X(\log \cM_X)
\longrightarrow
\bC[\vu] \otimes_{\bC} \Omega^n_X(\log \cM_X)$
is defined by
$\nu_i(P \otimes \omega)=u_iP \otimes \omega$
for $i=\ki$.
This defines a morphism of complexes $\nu_i$ for $i=\ki$,
which preserves the filtrations $W(I)$ and $L(I)$
for all $I \subset \ski$.
In fact,
\begin{gather}
\nu_i(W(I)_m(\bC[\vu] \otimes_{\bC} \Omega_X(\log \cM_X)))
\subset W(I)_{m-1}(\bC[\vu] \otimes_{\bC} \Omega_X(\log \cM_X)) \\
\nu_i(L(I)_m(\bC[\vu] \otimes_{\bC} \Omega_X(\log \cM_X)))
\subset L(I)_{m-2}(\bC[\vu] \otimes_{\bC} \Omega_X(\log \cM_X))
\end{gather}
if $i \in I$.
For the filtration $F$,
we have
\begin{equation}
\nu_i(F^p(\bC[\vu] \otimes_{\bC} \Omega_X(\log \cM_X)))
\subset
F^{p-1}(\bC[\vu] \otimes_{\bC} \Omega_X(\log \cM_X))
\end{equation}
for all $p \in \bZ$.
Therefore a morphism of complexes
$\nu_i$ is induced for $i=\ki$,
which satisfies
\begin{gather}
\label{eq:9}
\nu_i(L(I)_m A_{\bC})
\subset
\begin{cases}
L(I)_{m-2} A_{\bC} \quad &\text{if $i \in I$} \\
L(I)_m A_{\bC} \quad &\text{otherwise},
\end{cases} \\
\nu_i(F^pA_{\bC}) \subset F^{p-1}A_{\bC}
\end{gather}
for all $I \subset \ski$ and $m,p \in \bZ$.
Similarly, a morphism
$\nu_i \colon \bQ[\vu] \otimes_{\bQ} \kos_X(\cM_X)^n
\longrightarrow
\bQ[\vu] \otimes_{\bQ} \kos_X(\cM_X)^n$
is defined by
$\nu_i(P \otimes \eta)=u_iP \otimes \eta$,
from which a morphism of complexes
$\nu_i \colon A_{\bQ} \longrightarrow A_{\bQ}$
is induced for $i=\ki$
These morphisms satisfy the same properties as \eqref{eq:9} for $A_{\bQ}$.
We can easily check that the diagram
\begin{equation}
\label{eq:77}
\begin{CD}
A_{\bQ} @>{\nu_i}>> A_{\bQ} \\
@V{\alpha}VV @VV{\alpha}V \\
A_{\bC} @>>{(2\pi\sqrt{-1})\nu_i}> A_{\bC}
\end{CD}
\end{equation}
is commutative.
Because of $\nu_i(L_mA_{\bC}) \subset L_{m-2}A_{\bC}$
and because $L$ is finite,
$\nu_i$ on $A_{\bC}$ is nilpotent for all $i$.
By the same reason, $\nu_i$ on $A_{\bQ}$ is also nilpotent for all $i$.
For $\vc=(c_i) \in \bC^I$,
a morphism of bifiltered complexes
$\nu_I(\vc) \colon (A_{\bC}, L, F) \longrightarrow (A_{\bC}, L[2], F[-1])$
is defined by
$\nu_I(\vc)=\sum_{i \in I}c_i\nu_i$
for $I \subset \ski$.
We use $\nu(c)$
instead of $\nu_{\ski}(c)$
for $\vc=(c_i) \in \bC^k$.
%
\end{defn}


\begin{defn}
\label{defn:26}
The morphism
\begin{equation}
\coh^q(X,\nu_i) \colon
(\coh^q(X, A_{\bC}), L, F)
\longrightarrow
(\coh^q(X, A_{\bC}), L[2], F[-1])
\end{equation}
is denoted by $N_i$ for $i=\ki$.
Moreover we set
$N_I(\vc_I)=\sum_{i \in I}c_iN_i$
for $\vc_I=(c_i)_{i \in I} \in \bC^I$.
We use $N(\vc)$ instead of $N_{\ski}(\vc_{\ski})$.
Since $\nu_i$ is nilpotent, so is $N_i$ for any $i$.
Then $N_I(c_I)$ is also nilpotent
for any $I \subset \ski$ and $c_I \in \bC^I$.
\end{defn}

\begin{rmk}
For the case where $f \colon (X, \cM_X) \longrightarrow (\ast, \bN^k)$
is the central fiber
of a morphism $g \colon \mathcal{X} \longrightarrow \Delta^k$
as in Example \ref{exa:1},
we have $N_i=(2\pi\sqrt{-1})^{-1}\log T_i$ for each $i=\ki$,
where $T_i$ denotes the monodromy automorphism
around the coordinate hyperplane $\{t_i=0\}$
(cf. \cite[(2.21) Theorem and (4.22)]{SteenbrinkLHS},
\cite[Theorem 5.19]{FujisawaLHSSV2}).
\end{rmk}
%

\begin{thm}
\label{thm:4}
For any $I \subset \ski$, $\vc_I \in (\bpR)^I$ and $l \in \bpZ$,
the morphism $N_I(\vc_I)^l$
induces an isomorphism
\begin{equation}
\gr_l^{L(I)}\coh^q(X, A_{\bC})
\overset{\simeq}{\longrightarrow}
\gr_{-l}^{L(I)}\coh^q(X, A_{\bC})
\end{equation}
for all $q \in \bZ$.
\end{thm}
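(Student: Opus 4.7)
The plan is to deduce Theorem~\ref{thm:4} together with the neighboring results of Section~\ref{sec:main-results} by reducing the question to the $E_1$-page of the spectral sequence for $(A_{\bC}, L)$, endowing this page with a polarized differential multi-graded Hodge-Lefschetz module structure, and then invoking the general result Proposition~\ref{prop:1} about such modules. Throughout, I would exploit the $E_2$-degeneracy in Theorem~\ref{thm:3} to transfer conclusions from $E_r$-terms to the graded pieces of $L(I)$ on $\coh^q(X, A_{\bC})$.

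First, I would consider $V_{\bC} = \bigoplus_{a,b} E_1^{a,b}(A_{\bC}, L)$. By Theorem~\ref{thm:6}~\ref{item:9}, each $E_1^{a,b}(A_{\bC}, L)$ is a pure $\bQ$-Hodge structure of weight $b$, and $d_1$ is a morphism of Hodge structures. The operators $\nu_i$ from Definition~\ref{defn:14} induce nilpotent endomorphisms on $V_{\bC}$ which shift $L(I)$ by $-2$ for $i \in I$ and preserve it otherwise; together with a Lefschetz operator arising from the projectivity of $X$, they should yield a multi-$sl_2$ action compatible with the bigrading indexed by $\bZ \oplus \bZ^k$.

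Second, I would construct a polarization on the real form $V_{\bR}$ via a product on the filtered complex $(A_{\bC}, L)$ itself, not merely on its cohomology. Following the outline, this passes through the \v{C}ech-type filtered complex $\cC(\Omega_{X_{\bullet}}(\log \cM_{X_{\bullet}}))$ constructed in Section~\ref{sec:a Cech type complex}, the product and trace morphism $\Theta$ defined there, and the induced bilinear form on $V_{\bC}$ built in Sections~\ref{sec:constr-bilin-forms} and~\ref{sec:bilin-form}. The critical output is Lemma~\ref{lem:22}, which expresses the compatibility of this bilinear form with $d_1$; this compatibility is ensured by the fact that the pairing descends from a complex-level product, together with the vanishing $\Theta \cdot d_1 = 0$ of Lemma~\ref{lem:13}. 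Once compatibility is established, the residue computations and classical Hodge theory on the pieces $\coh^{\ast}(X_{\vr}, \varepsilon_{\vr} \otimes_{\bZ} \bC)$ (underlying Theorem~\ref{thm:1}) identify $V_{\bR}$ as a polarized differential $\bZ \oplus \bZ^k$-graded Hodge-Lefschetz module.

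Third, applying Proposition~\ref{prop:1} to $V_{\bR}$ yields for every $I \subset \ski$ and every $\vc_I \in (\bpR)^I$ the Lefschetz-type isomorphism for $N_I(\vc_I)^l$ on the $E_2$-terms with the induced filtration $L(I)_{\rec}$. By the $E_2$-degeneracy in Theorem~\ref{thm:3} and the canonical identification $\gr_{-p}^{L(I)}\coh^{p+q}(X, A_{\bC}) \simeq E_{\infty}^{p,q}(A_{\bC}, L(I)) = E_2^{p,q}(A_{\bC}, L(I))$ from Theorem~\ref{thm:6}, this isomorphism transfers to the graded pieces of $L(I)$ on $\coh^q(X, A_{\bC})$, giving exactly the statement of Theorem~\ref{thm:4}. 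The main obstacle is the second step: building the polarization at the level of the filtered complex so that its restriction to $V_{\bR}$ is automatically compatible with $d_1$, rather than constructing the pairing directly on cohomology (as in \cite[(3.4)]{Guillen-NavarroAznarCI}) where $d_1$-compatibility is not manifest. The detour through the \v{C}ech-type complex and $\Theta$ is precisely what allows this compatibility to be verified.
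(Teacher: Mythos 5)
Your overall architecture is the paper's own: put a polarized differential $\bZ\oplus\bZ^k$-graded Hodge--Lefschetz structure on the real form of $V_{\bC}=\bigoplus_{a,b}E_1^{a,b}(A_{\bC},L)$, with the polarization descending from the complex-level product $\widetilde{\Psi}$ and the trace $\Theta$ (so that Lemma \ref{lem:22} gives $d_1$-compatibility via $\Theta\cdot d_1=0$), then apply Proposition \ref{prop:1} and transfer to $\coh^q(X,A_{\bC})$ by $E_2$-degeneracy. Your first two steps match the paper.

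The gap is in your third step. Proposition \ref{prop:1} applied to $V_{\bR}$ with the full differential $d_1=\sum_i d_i'$ produces a Hodge--Lefschetz structure on $\coh(V_{\bR},d_1)\simeq \gr^L\coh^{\ast}(X,A_{\bC})$; this controls the filtration $L$ and the operator $N(\vc)$, not $L(I)$ and $N_I(\vc_I)$ for a proper subset $I$. To reach $E_2^{p,q}(A_{\bC},L(I))$ you need the additional identifications the paper makes: with $J=\ski\setminus I$, the convolution formula $L=L(I)\ast L(J)$ (Corollary \ref{cor:1}) regrades $V_{\bC}$ by $(j_0,|\vj_J|,|\vj_I|)$ and identifies the pieces with $E_1$-terms of $(\gr^{L(I)}A_{\bC},L(J))$; Lemma \ref{lem:23} splits $d_1=d_J'+d_I'$ into the $E_1$-differential of those filtered complexes and the Gysin morphism $E_1(\gamma)$; and a two-step cohomology computation, using the $E_2$-degeneracy of $E_r(\gr_m^{L(I)}A,L)$ (Theorem \ref{thm:6}~\ref{item:8}) and the strictness of $d_1$ on $E_1(A,L(I))$ with respect to $L_{\rec}$ (Theorem \ref{thm:6}~\ref{item:10}), then gives $\coh(\coh(V_{\bR},d_J'),d_I')\simeq\gr^{L_{\rec}}E_2(A_{\bC},L(I))$. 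Only after this (plus a final filtration argument to pass from the isomorphisms on the graded pieces $\gr^{L(J)_{\rec}}$ to an isomorphism on $E_2$ itself) does the Lefschetz property of $l_I(\vc_I)$ supplied by Proposition \ref{prop:1} yield $E_2^{-l,q+l}(A_{\bC},L(I))\overset{\simeq}{\longrightarrow}E_2^{l,q-l}(A_{\bC},L(I))$, which the $E_2$-degeneracy of Theorem \ref{thm:3} converts into the statement of Theorem \ref{thm:4}. As written, your proposal jumps from the module structure on $V_{\bR}$ (which is adapted to $L$) directly to conclusions about the spectral sequence of $L(I)$, and that jump is exactly where the remaining work lies.
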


\begin{thm}
\label{thm:2}
For any $I \subset \ski$, $\vc \in (\bpR)^k$
and $l \in \bpZ$,
the morphism $N(\vc)^l$ induces an isomorphism
\begin{equation}
\gr_{l+m}^L\gr_m^{L(I)}\coh^q(X, A_{\bC})
\overset{\simeq}{\longrightarrow}
\gr_{-l+m}^L\gr_m^{L(I)}\coh^q(X, A_{\bC})
\end{equation}
for all $m, q \in \bZ$.
\end{thm}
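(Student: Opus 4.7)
The plan is to deduce Theorem \ref{thm:2}, in parallel with Theorems \ref{thm:3}, \ref{thm:4}, and \ref{thm:5}, from Proposition \ref{prop:1} applied to the real form $V_{\bR}$ of $V_{\bC} = \bigoplus_{a,b} E_1^{a,b}(A_{\bC}, L)$, following the strategy outlined in the introduction.

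As a preliminary observation, for each $i \in I$ the endomorphism $\nu_i$ shifts $L(I)$ down by $2$, hence induces the zero endomorphism on $\gr_m^{L(I)} \coh^q(X, A_{\bC})$. Consequently the endomorphism of $\gr_m^{L(I)} \coh^q$ induced by $N(\vc)$ coincides with the one induced by $N_{I^c}(\vc_{I^c}) = \sum_{i \notin I} c_i N_i$, where $I^c = \ski \setminus I$. Theorem \ref{thm:2} is therefore equivalent to showing that the filtration induced by $L$ on $\gr_m^{L(I)} \coh^q$, shifted by $m$, is the monodromy weight filtration of this induced operator.

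To establish this, I would combine the $E_2$-degeneracy of $E_r(A, L(I))$ (Theorem \ref{thm:3}) with the identification
\[
(\gr_m^{L(I)} \coh^q(X, A), L[q], F)
\simeq
(E_2^{-m, q+m}(A, L(I)), L_{\rec}[q], F_{\rec})
\]
from Theorem \ref{thm:6} \ref{item:10}, which reduces the statement to a hard-Lefschetz-type assertion on $E_2^{-m, q+m}(A_{\bC}, L(I))$ for the operator induced by $N_{I^c}(\vc_{I^c})$. Under the $\bZ \oplus \bZ^k$-graded structure on $V_{\bR}$ in which the $\bZ^k$-factor records the multidegree with respect to the filtrations $L(1), \dots, L(k)$ and the $N_i$ are the $k$ commuting Lefschetz operators, this assertion is precisely what Proposition \ref{prop:1} supplies for the $L(I)$-weight subspace of multidegree $m$ under the sub-collection of Lefschetz operators indexed by $I^c$.

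The main technical obstacle, shared with Theorems \ref{thm:3}, \ref{thm:4}, and \ref{thm:5}, is to equip $V_{\bR}$ with the polarized differential $\bZ \oplus \bZ^k$-graded Hodge-Lefschetz module structure required by Proposition \ref{prop:1}. In particular, the polarization must be compatible with the differential $d_1$ of the spectral sequence; this is addressed by building the bilinear form on $V_{\bC}$ from the product on the filtered complex $(A_{\bC}, L)$ constructed in Sections \ref{sec:a Cech type complex}--\ref{sec:constr-bilin-forms} and from the identity $\Theta \cdot d_1 = 0$ established in Lemmas \ref{lem:13} and \ref{lem:22}. Once these hypotheses are verified, Theorem \ref{thm:2} follows simultaneously with the other three main theorems from a single application of Proposition \ref{prop:1}.
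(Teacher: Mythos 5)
Your reduction at the start is exactly the paper's: since $\nu_i$ shifts $L(I)$ by $-2$ for $i\in I$, the operator induced by $N(\vc)$ on $\gr_m^{L(I)}\coh^q(X,A_{\bC})$ agrees with the one induced by $N_J(\vc_J)$, $J=\ski\setminus I$, and the engine is the polarized differential $\bZ\oplus\bZ^k$-graded Hodge--Lefschetz structure on $V_{\bR}$ together with Proposition \ref{prop:1}. Where you diverge is the endgame. The paper does \emph{not} pass through $E_2(A,L(I))$ for this theorem: it applies Proposition \ref{prop:1} once (with $B=J$) to obtain the Hodge--Lefschetz structure on $\coh(V_{\bR},d'_J)$, translates this into the hard Lefschetz property of $\coh^j(X,\gr_m^{L(I)}\nu(\vc))$ on $\gr^{L[-m]}\coh^j(X,\gr_m^{L(I)}A_{\bC})$ — the $E_1$-level of $E_r(A_{\bC},L(I))$ — and then invokes Lemma \ref{lem:24}, a propagation lemma for filtered mixed Hodge complexes that delivers the $E_2$-degeneracy of $E_r(A_{\bC},L(I))$ (Theorem \ref{thm:3}) and Theorem \ref{thm:2} in one stroke. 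Your route, which presupposes Theorem \ref{thm:3} and works on $E_2^{-m,q+m}(A_{\bC},L(I))$ with $L_{\rec}$ via the $E_\infty\simeq\gr^{L(I)}\coh$ identification, is viable — it is essentially how the paper proves Theorem \ref{thm:4} — but it buys nothing over the paper's argument and costs you an independent proof of Theorem \ref{thm:3}, which Lemma \ref{lem:24} otherwise provides for free.

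The one step that would fail as written is the claim that the needed assertion "is precisely what Proposition \ref{prop:1} supplies for the $L(I)$-weight subspace of multidegree $m$" in a single application. The graded piece of $V$ of fixed $\vj_I$-multidegree is a summand of $E_1(A_{\bC},L)$, and hard Lefschetz for $\{l_i\}_{i\notin I}$ there says nothing about $E_2(A_{\bC},L(I))$, which is not a subspace of $V$ nor of $\coh(V,d_B)$ for any single $B$. To reach it you must use the convolution $L=L(I)\ast L(J)$ (Corollary \ref{cor:1}) and the decomposition $d_1=d'_J+d'_I$ (Lemma \ref{lem:23}), take cohomology twice, and identify $\coh(\coh(V_{\bR},d'_J),d'_I)$ with $\bigoplus\gr^{L_{\rec}}E_2(A_{\bC},L(I))$ using the degeneracy and strictness statements \ref{item:8} and \ref{item:10} of Theorem \ref{thm:6}; Proposition \ref{prop:1} must then be applied a second time (with $B=I$) to see that $l_J(\vc_J)$ retains the Lefschetz property after passing to $\coh(\cdot,d'_I)$. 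With that correction — all of whose ingredients the paper sets up for the proof of Theorem \ref{thm:4} — your argument closes.
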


\begin{defn}
\label{notn:2}
Let $\cL$ be an invertible sheaf on $X$.
The morphism
$\dlog \colon \cO^{\ast}_X[-1] \longrightarrow \Omega_X$
induces a morphism
$\coh^2(X, \dlog) \colon
\coh^1(X, \cO^{\ast}_X) \longrightarrow \coh^2(X, \Omega_X)$.
We set $c(\cL)=\coh^2(X, \dlog)([\cL]) \in \coh^2(X, \Omega_X)$
as in \cite[(2.2.4)]{DeligneII}.
\end{defn}

\begin{defn}
\label{defn:25}
The wedge product on $\Omega_X(\log \cM_X/\bN^k)$
induces the morphism of complexes of $\bC$-sheaves
$\Omega_{X/\ast}(\log (\cM_X/\bN^k)) \otimes_{\bC} \Omega_X
\longrightarrow \Omega_{X/\ast}(\log (\cM_X/\bN^k))$.
This morphism
induces a morphism
\begin{equation}
\label{eq:107}
\coh^a(X, \Omega_{X/\ast}(\log (\cM_X/\bN^k))
\otimes_{\bC}
\coh^b(X, \Omega_X)
\longrightarrow
\coh^{a+b}(X, \Omega_{X/\ast}(\log (\cM_X/\bN^k))
\end{equation}
as in Definition \ref{defn:10}.
For $\omega \in \coh^a(X, \Omega_{X/\ast}(\log (\cM_X/\bN^k))$
and $\eta \in \coh^b(X, \Omega_X)$,
the image of $\omega \otimes \eta$ by the morphism \eqref{eq:107}
is simply denoted by
$\omega \cup \eta$
and called the cup product of $\omega$ and $\eta$.
Thus the element $c(\cL) \in \coh^2(X, \Omega_X)$
gives us a morphism
$\cup c(\cL) \colon
\coh^a(X, \Omega_{X/\ast}(\log (\cM_X/\bN^k))
\longrightarrow
\coh^{a+2}(X, \Omega_{X/\ast}(\log (\cM_X/\bN^k))$
for all $a \in \bZ$.
\end{defn}

\begin{thm}[Log hard Lefschetz theorem]
\label{thm:5}
We assume that $X$ is of pure dimension in addition.
For any ample invertible sheaf $\cL$ on $X$,
the morphism
\begin{equation}
\label{eq:108}
(\cup c(\cL))^i
\colon
\coh^{-i+\dim X}(X, \Omega_{X/\ast}(\log (\cM_X/\bN^k))
\longrightarrow
\coh^{i+\dim X}(X, \Omega_{X/\ast}(\log (\cM_X/\bN^k))
\end{equation}
is an isomorphism for all $i \in \bpZ$.
\end{thm}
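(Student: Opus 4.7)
The plan is to transfer the cup-product operator with $c(\cL)$ to the complex $A_{\bC}$, incorporate it as the $(1,0)$-Lefschetz operator into the polarized differential $\bZ \oplus \bZ^k$-graded Hodge-Lefschetz module structure on $V_{\bR} = \bigoplus_{a,b} E_1^{a,b}(A_{\bR}, L)$ that will already have been produced in Section \ref{sec:proofs-main-theorems} as the main input for Theorems \ref{thm:4} and \ref{thm:2}, and then deduce hard Lefschetz on $\coh^\ast(X, A_{\bC})$ from the abstract hard Lefschetz built into a Hodge-Lefschetz module together with the $E_2$-degeneracy of $E_r^{p,q}(A_{\bC}, L)$.

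First I would use Corollary \ref{cor:3} to replace $\coh^\ast(X, \Omega_{X/\ast}(\log (\cM_X/\bN^k)))$ by $\coh^\ast(X, A_{\bC})$. Since the wedge product on $\Omega_X(\log \cM_X)$ descends to a pairing $A_{\bC} \otimes_{\bC} \Omega_X \longrightarrow A_{\bC}$ compatible under $\theta$ with the wedge on $\Omega_{X/\ast}(\log (\cM_X/\bN^k))$, cup product with $c(\cL) \in \coh^2(X, \Omega_X)$ induces an endomorphism $L_{\cL}$ of $\coh^\ast(X, A_{\bC})$ which matches \eqref{eq:108} under \eqref{eq:115}. This operator is evidently compatible with the filtration $L$ (in fact of $L$-weight $0$, i.e.\ preserves $L$ strictly) and with the filtration $F$ of bidegree $(1,1)$, and it commutes up to homotopy with each $\nu_i$ because the wedge product commutes with multiplication by $u_i \dlog t_i$. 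Hence $L_{\cL}$ passes to a morphism $L_{\cL} \colon E_r^{p,q}(A_{\bC}, L) \longrightarrow E_r^{p,q+2}(A_{\bC}, L)$ commuting with $d_r$ for all $r$.

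Next I would identify the action of $L_{\cL}$ on the $E_1$-page. By the proof of Theorem \ref{thm:1} outlined around \eqref{eq:111}--\eqref{eq:112}, each $E_1^{p,q}(A_{\bC}, L)$ decomposes as a direct sum of terms $\coh^\ast(X_{\vr}, \varepsilon_{\vr} \otimes_{\bZ} \bC)$ via the residue morphism, and the operator $L_{\cL}$ restricts on each summand to the classical cup product with $c(\cL|_{X_{\vr}})$. Since each $X_{\vr}$ is a smooth projective variety and $\cL|_{X_{\vr}}$ is ample, the classical hard Lefschetz theorem gives isomorphisms $L_{\cL}^i \colon \coh^{\dim X_{\vr} - i}(X_{\vr}, \varepsilon_{\vr} \otimes_{\bZ} \bC) \overset{\simeq}{\to} \coh^{\dim X_{\vr} + i}(X_{\vr}, \varepsilon_{\vr} \otimes_{\bZ} \bC)$ for all $i \in \bpZ$. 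Together with the $\mathrm{sl}_2$-triples coming from $\nu_1, \dots, \nu_k$ that (by construction in Sections \ref{sec:constr-bilin-forms}--\ref{sec:bilin-form}) make $V_{\bR}$ a polarized differential $\bZ \oplus \bZ^k$-graded Hodge-Lefschetz module, the operator $L_{\cL}$ supplies the missing Lefschetz operator in the extra $\bZ$-direction, so the enlarged datum $(V_{\bR}, d_1, L_{\cL}, \nu_1, \dots, \nu_k)$ is a polarized Hodge-Lefschetz module for the enlarged grading.

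Applying Proposition \ref{prop:1} (and the abstract hard Lefschetz inherent in a polarized Hodge-Lefschetz module), the combined Lefschetz operator yields isomorphisms $L_{\cL}^i \colon E_\infty^{p,q}(A_{\bC}, L) \overset{\simeq}{\to} E_\infty^{p,q+2i}(A_{\bC}, L)$ whenever $q+2i = 2\dim X - q$ and $p$ is fixed; summing over $p$ with $p+q = \dim X - i$ gives the isomorphism on $\gr^L\coh^{\dim X \pm i}(X, A_{\bC})$. Since $L_{\cL}$ preserves $L$ strictly and the spectral sequence degenerates at $E_2$ (Theorem \ref{thm:3}; also Theorem \ref{thm:6}), a standard $\gr$-to-total argument upgrades this to the desired isomorphism \eqref{eq:108}. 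The main obstacle is the third step: one has to verify cleanly that $L_{\cL}$ lifts to a morphism on the \v{C}ech-type filtered complex constructed in Section \ref{sec:a Cech type complex} that is compatible with the bilinear form defined via $\Theta$, so that the Hodge-Lefschetz module structure on $V_{\bR}$ really extends to include $L_{\cL}$; this should follow from the projection formula for $\Theta$ once the pairing $A_{\bC} \otimes \Omega_X \to A_{\bC}$ is chosen to be compatible with the product on $\cC(\Omega_{X_\bullet}(\log \cM_{X_\bullet}))$.
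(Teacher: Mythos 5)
Your proposal follows essentially the same route as the paper: transfer the problem to $\coh^{\ast}(X, A_{\bC})$ via Corollary \ref{cor:3}, realize $\cup c(\cL)$ as the operator $l_0$ on $V_{\bR}=\bigoplus E_1^{a,b}(A_{\bR},L)$ (Definition \ref{defn:17}, with classical hard Lefschetz on each $X_{\vr}$ giving Lemma \ref{lem:10}\ref{item:28} and the compatibility with $S$ being Lemma \ref{lem:20}\ref{item:18}), apply Proposition \ref{prop:1} to $\coh(V_{\bR},d_1)\simeq \gr^L\coh^{\ast}(X,A_{\bC})$, and upgrade from the associated graded to the full cohomology using $E_2$-degeneracy. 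The "main obstacle" you flag — compatibility of $L_{\cL}$ with the bilinear form built from $\Theta$ — is exactly what the paper resolves via Lemma \ref{lem:6} and Lemma \ref{lem:20}\ref{item:18}, so the plan is sound as stated.
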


\section{Proof of Theorem \ref{thm:1}}
\label{sec:proof-theor-refthm:1}

In this section,
we will prove Theorem \ref{thm:1}.
To this end,
we need to construct a residue isomorphism
as in \cite[(3.1.5)]{DeligneII}.
First, we introduce log complex manifolds $(X_{\vr}, \cM_{X_{\vr}})$
for all $\vr \in \bZ^k_{\ge \ve}$,
whose underlying complex manifolds $X_{\vr}$ are finite over $X$.
Second, we define the residue morphism
for the log de Rham complex $\Omega_X(\log \cM_X)$
in Definition \ref{defn:23},
and the residue morphism
for the Koszul complex of $(X, \cM_X)$
in Definition \ref{defn:30}.
In the construction of these two residue morphisms
the log complex manifolds $(X_{\vr}, \cM_{X_{\vr}})$ above
play a role of ``target'' spaces.
Then, we will prove Theorem \ref{thm:1}
at the end of this section.

\begin{defn}
A map
$\rf_X \colon X \longrightarrow \bZ^k$
is defined by
\begin{equation}
\rf_X(x)=\left(\rank \overline{\cM(i)}_{X,x}\gp\right)_{i=1}^k \in \bZ^k,
\end{equation}
where $\cM(i)_X$ is the monoid sheaf
given in Lemma \ref{lem:1} for each $i=\ki$.
\end{defn}

\begin{defn}
For $\vr \in \bZ^k_{\ge \ve}$,
we set
$\overline{X}_{\vr}=\{x \in X \mid \rf_X(x) \ge \vr\}$,
where $\ge$ is the partial order on $\bZ^k$
defined in \eqref{eq:3}.
\end{defn}

\begin{lem}
$\overline{X}_{\vr}$ is a closed analytic subset of $X$.
\end{lem}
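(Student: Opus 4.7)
The plan is to reduce the statement to the local model of \ref{para:1} and then exhibit $\overline{X}_{\vr}$ explicitly as a finite union of intersections of smooth hypersurfaces. Since being a closed analytic subset is a local property on $X$, it suffices to prove the claim in a neighborhood $V$ of an arbitrary point $x_0 \in X$. Shrinking if necessary, I may therefore assume that $(X, \cM_X)$ sits inside a local model $(U, \cM_U)$ as in \ref{para:1}, with partition $\Lambda = \coprod_{i=1}^k \Lambda_i$ and divisors $D_\lambda$ for $\lambda \in \Lambda$.

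The key input is the local identification of $\cM(i)_X$ given in the proof of Lemma \ref{lem:1}: it is the pull-back of $\cM_{\bC^\Lambda \times \bC^l}(D_i)$ under the closed immersion $U \hookrightarrow \bC^\Lambda \times \bC^l$. From this local identification I would extract the pointwise formula
\begin{equation}
\rank \overline{\cM(i)}_{X,x}\gp = \#\{\lambda \in \Lambda_i \mid x \in D_\lambda\}
\end{equation}
for every $x \in X$ and every $i = \ki$. This is the standard fact that the characteristic monoid of the log structure $\cM_Y(D)$ associated to a \nc divisor $D$ at a point $y$ has rank equal to the number of irreducible components of $D$ passing through $y$.

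Given this formula, the inequality $\rank \overline{\cM(i)}_{X,x}\gp \ge r_i$ becomes the combinatorial condition that $x$ lies in at least $r_i$ of the hypersurfaces $\{D_\lambda \cap X\}_{\lambda \in \Lambda_i}$. Therefore
\begin{equation}
\overline{X}_{\vr} \cap X
=\bigcap_{i=1}^{k}
\bigcup_{\substack{S_i \subset \Lambda_i \\ |S_i| = r_i}}
\bigcap_{\lambda \in S_i} (D_\lambda \cap X).
\end{equation}
Since each $D_\lambda \cap X$ is a closed analytic hypersurface in $X$ and $\Lambda_i$ is finite, the right-hand side is a finite intersection of finite unions of finite intersections of closed analytic subsets, hence closed analytic in $X$. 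This gives the conclusion locally, and by patching over an open cover of $X$ it follows that $\overline{X}_{\vr}$ is a closed analytic subset globally.

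There is essentially no obstacle once the local description of \ref{para:1} together with the identification of $\cM(i)_X$ from Lemma \ref{lem:1} is in hand; the only point requiring some care is justifying the pointwise rank formula for $\overline{\cM(i)}_{X,x}\gp$ from the local model, which is immediate from the explicit form of the log structure $\cM_{\bC^\Lambda \times \bC^l}(D_i)$ at a point where exactly the divisors $D_\lambda$ with $\lambda$ in a prescribed subset of $\Lambda_i$ pass through.
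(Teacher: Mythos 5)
Your proof is correct and follows essentially the same route as the paper: reduce to the local model of \ref{para:1} and identify $\overline{X}_{\vr}$ with the finite union $\bigcup_{\ula \in S_{\vr}(\Lambda)} D[\ula]$ of intersections of the coordinate hypersurfaces (your intersection-of-unions formula describes the same set). The extra justification you give for the pointwise rank formula is the fact implicitly used in the paper's one-line identification.
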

\begin{proof}
Since the question is of local nature,
we may assume that
$(X, \cM_X)$ is an open neighborhood of the origin
of a local model $(U, \cM_U)$ as in \ref{para:1}.
Then we have
\begin{equation}
\label{eq:7}
\overline{X}_{\vr}
=X \cap \bigcup_{\ula \in S_{\vr}(\Lambda)}D[\ula],
\end{equation}
where $D[\ula]=\bigcap_{\lambda \in \ula}D_{\lambda}$
for $\ula \subset \Lambda$.
\end{proof}

\begin{defn}
\label{defn:28}
For $\vr \in \bZ^k_{\ge \ve}$,
the normalization of the reduced \ca subspace $\overline{X}_{\vr}$
is denoted by $X_{\vr}$.
The composite of the canonical morphism
$X_{\vr} \longrightarrow \overline{X}_{\vr}$
and the closed immersion $\overline{X}_{\vr} \hookrightarrow X$
is denoted by $a_{\vr}$.
Then $a_{\vr}$ is a finite morphism.
A log structure $\cM_{X_{\vr}}$ on $X_{\vr}$
is defined by $\cM_{X_{\vr}}=a_{\vr}^{\ast}\cM_X$.
\end{defn}

\begin{lem}
\label{lem:4}
For $\vr \in \bZ^k_{\ge \ve}$,
we have the following \textup{:}
\begin{enumerate}
\item
$X_{\vr}$ is nonsingular.
\item
\label{item:11}
For any $x \in X_{\vr}$,
there exist an open neighborhood $V$ of $x$
and a reduced \snc divisor $D_V$ on $V$,
such that the log structure $\cM_V$ is isomorphic to
the log structure $\cM_V(D_V) \oplus \bN_V^{|\vr|}$,
where the structure morphism
$\alpha \colon \cM_V(D_V) \oplus \bN_V^{|\vr|} \longrightarrow \cO_V$
is given by
\begin{equation}
\alpha(f \oplus \vv)
=
\begin{cases}
0 \qquad &\text{if $\vv \not= \vo$}, \\
f \qquad &\text{if $\vv=\vo$},
\end{cases}
\end{equation}
for $f \in \cM_V(D_V) \subset \cO_V$
and for $\vv \in \bN_V^{|\vr|}$.
\end{enumerate}
\end{lem}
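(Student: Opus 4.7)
Both assertions are local on $X_\vr$, and since $a_\vr$ is finite, it suffices to work in a neighborhood of a chosen point $y \in X_\vr$ lying over $x = a_\vr(y) \in \overline{X}_\vr$. By Proposition~\ref{prop:2} and the local description in \ref{para:1}, I reduce to the case where $(X, \cM_X)$ is an open neighborhood of $x$ inside a local model $(U, \cM_U)$, with $x$ the origin. Set $B = \{\lambda \in \Lambda : x_\lambda(x) = 0\}$; after shrinking, the $x_\lambda$ with $\lambda \in \Lambda \setminus B$ are invertible on $X$, and the condition $x \in \overline{X}_\vr$ forces $|B \cap \Lambda_i| \ge r_i$ for every $i$.

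First I would use \eqref{eq:7} together with the invertibility of the $x_\lambda$ for $\lambda \notin B$ to obtain
\[
\overline{X}_\vr \cap X
= \bigcup_{\substack{\ula \in S_\vr(\Lambda) \\ \ula \subset B}} D[\ula] \cap X.
\]
Each $D[\ula] \cap X$ is a smooth connected submanifold, cut out by the independent coordinate equations $x_\lambda = 0$ ($\lambda \in \ula$), so these are precisely the irreducible components of the germ $(\overline{X}_\vr, x)$. Since each such irreducible component is smooth, hence normal, the normalization of $\overline{X}_\vr$ over a sufficiently small neighborhood of $x$ is identified with the disjoint union $\coprod_{\ula}(D[\ula] \cap X)$. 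Consequently $X_\vr$ is nonsingular, which establishes (1).

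For (2), the point $y$ corresponds to a specific branch $\ula_0 \in S_\vr(\Lambda)$ with $\ula_0 \subset B$, and a small open neighborhood $V$ of $y$ is identified via $a_\vr$ with an open neighborhood of $x$ in $D[\ula_0]$. Because the composite $V \hookrightarrow D[\ula_0] \hookrightarrow \bC^\Lambda \times \bC^l$ realizes $a_\vr|_V$ as an analytic inclusion, $\cM_V = a_\vr^{\ast}\cM_X$ coincides with the pullback of $\cM_{\bC^\Lambda \times \bC^l}(D)$ to $V$. Using the chart $\cM_{\bC^\Lambda \times \bC^l}(D) \simeq \cO^{\ast} \oplus \bN^B$ near $x$ with structure map $(u, (n_\lambda)) \mapsto u \prod_{\lambda \in B} x_\lambda^{n_\lambda}$, the vanishing $x_\lambda|_V \equiv 0$ for $\lambda \in \ula_0$ converts this map into $(u, (n_\lambda)) \mapsto u \prod_{\lambda \in B \setminus \ula_0} x_\lambda^{n_\lambda}$ when $n_\lambda = 0$ for every $\lambda \in \ula_0$, and into $0$ otherwise. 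The splitting $\bN^B = \bN^{B \setminus \ula_0} \oplus \bN^{\ula_0}$ then yields
\[
\cM_V \simeq \cM_V(D_V) \oplus \bN_V^{|\vr|},
\]
with structure map exactly of the form stated in \ref{item:11}, where $D_V = \sum_{\lambda \in B \setminus \ula_0} (D_\lambda \cap V)$ is a reduced \snc divisor on $V$ (the $x_\lambda$ for $\lambda \in B \setminus \ula_0$ extending to independent coordinate functions on $V \subset D[\ula_0]$), and $|\ula_0| = \sum_i |\ula_0 \cap \Lambda_i| = \sum_i r_i = |\vr|$.

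The main obstacle will be the normalization step in the second paragraph: one must justify that the normalization of the germ $(\overline{X}_\vr, x)$ is the disjoint union of its smooth irreducible components. This will follow from the standard facts that a smooth connected analytic germ is irreducible and normal, and that the normalization of a reduced analytic germ whose irreducible components are normal is obtained by separating those components.
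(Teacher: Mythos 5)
Your proposal is correct and follows essentially the same route as the paper: reduce to a local model via \ref{para:1}, use \eqref{eq:7} to exhibit $\overline{X}_{\vr}$ locally as the union of the smooth coordinate subspaces $D[\ula]$ for $\ula \in S_{\vr}(\Lambda)$ (each contained in $U$ since $\vr \ge \ve$), identify the normalization with their disjoint union, and read off the splitting $\cM_V \simeq \cM_V(D_V) \oplus \bN_V^{|\vr|}$ from the chart on each branch with $D_V$ cut out by the remaining coordinates. Your write-up merely spells out the normalization step and the chart computation that the paper's proof leaves implicit.
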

\begin{proof}
We may work on a local model $(U, \cM_U)$ in \ref{para:1}.
From \eqref{eq:7} for $\overline{U}_{\vr}$,
and from the equality $U \cap D[\ula]=D[\ula]$
for any $\ula \in S_{\vr}(\Lambda)$ with $\vr \ge \ve$,
we obtain
$U_{\vr}=\coprod_{\ula \in S_{\vr}(\Lambda)}D[\ula]$.
Thus $U_{\vr}$ is nonsingular
and the restriction $a_{\vr}|_{D[\ula]}$
coincides with the canonical inclusion $D[\ula] \hookrightarrow U$.
Moreover, the log structure $\cM_{U_{\vr}}|_{D[\ula]}$
coincides with the pull-back of
$\cM_{\bC^{\Lambda} \times \bC^l}(D)$
by the closed immersion $D[\ula] \hookrightarrow \bC^{\Lambda} \times \bC^l$.
By setting 
$D_V|_{D[\ula]}=\sum_{\lambda \in \Lambda \setminus \ula}D_{\lambda}\cap D[\ula]$,
the condition \ref{item:11} is satisfied.
\end{proof}

\begin{rmk}
We note that the condition \ref{item:11} is
a special case of the condition (3.4.1) in \cite{FujisawaMHSLSD}.
\end{rmk}

\begin{lem}
\label{lem:2}
For any $\vr \in \bZ^k_{\ge \ve}$,
there exist a unique \nc divisor $D$ on $X_{\vr}$
and a unique inclusion $\cM_{X_{\vr}}(D) \hookrightarrow \cM_{X_{\vr}}$
satisfying the following$:$
\begin{enumerate}
\item
\label{item:12}
For any $x \in X_{\vr}$, there exists an open neighborhood $V$ of $x$
such that the inclusion $\cM_{X_{\vr}}(D)|_V \hookrightarrow \cM_{X_{\vr}}|_V$
induces an isomorphism
$\cM_{X_{\vr}}(D)|_V \oplus \bN^{|\vr|}_V \simeq \cM_{X_{\vr}}|_V$
as log structures.
\end{enumerate}
Moreover $\cM_{X_{\vr}}\gp/\cM_{X_{\vr}}(D)\gp$,
which is a locally free $\bZ$-module of rank $|\vr|$,
carries a symmetric bilinear form with values in $\bZ$,
whose stalk at any $x \in X_{\vr}$
coincides with the canonical bilinear form
associated to the finitely generated free monoid
$(\cM_{X_{\vr}}/\cM_{X_{\vr}}(D))_x$
\textup{(}cf. Definition \textup{\ref{defn:9}}\textup{)}.
\end{lem}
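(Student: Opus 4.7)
The plan is to produce $(D, \cM_{X_{\vr}}(D) \hookrightarrow \cM_{X_{\vr}})$ locally from Lemma \ref{lem:4}, verify by a stalkwise inspection in the local model of \ref{para:1} that the construction is intrinsic, and then derive both the local freeness of the quotient and the bilinear form from the same stalk description.

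For each $x \in X_{\vr}$ I would first fix an open neighborhood $V$, a reduced \snc divisor $D_V$, and an isomorphism $\cM_V(D_V) \oplus \bN^{|\vr|}_V \simeq \cM_V$ as in Lemma \ref{lem:4}\ref{item:11}, declare $D \cap V$ to be $D_V$, and take the local inclusion $\cM_{X_{\vr}}(D)|_V \hookrightarrow \cM_V$ to be the embedding of the first summand, so that condition \ref{item:12} holds automatically. The substance lies in showing that $D \cap V$ and the image of $\cM_{X_{\vr}}(D)|_V$ in $\cM_V$ are intrinsic to $\cM_V$. For this I would pass to the local model of \ref{para:1}: each $y \in V$ lies in a unique component $D[\ula(y)]$ of $U_{\vr} = \coprod_{\ula \in S_{\vr}(\Lambda)} D[\ula]$, and writing $\Lambda_0 \subset \Lambda$ for the set of indices $\lambda$ with $a_{\vr}(y) \in D_\lambda$, the local description gives $\overline{\cM}_{X_{\vr},y} \simeq \bN^{\Lambda_0}$ with $\ula(y) \subset \Lambda_0$. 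The $\bN^{|\vr|}$ summand in $\cM_V$ must correspond to the sub-monoid generated by $\{\ve_\lambda\}_{\lambda \in \ula(y)}$, because these are exactly the generators whose lifts to $\cM_V$ are annihilated by the structure morphism, while the generators indexed by $\Lambda_0 \setminus \ula(y)$ lift to local equations of $D_V$. Since this description depends only on $V$ and $a_{\vr}|_V$, the local pieces glue into a globally defined normal crossing divisor $D$ on $X_{\vr}$ together with an inclusion $\cM_{X_{\vr}}(D) \hookrightarrow \cM_{X_{\vr}}$; the same canonical description forces uniqueness, since any other pair satisfying \ref{item:12} must induce this same decomposition at every stalk.

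The ``moreover'' assertion then follows by reading off stalks. Taking associated groups in the local decomposition yields $\cM_{X_{\vr}}\gp|_V / \cM_{X_{\vr}}(D)\gp|_V \simeq \bZ^{|\vr|}_V$, so the quotient is locally free of rank $|\vr|$. At each $y \in X_{\vr}$ the stalk $(\cM_{X_{\vr}}/\cM_{X_{\vr}}(D))_y$ is a finitely generated free monoid, and Definition \ref{defn:9} supplies a canonical symmetric bilinear form on its group; the canonical identification of this group with $\bZ^{\ula(y)}$ obtained in the previous paragraph ensures that the stalkwise forms agree on overlaps and assemble into a global symmetric $\bZ$-valued bilinear form on $\cM_{X_{\vr}}\gp / \cM_{X_{\vr}}(D)\gp$.

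The main obstacle is the canonical characterization of the $\bN^{|\vr|}$ summand in the second paragraph: Lemma \ref{lem:4} by itself only produces an isomorphism, with no a priori guarantee that the image of $\bN^{|\vr|}_V$ in $\cM_V$ is intrinsic. The resolution rests on the observation that normalizing $\overline{X}_{\vr}$ at $y$ selects a unique branch $D[\ula(y)]$ in the local model, which in turn selects a canonical basis of the $\bN^{|\vr|}$ summand; once this is settled the remaining assertions are essentially formal.
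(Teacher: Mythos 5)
Your argument is correct, but it takes a different route from the paper: the paper's own proof is a two-line reduction to connected components followed by a citation of Lemmas 3.7 and 3.10 of \cite{FujisawaMHSLSD} (the relevant hypothesis having been verified in Lemma \ref{lem:4} and the remark after it, which identifies condition \ref{item:11} as a special case of condition (3.4.1) of that reference). You instead unfold a direct, self-contained argument in the local model. The key point you isolate --- that the subsheaf $\cM_{X_{\vr}}(D)\subset\cM_{X_{\vr}}$ is intrinsically characterized, stalkwise, as the preimage under $\pi_{X_{\vr}}$ of the free summand of $\overline{\cM}_{X_{\vr},y}$ spanned by those irreducible generators whose lifts have nonzero image under the structure morphism --- is exactly the right invariant, and it is what makes both the gluing and the uniqueness work; your verification in the local model $U_{\vr}=\coprod_{\ula\in S_{\vr}(\Lambda)}D[\ula]$ that this subsheaf equals $\cM(D_{\Lambda\setminus\ula}\cap D[\ula])$ is correct, as is the observation that on each branch $D[\ula]$ the quotient $\cM_{X_{\vr}}/\cM_{X_{\vr}}(D)$ is the constant sheaf $\bN^{\ula}$, so that the canonical bilinear forms of Definition \ref{defn:9} on the stalks are locally constant and glue. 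One small precision: what is canonical is the inclusion $\cM_{X_{\vr}}(D)\hookrightarrow\cM_{X_{\vr}}$ (equivalently its image in $\overline{\cM}_{X_{\vr}}$), not the complementary $\bN^{|\vr|}$ summand inside $\cM_{X_{\vr}}$ itself, which exists only locally and non-uniquely (one may twist lifts by units); your phrasing about the summand being ``forced'' is accurate at the level of $\overline{\cM}_{X_{\vr}}$, and since the lemma only asserts uniqueness of the inclusion this does not affect the proof. The trade-off is the usual one: the paper's citation is shorter and defers the monoid-theoretic bookkeeping to the earlier reference, while your version makes the lemma verifiable within this paper at the cost of redoing that bookkeeping.
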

\begin{proof}
By considering the connected components of $X_{\vr}$,
we can easily deduce the conclusion
form Lemmas 3.7 and 3.10 of \cite{FujisawaMHSLSD}.
\end{proof}

\begin{defn}
\label{defn:6}
For $\vr \in \bZ^k_{\ge \ve}$,
we denote by $D_{\vr}$
the normal crossing divisor
$D$ in Lemma \ref{lem:2}
on $X_{\vr}$.
We set $L_{\vr}=\cM_{X_{\vr}}\gp/\cM_{X_{\vr}}(D)\gp$
and $\varepsilon_{\vr}=\bigwedge^{|\vr|}L_{\vr}$,
which are locally free $\bZ$-modules of rank $|\vr|$
and of rank one respectively.
The symmetric bilinear form on $L_{\vr}$
in Lemma \ref{lem:2}
induces an isomorphism
$\varepsilon_{\vr} \otimes_{\bZ} \varepsilon_{\vr} \longrightarrow \bZ$,
which is denoted by $\vartheta_{\vr}$.
\end{defn}

\begin{defn}
For $\vr \in \bZ^k_{\ge \ve}$,
the monoid subsheaf $\cM_{X_{\vr}}(D_{\vr})$
defines an increasing filtration 
on $\Omega_{X_{\vr}}(\log \cM_{X_{\vr}})$,
as in Definition \ref{defn:29},
which is denoted by $\widehat{W}$ for a while.
\end{defn}

\begin{para}
The morphism of abelian sheaves
$\bigwedge^m\dlog \colon
\bigwedge^m\cM_{X_{\vr}}\gp
\longrightarrow
\Omega^m_{X_{\vr}}(\log \cM_{X_{\vr}})$
induces a morphism of $\cO_{X_{\vr}}$-modules
$\bigwedge^m\cM_{X_{\vr}}\gp
\otimes_{\bZ} \Omega^{n-m}_{X_{\vr}}(\log D_{\vr})
\longrightarrow
\gr_m^{\widehat{W}}\Omega^n_{X_{\vr}}(\log \cM_{X_{\vr}})$,
which factors through the surjection
$\bigwedge^m\cM_{X_{\vr}}\gp
\otimes_{\bZ} \Omega^{n-m}_{X_{\vr}}(\log D_{\vr})
\longrightarrow
\bigwedge^mL_{\vr} \otimes_{\bZ} \Omega^{n-m}_{X_{\vr}}(\log D_{\vr})$.
Thus a morphism of $\cO_{X_{\vr}}$-modules
\begin{equation}
\label{eq:5}
\bigwedge^m L_{\vr} \otimes_{\bZ} \Omega^{n-m}_{X_{\vr}}(\log D_{\vr})
\longrightarrow
\gr_m^{\widehat{W}}\Omega^n_{X_{\vr}}(\log \cM_{X_{\vr}})
\end{equation}
is obtained for all $m,n \in \bZ$.
\end{para}

\begin{lem}
For $\vr \in \bZ^k_{\ge \ve}$,
The morphism \eqref{eq:5} gives us
an isomorphism of $\cO_{X_{\vr}}$-modules
\begin{equation}
\label{eq:6}
\bigwedge^m L_{\vr} \otimes_{\bZ} \Omega^{n-m}_{X_{\vr}}(\log D_{\vr})
\longrightarrow
\gr_m^{\widehat{W}}\Omega^n_{X_{\vr}}(\log \cM_{X_{\vr}})
\end{equation}
for all $m, n \in \bZ$.
Therefore,
$\widehat{W}_{|\vr|}\Omega_{X_{\vr}}(\log \cM_{X_{\vr}})
=\Omega_{X_{\vr}}(\log \cM_{X_{\vr}})$.
\end{lem}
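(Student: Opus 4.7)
My plan is to reduce the statement to a local computation using the product structure of the log structure provided by Lemma \ref{lem:2}\ref{item:12}, and then identify the graded pieces of $\widehat{W}$ with the expected tensor products explicitly.

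First, since the assertion is local on $X_{\vr}$, I would fix a point $x \in X_{\vr}$ and an open neighborhood $V$ on which Lemma \ref{lem:2}\ref{item:12} yields a splitting $\cM_V \simeq \cM_{X_{\vr}}(D_{\vr})|_V \oplus \bN^{|\vr|}_V$. Under this splitting, $L_{\vr}|_V$ is the constant sheaf $\bZ^{|\vr|}$, and one can choose sections $s_1,\dots,s_{|\vr|} \in \Gamma(V,\cM_V)$ corresponding to the canonical basis of $\bN^{|\vr|}$, whose structure images in $\cO_V$ vanish. The elements $\dlog s_1,\dots,\dlog s_{|\vr|}$ lie in $\Gamma(V,\Omega^1(\log \cM_V))$ but not in $\widehat{W}_0=\Omega^1_V(\log D_{\vr})$.

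Next, I would establish the key local decomposition
\begin{equation}
\Omega^n_V(\log \cM_V)
\simeq \bigoplus_{S \subset \{1,\dots,|\vr|\}}
\bigl(\textstyle\bigwedge_{i\in S}\dlog s_i\bigr)\wedge \Omega^{n-|S|}_V(\log D_{\vr}).
\end{equation}
This follows from Kato--Nakayama's local description of the log de Rham complex (Lemma (3.6)(2) of \cite{KatoNakayama}): choosing local coordinates in which $D_{\vr}$ is cut out by some of the coordinates, together with the sections $s_1,\dots,s_{|\vr|}$, gives a local chart for $\cM_V$ in which $\Omega^1_V(\log \cM_V)$ is the direct sum of $\Omega^1_V(\log D_{\vr})$ and the free $\cO_V$-module on the $\dlog s_i$. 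Taking exterior powers yields the displayed decomposition.

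Under this decomposition, $\widehat{W}_m|_V$ is precisely the direct summand corresponding to subsets $S$ with $|S|\le m$, so $\gr^{\widehat{W}}_m\Omega^n_V(\log \cM_V) \simeq \bigoplus_{|S|=m}\dlog s_S \wedge \Omega^{n-m}_V(\log D_{\vr})$, and the map \eqref{eq:5} sends $\ve_{i_1}\wedge\cdots\wedge\ve_{i_m}\otimes\omega$ to the class of $\dlog s_{i_1}\wedge\cdots\wedge\dlog s_{i_m}\wedge\omega$. This is manifestly an isomorphism onto the claimed direct summand, proving \eqref{eq:6} locally and hence globally. Finally, because $L_{\vr}$ has rank $|\vr|$, $\bigwedge^m L_{\vr}=0$ for $m>|\vr|$, so \eqref{eq:6} implies $\gr^{\widehat{W}}_m\Omega_{X_{\vr}}(\log \cM_{X_{\vr}})=0$ for $m>|\vr|$, and hence $\widehat{W}_{|\vr|}=\Omega_{X_{\vr}}(\log \cM_{X_{\vr}})$.

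The main obstacle is verifying the local product decomposition of $\Omega^n_V(\log \cM_V)$ cleanly; once the local splitting of $\cM_V$ is used together with Kato--Nakayama's formula for log differentials and the local coordinate description from Lemma \ref{lem:4}\ref{item:11}, everything else is a direct identification. One should also take care that the canonical map \eqref{eq:5}, defined globally via $\bigwedge^m\dlog$, coincides locally with the map induced by sending $\ve_i$ to $[\dlog s_i]$; this is immediate from the construction since $L_{\vr}|_V$ is generated by the classes of $s_i$ modulo $\cM_{X_{\vr}}(D_{\vr})\gp$.
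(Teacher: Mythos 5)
Your proposal is correct and follows essentially the same route as the paper: localize using the splitting $\cM_{X_{\vr}} \simeq \cM_{X_{\vr}}(D_{\vr}) \oplus \bN^{|\vr|}_{X_{\vr}}$ from Lemma \ref{lem:2}\ref{item:12}, deduce $\Omega^1_{X_{\vr}}(\log \cM_{X_{\vr}}) \simeq (\cO_{X_{\vr}} \otimes_{\bZ} L_{\vr}) \oplus \Omega^1_{X_{\vr}}(\log D_{\vr})$, and read off the graded pieces from the resulting exterior-power decomposition. You merely spell out the steps the paper compresses into ``from which we obtain the conclusion easily.''
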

\begin{proof}
Since the question is of local nature,
we may assume
that $\cM_{X_{\vr}}=\cM_{X_{\vr}}(D_{\vr}) \oplus \bN^{|\vr|}_{X_{\vr}}$
as in \ref{item:12}.
Then we have
$\Omega^1_{X_{\vr}}(\log \cM_{X_{\vr}})
\simeq
(\cO_{X_{\vr}} \otimes_{\bZ} L_{\vr}) \oplus \Omega^1_{X_{\vr}}(\log D_{\vr})$,
from which we obtain the conclusion easily.
\end{proof}

\begin{defn}[Residue morphism]
\label{defn:23}
Let $\vr \in \bZ^k_{\ge \ve}$.
By composing the three morphisms,
the surjection
$\Omega^n_{X_{\vr}}(\log \cM_{X_{\vr}})
\longrightarrow \gr_{|\vr|}^{\widehat{W}}\Omega^n_{X_{\vr}}(\log \cM_{X_{\vr}})$,
the inverse of the isomorphism \eqref{eq:6} for $m=|\vr|$
and the inclusion
$\varepsilon_{\vr} \otimes_{\bZ} \Omega^{n-|\vr|}_{X_{\vr}}(\log D_{\vr})
\hookrightarrow
\varepsilon_{\vr} \otimes_{\bZ} \Omega^{n-|\vr|}_{X_{\vr}}(\log \cM_{X_{\vr}})$,
we obtain a morphism of $\cO_{X_{\vr}}$-modules
$\Omega^n_{X_{\vr}}(\log \cM_{X_{\vr}})
\longrightarrow
\varepsilon_{\vr} \otimes \Omega^{n-|\vr|}_{X_{\vr}}(\log \cM_{X_{\vr}})$.
Then we have a morphism of $\cO_X$-modules
\begin{equation}
\label{eq:98}
(a_{\vr})_{\ast}\Omega^n_{X_{\vr}}(\log \cM_{X_{\vr}})
\longrightarrow
(a_{\vr})_{\ast}(\varepsilon_{\vr} \otimes_{\bZ}
\Omega^{n-|\vr|}_{X_{\vr}}(\log \cM_{X_{\vr}}))
\end{equation}
on $X$.
A morphism of $\cO_X$-modules
\begin{equation}
\label{eq:111}
\res_{\vr} \colon
\Omega^n_X(\log \cM_X)
\longrightarrow
(a_{\vr})_{\ast}
(\varepsilon_{\vr}
\otimes_{\bZ}
\Omega^{n-|\vr|}_{X_{\vr}}(\log \cM_{X_{\vr}}))
\end{equation}
is defined as the composite of the canonical morphism
$\Omega^n_X(\log \cM_X)
\longrightarrow
(a_{\vr})_{\ast}\Omega^n_{X_{\vr}}(\log \cM_{X_{\vr}})$
and the morphism \eqref{eq:98}.
It is easy to see
that these morphisms form a morphism of complexes of $\bC$-sheaves
\begin{equation}
\res_{\vr} \colon
\Omega_X(\log \cM_X)
\longrightarrow
(a_{\vr})_{\ast}
(\varepsilon_{\vr}
\otimes_{\bZ}
\Omega_{X_{\vr}}(\log \cM_{X_{\vr}})[-|\vr|])
\end{equation}
for all $\vr \in \bZ^k_{\ge \ve}$.
\end{defn}

\begin{para}[\textbf{Local description of the residue morphism}]
\label{para:2}
For $\vr \in \bZ^k_{\ge \ve}$,
we describe $\res_{\vr}$ locally.
We may work on a local model $(U, \cM_U)$.
Then
$U_{\vr}=\coprod_{\ula \in S_{\vr}(\Lambda)}D[\ula]$
and $D_{\vr}|_{D[\ula]}=D_{\Lambda \setminus \ula} \cap D[\ula]$
as in the proof of Lemma \ref{lem:4}.
For $\ula \in S_{\vr}(\Lambda)$,
we set $D_{\ula}=\sum_{\lambda \in \ula}D_{\lambda}$.
Then we have
\begin{equation}
(\prod_{\lambda \in \Lambda_i}x_{\lambda})
\cdot
\Omega^n_{\bC^{\Lambda} \times \bC^l}(\log D)
\subset W(D_{\ula})_{|\ula|-1}\Omega^n_{\bC^{\Lambda} \times \bC^l}(\log D)
\end{equation}
because $\ula \cap \Lambda_i \not= \emptyset$
for all $i=\ki$.
Therefore the Poincar\'e residue morphism for $W(D_{\ula})$
\begin{equation}
\res^{\ula} \colon
\Omega^n_{\bC^{\Lambda} \times \bC^l}(\log D)
\longrightarrow
\varepsilon(\ula)
\otimes_{\bZ}
\Omega^{n-|\ula|}_{D[\ula]}(\log D_{\Lambda \setminus \ula} \cap D[\ula])
\end{equation}
induces a morphism of $\cO_U$-modules
$\Omega^n_U(\log \cM_U)
\longrightarrow
\varepsilon(\ula)
\otimes_{\bZ}
\Omega^{n-|\ula|}_{D[\ula]}(\log D_{\Lambda \setminus \ula} \cap D[\ula])$
by the identification \eqref{eq:10}.
Composing with the canonical inclusion,
we obtain a morphism
\begin{equation}
\res^{\ula} \colon
\Omega^n_U(\log \cM_U)
\longrightarrow
\varepsilon(\ula)
\otimes_{\bZ}
\Omega^{n-|\ula|}_{D[\ula]}(\log \cM_{D[\ula]})
\end{equation}
denoted by $\res^{\ula}$ again.
Under the identification
\begin{equation}
(a_{\vr})_{\ast}(
\varepsilon_{\vr}
\otimes_{\bZ}
\Omega^{n-|\vr|}_{U_{\vr}}(\log \cM_{U_{\vr}})
)
\simeq
\bigoplus_{\ula \in S_{\vr}(\Lambda)}
\varepsilon(\ula)
\otimes_{\bZ}
\Omega^{n-|\ula|}_{D[\ula]}(\log \cM_{D[\ula]}),
\end{equation}
the equality
$\res_{\vr}=\sum_{\ula \in S_{\vr}(\Lambda)}\res^{\ula}$
can be easily checked.
\end{para}

\begin{lem}
\label{lem:5}
For $\vr \in \bZ^k_{\ge \ve}$ and $I \subset \ski$,
we have
\begin{gather}
\res_{\vr}(W_m\Omega^n_X(\log \cM_X))
\subset
(a_{\vr})_{\ast}(
\varepsilon_{\vr}
\otimes_{\bZ}
W_{m-|\vr|}\Omega^{n-|\vr|}_{X_{\vr}}(\log \cM_{X_{\vr}})) \\
\res_{\vr}(W(I)_m\Omega^n_X(\log \cM_X))=0 \quad
\text{if $|\vr_I| > m$},
\end{gather}
where $\vr_I$ is the image of $\vr \in \bZ^k$
by the projection $\bZ^k \longrightarrow \bZ^I$.
\end{lem}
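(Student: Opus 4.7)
The plan is to work locally on a neighborhood of a point of a local model $(U,\cM_U)$ as in \ref{para:1} and invoke the formula $\res_{\vr}|_U=\sum_{\ula\in S_{\vr}(\Lambda)}\res^{\ula}$ from \ref{para:2}. Here $\res^{\ula}$ is, via the identification \eqref{eq:10}, the classical Poincar\'e residue along the stratum $D[\ula]$ of the simple normal crossing divisor $D$ on $\bC^{\Lambda}\times\bC^l$, followed by the canonical inclusion into $\Omega^{n-|\ula|}_{D[\ula]}(\log \cM_{D[\ula]})$. Both stated inclusions will then be verified summand by summand over $\ula \in S_{\vr}(\Lambda)$.

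For the first inclusion, I would appeal to the standard behavior of the Poincar\'e residue for a simple normal crossing divisor: $\res^{\ula}$ is nonzero only on summands whose wedge contains $\bigwedge_{\lambda\in\ula}\dlog x_{\lambda}$, in which case it strips those $|\ula|$ factors and retains the rest. Consequently it sends $W_m\Omega^n_{\bC^{\Lambda}\times\bC^l}(\log D)$ into $\varepsilon(\ula)\otimes_{\bZ}W_{m-|\ula|}\Omega^{n-|\ula|}_{D[\ula]}(\log D_{\Lambda\setminus\ula}\cap D[\ula])$. The subsequent inclusion into $\varepsilon(\ula)\otimes_{\bZ}\Omega^{n-|\ula|}_{D[\ula]}(\log\cM_{D[\ula]})$ is strict for $W$, because a residue form has no log factor in the $\bN^{\ula}$-directions that distinguish $\cM_{D[\ula]}$ from $\cM_{D[\ula]}(D_{\Lambda\setminus\ula}\cap D[\ula])$ (cf.\ Lemma \ref{lem:4}). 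Since $|\ula|=|\vr|$ for every $\ula\in S_{\vr}(\Lambda)$, summing yields the first inclusion.

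For the second inclusion, the local description recorded just after \eqref{eq:10} identifies $W(I)_m\Omega^n_U(\log\cM_U)$ with the $\cO_U$-submodule locally generated by wedges carrying at most $m$ factors of the form $\dlog x_{\lambda}$ with $\lambda\in\Lambda_I:=\coprod_{i\in I}\Lambda_i$. On the other hand, $\res^{\ula}$ vanishes on any wedge missing some $\dlog x_{\lambda}$ with $\lambda\in\ula$, so to produce a nonzero residue a form must contain at least $|\ula\cap\Lambda_I|$ factors from $\Lambda_I$. By the definition of $S_{\vr}(\Lambda)$, $|\ula\cap\Lambda_I|=\sum_{i\in I}r_i=|\vr_I|$, and the hypothesis $|\vr_I|>m$ then forces $\res^{\ula}$ to annihilate $W(I)_m\Omega^n_U(\log\cM_U)$ for every $\ula$.

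The main point requiring care is the comparison of weight filtrations under the inclusion $\Omega_{D[\ula]}(\log(D_{\Lambda\setminus\ula}\cap D[\ula]))\hookrightarrow\Omega_{D[\ula]}(\log\cM_{D[\ula]})$ in the first part, since $W$ on the right-hand side is the weight filtration with respect to the trivial log structure and a priori counts additional log factors coming from the abstract $\bN^{|\vr|}$-summand of $\cM_{X_{\vr}}$ produced by Lemma \ref{lem:4}. As explained, the image of the residue is confined to the subcomplex generated by log factors from $\Lambda\setminus\ula$, so this distinction causes no increase in weight; everything else reduces to routine bookkeeping.
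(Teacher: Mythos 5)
Your proof is correct and follows exactly the route the paper intends: the paper's own proof of this lemma is the single line ``Easy from the local description above,'' referring to the decomposition $\res_{\vr}=\sum_{\ula\in S_{\vr}(\Lambda)}\res^{\ula}$ on a local model from \ref{para:2}, and you have simply written out that local computation, including the one genuinely non-automatic point (that the weight filtration on $\Omega_{X_{\vr}}(\log\cM_{X_{\vr}})$ counts the extra $\bN^{|\vr|}$-directions, which the residue image avoids).
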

\begin{proof}
Easy from the local description above.
\end{proof}

\begin{para}
For $\vq, \vr \in \bnnZ^k$
with $\vr \ge \vq+\ve$,
the morphism $\res_{\vr}$ in \eqref{eq:111}
factors through the surjection
$\Omega^n_X(\log \cM_X)
\longrightarrow
\Omega^n_X(\log \cM_X)/
\sum_{i=1}^{k}W(i)_{q_i}$
by Lemma \ref{lem:5}.
Thus a morphism of $\cO_X$-modules
\begin{equation}
\label{eq:57}
\res_{\vr} \colon
\Omega^n_X(\log \cM_X)/
\sum_{i=1}^{k}W(i)_{q_i}
\longrightarrow
(a_{\vr})_{\ast}
(\varepsilon_{\vr}
\otimes_{\bZ}
\Omega^{n-|\vr|}_{X_{\vr}}(\log \cM_{X_{\vr}}))
\end{equation}
is obtained,
which is denoted by $\res_{\vr}$ again
by abuse of notation.
\end{para}

\begin{lem}
\label{lem:25}
We have the isomorphism of complexes of $\bC$-sheaves
\begin{equation}
\sum_{\substack{\vr \ge \vq+\ve \\ |\vr|=m}}
\res_{\vr} \colon
\gr_m^W
\bigl(\Omega_X(\log \cM_X)/
\sum_{i=1}^{k}W(i)_{q_i}\bigr)
\overset{\simeq}{\longrightarrow}
\bigoplus_{\substack{\vr \ge \vq+\ve \\ |\vr|=m}}
(a_{\vr})_{\ast}
(\varepsilon_{\vr}
\otimes_{\bZ}
\Omega_{X_{\vr}})[-m]
\end{equation}
for any $\vq \in \bN^k$ and $m \in \bZ$,
under which
$W(I)_l\gr_m^W\bigl(\Omega_X(\log \cM_X)/\sum_{i=1}^{k}W(i)_{q_i}\bigr)$
is identified with
the direct sum of
$(a_{\vr})_{\ast}(\varepsilon_{\vr} \otimes_{\bZ} \Omega_{X_{\vr}})[-m]$
over the index set
\begin{equation}
\label{eq:8}
\{ \vr \in \bN^k \mid
\vr \ge \vq+\ve, |\vr|=m, |\vr_I| \le l\},
\end{equation}
for all $I \subset \ski$ and $l \in \bZ$.
\end{lem}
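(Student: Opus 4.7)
The question is local on $X$, so my plan is to reduce to a local model $(U, \cM_U)$ as in \ref{para:1} with its partition $\Lambda = \coprod_{i=1}^k \Lambda_i$, and work with the identification \eqref{eq:10} together with the explicit description of $\res_{\vr}$ given in \ref{para:2}. The classical Poincar\'e residue for the reduced \snc divisor $D$ on $\bC^\Lambda \times \bC^l$ yields an isomorphism
\begin{equation*}
\sum_{|\ula|=m} \res^{\ula}
\colon
\gr_m^W \Omega^n_{\bC^\Lambda \times \bC^l}(\log D)
\overset{\simeq}{\longrightarrow}
\bigoplus_{|\ula| = m} \varepsilon(\ula)
\otimes_\bZ \Omega^{n-m}_{D[\ula]}.
\end{equation*}
Tensoring with $\cO_U$ and invoking \eqref{eq:10}, together with the equality $U_{\vr} = \coprod_{\ula \in S_{\vr}(\Lambda)} D[\ula]$ from the proof of Lemma \ref{lem:4}, this becomes the direct sum over $\vr \ge \ve$ with $|\vr| = m$ of the morphisms $\res_{\vr}$ in \eqref{eq:111}.

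Next, I would analyze the effect of passing to the quotient by $\sum_{i=1}^{k} W(i)_{q_i}$. Locally, $W(i)_{q_i} \Omega^n_U(\log \cM_U)$ consists of forms representable with at most $q_i$ logarithmic factors coming from $\Lambda_i$, so under the Poincar\'e residue $W(i)_{q_i} \cap W_m$ maps onto those summands indexed by $\ula$ with $|\ula \cap \Lambda_i| \le q_i$. Hence $\sum_i W(i)_{q_i}$ corresponds, after passing to $\gr_m^W$, to the direct sum of summands indexed by $\ula$ with $|\ula \cap \Lambda_i| \le q_i$ for some $i$. The quotient therefore coincides with the direct sum over $\ula$ satisfying $|\ula \cap \Lambda_i| \ge q_i + 1$ for all $i$, which is exactly the range $\ula \in S_{\vr}(\Lambda)$ with $\vr \ge \vq + \ve$ and $|\vr| = m$. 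Re-packaging via the disjoint union description of $U_{\vr}$ gives the stated isomorphism.

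For the filtration $W(I)$, Lemma \ref{lem:5} already shows that $\res_{\vr}$ annihilates $W(I)_l$ when $|\vr_I| > l$. Conversely, the local description of $W(I)_l$ as forms with at most $l$ logarithmic factors from $\Lambda_I = \coprod_{i \in I}\Lambda_i$ implies that under the Poincar\'e residue its image in $\gr_m^W$ sits in the summands indexed by $\ula$ with $|\ula \cap \Lambda_I| \le l$, i.e.\ $|\vr_I| \le l$. Combining with the quotient analysis above yields the claimed identification with the direct sum over the index set \eqref{eq:8}.

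The main point to verify carefully is the combinatorial matching between the summands in the classical residue decomposition and the reindexing by $\vr \in \bN^k$ with $|\vr| = m$; once the local decomposition $U_{\vr} = \coprod_{\ula \in S_{\vr}(\Lambda)} D[\ula]$ and the definition of $\varepsilon_{\vr}$ are invoked, the compatibility of the Poincar\'e residue with the refinements $W(i)$ and $W(I)$ reduces to counting log factors distributed across the partition $\Lambda = \coprod_i \Lambda_i$, which is straightforward bookkeeping.
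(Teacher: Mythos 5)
Your proposal is correct and follows essentially the same route as the paper: reduce to a local model, identify the quotient with the corresponding quotient of $\Omega_{\bC^{\Lambda}\times\bC^l}(\log D)$ via \eqref{eq:10}, apply the classical Poincar\'e residue, and match summands combinatorially using the partition $\Lambda=\coprod_i\Lambda_i$ and the local description of $\res_{\vr}$ in \ref{para:2}. The only imprecision is the intermediate claim that after tensoring with $\cO_U$ the residue ``becomes the direct sum over $\vr\ge\ve$'': the summands indexed by $\ula$ missing some $\Lambda_i$ do not disappear until one passes to the quotient by $\sum_i W(i)_{q_i}$ (they contribute $\Omega_{D[\ula]\cap U}$), but your subsequent analysis of the quotient handles exactly these terms, so the argument as a whole is sound.
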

\begin{proof}
We may work on a local model
$(U, \cM_U)$
and use the same notation as in \ref{para:1}.
In particular, the partition
$\Lambda=\coprod_{i=1}\Lambda_i$
is associated to the semistable morphism $\varphi$ in \ref{item:3}.
Since
\begin{equation}
(\prod_{\lambda \in \Lambda_i}x_{\lambda})
\cdot
\Omega^n_{\bC^{\Lambda} \times \bC^l}(\log D)
\subset W(D_i)_0\Omega^n_{\bC^{\Lambda} \times \bC^l}(\log D)
\end{equation}
for all $i=\ki$,
the identification \eqref{eq:10}
induces an isomorphism
\begin{equation}
\label{eq:93}
\Omega_{\bC^{\Lambda} \times \bC^l}(\log D)
/\sum_{i=1}^{k}W(D_i)_{q_i}
\overset{\simeq}{\longrightarrow}
\Omega_U(\log \cM_U)
/\sum_{i=1}^{k}W(i)_{q_i}
\end{equation}
for any $\vq \in \bN^k$.
Then the local description of $\res_r$ in \ref{para:2}
implies the conclusion
from the usual Poincar\'e residue isomorphism
as in \cite[3.1]{DeligneII}
(cf. \cite[Section 3]{FujisawaLHSSV}).
\end{proof}

The following corollary will be used
in Section \ref{sec:proofs-main-theorems}.

\begin{cor}
\label{cor:1}
For $I \subset \ski$,
we have $L=L(I) \ast L(\ski \setminus I)$ on $A_{\bC}$.
\end{cor}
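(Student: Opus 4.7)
The plan is to unfold the definitions, reduce the corollary to an equality of classical weight filtrations on the log de Rham complex, and verify that equality combinatorially on a local model.

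Set $J = \ski \setminus I$. By the direct sum decomposition \eqref{eq:105} and the explicit formulas for the filtrations on $A_\bC$ (Remark \ref{rmk:8} and Definition \ref{defn:4}), each of $L$, $L(I)$, $L(J)$ splits along the $\vq$-decomposition, and both intersection and sum commute with this splitting. On the $\vq$-component $(A_\bC^n)_\vq = \Omega^{n+k}_X(\log \cM_X)/\sum_i W(i)_{q_i}$ the levels of the three filtrations are $W_{m+2|\vq|+k}$, $W(I)_{a+2|\vq_I|+|I|}$ and $W(J)_{b+2|\vq_J|+|J|}$ respectively. Using $|\vq_I|+|\vq_J|=|\vq|$ and $|I|+|J|=k$, the substitution $\tilde a = a+2|\vq_I|+|I|$, $\tilde b = b+2|\vq_J|+|J|$ transforms the claim into the equality
\begin{equation}
W_p = \sum_{\tilde a + \tilde b = p} W(I)_{\tilde a} \cap W(J)_{\tilde b}
\end{equation}
on $\Omega^{n+k}_X(\log \cM_X)/\sum_i W(i)_{q_i}$ for every $p$.

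Since this is a local question, I would work on a local model $(U, \cM_U)$ as in \ref{para:1}, with $\Lambda = \coprod_i \Lambda_i$, $\Lambda_I = \coprod_{i \in I} \Lambda_i$ and $\Lambda_J = \coprod_{i \in J} \Lambda_i$. Via \eqref{eq:10} the complex $\Omega_U(\log \cM_U)$ is obtained by base change from $\Omega_{\bC^\Lambda \times \bC^l}(\log D)$, which is a free $\cO$-module with the distinguished basis $\{\dlog x_\ula \wedge dz_{\vec j}\}$ indexed by $\ula \subseteq \Lambda$ and $\vec j \subseteq \{1,\dots,l\}$. A basis element indexed by $(\ula, \vec j)$ lies in $W_m$ iff $|\ula| \leq m$, in $W(I)_{\tilde a}$ iff $|\ula \cap \Lambda_I| \leq \tilde a$, in $W(J)_{\tilde b}$ iff $|\ula \cap \Lambda_J| \leq \tilde b$, and in $\sum_i W(i)_{q_i}$ iff $|\ula \cap \Lambda_i| \leq q_i$ for some $i$. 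Hence all four submodules are generated by subsets of this common basis.

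Because $\Lambda = \Lambda_I \sqcup \Lambda_J$, we have $|\ula| = |\ula \cap \Lambda_I| + |\ula \cap \Lambda_J|$, so a direct basis-element inspection immediately yields the displayed equality on $\Omega_U(\log \cM_U)$: an element of $W_p$ is a sum of basis elements with $|\ula| \leq p$, and each such element already lies in $W(I)_{|\ula \cap \Lambda_I|} \cap W(J)_{|\ula \cap \Lambda_J|}$; the reverse inclusion is obvious. Descent to the quotient by $K := \sum_i W(i)_{q_i}$ is then automatic from the basis compatibility, via the distributive identity $(W(I)_{\tilde a}+K) \cap (W(J)_{\tilde b}+K) = (W(I)_{\tilde a} \cap W(J)_{\tilde b}) + K$ which holds whenever all participants are spanned by basis elements. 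There is no serious obstacle; the only delicate step is keeping track of the shifts $|I|, |J|, k, 2|\vq_I|, 2|\vq_J|, 2|\vq|$ appearing in the definitions of $L, L(I), L(J)$, and these are resolved automatically by the identities $|I|+|J|=k$ and $|\vq_I|+|\vq_J|=|\vq|$.
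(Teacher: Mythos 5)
Your reduction to the statement $W = W(I)\ast W(\ski\setminus I)$ on $\Omega^{n+k}_X(\log \cM_X)/\sum_{i}W(i)_{q_i}$ is exactly the reduction the paper makes, and your bookkeeping with the shifts $|I|,|J|,2|\vq_I|,2|\vq_J|$ is fine. The gap is in the local verification: the assertion that $W_m$, $W(I)_{\tilde a}$, $W(J)_{\tilde b}$ and $\sum_i W(i)_{q_i}$ are all generated by subsets of the common basis $\{\dlog x_{\ula}\wedge dz_{\vj}\}$ is false. Since $dx_\lambda = x_\lambda\,\dlog x_\lambda$, already $W_0\Omega^1_{\bC^{\Lambda}\times\bC^l}(\log D)=\Omega^1_{\bC^{\Lambda}\times\bC^l}$ contains $dx_\lambda$, which is not in the $\cO$-span of the basis elements with $\ula=\emptyset$. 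In general $W_m\Omega^n(\log D)$ decomposes along your basis as $\bigoplus_{\ula,\vj} I_{\ula,m}\cdot\dlog x_{\ula}\wedge dz_{\vj}$, where $I_{\ula,m}$ is the (nonzero) ideal generated by the squarefree monomials of degree $\max(|\ula|-m,0)$ in the variables $x_\lambda$, $\lambda\in\ula$; similarly for $W(I)$ and $W(J)$. This false premise is load-bearing in two places. First, the inclusion $W(I)_{\tilde a}\cap W(J)_{\tilde b}\subset W_{\tilde a+\tilde b}$, which you call obvious, is in fact the nontrivial half of the identity: an element of the intersection is a sum of generators each with at most $\tilde a$ logarithmic factors from $\Lambda_I$, and \emph{separately} a sum of generators each with at most $\tilde b$ factors from $\Lambda_J$, and nothing immediate produces a single presentation achieving both bounds. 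Second, the distributivity $(W(I)_{\tilde a}+K)\cap(W(J)_{\tilde b}+K)=(W(I)_{\tilde a}\cap W(J)_{\tilde b})+K$ used for the descent to the quotient rests on the same premise.

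Both points can be repaired: the coefficient modules above are squarefree monomial ideals, which form a distributive lattice, and a short computation with them yields both inclusions and the descent. But that is a genuinely different, longer computation than basis-element inspection. The paper takes another route for the hard half: it obtains $W_m\subset (W(I)\ast W(J))_m$ from the generators essentially as you do, and then proves $W(I)_a\cap W(J)_b\subset W_{a+b}$ by passing to $\gr^W$ and invoking the residue isomorphism of Lemma \ref{lem:25}, on which $W(I)_a\cap W(J)_b$ visibly vanishes when $a+b<m$; this argument is carried out directly on the quotient by $\sum_i W(i)_{q_i}$ (via the isomorphism \eqref{eq:93}), a step your write-up does not address since you work on $\bC^{\Lambda}\times\bC^l$ rather than on $U$.
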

\begin{proof}
We set $J=\ski \setminus I$.
By the definition \eqref{eq:2} of $L(I)$ on $A_{\bC}$,
it suffices to prove the equality
$W(I) \ast W(J)=W$
on 
$\Omega_X(\log \cM_X)/\sum_{i=1}^{k}W(i)_{q_i}$
for all $\vq \in \bN^k$.
From the isomorphism \eqref{eq:93}
and the equality $W(D_I) \ast W(D_J)=W(D)$
on $\Omega_{\bC^{\Lambda} \times \bC^l}(\log D)$
for a local situation in \ref{para:1},
we have
$W_m \subset (W(I) \ast W(J))_m$ on 
$\Omega_X(\log \cM_X)/\sum_{i=1}^{k}W(i)_{q_i}$
for all $m$.
By the lemma above,
$W(I)_a \cap W(J)_b=0$ on
$\gr_m^W\bigl(\Omega_X(\log \cM_X)/\sum_{i=1}^{k}W(i)_{q_i}\bigr)$
if $a+b < m$.
Therefore $W(I)_a \cap W(J)_b \subset W_{a+b}$ on
$\Omega_X(\log \cM_X)/\sum_{i=1}^{k}W(i)_{q_i}$
for any $a, b \in \bZ$.
\end{proof}

\begin{defn}
Let $\vr \in \bZ^k_{\ge \ve}$.
Then the monoid subsheaf $\cM_{X_{\vr}}(D_{\vr})$ of $\cM_{X_{\vr}}$
gives us
an increasing filtration
$W(\cM_{X_{\vr}}(D_{\vr})_{\bQ}\gp)$
on $\kos_X(\cM_X)$
as in \cite[Definition 1.8]{FujisawaMHSLSD}.
This filtration is denoted by $\widehat{W}$ for a while
as in the case of the log de Rham complex.
\end{defn}

\begin{lem}
Let $\vr \in \bZ^k_{\ge \ve}$.
There exists an isomorphism
of complexes of $\bQ$-sheaves
\begin{equation}
\label{eq:15}
\bigwedge^mL_{\vr} \otimes \kos_{X_{\vr}}(D_{\vr})[-m]
\overset{\simeq}{\longrightarrow}
\gr_m^{\widehat{W}}\kos_{X_{\vr}}(\cM_{X_{\vr}})
\end{equation}
for all $m$.
Therefore,
$\widehat{W}_{|\vr|}\kos_{X_{\vr}}(\cM_{X_{\vr}})
=\kos_{X_{\vr}}(\cM_{X_{\vr}})$.
\end{lem}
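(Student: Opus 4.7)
The plan is to mimic the proof of the parallel isomorphism \eqref{eq:6} for the log de Rham complex. Since the question is local, by Lemma~\ref{lem:4}\ref{item:11} I would assume that there is a decomposition of log structures $\cM_{X_{\vr}} \simeq \cM_{X_{\vr}}(D_{\vr}) \oplus \bN^{|\vr|}_{X_{\vr}}$, which upon passing to group sheaves and tensoring with $\bQ$ yields
\[
\cM_{X_{\vr},\bQ}\gp \simeq \cM_{X_{\vr}}(D_{\vr})_{\bQ}\gp \oplus \bQ^{|\vr|}_{X_{\vr}},
\]
and hence a bigraded decomposition of the exterior algebra
\[
\textstyle\bigwedge \cM_{X_{\vr},\bQ}\gp \simeq \bigwedge \cM_{X_{\vr}}(D_{\vr})_{\bQ}\gp \otimes_{\bQ} \bigwedge \bQ^{|\vr|}_{X_{\vr}},
\]
in which the direct summand corresponding to exterior degree $m$ in the second factor is canonically identified with $\bigwedge^m L_{\vr}$, since $L_{\vr} \simeq \bQ^{|\vr|}$ locally.

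First, I would construct the morphism \eqref{eq:15} globally. Since $\kos_{X_{\vr}}(\cM_{X_{\vr}})$ is built from $\bigwedge \cM_{X_{\vr},\bQ}\gp$ together with the Koszul complex of $\cO^{\ast}_{X_{\vr}}$ (cf.\ \cite[(2.3)]{FujisawaMHSLSD} and the local description \eqref{eq:23}), the wedge-product structure on $\bigwedge \cM_{X_{\vr},\bQ}\gp$ provides a natural morphism
\[
\textstyle\bigwedge^m \cM_{X_{\vr},\bQ}\gp \otimes \kos_{X_{\vr}}(\cM_{X_{\vr}}(D_{\vr}))[-m] \longrightarrow \kos_{X_{\vr}}(\cM_{X_{\vr}}),
\]
using the canonical morphism $\kos_{X_{\vr}}(\cM_{X_{\vr}}(D_{\vr})) \to \kos_{X_{\vr}}(\cM_{X_{\vr}})$ induced by the inclusion $\cM_{X_{\vr}}(D_{\vr}) \hookrightarrow \cM_{X_{\vr}}$. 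By the definition of $\widehat{W} = W(\cM_{X_{\vr}}(D_{\vr})_{\bQ}\gp)$ in \cite[Definition 1.8]{FujisawaMHSLSD}, this morphism lands in $\widehat{W}_m\kos_{X_{\vr}}(\cM_{X_{\vr}})$ and the subspace $\cM_{X_{\vr}}(D_{\vr})_{\bQ}\gp \subset \cM_{X_{\vr},\bQ}\gp$ wedges into $\widehat{W}_{m-1}$, so on the quotient the morphism factors through $\bigwedge^m L_{\vr}$. Composition with the projection $\widehat{W}_m \twoheadrightarrow \gr_m^{\widehat{W}}$ then yields the desired morphism \eqref{eq:15}.

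Second, I would verify locally that \eqref{eq:15} is an isomorphism. Using the local decomposition of $\cM_{X_{\vr},\bQ}\gp$ above and the explicit form of the Koszul complex, the stalk of $\kos_{X_{\vr}}(\cM_{X_{\vr}})$ decomposes as a double sum indexed by exterior degrees in the two factors, and (analogously to the description \eqref{eq:109} of $W(I)$ on the stalk of $\kos_U(\cM_U)$) the subcomplex $\widehat{W}_m$ is precisely the part where the exterior degree in the $\bQ^{|\vr|}_{X_{\vr}}$ factor is $\le m$. Thus $\gr_m^{\widehat{W}}$ picks out the part of exterior degree exactly $m$ in the second factor, which is canonically $\bigwedge^m L_{\vr} \otimes \kos_{X_{\vr}}(D_{\vr})[-m]$, and this identification matches the globally defined map. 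The final assertion $\widehat{W}_{|\vr|}\kos_{X_{\vr}}(\cM_{X_{\vr}}) = \kos_{X_{\vr}}(\cM_{X_{\vr}})$ then follows since $L_{\vr}$ is locally free of rank $|\vr|$.

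The main obstacle will be the bookkeeping for signs and shifts inherent to the Koszul construction under the wedge product; but since this parallels the treatment for the log de Rham complex, it should reduce to a straightforward linear-algebra verification and introduce no essential new difficulty.
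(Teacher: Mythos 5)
Your proposal is correct, but it takes a genuinely different (more self-contained) route than the paper: the paper's entire proof is a citation of Proposition 1.10 of \cite{FujisawaMHSLSD}, which computes the graded pieces of the filtration $W(\cN_{\bQ}\gp)$ on the Koszul complex in precisely this setting --- the hypothesis needed there is condition (3.4.1) of that paper, which the present paper has already observed (Remark after Lemma \ref{lem:4}) is satisfied because of Lemma \ref{lem:4}\ref{item:11}. What you propose is essentially to reprove that proposition by hand in this special case, in parallel with the Poincar\'e-residue-type isomorphism \eqref{eq:6} for the log de Rham complex; that is a legitimate alternative, and it has the virtue of making the map in \eqref{eq:15} explicit. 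The one step you should not dismiss as mere bookkeeping is the claim that the wedge map $\bigwedge^m\cM_{X_{\vr},\bQ}\gp\otimes\kos_{X_{\vr}}(\cM_{X_{\vr}}(D_{\vr}))[-m]\longrightarrow\kos_{X_{\vr}}(\cM_{X_{\vr}})$ is a morphism of \emph{complexes}: in the de Rham case the analogous statement is immediate from $d(\dlog m)=0$, but the differential of $\kos_X(\cM_X)$ as defined in \cite[(2.3)]{FujisawaMHSLSD} mixes the exterior-algebra factor with the $\kos(\cO^{\ast}_X)$ factor, and checking this compatibility is exactly the nontrivial content that the cited Proposition 1.10 packages. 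Granting that, your remaining steps are sound: the factorization through $\bigwedge^mL_{\vr}$ on $\gr_m^{\widehat W}$ follows because wedging with a section of $\cM_{X_{\vr}}(D_{\vr})_{\bQ}\gp$ lands in $\widehat W_{m-1}$, the local verification via the splitting of Lemma \ref{lem:2}\ref{item:12} identifies $\widehat W_m$ with the part of exterior degree at most $m$ in the $\bN^{|\vr|}$-direction exactly as in \eqref{eq:109}, and the final assertion follows from $\rank L_{\vr}=|\vr|$ (note only that $L_{\vr}$ is locally $\bZ^{|\vr|}$, not $\bQ^{|\vr|}$, though this is immaterial after tensoring with the $\bQ$-complex).
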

\begin{proof}
By Proposition 1.10 of \cite{FujisawaMHSLSD}.
\end{proof}

\begin{defn}[Residue morphism for the Koszul complex]
\label{defn:30}
Let $\vr \in \bZ^k_{\ge \ve}$.
By composing the three morphisms,
the surjection
$\kos_{X_{\vr}}(\cM_{X_{\vr}})
\longrightarrow
\gr_{|\vr|}^{\widehat{W}}\kos_{X_{\vr}}(\cM_{X_{\vr}})$,
the inverse of the isomorphism \eqref{eq:15} for $m=|\vr|$
and the inclusion
$\varepsilon_{\vr} \otimes_{\bZ} \kos_{X_{\vr}}(D_{\vr})[-|\vr|]
\hookrightarrow
\varepsilon_{\vr} \otimes_{\bZ} \kos_{X_{\vr}}(\cM_{X_{\vr}})[-|\vr|]$,
we obtain a morphism of complexes of $\bQ$-sheaves
$\kos_{X_{\vr}}(\cM_{X_{\vr}})
\longrightarrow
\varepsilon_{\vr} \otimes_{\bZ} \kos_{X_{\vr}}(\cM_{X_{\vr}})[-|\vr|]$.
Then we have a morphism of complexes of $\bQ$-sheaves
\begin{equation}
\label{eq:102}
(a_{\vr})_{\ast}\kos_{X_{\vr}}(\cM_{X_{\vr}})
\longrightarrow
(a_{\vr})_{\ast}(\varepsilon_{\vr} \otimes_{\bZ}
\kos_{X_{\vr}}(\cM_{X_{\vr}}))[-|\vr|]
\end{equation}
on $X$.
A morphism of complexes of $\bQ$-sheaves
\begin{equation}
\label{eq:112}
\res^{\bQ}_{\vr} \colon
\kos_X(\cM_X)
\longrightarrow
(a_{\vr})_{\ast}
(\varepsilon_{\vr}
\otimes_{\bZ}
\kos_{X_{\vr}}(\cM_{X_{\vr}}))[-|\vr|]
\end{equation}
is defined as the composite of the canonical morphism
$\kos_X(\cM_X)
\longrightarrow
(a_{\vr})_{\ast}\kos_{X_{\vr}}(\cM_{X_{\vr}})$
and the morphism \eqref{eq:102}.
\end{defn}

\begin{para}[\textbf{The stalk of the residue morphism $\res_{\vr}^{\bQ}$}]
\label{para:4}
Now we describe $\res_{\vr}^{\bQ}$ stalkwise.
We may work at the origin $x=0$
of a local model $(U, \cM_U)$
in \ref{para:1}.
Similarly to \eqref{eq:23},
we have
\begin{equation}
(a_{\vr})_{\ast}
(\varepsilon_{\vr}
\otimes_{\bZ}
\kos_{X_{\vr}}(\cM_{X_{\vr}})[-|\vr|])^n_x
\simeq
\bigoplus_{\umu \in S_{\vr}(\Lambda)}
\bigoplus_{\unu \in S(\Lambda)}
\varepsilon(\umu)
\otimes_{\bZ}
\varepsilon(\unu)
\otimes_{\bZ}
\kos(\cO^{\ast}_{D[\umu],x})^{n-|\vr|-|\unu|}
\end{equation}
for all $n \in \bZ$.
Via the identification \eqref{eq:23},
\begin{equation}
\res^{\bQ}_{\vr, x}
(\varepsilon(\ula)
\otimes_{\bZ}
\kos(\cO^{\ast}_{U,x})^{n-|\ula|})
\subset
\bigoplus_{\substack{\umu \in S_{\vr}(\Lambda) \\ \umu \subset \ula}}
\varepsilon(\umu)
\otimes_{\bZ}
\varepsilon(\ula \setminus \umu)
\otimes_{\bZ}
\kos(\cO^{\ast}_{D[\umu],x})^{n-|\ula|}
\end{equation}
and the restriction of $\res^{\bQ}_{\vr, x}$
on the direct summand
$\varepsilon(\ula)
\otimes_{\bZ}
\kos(\cO^{\ast}_U)_x^{n-|\ula|}$
is identified with
\begin{equation}
\label{eq:24}
\sum_{\substack{\umu \in S_{\vr}(\Lambda) \\ \umu \subset \ula}}
\chi(\umu, \ula \setminus \umu)^{-1}
\otimes
\kos(a[\umu]^{\ast}_x),
\end{equation}
where $\chi(\umu, \ula \setminus \umu)$
is the isomorphism \eqref{eq:44}
and $\kos(a[\umu]^{\ast}_x)$
is the induced morphism from the canonical morphism
$a[\umu]^{\ast}_x \colon \cO^{\ast}_{U,x} \longrightarrow \cO^{\ast}_{D[\umu],x}$
for the closed immersion
$a[\umu] \colon D[\umu] \hookrightarrow U$.
\end{para}

\begin{lem}
For $\vr \in \bZ_{\ge \ve}^k$
and $I \subset \ski$,
we have
\begin{gather}
\res^{\bQ}_{\vr}(W_{|\vr|}\kos_X(\cM_X))
\subset
(a_{\vr})_{\ast}(
\varepsilon_{\vr}
\otimes_{\bZ}
\kos_{X_{\vr}}(\cO^{\ast}_{X_{\vr}}))[-|\vr|] \\
\res^{\bQ}_{\vr}(W(I)_m\kos_X(\cM_X))=0 \quad
\text{if $|\vr_I| > m$.}
\end{gather}
\end{lem}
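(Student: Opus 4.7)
The proof plan is to check both inclusions stalkwise at the origin of a local model $(U, \cM_U)$ as in \ref{para:1}, using the explicit descriptions of the filtrations given in \eqref{eq:109} and of the residue morphism given in \eqref{eq:24}. Since the statements are of local nature on $X$ and both the filtrations $W, W(I)$ on $\kos_X(\cM_X)$ and the residue morphism $\res_{\vr}^{\bQ}$ have already been computed explicitly at such a stalk, the argument reduces to a finite combinatorial check on indices.

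First I would check the second inclusion, since it is the cleaner one. At a stalk of $U$, the summand indexed by $\ula \in S(\Lambda)$ lies in $W(I)_m\kos_U(\cM_U)^n_x$ precisely when $|\ula \cap \Lambda_I| \le m$, where $\Lambda_I = \coprod_{i \in I}\Lambda_i$. On the other hand, \eqref{eq:24} shows that the restriction of $\res^{\bQ}_{\vr,x}$ to the summand $\varepsilon(\ula) \otimes \kos(\cO_{U,x}^{\ast})^{n-|\ula|}$ is a sum over those $\umu \in S_{\vr}(\Lambda)$ with $\umu \subset \ula$. Any such $\umu$ satisfies $|\umu \cap \Lambda_i| = r_i$ for all $i$, so $|\umu \cap \Lambda_I| = |\vr_I|$, and the inclusion $\umu \subset \ula$ forces $|\vr_I| \le |\ula \cap \Lambda_I| \le m$. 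Under the hypothesis $|\vr_I| > m$, there is no such $\umu$, so every term in \eqref{eq:24} vanishes.

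For the first inclusion, I specialize the same analysis to $I=\ski$, so that $W_m$ corresponds to the summands with $|\ula| \le m$. For $\ula$ with $|\ula| \le |\vr|$, a subset $\umu \in S_{\vr}(\Lambda)$ with $\umu \subset \ula$ can exist only if $|\ula| \ge |\umu| = |\vr|$, forcing $|\ula|=|\vr|$ and hence $\umu = \ula$, $\ula \setminus \umu = \emptyset$. Then \eqref{eq:24} collapses to the single term $\chi(\ula, \emptyset)^{-1} \otimes \kos(a[\ula]^{\ast}_x)$, whose image lies in $\varepsilon(\ula) \otimes \varepsilon(\emptyset) \otimes \kos(\cO^{\ast}_{D[\ula],x})^{n-|\vr|}$, that is, in the factor coming from the trivial log structure on $X_{\vr}$. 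Via the direct sum decomposition $(a_{\vr})_{\ast}\kos_{X_{\vr}}(\cO^{\ast}_{X_{\vr}})_x = \bigoplus_{\umu \in S_{\vr}(\Lambda)}\kos(\cO^{\ast}_{D[\umu],x})$, this is exactly the target $(a_{\vr})_{\ast}(\varepsilon_{\vr} \otimes_{\bZ} \kos_{X_{\vr}}(\cO^{\ast}_{X_{\vr}}))[-|\vr|]$.

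The only mildly subtle point is keeping track of the various identifications: the direct sum decomposition of $\kos_U(\cM_U)_x$ by $\ula \in S(\Lambda)$, the further decomposition of the target of $\res^{\bQ}_{\vr, x}$ by pairs $(\umu, \unu)$, and the compatibility between the global object $(a_{\vr})_{\ast}\varepsilon_{\vr}$ and the local piece $\varepsilon(\umu)$ for each $\umu \in S_{\vr}(\Lambda)$. None of these are hard but care is required so that the inclusion in the first statement is genuinely into the subcomplex with trivial log structure on $X_{\vr}$, and not merely into a subcomplex of $(a_{\vr})_{\ast}(\varepsilon_{\vr} \otimes \kos_{X_{\vr}}(\cM_{X_{\vr}}))[-|\vr|]$.
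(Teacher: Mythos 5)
Your proposal is correct and follows essentially the same route as the paper: the paper's proof simply says to work stalkwise on a local model, identifies $W(I)_m\kos_U(\cM_U)^n_x$ with the sum of the summands $\varepsilon(\ula)\otimes\kos(\cO^{\ast}_{U,x})^{n-|\ula|}$ over $\ula$ with $|\ula\cap\Lambda_I|\le m$, and leaves the rest as an easy check against the stalkwise formula \eqref{eq:24} for $\res^{\bQ}_{\vr,x}$. Your write-up just makes that combinatorial check explicit ($|\vr_I|=|\umu\cap\Lambda_I|\le|\ula\cap\Lambda_I|\le m$ for the vanishing statement, and $\umu=\ula$, $\unu=\emptyset$ for the first inclusion), which is exactly what the paper intends.
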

\begin{proof}
We may work stalkwise.
Under the identification \eqref{eq:23},
$W(I)_m\kos_X(\cM_X)^n_x$
is identified with
\begin{equation}
\bigoplus_{\substack{\ula \in S(\Lambda) \\ |\ula \cap \Lambda_I| \le m}}
\varepsilon(\ula)
\otimes_{\bZ}
\kos_U(\cO^{\ast}_U)_x^{n-|\ula|},
\end{equation}
where $\Lambda_I=\coprod_{i \in I}\Lambda_i$.
Then we can easily check the conclusions.
\end{proof}

\begin{lem}
\label{lem:26}
We have the quasi-isomorphism
\begin{equation}
\sum_{\substack{\vr \ge \vq+\ve \\ |\vr|=m}}
\res^{\bQ}_{\vr} \colon
\gr_m^W\bigl(
\kos_X(\cM_X)/\sum_{i=1}^{k}W(i)_{q_i}
\bigr)
\longrightarrow
\bigoplus_{\substack{\vr \ge \vq+\ve \\ |\vr|=m}}
(a_{\vr})_{\ast}(
\varepsilon_{\vr}
\otimes_{\bZ}
\kos_{X_{\vr}}(\cO^{\ast}_{X_{\vr}}))[-m]
\end{equation}
for any $\vq \in \bN^k$ and $m \in \bZ$.
Similarly, we have the quasi-isomorphism
\begin{equation}
\sum\res^{\bQ}_{\vr} \colon
W(I)_l\gr_m^W\bigl(\kos_X(\cM_X)/\sum_{i=1}^{k}W(i)_{q_i}\bigr)
\longrightarrow
\bigoplus
(a_{\vr})_{\ast}(
\varepsilon_{\vr}
\otimes_{\bZ}
\kos_{X_{\vr}}(\cO^{\ast}_{X_{\vr}}))[-m]
\end{equation}
for all $I \subset \ski$ and $l \in \bZ$,
where the sum and the direct sum are taken
over the same index set as \eqref{eq:8}.
\end{lem}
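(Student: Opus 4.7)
The plan is to adapt the proof of Lemma \ref{lem:25} to the Koszul setting. The only essentially new ingredient is that the residue morphism $\res^{\bQ}_{\vr}$ in \eqref{eq:112} will only induce a quasi-isomorphism on associated gradeds rather than a strict isomorphism, because the Koszul complex $\kos(\cO^{\ast})$ is merely a resolution of $\bQ$ rather than being it on the nose.

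First I would reduce to the local model $(U, \cM_U)$ of \ref{para:1} and work at the stalk of a point $x \in U$, the origin being the worst case. Combining the stalk decomposition \eqref{eq:23} with the description of $W(i)_{q_i}$ in \eqref{eq:109}, the stalk
\begin{equation}
\gr_m^W\bigl(\kos_U(\cM_U)\bigl/\textstyle\sum_{i=1}^{k}W(i)_{q_i}\bigr)^n_x
\end{equation}
decomposes as the direct sum, over $\ula \in S(\Lambda)$ with $|\ula|=m$ and $|\ula \cap \Lambda_i| \ge q_i+1$ for all $i$, of $\varepsilon(\ula) \otimes_{\bZ} \kos(\cO^{\ast}_{U,x})^{n-m}$. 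Grouping these summands by the multi-index $\vr=(|\ula \cap \Lambda_i|)_{i=1}^{k}$, the conditions become $\vr \ge \vq+\ve$ and $|\vr|=m$, while for each such $\vr$ the indices $\ula$ range over $S_{\vr}(\Lambda)$. Meanwhile, the decomposition $U_{\vr} = \coprod_{\ula \in S_{\vr}(\Lambda)} D[\ula]$ established in the proof of Lemma \ref{lem:4} gives the parallel stalk description of the right-hand side with $\kos(\cO^{\ast}_{U,x})$ replaced by $\kos(\cO^{\ast}_{D[\ula], x})$.

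Next I would feed this into the explicit stalk formula \eqref{eq:24} for $\res^{\bQ}_{\vr, x}$: on the summand indexed by $\ula$ with $|\ula| = m = |\vr|$, the sum over $\umu \in S_{\vr}(\Lambda)$ with $\umu \subset \ula$ collapses to the single term $\umu = \ula$ (forced by $|\umu| = |\ula|$), and $\chi(\ula, \emptyset)^{-1}$ is the canonical identification. Hence after passing to $\gr_m^W$ the total residue map becomes the direct sum of the morphisms $\id_{\varepsilon(\ula)} \otimes \kos(a[\ula]^{\ast}_x)$ indexed by all admissible pairs $(\vr, \ula)$.

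Finally, each $\kos(a[\ula]^{\ast}_x)$ is a quasi-isomorphism: by item \ref{item:41} of \ref{para:11}, both $\bQ_U \to \kos_U(\cO^{\ast}_U)$ and $\bQ_{D[\ula]} \to \kos_{D[\ula]}(\cO^{\ast}_{D[\ula]})$ are quasi-isomorphisms, and by naturality of this construction they intertwine $\kos(a[\ula]^{\ast})$ with the adjunction map $\bQ_U \to (a[\ula])_{\ast}\bQ_{D[\ula]}$, which at stalks on $D[\ula]$ is the identity on $\bQ$. This gives the first quasi-isomorphism. For the variant involving $W(I)_l$, the description \eqref{eq:109} shows that this sub-filtration cuts out exactly the summands with $|\ula \cap \Lambda_I| \le l$, equivalently $|\vr_I| \le l$, which matches the index set \eqref{eq:8} on the right-hand side, so the identical argument applies verbatim. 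The main technical effort lies in the bookkeeping of these stalk decompositions and the collapse of the residue formula to a single term; the quasi-isomorphism is then a formal consequence of the acyclicity in item \ref{item:41}, and no new obstacle beyond what was already handled in Lemma \ref{lem:25} arises.
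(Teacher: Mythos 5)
Your proposal is correct and follows essentially the same route as the paper: reduce to the stalk at the origin of a local model, use the decompositions \eqref{eq:23} and \eqref{eq:109} to match summands with the index set, observe that the formula \eqref{eq:24} collapses to $\id_{\varepsilon(\ula)} \otimes \kos(a[\ula]^{\ast}_x)$ on the relevant graded piece, and conclude because each $\kos(a[\ula]^{\ast}_x)$ is a quasi-isomorphism since both source and target are canonically quasi-isomorphic to $\bQ$. The paper's proof is just a terser version of exactly this argument.
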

\begin{proof}
We may work stalkwise as in \ref{para:4}.
Note that
the morphism
$\kos(a[\umu]^{\ast}_x) \colon
\kos(\cO^{\ast}_{U,x}) \longrightarrow \kos(\cO^{\ast}_{D[\umu],x})$
in \eqref{eq:24}
is a quasi-isomorphism
because both sides are canonically quasi-isomorphic to $\bQ$
by \cite[Corollary 1.15]{FujisawaMHSLSD}.
Then the conclusions follows
from the local description in \ref{para:4}.
\end{proof}

\begin{lem}
\label{lem:8}
For $\vr \in \bZ^k_{\ge \ve}$,
the diagram
\begin{equation}
\begin{CD}
\kos_X(\cM_X)
@>{\res^{\bQ}_{\vr}}>>
(a_{\vr})_{\ast}(\varepsilon_{\vr}
\otimes_{\bZ}
\kos_{X_{\vr}}(\cM_{X_{\vr}}))[-|\vr|] \\
@V{\psi_X}VV
@VV{(a_{\vr})_{\ast}
(\id \otimes (2\pi\sqrt{-1})^{-|\vr|}
\psi_{X_{\vr}}[-|\vr|]}V \\
\Omega_X(\log \cM_X)
@>>{\res_{\vr}}>
(a_{\vr})_{\ast}(\varepsilon_{\vr}
\otimes_{\bZ}
\Omega_{X_{\vr}}(\log \cM_{X_{\vr}}))[-|\vr|]
\end{CD}
\end{equation}
is commutative,
where $\psi_X$
and $\psi_{X_{\vr}}$
are the morphisms in \ref{item:5}.
\end{lem}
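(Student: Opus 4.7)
Since both residue morphisms and both morphisms $\psi_X, \psi_{X_{\vr}}$ are defined locally and the question is of local nature, the plan is to check commutativity stalkwise at the origin $x=0$ of a local model $(U, \cM_U)$ as in \ref{para:1}, using the explicit stalk descriptions already collected above. Specifically, the stalks of $\kos_U(\cM_U)^n_x$ and of the target of $\res^{\bQ}_{\vr}$ decompose as in \eqref{eq:23} and \ref{para:4} into direct sums indexed by $\ula \in S(\Lambda)$, and similarly $\res_{\vr}$ decomposes as $\sum_{\ula \in S_{\vr}(\Lambda)} \res^{\ula}$ as recalled in \ref{para:2}. It therefore suffices to fix $\ula \in S(\Lambda)$ and check that the two composites agree on the direct summand $\varepsilon(\ula)\otimes_{\bZ}\kos(\cO^{\ast}_{U,x})^{n-|\ula|}$.

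First I would dispose of the summands with $\ula \notin S_{\ast}(\Lambda)$ refining $\vr$: on such summands both composites vanish, one because of the explicit formula \eqref{eq:24} (which restricts the sum to $\umu \in S_{\vr}(\Lambda)$ with $\umu \subset \ula$), and the other because $\res^{\ula'}$ for $\ula' \in S_{\vr}(\Lambda)$ only picks up residues along $\dlog x_{\lambda}$ with $\lambda \in \ula'$, so nonvanishing forces $\ula' \subset \ula$. Then for a fixed $\ula$ with some $\umu \in S_{\vr}(\Lambda)$, $\umu \subset \ula$, I would evaluate both composites on a typical element $\vv \otimes \eta$ with $\vv = \ve_{\lambda_1}\wedge\dots\wedge\ve_{\lambda_p} \in \varepsilon(\ula)$ and $\eta \in \kos(\cO^{\ast}_{U,x})^{n-p}$. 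Going down then right gives $(2\pi\sqrt{-1})^{-p}\dlog x_{\lambda_1}\wedge\dots\wedge\dlog x_{\lambda_p}\wedge\psi_{(U,\cO^{\ast}_U),x}(\eta)$, to which we apply $\res^{\umu}$ (the Poincar\'e residue for $W(D_{\umu})$); this extracts the wedge of $\dlog x_{\lambda}$ for $\lambda \in \umu$, leaving $\varepsilon(\umu)\otimes$ (a wedge over $\ula\setminus\umu$ of $\dlog x_{\lambda}$ with the remaining form) on $D[\umu]$. Going right then down applies \eqref{eq:24} to split $\vv$ via $\chi(\umu,\ula\setminus\umu)^{-1}$ and restricts $\eta$ to $D[\umu]$ via $\kos(a[\umu]^{\ast}_x)$; then $(2\pi\sqrt{-1})^{-|\vr|}\psi_{X_{\vr}}$ converts this into the same wedge product after producing the factor $(2\pi\sqrt{-1})^{-(p-|\vr|)}$ applied to the remainder in $\varepsilon(\ula\setminus\umu)$.

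The two scalar factors combine as $(2\pi\sqrt{-1})^{-|\vr|}\cdot(2\pi\sqrt{-1})^{-(p-|\vr|)} = (2\pi\sqrt{-1})^{-p}$, matching the first composite. The main point to check — and the only nontrivial one — is that the signs coming from the Poincar\'e residue $\res^{\umu}$ (which reorders $\dlog x_{\lambda_1}\wedge\dots\wedge\dlog x_{\lambda_p}$ so that the $\dlog x_{\lambda}$ with $\lambda\in\umu$ come first) agree with the signs built into $\chi(\umu,\ula\setminus\umu)^{-1}$ (which reorders $\ve_{\lambda_1}\wedge\dots\wedge\ve_{\lambda_p}$ so that the $\ve_{\lambda}$ with $\lambda\in\umu$ come first). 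But both signs are, by construction, the sign of the permutation of $\{\lambda_1,\dots,\lambda_p\}$ that moves the elements of $\umu$ to the front while preserving internal order, so they cancel out and equality follows.

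The main obstacle is precisely this sign comparison; once one is careful that $\psi$ is defined via the same $\wedge$-product structure from which $\chi$ is defined (see \eqref{eq:44} and \ref{item:5}), the computation is a routine unwinding of definitions on each direct summand, and then assembling over all $\ula$ gives commutativity of the diagram.
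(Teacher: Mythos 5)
Your proof is correct, but it takes a different route from the paper's. The paper does not unwind the residue morphisms in local coordinates at all: it observes that both $\res_{\vr}$ and $\res^{\bQ}_{\vr}$ are, by Definitions \ref{defn:23} and \ref{defn:30}, built in the same way from the surjection onto $\gr_{|\vr|}^{\widehat{W}}$ and the inverses of the graded-piece isomorphisms \eqref{eq:6} and \eqref{eq:15}, so the whole lemma reduces to the commutativity of a single square intertwining those two isomorphisms by $\id\otimes(2\pi\sqrt{-1})^{-m}\psi_{(X_{\vr},D_{\vr})}$ and $\gr_m^{\widehat{W}}\psi_{(X_{\vr},\cM_{X_{\vr}})}$, applied at $m=|\vr|$; the restriction morphisms $\kos_X(\cM_X)\to(a_{\vr})_*\kos_{X_{\vr}}(\cM_{X_{\vr}})$ and $\Omega_X(\log\cM_X)\to(a_{\vr})_*\Omega_{X_{\vr}}(\log\cM_{X_{\vr}})$ are compatible with $\psi$ by functoriality, so they cause no trouble. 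You instead compute both composites stalkwise at the origin of a local model via \eqref{eq:23}, \eqref{eq:24} and the decomposition $\res_{\vr}=\sum_{\umu}\res^{\umu}$ of \ref{para:2}, and you correctly isolate the two things that must be matched: the scalar factor, where $(2\pi\sqrt{-1})^{-|\vr|}\cdot(2\pi\sqrt{-1})^{-(p-|\vr|)}=(2\pi\sqrt{-1})^{-p}$, and the shuffle sign, where the Poincar\'e residue and $\chi(\umu,\ula\setminus\umu)^{-1}$ both carry the sign of the permutation moving $\umu$ to the front (both being inverses of maps defined by the same wedge product, via \eqref{eq:6} and \eqref{eq:44}). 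The paper's argument is shorter and more structural, pushing the computation into a single compatibility of $\psi$ with the $\widehat{W}$-graded identifications; yours is more explicit and makes the sign bookkeeping visible, at the cost of carrying the combinatorics of $S_{\vr}(\Lambda)$ through the whole verification. Both are valid.
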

\begin{proof}
The commutativity of the diagram
\begin{equation}
\begin{CD}
\bigwedge^m L_{\vr}
\otimes_{\bZ}
\kos_{X_{\vr}}(D_{\vr})[-m]
@>{\simeq}>>
\gr_m^{\widehat{W}}\kos_{X_{\vr}}(\cM_{X_{\vr}}) \\
@V{\id \otimes (2\pi\sqrt{-1})^{-m}\psi_{(X_{\vr}, D_{\vr})}[-m]}VV
@VV{\gr_m^{\widehat{W}}\psi_{(X_{\vr}, \cM_{X_{\vr}})}}V \\
\bigwedge^mL_{\vr} \otimes_{\bZ} \Omega_{X_{\vr}}(\log D_{\vr})[-m]
@>>>
\gr_m^{\widehat{W}}\Omega_{X_{\vr}}(\log \cM_{X_{\vr}})
\end{CD}
\end{equation}
can be easily checked from the definition
in \cite[(2.4)]{FujisawaMHSLSD}.
The conclusion follows from the case of $m=|\vr|$.
\end{proof}

\begin{proof}[Proof of Theorem \textup{\ref{thm:1}}]
We use the notation in Remark \ref{rmk:8} for short.
Because
\begin{equation}
d_i(L_mA^n_{\bQ}) \subset L_{m-1}A^{n+1}_{\bQ}, \quad
d_i(L_mA^n_{\bC}) \subset L_{m-1}A^{n+1}_{\bC}
\end{equation}
for all $i=\ki$ and $m, n \in \bZ$,
we have
\begin{gather}
\gr_m^LA_{\bQ}
=
\bigoplus_{\vq \in \bN^k}
\bQ\vu^{\vq} \otimes_{\bC}
\gr_{m+2|\vq|+k}^W
\bigl(\kos_X(\cM_X)/\sum_{i=1}^{k}W(i)_{q_i}\bigr)[k] \\
\gr_m^LA_{\bC}
=
\bigoplus_{\vq \in \bN^k}
\bC\vu^{\vq} \otimes_{\bC}
\gr_{m+2|\vq|+k}^W
\bigl(\Omega_X(\log \cM_X)/\sum_{i=1}^{k}W(i)_{q_i}\bigr)[k]
\end{gather}
as complexes.
Therefore,
by Lemmas \ref{lem:25} and \ref{lem:26},
and by the canonical isomorphisms
\begin{gather}
\gr_m^L(L(I)_bA_{\bQ}/L(I)_aA_{\bQ})
\simeq
L(I)_b\gr_m^LA_{\bQ}/L(I)_a\gr_m^LA_{\bQ} \\
\gr_m^L(L(I)_bA_{\bC}/L(I)_aA_{\bC})
\simeq
L(I)_b\gr_m^LA_{\bC}/L(I)_a\gr_m^LA_{\bC},
\end{gather}
we have a quasi-isomorphism
\begin{equation}
\label{eq:34}
\gr_m^L(L(I)_bA_{\bQ}/L(I)_aA_{\bQ})
\longrightarrow
\bigoplus
\bQ\vu^{\vq} \otimes_{\bQ}
(a_{\vr})_{\ast}(\varepsilon_{\vr}
\otimes
\kos_{X_{\vr}}(\cO_{X_{\vr}}^{\ast}))[-m-2|\vq|]
\end{equation}
and an isomorphism
\begin{equation}
\label{eq:62}
\gr_m^L(L(I)_bA_{\bC}/L(I)_aA_{\bC})
\overset{\simeq}{\longrightarrow}
\bigoplus
\bC\vu^{\vq} \otimes_{\bC}
(a_{\vr})_{\ast}(\varepsilon_{\vr}
\otimes_{\bZ}
\Omega_{X_{\vr}})[-m-2|\vq|],
\end{equation}
under which the morphism induced by $\alpha$ is identified with
\begin{equation}
\label{eq:86}
\bigoplus
(a_{\vr})_{\ast}(
\id
\otimes
(2\pi\sqrt{-1})^{-m-|\vq|}\psi_{(X_{\vr}, \cO_{X_{\vr}}^{\ast})})[-m-2|\vq|]
\end{equation}
by Lemma \ref{lem:8},
where the direct sums \eqref{eq:34}--\eqref{eq:86}
are taken over the index set
\begin{equation}
\label{eq:52}
\{(\vq,\vr) \in \bN^k \times \bN^k \mid
\vr \ge \vq+\ve,
|\vr|=m+2|\vq|+k,
a < |\vr_I|-2|\vq_I|-|I| \le b\}.
\end{equation}
From \eqref{eq:50} and Lemma \ref{lem:25}, we have
\begin{align}
F^p\gr_m^L(L(I)_bA^n_{\bC}/L(I)_aA_{\bC})
&=
\bigoplus_{|\vq| \le n-p}
\bC\vu^{\vq} \otimes_{\bC}
\gr_m^L
(L(I)_b(A^n_{\bC})_{\vq}/L(I)_a(A^n_{\bC})_{\vq} \\
&\simeq
\bigoplus_{|\vq| \le n-p}
\bC\vu^{\vq} \otimes_{\bC}
\bigl(L(I)_b\gr_m^L(A^n_{\bC})_{\vq}/L(I)_a\gr_m^L(A^n_{\bC})_{\vq}\bigr) \\
&\simeq
\bigoplus
\bC\vu^{\vq} \otimes_{\bC}
(a_{\vr})_{\ast}
(\varepsilon_{\vr}
\otimes_{\bZ}
\Omega^{n-m-2|\vq|}_{X_{\vr}}),
\end{align}
where the direct sum in the last term
is taken over the index set
\begin{equation}
\{ (\vq, \vr) \in \bN^k \times \bN^k \mid
\vr \ge \vq+\ve, |\vq| \le n-p, |\vr|=m+2|\vq|+k,
a < |\vr_I|-2|\vq_I|-|I| \le b \}.
\end{equation}
Therefore the isomorphisms \eqref{eq:62}
induces an isomorphism of filtered complexes
\begin{equation}
\label{eq:79}
(\gr_m^L(L(I)_bA_{\bC}/L(I)_aA_{\bC}), F)
\overset{\simeq}{\longrightarrow}
\bigoplus
\bC\vu^{\vq} \otimes_{\bC}
((a_{\vr})_{\ast}
(\varepsilon_{\vr}
\otimes_{\bZ}
\Omega_{X_{\vr}})[-m-2|\vq|], F[-m-|\vq|]),
\end{equation}
where $F$ on the right hand side
is the stupid filtration
on $\varepsilon_{\vr} \otimes_{\bZ}\Omega_{X_{\vr}}$
and the index set of the direct sum on the right hand side
is the same as \eqref{eq:52}.
Because $\varepsilon_{\vr}$ admits
a positive definite symmetric bilinear form
$\theta_{\vr}$ as in Definition \ref{defn:6},
\begin{equation}
((L(I)_bA_{\bQ}/L(I)_aA_{\bQ},L),
(L(I)_bA_{\bC}/L(I)_aA_{\bC},L,F), \alpha)
\end{equation}
is a $\bQ$-\cmh complex on $X$
by \cite[(2.2.2)]{DeligneII}.
\end{proof}

\begin{rmk}
The assumption for
$f \colon (X, \cM_X) \longrightarrow (\ast,\bN^k)$ being projective
in Theorem \ref{thm:1}
can be relaxed to the assumptions that
$f \colon (X, \cM_X) \longrightarrow (\ast,\bN^k)$ is proper
and that $X_{\vr}$ is K\"ahler for all $\vr \in \bZ^k_{\ge \ve}$.
\end{rmk}

\begin{rmk}
\label{rmk:9}
By taking $a$ sufficiently small and $b$ sufficiently large
in \eqref{eq:79},
we have the isomorphism of filtered complexes
\begin{equation}
\label{eq:67}
(\gr_m^LA_{\bC}, F)
\overset{\simeq}{\longrightarrow}
\bigoplus
\bC\vu^{\vq} \otimes_{\bC}
((a_{\vr})_{\ast}
(\varepsilon_{\vr}
\otimes_{\bZ}
\Omega_{X_{\vr}})[-m-2|\vq|], F[-m-|\vq|]),
\end{equation}
for all $m \in \bZ$,
where the direct sum on the right hand side
is taken over the index set
\begin{equation}
\label{eq:61}
\{(\vr, \vq) \in \bN^k \times \bN^k
\mid \vr \ge \vq+\ve, |\vr|=m+2|\vq|+k\}.
\end{equation}
Similarly, we have the quasi-isomorphism
\begin{equation}
\label{eq:78}
\gr_m^LA_{\bQ}
\longrightarrow
\bigoplus
\bQ\vu^{\vq} \otimes_{\bQ}
(a_{\vr})_{\ast}(\varepsilon_{\vr}
\otimes_{\bZ}
\kos_X(\cM_{X_{\vr}})[-m-2|\vq|]
\end{equation}
for all $m \in \bZ$,
where the index set of the direct sum
is the same as \eqref{eq:61}.
\end{rmk}

\section{A complex $\cC(\Omega_{X_{\bullet}}(\log \cM_{X_{\bullet}}))$}
\label{sec:a Cech type complex}

In this section, we first construct a \v{C}ech type filtered complex
$(\cC(\Omega_{X_{\bullet}}(\log \cM_{X_{\bullet}})), \delta W)$
and a product on it.
Because the family of complex manifolds
$\{X_{\vr}\}_{\vr \in \bZ_{\ge \ve}^k}$ does not admit a simplicial
(or cubical) structure in general,
it is not possible to apply
the arguments in \cite[Section 2]{FujisawaPLMHS}.
Thus the construction in this section
requires some other tasks,
in which the log structures on $X$ and on $X_{\vr}$
play essential roles.
Second, we construct 
a kind of ``trace map''
$E_1^{-k,2\dim X+2k} \longrightarrow \bC$,
where $E_1^{p,q}$
denotes the $E_1$-terms of the spectral sequence
associated to
$(R\Gamma_c(X, \cC(\Omega_{X_{\bullet}}(\log \cM_{X_{\bullet}}))), \delta W)$
in this section.
The construction of this map
is similar to and slightly simplified from
the one in \cite[Definition 7.7]{FujisawaPLMHS}.

\begin{defn}
For $\vr \in \bN^k$,
we set
\begin{equation}
\bigwedge^{\otimes \vr}\overline{\cM}_X\gp
=
\bigwedge^{r_1}\overline{\cM(1)}_X\gp
\otimes_{\bZ}
\bigwedge^{r_2}\overline{\cM(2)}_X\gp
\otimes_{\bZ}
\dots
\otimes_{\bZ}
\bigwedge^{r_k}\overline{\cM(k)}_X\gp,
\end{equation}
which is regraded as a subsheaf of
$\bigwedge^{|\vr|}\overline{\cM}_X\gp$
by the inclusion
\begin{equation}
\bigwedge^{\otimes \vr}\overline{\cM}_X\gp
\ni
\vv_1 \otimes \vv_2 \otimes \dots \otimes \vv_k
\mapsto
\vv_1 \wedge \vv_2 \wedge \dots \wedge \vv_k
\in \bigwedge^{|\vr|}\overline{\cM}_X\gp.
\end{equation}
\end{defn}

\begin{para}
For $\vr \in \bZ^k_{\ge \ve}$,
the canonical morphism
$a_{\vr}^{-1}\cM_X\gp \longrightarrow \cM_{X_{\vr}}\gp$
induces morphisms
$a_{\vr}^{-1}\overline{\cM}_X\gp
\longrightarrow \cM_{X_{\vr}}\gp/\cM_{X_{\vr}}(D_{\vr})\gp
=L_{\vr}$
and
$a_{\vr}^{-1}\bigwedge^{|\vr|}\overline{\cM}_X\gp
\longrightarrow
\bigwedge^{|\vr|}L_{\vr}=\varepsilon_{\vr}$.
Thus we obtain a morphism of $\bZ$-sheaves
\begin{equation}
\label{eq:35}
\bigwedge^{|\vr|}\overline{\cM}_X\gp
\longrightarrow
(a_{\vr})_{\ast}\varepsilon_{\vr}
\end{equation}
for all $\vr \in \bZ^k_{\ge \ve}$.
\end{para}

\begin{lem}
By restricting the morphism \eqref{eq:35} to
$\bigwedge^{\otimes \vr}\overline{\cM}_X\gp$,
we obtain an isomorphism
\begin{equation}
\label{eq:36}
\bigwedge^{\otimes \vr}\overline{\cM}_X\gp
\overset{\simeq}{\longrightarrow}
(a_{\vr})_{\ast}\varepsilon_{\vr}
\end{equation}
for any $\vr \in \bZ^k_{\ge \ve}$.
\end{lem}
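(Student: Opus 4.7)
The plan is to verify the isomorphism on stalks, reducing to an explicit computation in a local model. Fix $x \in X$. By Proposition \ref{prop:2} and the local description in \ref{para:1}, I may assume we are on an open neighborhood of the origin of a local model $(U, \cM_U)$, so that $\overline{\cM}_{U, x} \simeq \bN^{\Lambda}$ for some finite set $\Lambda$ equipped with a partition $\Lambda = \coprod_{i=1}^k \Lambda_i$ associated to the semistable morphism $\overline{f^{\flat}_x}$, giving $\overline{\cM(i)}_{X,x}\gp \simeq \bZ^{\Lambda_i}$. Note that $\rf_X(x) = (|\Lambda_1|, \dots, |\Lambda_k|)$.

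If $\rf_X(x) \not\ge \vr$, i.e.\ $r_i > |\Lambda_i|$ for some $i$, then both stalks vanish: the left-hand side because $\bigwedge^{r_i}\bZ^{\Lambda_i} = 0$, and the right-hand side because $x \notin \overline{X}_{\vr}$ so $a_{\vr}^{-1}(x) = \emptyset$. Now assume $\rf_X(x) \ge \vr$. Using the decomposition \eqref{eq:17} on each factor yields
\begin{equation}
\bigl(\bigwedge\nolimits^{\otimes \vr}\overline{\cM}_X\gp\bigr)_x
\simeq
\bigoplus_{\substack{\ula_i \subset \Lambda_i \\ |\ula_i|=r_i}}
\varepsilon(\ula_1) \otimes_{\bZ} \cdots \otimes_{\bZ} \varepsilon(\ula_k).
\end{equation}
On the other side, the proof of Lemma \ref{lem:4} shows $U_{\vr} = \coprod_{\ula \in S_{\vr}(\Lambda)} D[\ula]$ with $a_{\vr}|_{D[\ula]}$ the inclusion into $U$; together with the description of $\cM_{X_{\vr}}(D_{\vr})$ in Lemma \ref{lem:2}, at the point $y(\ula) \in D[\ula]$ we have $L_{\vr, y(\ula)} \simeq \bZ^{\ula}$ and therefore $\varepsilon_{\vr, y(\ula)} \simeq \varepsilon(\ula)$. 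Since $a_{\vr}$ is finite, this gives
\begin{equation}
((a_{\vr})_{\ast}\varepsilon_{\vr})_x
\simeq \bigoplus_{\ula \in S_{\vr}(\Lambda)} \varepsilon(\ula).
\end{equation}

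Both direct sums are indexed by the same set, via the bijection $(\ula_1, \dots, \ula_k) \leftrightarrow \coprod_i \ula_i$. Tracing the definition of \eqref{eq:35} through these identifications, the morphism sends each summand $\varepsilon(\ula_1) \otimes \cdots \otimes \varepsilon(\ula_k)$ into the summand $\varepsilon(\ula_1 \cup \cdots \cup \ula_k)$ by iterating the product $\chi$ of \eqref{eq:44} for pairwise disjoint subsets, which is an isomorphism. Taking the direct sum over all indices gives the desired stalkwise isomorphism, hence the global isomorphism \eqref{eq:36}. The only delicate point, and the one requiring real checking, is the identification of the canonical map $a_{\vr}^{-1}\overline{\cM}_X\gp \to L_{\vr}$ at $y(\ula)$ with the projection $\bZ^{\Lambda} \to \bZ^{\ula}$; this is a direct consequence of the construction of $\cM_{X_{\vr}}(D_{\vr})$ in Lemma \ref{lem:2} and the explicit form of $\cM_{X_{\vr}}|_{D[\ula]}$ recorded in the proof of Lemma \ref{lem:4}, but it is the step one must verify carefully before the formal indexing argument goes through.
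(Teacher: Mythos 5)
Your proof is correct and follows the same route as the paper, which simply reduces to a stalkwise check in the local model of \ref{para:1}; your expansion of that check (matching the index set $S_{\vr}(\Lambda)$ with tuples $(\ula_1,\dots,\ula_k)$ and identifying the map on each rank-one summand with $\chi$) is exactly the computation the paper leaves implicit. The one point you flag as delicate — that $a_{\vr}^{-1}\overline{\cM}_X\gp \to L_{\vr}$ becomes the projection $\bZ^{\Lambda}\to\bZ^{\ula}$ on $D[\ula]$ — is indeed the content of the verification, and your justification via Lemmas \ref{lem:2} and \ref{lem:4} is the intended one.
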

\begin{proof}
We may work stalkwise.
Then the local description in \ref{para:1}
implies the conclusion easily.
\end{proof}

\begin{defn}
\label{defn:31}
The image of $t_i \in \Gamma(X, \cM_X)$
by the projection $\cM_X \longrightarrow \overline{\cM}_X$
is denoted by $\overline{t}_i \in \Gamma(X, \overline{\cM}_X)$
for $i=\ki$.
Then a morphism
$\overline{t}_i \wedge \colon
\bigwedge\overline{\cM}_X \longrightarrow \bigwedge\overline{\cM}_X$
is defined by sending $\vv$ to $\overline{t}_i \wedge \vv$.
Because $\overline{t}_i \in \Gamma(X, \overline{\cM(i)}_X)$
as in Remark \ref{rmk:4},
the morphism $\overline{t}_i \wedge$
induces a morphism
$\overline{t}_i \wedge \colon
\bigwedge^{\otimes \vr}\overline{\cM}_X\gp
\longrightarrow
\bigwedge^{\otimes \vr+\ve_i}\overline{\cM}_X\gp$
for every $\vr \in \bN^k$.
Thus we obtain a morphism
\begin{equation}
\delta_i \colon
(a_{\vr})_{\ast}\varepsilon_{\vr}
\longrightarrow
(a_{\vr+\ve_i})_{\ast}\varepsilon_{\vr+\ve_i}
\end{equation}
via the isomorphism \eqref{eq:36}
for $\vr \in \bZ^k_{\ge \ve}$ and for $i=\ki$.
Trivially the equalities
\begin{equation}
\label{eq:41}
\delta_i^2=0, \quad
\delta_i\delta_j+\delta_j\delta_i=0
\end{equation}
hold for all $i,j \in \ski$.
\end{defn}


\begin{lem}
For any $\vr \in \bZ^k_{\ge \ve}$,
the canonical morphism
\begin{equation}
\label{eq:37}
(a_{\vr})_{\ast}\varepsilon_{\vr}
\otimes_{\bZ}
\Omega^n_X(\log \cM_X)
\longrightarrow
(a_{\vr})_{\ast}(
\varepsilon_{\vr}
\otimes_{\bZ}
\Omega^n_{X_{\vr}}(\log \cM_{X_{\vr}}))
\end{equation}
is surjective.
\end{lem}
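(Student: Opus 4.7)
The question is of local nature on $X$, so I would reduce to a local model $(U, \cM_U)$ as in \ref{para:1}, together with the partition $\Lambda = \coprod_{i=1}^k \Lambda_i$ associated with the semistable morphism $\varphi$. From the proof of Lemma \ref{lem:4}, one has the disjoint decomposition $U_{\vr} = \coprod_{\ula \in S_{\vr}(\Lambda)} D[\ula]$, and the restriction $a_{\vr}|_{D[\ula]}$ is the closed immersion $D[\ula] \hookrightarrow U$. Under the isomorphism \eqref{eq:36} this yields a decomposition
\begin{equation}
(a_{\vr})_{\ast}\varepsilon_{\vr} \simeq \bigoplus_{\ula \in S_{\vr}(\Lambda)} \varepsilon(\ula)_{D[\ula]},
\end{equation}
and similarly $(a_{\vr})_{\ast}(\varepsilon_{\vr}\otimes_{\bZ} \Omega^n_{X_{\vr}}(\log \cM_{X_{\vr}}))$ decomposes into the direct sum of $\varepsilon(\ula) \otimes_{\bZ} \Omega^n_{D[\ula]}(\log \cM_{D[\ula]})$ over $\ula \in S_{\vr}(\Lambda)$. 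Therefore the morphism \eqref{eq:37} splits as a direct sum over $\ula$, and it suffices to verify that each summand
\begin{equation}
\varepsilon(\ula) \otimes_{\bZ} \Omega^n_U(\log \cM_U) \longrightarrow \varepsilon(\ula) \otimes_{\bZ} \Omega^n_{D[\ula]}(\log \cM_{D[\ula]})
\end{equation}
is surjective.

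The key tool is Lemma (3.6)(2) of \cite{KatoNakayama}, which, combined with the fact that both $\cM_U$ and $\cM_{D[\ula]}$ are pulled back from $\cM_{\bC^{\Lambda} \times \bC^l}(D)$, gives the identifications
\begin{gather}
\Omega^n_U(\log \cM_U) \simeq \cO_U \otimes_{\cO_{\bC^{\Lambda} \times \bC^l}} \Omega^n_{\bC^{\Lambda} \times \bC^l}(\log D), \\
\Omega^n_{D[\ula]}(\log \cM_{D[\ula]}) \simeq \cO_{D[\ula]} \otimes_{\cO_{\bC^{\Lambda} \times \bC^l}} \Omega^n_{\bC^{\Lambda} \times \bC^l}(\log D).
\end{gather}
Under these identifications, the morphism above is induced by the canonical surjection $\cO_U \longrightarrow \cO_{D[\ula]}$, which is obviously surjective because $D[\ula] \subset U$ is a closed analytic subset. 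Tensoring a surjection of $\cO_U$-modules with $\Omega^n_{\bC^{\Lambda} \times \bC^l}(\log D)$ on the right preserves surjectivity, so the required surjectivity follows.

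There is no serious obstacle beyond correctly unwinding the local description; the main point is to observe that the target sheaf is already generated by pull-backs of local sections from the ambient polydisc, so the surjectivity reduces to the surjectivity of $\cO_U \twoheadrightarrow \cO_{D[\ula]}$. No further input is needed.
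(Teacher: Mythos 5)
Your proof is correct and follows essentially the same route as the paper: reduce to a local model, use $U_{\vr}=\coprod_{\ula \in S_{\vr}(\Lambda)}D[\ula]$ to split the morphism into summands indexed by $\ula$, and observe that each summand is $\id \otimes a[\ula]^{\ast}$ for the closed immersion $a[\ula]\colon D[\ula]\hookrightarrow U$. The only difference is that the paper works stalkwise at the origin and simply asserts that $a[\ula]^{\ast}$ is surjective, whereas you spell out the reason via the Kato--Nakayama identification \eqref{eq:10} and the surjection $\cO_U \twoheadrightarrow \cO_{D[\ula]}$, which is a harmless (and correct) elaboration.
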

\begin{proof}
It suffices to consider the stalks
at the origin $x=0$ of a local model
$(U, \cM_U)$ in \ref{para:1}.
Then
\begin{gather}
((a_{\vr})_{\ast}\varepsilon_{\vr} \otimes_{\bZ} \Omega^n_U(\log \cM_U))_x
=
\bigoplus_{\ula \in S_{\vr}(\Lambda)}
\varepsilon(\ula) \otimes_{\bZ} \Omega^n_U(\log \cM_U)_x
\label{eq:39} \\
(a_{\vr})_{\ast}(
\varepsilon_{\vr}
\otimes_{\bZ}
\Omega^n_{U_{\vr}}(\log \cM_{U_{\vr}}))_x
=
\bigoplus_{\ula \in S_{\vr}(\Lambda)}
\varepsilon(\ula) \otimes_{\bZ} \Omega^n_{D[\ula]}(\log \cM_{D[\ula]})_x
\label{eq:40}
\end{gather}
and the stalk of the morphism \eqref{eq:37}
is the direct sum of $\id \otimes a[\ula]^{\ast}$
over all $\ula \in S_{\vr}(\Lambda)$,
where
$a[\ula]^{\ast} \colon \Omega^n_U(\log \cM_U)_x \longrightarrow
\Omega^n_{D[\ula]}(\log \cM_{D[\ula]})_x$
is the surjection induced from the closed immersion
$a[\ula] \colon D[\ula] \hookrightarrow U$.
\end{proof}

\begin{lem}
\label{lem:9}
For all $i=\ki$,
the composite
\begin{equation}
\label{eq:38}
\begin{split}
(a_{\vr})_{\ast}\varepsilon_{\vr} \otimes_{\bZ} \Omega^n_X(\log \cM_X)
\xrightarrow{\delta_i \otimes \id}
(a_{\vr+\ve_i})_{\ast}&\varepsilon_{\vr+\ve_i}
\otimes_{\bZ} \Omega^n_X(\log \cM_X) \\
&\longrightarrow
(a_{\vr+\ve_i})_{\ast}(
\varepsilon_{\vr+\ve_i}
\otimes_{\bZ}
\Omega^n_{X_{\vr+\ve_i}}(\log \cM_{X_{\vr+\ve_i}})
)
\end{split}
\end{equation}
factors through the surjection \eqref{eq:37}.
\end{lem}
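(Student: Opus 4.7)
The plan is to verify the factorization stalkwise at the origin $x=0$ of an arbitrary local model $(U, \cM_U)$ as in \ref{para:1}, which suffices because the composite \eqref{eq:38} and the surjection \eqref{eq:37} are $\cO_X$-linear. Using \eqref{eq:39} and \eqref{eq:40} for both $\vr$ and $\vr+\ve_i$, the source and target of the composite decompose as direct sums indexed respectively by $S_{\vr}(\Lambda)$ and $S_{\vr+\ve_i}(\Lambda)$, and the stalk of \eqref{eq:37} splits as the direct sum over $\ula \in S_{\vr}(\Lambda)$ of the maps $\id \otimes a[\ula]^{\ast}$. In particular, the kernel of \eqref{eq:37} at $x$ is
\begin{equation}
\bigoplus_{\ula \in S_{\vr}(\Lambda)}
\varepsilon(\ula) \otimes_{\bZ} \kernel(a[\ula]^{\ast}),
\end{equation}
so it suffices to show the composite \eqref{eq:38} annihilates each direct summand.

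Next I would unwind $\delta_{i,x}$ under the identification \eqref{eq:36}. Because $\overline{t}_i \in \Gamma(X, \overline{\cM(i)}_X\gp)$ is the image of $\ve_i$ under $\overline{f^{\flat}}$ and locally agrees with the semistable morphism $\varphi$, the element $\overline{t}_i$ corresponds to $\sum_{\lambda \in \Lambda_i} \ve_{\lambda} \in \bZ^{\Lambda_i}$. Hence, for $\vv \in \varepsilon(\ula)$ with $\ula \in S_{\vr}(\Lambda)$, wedging with $\overline{t}_i$ kills every term $\ve_{\lambda} \wedge \vv$ with $\lambda \in \ula \cap \Lambda_i$, and we obtain
\begin{equation}
\delta_{i,x}(\vv) = \sum_{\mu \in \Lambda_i \setminus \ula}
\pm\, \vv \wedge \ve_{\mu}
\in \bigoplus_{\mu \in \Lambda_i \setminus \ula}
\varepsilon(\ula \cup \{\mu\}),
\end{equation}
and each $\ula \cup \{\mu\}$ belongs to $S_{\vr+\ve_i}(\Lambda)$.

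The key geometric observation is that for every such $\mu$, the chain of closed immersions $D[\ula \cup \{\mu\}] \hookrightarrow D[\ula] \hookrightarrow U$ of log complex manifolds yields a factorization $a[\ula \cup \{\mu\}]^{\ast} = j_{\ula,\mu}^{\ast} \cdot a[\ula]^{\ast}$ of restriction maps on log de Rham forms, where $j_{\ula,\mu}$ denotes the inclusion $D[\ula \cup \{\mu\}] \hookrightarrow D[\ula]$. Therefore $\kernel(a[\ula]^{\ast}) \subset \kernel(a[\ula \cup \{\mu\}]^{\ast})$ for every $\mu \in \Lambda_i \setminus \ula$. Applied to an arbitrary element $\vv \otimes \omega \in \varepsilon(\ula) \otimes \kernel(a[\ula]^{\ast})$, this shows that each term of $(\delta_i \otimes \id)(\vv \otimes \omega)$ is sent to zero by the corresponding component of the surjection \eqref{eq:37} for $\vr + \ve_i$. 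Hence the composite \eqref{eq:38} vanishes on the kernel of \eqref{eq:37}, yielding the desired factorization.

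I do not expect a serious obstacle here; the content of the lemma is essentially the stratum-by-stratum bookkeeping fact that the operator $\delta_i$ moves from a stratum $D[\ula]$ to the smaller strata $D[\ula \cup \{\mu\}]$, on which any form already vanishing along $D[\ula]$ automatically vanishes. The only care needed is the explicit identification of $\overline{t}_i$ via the semistable morphism $\varphi$, which is supplied by Remark \ref{rmk:4} together with Definition \ref{defn:1}.
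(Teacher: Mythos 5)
Your proposal is correct and follows essentially the same route as the paper's own proof: reduce to the stalk at the origin of a local model, identify $\delta_{i,x}$ as wedging with $\overline{t}_i=\sum_{\lambda\in\Lambda_i}\ve_\lambda$ so that it sends the $\ula$-summand into the summands indexed by $\ula\cup\{\mu\}$ with $\mu\in\Lambda_i\setminus\ula$, and then use the factorization $D[\ula\cup\{\mu\}]\hookrightarrow D[\ula]\hookrightarrow U$ of the closed immersions. Your rephrasing in terms of the kernel of the surjection \eqref{eq:37} is just a more explicit way of stating the same factorization argument.
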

\begin{proof}
It is enough to consider the stalk of the morphism \eqref{eq:38}
at the origin $x=0$ of a local model
$(U, \cM_U)$ as above.
Under \eqref{eq:39} for $\vr$ and \eqref{eq:40} for $\vr+\ve_i$,
the stalk of \eqref{eq:38} at $x$ is the direct sum of
$\sum_{\lambda \in \Lambda_i \setminus (\ula \cap \Lambda_i)}
(e_{\lambda} \wedge) \otimes a[\ula \cup \{\lambda\}]^{\ast}_x$
for all $\ula \in S_{\vr}(\Lambda)$.
Because
$a[\ula \cup \{\lambda\}] \colon D[\ula \cup \{\lambda\}] \hookrightarrow U$
factors as
$D[\ula \cup \{\lambda\}] \hookrightarrow D[\ula] \hookrightarrow U$,
we obtain the conclusion.
\end{proof}

\begin{defn}
\label{defn:32}
By the lemma above,
a morphism of $\cO_X$-modules
\begin{equation}
\label{eq:14}
(a_{\vr})_{\ast}(
\varepsilon_{\vr}
\otimes_{\bZ}
\Omega^n_{X_{\vr}}(\log \cM_{X_{\vr}}))
\longrightarrow
(a_{\vr+\ve_i})_{\ast}(
\varepsilon_{\vr+\ve_i}
\otimes_{\bZ}
\Omega^n_{X_{\vr+\ve_i}}(\log \cM_{X_{\vr+\ve_i}})),
\end{equation}
is induced from \eqref{eq:38} for every $i=\ki$.
This morphism is denoted by $\delta_i$ again
by abuse of notation.
Then the same equalities as \eqref{eq:41} hold trivially.
\end{defn}

\begin{rmk}
\label{rmk:11}
We look at the stalk of the morphism \eqref{eq:14}.
As in the proof of Lemma \ref{lem:9},
it suffices to consider the stalk at the origin $x=0$
of a local model $(U, \cM_U)$.
Under \eqref{eq:40} for $\vr$ and $\vr+\ve_i$,
the stalk of \eqref{eq:14} at $x$ is the direct sum of
$\sum_{\lambda \in \Lambda_i \setminus (\ula \cap \Lambda_i)}
(e_{\lambda} \wedge) \otimes (-)|_{D[\ula \cup \{\lambda\}]}$
for all $\ula \in S_{\vr}(\Lambda)$,
where $(-)|_{D[\ula \cup \{\lambda\}]}$ denotes the restriction morphism
from $D[\ula]$ to $D[\ula \cup \{\lambda\}]$.
Therefore the equality
\begin{equation}
\label{eq:103}
\delta_i \cdot (a_{\vr})_{\ast}(\id \otimes d)
=(a_{\vr+\ve_i})_{\ast}(\id \otimes d) \cdot \delta_i
\end{equation}
holds for all $i=\ki$.
\end{rmk}

\begin{defn}
An $\cO_X$-module
$\cC(\Omega_{X_{\bullet}}(\log \cM_{X_{\bullet}}))^n$
and a morphism of $\cO_X$-modules
\begin{equation}
d \colon
\cC(\Omega_{X_{\bullet}}(\log \cM_{X_{\bullet}}))^n
\longrightarrow
\cC(\Omega_{X_{\bullet}}(\log \cM_{X_{\bullet}}))^{n+1}
\end{equation}
are defined by
\begin{equation}
\cC(\Omega_{X_{\bullet}}(\log \cM_{X_{\bullet}}))^n
=\bigoplus_{\vr \in \bZ^k_{\ge \ve}}
(a_{\vr})_{\ast}
(\varepsilon_{\vr} \otimes_{\bZ}
\Omega^{n-|\vr|+k}_{X_{\vr}}(\log \cM_{X_{\vr}}))
\end{equation}
and
\begin{equation}
d=\bigoplus_{\vr \in \bZ^k_{\ge \ve}}
((-1)^{|\vr|-k}(a_{\vr})_{\ast}(\id \otimes d)
+\sum_{i=1}^{k}\delta_i),
\end{equation}
where $d$ in the right hand side
is the differential of the complex
$\Omega_{X_{\vr}}(\log \cM_{X_{\vr}})$.
From \eqref{eq:41} and \eqref{eq:103},
the equality $d^2=0$ can be easily checked.
Thus the complex $\cC(\Omega_{X_{\bullet}}(\log \cM_{X_{\bullet}}))$
of $\bC$-sheaves on $X$ is obtained.
By setting
\begin{equation}
(\delta W)_m\cC(\Omega_{X_{\bullet}}(\log \cM_{X_{\bullet}}))^n
=\bigoplus_{\vr \in \bZ^k_{\ge \ve}}
(a_{\vr})_{\ast}
(\varepsilon_{\vr}
\otimes_{\bZ}
W_{m+|\vr|-k}\Omega^{n-|\vr|+k}_{X_{\vr}}(\log \cM_{X_{\vr}}))
\end{equation}
for $m, n \in \bZ$,
we obtain an increasing filtration $\delta W$
on the complex
$\cC(\Omega_{X_{\bullet}}(\log \cM_{X_{\bullet}}))$.
We have
\begin{equation}
\label{eq:48}
\gr_m^{\delta W}\cC(\Omega_{X_{\bullet}}(\log \cM_{X_{\bullet}}))
=\bigoplus_{\vr \in \bZ^k_{\ge \ve}}
(a_{\vr})_{\ast}
(\varepsilon_{\vr}
\otimes_{\bZ}
\gr_{m+|\vr|-k}^W\Omega_{X_{\vr}}(\log \cM_{X_{\vr}}))[-|\vr|+k]
\end{equation}
as complexes for all $m \in \bZ$,
because
$\delta_i((\delta W)_m\cC(\Omega_{X_{\bullet}}(\log \cM_{X_{\bullet}})))
\subset (\delta W)_{m-1}\cC(\Omega_{X_{\bullet}}(\log \cM_{X_{\bullet}}))$
for $i=\ki$
by the description of $\delta_i$ in Remark \ref{rmk:11}.
\end{defn}

Next task is to construct a product
on the complex $\cC(\Omega_{X_{\bullet}}(\log \cM_{X_{\bullet}}))$
as in \cite[Section 2]{FujisawaPLMHS}.

\begin{para}
For every $x \in X$,
a morphism
$\overline{\chi}(\overline{f}^{\flat}_x) \colon
\bigwedge \overline{\cM}_{X,x}\gp
\otimes_{\bZ} \bigwedge \overline{\cM}_{X,x}\gp
\longrightarrow \bigwedge \overline{\cM}_{X,x}\gp$
is induced from the semistable morphism
$\overline{f}^{\flat}_x: \bN^k \longrightarrow \overline{\cM}_{X,x}$
as in Definition \ref{defn:7}.
It is easy to see that
$\overline{\chi}(\overline{f}^{\flat}_x)
(\bigwedge^{\otimes \vr}\overline{\cM}_{X,x}\gp
\otimes
\bigwedge^{\otimes \vs}\overline{\cM}_{X,x}\gp)
\subset
\bigwedge^{\otimes \vr+\vs-\ve}\overline{\cM}_{X,x}\gp$
for all $\vr, \vs \in \bN^k_{\ge \ve}$.
\end{para}

\begin{lem}
\label{lem:28}
There exists a unique morphism
$\overline{\chi} \colon
\bigwedge\overline{\cM}_X\gp \otimes_{\bZ} \bigwedge\overline{\cM}_X\gp
\longrightarrow
\bigwedge\overline{\cM}_X\gp$
such that
$\overline{\chi}_x=\overline{\chi}(\overline{f}^{\flat}_x)$
for all $x \in X$.
\end{lem}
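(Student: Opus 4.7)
The uniqueness is immediate, since a morphism of sheaves is determined by its stalks. For existence, the plan is to construct $\overline{\chi}$ locally on every local model as in \ref{para:1} and glue the resulting morphisms.

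First, I would fix a local model $(U,\cM_U)$ with partition $\Lambda=\coprod_{i=1}^{k}\Lambda_i$ and coordinate functions $x_{\lambda}$ for $\lambda \in \Lambda$. Sending $\ve_{\lambda}$ to the class of $x_{\lambda}$ in $\overline{\cM}_U\gp$ yields a morphism of sheaves $\bZ^{\Lambda}_U \longrightarrow \overline{\cM}_U\gp$ whose stalk at any $y \in U$ is the canonical surjection $\bZ^{\Lambda} \longrightarrow \bZ^{\Lambda(y)} \simeq \overline{\cM}_{U,y}\gp$, where $\Lambda(y)=\{\lambda \in \Lambda \mid y \in D_{\lambda}\}$. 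Taking exterior powers produces a stalkwise surjection $\bigwedge \bZ^{\Lambda}_U \longrightarrow \bigwedge \overline{\cM}_U\gp$ of sheaves on $U$.

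Applying $\overline{\chi}(\Lambda)$ from \eqref{eq:45} to the constant sheaf $\bZ^{\Lambda}_U$ and composing with the above surjection, I obtain a morphism $\widetilde{\chi} \colon \bigwedge \bZ^{\Lambda}_U \otimes_{\bZ} \bigwedge \bZ^{\Lambda}_U \longrightarrow \bigwedge \overline{\cM}_U\gp$. The key step is to show that $\widetilde{\chi}$ descends to a morphism $\overline{\chi}_U$ defined on $\bigwedge \overline{\cM}_U\gp \otimes_{\bZ} \bigwedge \overline{\cM}_U\gp$; equivalently, writing $K$ for the kernel of the surjection $\bigwedge \bZ^{\Lambda}_U \longrightarrow \bigwedge \overline{\cM}_U\gp$, one must check that $\widetilde{\chi}$ annihilates $K \otimes \bigwedge \bZ^{\Lambda}_U + \bigwedge \bZ^{\Lambda}_U \otimes K$. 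This is verified stalkwise: for every $y \in U$, Remark \ref{rmk:2} applied to $\Gamma=\Lambda(y) \subset \Lambda$ identifies $\widetilde{\chi}_y$ with the composite of the tensor surjection $\bigwedge \bZ^{\Lambda} \otimes \bigwedge \bZ^{\Lambda} \longrightarrow \bigwedge \bZ^{\Lambda(y)} \otimes \bigwedge \bZ^{\Lambda(y)}$ and $\overline{\chi}(\Lambda(y))$, which automatically vanishes on any tensor with a factor in $K_y$. Hence $\widetilde{\chi}$ factors to give $\overline{\chi}_U$ with stalk $\overline{\chi}(\Lambda(y))$ at each $y$.

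To match the prescribed stalks, observe that under $\overline{\cM}_{U,y}\gp \simeq \bZ^{\Lambda(y)}$ the morphism $\overline{f}^{\flat}_y$ sends $\ve_i$ to $\sum_{\lambda \in \Lambda(y) \cap \Lambda_i}\ve_{\lambda}$, and $\Lambda(y) \cap \Lambda_i \neq \emptyset$ for all $i$ because $U \subset D_i=\sum_{\lambda \in \Lambda_i}D_{\lambda}$. Hence the partition on $\Lambda(y)$ associated to $\overline{f}^{\flat}_y$ in Definition \ref{defn:1} is $\Lambda(y)=\coprod_i(\Lambda(y) \cap \Lambda_i)$, so Definition \ref{defn:7} yields $\overline{\chi}(\overline{f}^{\flat}_y)=\overline{\chi}(\Lambda(y))=(\overline{\chi}_U)_y$, as required. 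On overlaps, two such local constructions share the prescribed stalks and therefore agree as morphisms of sheaves, so the local $\overline{\chi}_U$ glue to a unique global $\overline{\chi}$. The main obstacle is the descent step from $\bigwedge \bZ^{\Lambda}_U$ to $\bigwedge \overline{\cM}_U\gp$ on both tensor factors, where the compatibility of $\overline{\chi}$ with restriction to a subset, recorded in Remark \ref{rmk:2}, does the essential work.
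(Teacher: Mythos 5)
Your proposal is correct and follows essentially the same route as the paper's proof: reduce to a local model, use the chart $\bZ^{\Lambda}_U \longrightarrow \overline{\cM}_U\gp$ to pull the product $\overline{\chi}(\Lambda)$ down via the compatibility in Remark \ref{rmk:2}, and check stalkwise that the result has the prescribed stalks. Your write-up is in fact slightly more explicit than the paper's at the descent step (isolating the kernel $K$) and in verifying that the partition of $\Lambda(y)$ associated to $\overline{f}^{\flat}_y$ is the restricted one, but the underlying argument is identical.
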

\begin{proof}
Since the uniqueness is trivial,
it suffices to check the existence locally.
Thus we may work over a local model 
$(U, \cM_U)$ in \ref{para:1}.
We note that there exists a chart
$\bN^{\Lambda}_U \longrightarrow \cM_U$
which induces a surjection
$\bZ^{\Lambda}_U \longrightarrow \overline{\cM}_U\gp$.
For $x \in U$,
we set
$\Lambda_x=\{\lambda \in \Lambda \mid x_{\lambda} \notin \cO^{\ast}_{U,x}\}$,
where $x_{\lambda}$ is the coordinate function
corresponding to $\lambda \in \Lambda$ as in \ref{para:1}.
The chart $\bN^{\Lambda}_U \longrightarrow \cM_U$
induces the identification
$\bN^{\Lambda_x} \overset{\simeq}{\longrightarrow} \overline{\cM}_{U,x}$
for all $x \in U$.
On the other hand,
the partition $\Lambda=\coprod_{i=1}^k\Lambda_i$ as in \ref{para:1}
induces a morphism
$\overline{\chi}(\Lambda) \colon
\bigwedge\bZ^{\Lambda} \otimes_{\bZ} \bigwedge\bZ^{\Lambda}
\longrightarrow \bigwedge\bZ^{\Lambda}$.
By Remark \ref{rmk:2},
the diagram
\begin{equation}
\begin{CD}
\bigwedge\bZ^{\Lambda} \otimes_{\bZ} \bigwedge\bZ^{\Lambda}
@>{\overline{\chi}(\Lambda)}>>
\bigwedge\bZ^{\Lambda} \\
@VVV @VVV \\
\bigwedge\bZ^{\Lambda_x} \otimes_{\bZ} \bigwedge\bZ^{\Lambda_x}
@>{\overline{\chi}(\Lambda_x)}>>
\bigwedge\bZ^{\Lambda_x} \\
@V{\simeq}VV @VV{\simeq}V \\
\bigwedge \overline{\cM}_{U,x}\gp \otimes_{\bZ} \bigwedge \overline{\cM}_{U,x}\gp
@>>{\overline{\chi}(\overline{f}^{\flat}_x)}>
\bigwedge \overline{\cM}_{U,x}\gp
\end{CD}
\end{equation}
is commutative for all $x \in U$.
Therefore the composite
\begin{equation}
\bigwedge\bZ^{\Lambda}_U \otimes_{\bZ} \bigwedge\bZ^{\Lambda}_U
\overset{\overline{\chi}(\Lambda)}{\longrightarrow}
\bigwedge\bZ^{\Lambda}_U
\longrightarrow
\bigwedge \overline{\cM}_U\gp
\end{equation}
factors through the surjection
$\bigwedge\bZ^{\Lambda}_U \otimes_{\bZ} \bigwedge\bZ^{\Lambda}_U
\longrightarrow
\bigwedge \overline{\cM}_U\gp \otimes_{\bZ} \bigwedge \overline{\cM}_U\gp$
and induces the morphism
$\bigwedge \overline{\cM}_U\gp \otimes_{\bZ} \bigwedge \overline{\cM}_U\gp
\longrightarrow \bigwedge \overline{\cM}_U\gp$
as desired.
\end{proof}

\begin{defn}
The restriction of $\overline{\chi}$ to
$\bigwedge^{\otimes \vr}\overline{\cM}_X\gp
\otimes_{\bZ}
\bigwedge^{\otimes \vs}\overline{\cM}_X\gp$
gives us a morphism
$\bigwedge^{\otimes \vr}\overline{\cM}_X\gp
\otimes_{\bZ}
\bigwedge^{\otimes \vs}\overline{\cM}_X\gp
\longrightarrow
\bigwedge^{\otimes \vr+\vs-\ve}\overline{\cM}_X\gp$
by definition.
Therefore, the morphism $\overline{\chi}$
induces a morphism
\begin{equation}
(a_{\vr})_{\ast}\varepsilon_{\vr}
\otimes_{\bZ} (a_{\vs})_{\ast}\varepsilon_{\vs}
\longrightarrow
(a_{\vr+\vs-\ve})_{\ast}\varepsilon_{\vr+\vs-\ve}
\end{equation}
via the isomorphism \eqref{eq:36}.
It is also denoted by $\overline{\chi}$ by abuse of the notation.
\end{defn}

\begin{rmk}
The equalities
\begin{equation}
\label{eq:47}
\overline{\chi} \cdot (\delta_i \otimes \id)
=
(-1)^{|\vr|-k}\overline{\chi} \cdot (\id \otimes \delta_i)
=
\delta_i \cdot \overline{\chi}
\end{equation}
can be easily checked for $i=\ki$.
\end{rmk}

\begin{lem}
For $\vr, \vs \in \bZ^k_{\ge \ve}$
and for $p,q \in \bZ$,
we define a morphism of $\cO_X$-modules
\begin{equation}
\label{eq:43}
\begin{split}
((a_{\vr})_{\ast}\varepsilon_{\vr}
\otimes_{\bZ}
\Omega^p_X(\log \cM_X))
&\otimes_{\bC}
((a_{\vs})_{\ast}\varepsilon_{\vs}
\otimes_{\bZ}
\Omega^q_X(\log \cM_X)) \\
&\longrightarrow
(a_{\vr+\vs-\ve})_{\ast}
(\varepsilon_{\vr+\vs-\ve}
\otimes_{\bZ}
\Omega^{p+q}_{X_{\vr+\vs-\ve}}(\log \cM_{X_{\vr+\vs-\ve}}))
\end{split}
\end{equation}
as the composite of the three morphisms,
the isomorphism
\begin{equation}
\begin{split}
((a_{\vr})_{\ast}\varepsilon_{\vr}
\otimes_{\bZ}
\Omega^p_X(\log \cM_X))
&\otimes_{\bC}
((a_{\vs})_{\ast}\varepsilon_{\vs}
\otimes_{\bZ}
\Omega^q_X(\log \cM_X)) \\
&\simeq
((a_{\vr})_{\ast}\varepsilon_{\vr}
\otimes_{\bZ}
(a_{\vs})_{\ast}\varepsilon_{\vs})
\otimes_{\bZ}
(\Omega^p_X(\log \cM_X)
\otimes_{\bC}
\Omega^q_X(\log \cM_X))
\end{split}
\end{equation}
exchanging the middle terms,
the morphism
$\overline{\chi} \otimes \wedge$,
and the surjection \eqref{eq:37} for $\vr+\vs-\ve \in \bZ^k_{\ge \ve}$.
Then this morphism factors through the surjection
\begin{equation}
\begin{split}
((a_{\vr})_{\ast}\varepsilon_{\vr}
\otimes_{\bZ}
&\Omega^p_X(\log \cM_X))
\otimes_{\bC}
((a_{\vs})_{\ast}\varepsilon_{\vs}
\otimes_{\bZ}
\Omega^q_X(\log \cM_X)) \\
&\longrightarrow
(a_{\vr})_{\ast}(
\varepsilon_{\vr}
\otimes_{\bZ}
\Omega^p_{X_{\vr}}(\log \cM_{X_{\vr}}))
\otimes_{\bC}
(a_{\vs})_{\ast}(
\varepsilon_{\vs}
\otimes_{\bZ}
\Omega^q_{X_{\vs}}(\log \cM_{X_{\vs}}))
\end{split}
\end{equation}
induced from the surjections \eqref{eq:37}
for $\vr$ and $\vs$.
\end{lem}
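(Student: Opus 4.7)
The plan is to verify the factorization stalkwise, reducing to the origin of a local model. Since both source and target of \eqref{eq:43} are quasi-coherent sheaves whose stalks at any point of $X$ can be computed on a neighborhood coming from a local model $(U, \cM_U)$ as in \ref{para:1}, it suffices to check the factorization at the origin $x = 0$ of such $U$. Using the identification \eqref{eq:36} together with the decomposition \eqref{eq:39}, the stalk of the source at $x$ decomposes as
\begin{equation}
\bigoplus_{\ula \in S_{\vr}(\Lambda)} \bigoplus_{\umu \in S_{\vs}(\Lambda)}
\varepsilon(\ula) \otimes_{\bZ} \Omega^p_U(\log \cM_U)_x
\otimes_{\bC} \varepsilon(\umu) \otimes_{\bZ} \Omega^q_U(\log \cM_U)_x,
\end{equation}
and the target decomposes analogously via \eqref{eq:40}. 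It is therefore enough to analyze the morphism on each summand indexed by a pair $(\ula, \umu)$.

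Next I unravel the composite on one summand, inspecting the action of $\overline{\chi}$ at $x$. By the explicit formula of Lemma \ref{lem:28} and the definition of $\overline{\chi}(\overline{f^{\flat}_x})$, the image of $\ve_{\ula} \otimes \ve_{\umu}$ under $\overline{\chi}$ vanishes unless $|\ula \cap \umu \cap \Lambda_i| = 1$ for every $i = \ki$, in which case, writing $\{\lambda_i\} = \ula \cap \umu \cap \Lambda_i$, it equals $\pm \ve_{\ugamma}$ with $\ugamma = \ula \cup (\umu \setminus \{\lambda_1, \dots, \lambda_k\})$. In the vanishing case the composite is identically zero, so the factorization is trivial. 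In the non-vanishing case, applying the surjection \eqref{eq:37} for $\vr + \vs - \ve$ to the summand $\varepsilon(\ugamma) \otimes \Omega^{p+q}_U(\log \cM_U)_x$ corresponds to restricting the wedge product to $D[\ugamma]$, so the composite sends $\ve_{\ula} \otimes \omega \otimes \ve_{\umu} \otimes \eta$ to $\pm \ve_{\ugamma} \otimes (\omega \wedge \eta)|_{D[\ugamma]}$.

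The crucial combinatorial point is that $\ula \subset \ugamma$ by construction, while $\umu \subset \ugamma$ as well: indeed $\umu \setminus \{\lambda_1, \dots, \lambda_k\} \subset \ugamma$ tautologically, and $\{\lambda_1, \dots, \lambda_k\} \subset \ula \subset \ugamma$ since each $\lambda_i$ lies in $\ula \cap \umu$. Consequently $D[\ugamma] \subset D[\ula] \cap D[\umu]$, and so the restriction to $D[\ugamma]$ factors as
\begin{equation}
(\omega \wedge \eta)|_{D[\ugamma]}
= (a[\ula]^{\ast} \omega)|_{D[\ugamma]} \wedge (a[\umu]^{\ast} \eta)|_{D[\ugamma]}.
\end{equation}
Thus on each $(\ula, \umu)$-summand the composite depends only on the classes $a[\ula]^{\ast} \omega \in \Omega^p_{D[\ula]}(\log \cM_{D[\ula]})_x$ and $a[\umu]^{\ast} \eta \in \Omega^q_{D[\umu]}(\log \cM_{D[\umu]})_x$, which is exactly the factorization required.

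This is essentially a bookkeeping argument and I do not expect any real obstacle; the only delicate point is the inclusion $D[\ugamma] \subset D[\ula] \cap D[\umu]$, which is automatic from the way the indices of $\overline{\chi}$ are formed (the ``overlap'' $\{\lambda_1, \dots, \lambda_k\}$ removed from $\umu$ is simultaneously retained inside $\ula$). The signs introduced by the exchange of middle factors and by $\overline{\chi}$ itself are irrelevant for the factorization statement.
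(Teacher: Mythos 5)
Your proof is correct and follows essentially the same route the paper indicates (the paper's proof is just "similar to the proofs of Lemmas \ref{lem:9} and \ref{lem:28}", i.e.\ a stalkwise check at the origin of a local model). The key point you isolate --- that $\overline{\chi}$ sends the $(\ula,\umu)$-summand into $\varepsilon(\ugamma)$ with $\ugamma=\ula\cup\umu$, so that $D[\ugamma]\subset D[\ula]\cap D[\umu]$ and the composite kills the kernels of the surjections \eqref{eq:37} for $\vr$ and $\vs$ --- is exactly the analogue of the factorization $D[\ula\cup\{\lambda\}]\hookrightarrow D[\ula]\hookrightarrow U$ used in Lemma \ref{lem:9}.
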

\begin{proof}
Similar to the proof of Lemmas \ref{lem:9} and \ref{lem:28}.
\end{proof}

\begin{defn}
From the lemma above,
the morphism \eqref{eq:43}
induces a morphism
\begin{equation}
\begin{split}
(a_{\vr})_{\ast}(
\varepsilon_{\vr}
\otimes_{\bZ}
\Omega^p_{X_{\vr}}(\log \cM_{X_{\vr}}))
\otimes_{\bC}
(&(a_{\vs})_{\ast}(
\varepsilon_{\vs}
\otimes_{\bZ}
\Omega^q_{X_{\vs}}(\log \cM_{X_{\vs}})) \\
&\longrightarrow
(a_{\vr+\vs-\ve})_{\ast}
(\varepsilon_{\vr+\vs-\ve}
\otimes_{\bZ}
\Omega^{p+q}_{X_{\vr+\vs-\ve}}(\log \cM_{X_{\vr+\vs-\ve}}))
\end{split}
\end{equation}
for $\vr, \vs \in \bZ^k_{\ge \ve}$ and for $p,q \in \bZ$,
which is denoted by $\overline{\chi} \otimes \wedge$
by abuse of the notation.
\end{defn}

\begin{defn}
A morphism of $\bC$-sheaves
\begin{equation}
\tau \colon
\cC(\Omega_{X_{\bullet}}(\log \cM_{X_{\bullet}}))^p
\otimes_{\bC}
\cC(\Omega_{X_{\bullet}}(\log \cM_{X_{\bullet}}))^q
\longrightarrow
\cC(\Omega_{X_{\bullet}}(\log \cM_{X_{\bullet}}))^{p+q}
\end{equation}
for $p,q \in \bZ$
is defined by
\begin{equation}
\tau
=
(-1)^{(|\vs|-k)(p-|\vr|+k)}
\prod_{i=1}^k
\frac{(r_i-1)!(s_i-1)!}{(r_i+s_i-1)!}
\overline{\chi} \otimes \wedge
\end{equation}
on the direct summand
\begin{equation}
(a_{\vr})_{\ast}
(\varepsilon_{\vr}
\otimes_{\bZ}
\Omega^{p-|\vr|+k}_{X_{\vr}}(\log \cM_{X_{\vr}}))
\otimes_{\bC}
(a_{\vs})_{\ast}
(\varepsilon_{\vs}
\otimes_{\bZ}
\Omega^{q-|\vs|+k}_{X_{\vs}}(\log \cM_{X_{\vs}}))
\end{equation}
of
$\cC(\Omega_{X_{\bullet}}(\log \cM_{X_{\bullet}}))^p
\otimes_{\bC}
\cC(\Omega_{X_{\bullet}}(\log \cM_{X_{\bullet}}))^q$.
We can check that these morphisms
define a morphism of complexes of $\bC$-sheaves
\begin{equation}
\tau \colon
\cC(\Omega_{X_{\bullet}}(\log \cM_{X_{\bullet}}))
\otimes_{\bC}
\cC(\Omega_{X_{\bullet}}(\log \cM_{X_{\bullet}}))
\longrightarrow
\cC(\Omega_{X_{\bullet}}(\log \cM_{X_{\bullet}}))
\end{equation}
by the direct computation using \eqref{eq:47}.
The inclusion
\begin{equation}
\tau(
\delta W_a\cC(\Omega_{X_{\bullet}}(\log \cM_{X_{\bullet}}))
\otimes_{\bC}
\delta W_b\cC(\Omega_{X_{\bullet}}(\log \cM_{X_{\bullet}}))
\subset
\delta W_{a+b}\cC(\Omega_{X_{\bullet}}(\log \cM_{X_{\bullet}}))
\end{equation}
for all $a, b \in \bZ$
can be easily checked from the definition above.
\end{defn}

\begin{rmk}
\label{rmk:13}
Direct computation using \eqref{eq:20} shows that
$\tau$ is compatible with the isomorphism
\begin{equation}
\cC(\Omega_{X_{\bullet}}(\log \cM_{X_{\bullet}}))
\otimes_{\bC}
\cC(\Omega_{X_{\bullet}}(\log \cM_{X_{\bullet}}))
\simeq
\cC(\Omega_{X_{\bullet}}(\log \cM_{X_{\bullet}}))
\otimes_{\bC}
\cC(\Omega_{X_{\bullet}}(\log \cM_{X_{\bullet}}))
\end{equation}
exchanging the left and right hand sides
defined in \ref{para:6}.
\end{rmk}

\begin{assump}
In the remainder of this section,
$X$ is assumed to be of pure dimension.
\end{assump}

\begin{para}
\label{para:8}
We consider
the $E_1$-terms of the spectral sequence
\begin{equation}
\label{eq:56}
E_r^{p,q}
(R\Gamma_c(X, \cC(\Omega_{X_{\bullet}}(\log \cM_{X_{\bullet}}))), \delta W).
\end{equation}
By \eqref{eq:48},
we have
\begin{equation}
\label{eq:42}
\begin{split}
E_1^{p,q}
(R\Gamma_c(X, \cC(\Omega_{X_{\bullet}}&(\log \cM_{X_{\bullet}}))), \delta W) \\
&\simeq
\bigoplus_{\vr \in \bZ^k_{\ge \ve}}
\coh^{p+q-|\vr|+k}_c(X_{\vr},
\varepsilon_{\vr} \otimes \gr_{-p+|\vr|-k}^W\Omega_{X_{\vr}}(\log \cM_{X_{\vr}}))
\end{split}
\end{equation}
for all $p,q$
because $a_{\vr}$ is a finite morphism.
\end{para}

The following lemma is a special case
of \cite[Lemma 3.23]{FujisawaMHSLSD}.

\begin{lem}
\label{lem:3}
For $\vr \in \bZ^k_{\ge \ve}$,
there exists an isomorphism of complexes of $\bC$-sheaves
\begin{equation}
\label{eq:46}
\bigoplus_{l=0}^m
(i_l)_{\ast}(i_l^{-1}(
\varepsilon_{\vr} \otimes_{\bZ}
\bigwedge^{m-l}L_{\vr}) \otimes_{\bZ}
\varepsilon^l \otimes_{\bZ} \Omega_{\widetilde{D_{\vr}}^l}[-m])
\overset{\simeq}{\longrightarrow}
\varepsilon_{\vr} \otimes_{\bZ}
\gr_m^W\Omega_{X_{\vr}}(\log \cM_{X_{\vr}})
\end{equation}
where $\widetilde{D_{\vr}}^l$, $i_l$
and $\varepsilon^l$ are defined in \textup{\cite[(3.1.4)]{DeligneII}}.
\end{lem}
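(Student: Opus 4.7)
The problem is local on $X_{\vr}$, so by Lemma \ref{lem:4} we may restrict to a small open $V \subset X_{\vr}$ on which $\cM_V \simeq \cM_V(D_{\vr}|_V) \oplus \bN^{|\vr|}_V$. The first step is to translate this splitting of log structures into the direct sum decomposition
\[
\Omega^1_V(\log \cM_V) \simeq \Omega^1_V(\log D_{\vr}|_V) \oplus (\cO_V \otimes_{\bZ} L_{\vr}|_V),
\]
where the second summand is spanned by the $\dlog$'s of the generators of the free factor $\bN^{|\vr|}_V$. Taking exterior powers then gives, for each $n \in \bZ$,
\[
\Omega^n_V(\log \cM_V) \simeq \bigoplus_{a+b=n} \Omega^a_V(\log D_{\vr}|_V) \otimes_{\bZ} \bigwedge^b L_{\vr}|_V.
\]

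Next I would analyze the weight filtration $W = W(\cO_{X_{\vr}}^{\ast})$, which counts the total number of log poles. Since each basis element of $L_{\vr}$ contributes exactly one pole, independently of the poles along $D_{\vr}$, the above decomposition induces
\[
\gr_m^W \Omega^n_V(\log \cM_V) \simeq \bigoplus_{l=0}^m \gr_l^W \Omega^{n-m+l}_V(\log D_{\vr}|_V) \otimes_{\bZ} \bigwedge^{m-l} L_{\vr}|_V.
\]

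Then I would invoke the classical Poincar\'e residue isomorphism of Deligne \cite[(3.1.5.2)]{DeligneII} applied to the simple normal crossing divisor $D_{\vr}$ on $X_{\vr}$, namely
\[
\gr_l^W \Omega_{X_{\vr}}(\log D_{\vr}) \overset{\simeq}{\longrightarrow} (i_l)_{\ast} \bigl(\varepsilon^l \otimes_{\bZ} \Omega_{\widetilde{D_{\vr}}^l}[-l]\bigr).
\]
Tensoring with $\varepsilon_{\vr} \otimes_{\bZ} \bigwedge^{m-l} L_{\vr}$ and applying the projection formula $(i_l)_{\ast}(i_l^{-1}K \otimes N) \simeq K \otimes (i_l)_{\ast} N$, together with the bookkeeping of shifts (the factor $\bigwedge^{m-l} L_{\vr}$ sits in degree $m-l$ and the residue shifts by $-l$, for a total of $-m$), one sees that the left-hand side of the claimed isomorphism locally matches the right-hand side.

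The main obstacle is to verify that this locally defined isomorphism is canonical and therefore globalizes to $X_{\vr}$. By the uniqueness assertion in Lemma \ref{lem:2}, both the divisor $D_{\vr}$ and the subsheaf $\cM_{X_{\vr}}(D_{\vr}) \subset \cM_{X_{\vr}}$ are globally determined; only the choice of a complementary $\bN^{|\vr|}$-factor is local, and two such choices differ by units of $\cO_V$, whose $\dlog$'s lie in $\Omega^1_V(\log D_{\vr}|_V)$ and hence act trivially on $\gr_m^W$ after applying the Poincar\'e residue. Therefore the local isomorphisms patch to the desired global one.
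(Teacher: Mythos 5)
Your overall strategy is sound and is genuinely more self-contained than the paper's treatment: the paper disposes of this lemma in one line by citing it as a special case of Lemma 3.23 of \cite{FujisawaMHSLSD}, whereas you reconstruct the isomorphism from the local splitting $\cM_V \simeq \cM_V(D_{\vr}|_V) \oplus \bN^{|\vr|}_V$ of Lemma \ref{lem:4}, the induced decomposition of $\Omega^1_V(\log\cM_V)$, and Deligne's residue for the \snc divisor $D_{\vr}$. The local computation and the degree/shift bookkeeping are correct. You also correctly identify globalization as the crux.

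However, the last step as written has a gap in its justification, and as stated the ``hence'' is false. Two splittings as in Lemma \ref{lem:4}\ref{item:11} a priori send the generators of the $\bN^{|\vr|}$-factor to two lifts in $\cM_V$ of the same class in $L_{\vr}$, and two such lifts could differ by a section $g$ of $\cM_V(D_{\vr}|_V)\gp$, not merely by a unit. For such a $g$ one indeed has $\dlog g \in \Omega^1_V(\log D_{\vr}|_V)$, but this does \emph{not} make the discrepancy vanish in $\gr_m^W$: replacing $\dlog \tilde v$ by $\dlog\tilde v + \dlog g$ changes the image by a term lying in $W_m \cap \widehat W_{b-1}$ (where $\widehat W = W(\cM_{X_{\vr}}(D_{\vr}))$), which is generally nonzero in $\gr_m^W$ --- it merely lands in a lower summand of the decomposition. (Concretely, for $\cM$ generated by $\cO^*$, a coordinate $x$ and one free generator $v$, replacing $v$ by $x^n v$ shifts the $l=1$ component of the residue by $n$ times the $l=0$ component.) What saves the argument is that the ambiguity really is only by units: since $\overline{\cM}_{X_{\vr},x}$ is a \emph{free} monoid, its irreducible elements are intrinsic, any isomorphism $\cM_V(D_{\vr}|_V)\oplus\bN^{|\vr|}_V\simeq\cM_V$ must send the canonical generators of $\bN^{|\vr|}$ to lifts of those irreducible elements, and two lifts of the same element of $\overline{\cM}_{X_{\vr}}$ to $\cM_{X_{\vr}}$ differ by a section of $\cO^*_{X_{\vr}}$. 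Then $\dlog u \in \Omega^1_{X_{\vr}} = W_0\Omega^1_{X_{\vr}}(\log\cM_{X_{\vr}})$ --- not just $W_1\Omega^1(\log D_{\vr})$ --- so the discrepancy lies in $W_{l+0+(b-1)} = W_{m-1}$ and dies in $\gr_m^W$. You should make both points explicit: why the transition is by units (freeness of $\overline{\cM}_{X_{\vr}}$), and that the relevant containment is $\dlog u \in W_0$, not $\dlog u \in \Omega^1(\log D_{\vr})$.
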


\begin{para}
The composite of the inverse of \eqref{eq:46} for $m=|\vr|$,
the projection for $l=0$
and $\vartheta_{\vr} \otimes \id$
gives us a morphism
$\varepsilon_{\vr} \otimes_{\bZ}
\gr_{|\vr|}^W\Omega_{X_{\vr}}(\log \cM_{X_{\vr}})
\longrightarrow
\Omega_{X_{\vr}}[-|\vr|]$.
Then a morphism
\begin{equation}
(a_{\vr})_{\ast}(
\epsilon_{\vr} \otimes_{\bZ}
\gr_{|\vr|}^W\Omega_{X_{\vr}}(\log \cM_{X_{\vr}}))
\longrightarrow
(a_{\vr})_{\ast}\Omega_{X_{\vr}}[-|\vr|]
\end{equation}
is obtained.
By taking the direct sum for all $\vr \in \bZ^k_{\ge \ve}$,
a morphism
\begin{equation}
\label{eq:53}
\gr_k^{\delta W}\cC(\Omega_{X_{\bullet}}(\log \cM_{X_{\bullet}}))
\longrightarrow
\bigoplus_{\vr \in \bZ^k_{\ge \ve}}
(a_{\vr})_{\ast}\Omega_{X_{\vr}}[-2|\vr|+k]
\end{equation}
is obtained by \eqref{eq:48}.
Similarly, the composite of the inverse of \eqref{eq:46} for $m=|\vr|+1$,
the projection for $l=1$
and $i^{-1}(\vartheta_{\vr} \otimes \id)$
induces a morphism
\begin{equation}
\label{eq:70}
\gr_{k+1}^{\delta W}\cC(\Omega_{X_{\bullet}}(\log \cM_{X_{\bullet}}))
\longrightarrow
\bigoplus_{\vr \in \bZ^k_{\ge \ve}}
(a_{\vr} \cdot i)_{\ast}
(\varepsilon \otimes_{\bZ} \Omega_{\widetilde{D}_{\vr}}[-2|\vr|+k-1])
\end{equation}
by \eqref{eq:48} again,
where we use $\widetilde{D_{\vr}}$, $i$ and $\varepsilon$
instead of $\widetilde{D_{\vr}}^1$, $i_1$ and $\varepsilon^1$
in \eqref{eq:46} respectively.
\end{para}

\begin{lem}
We have the isomorphisms
\begin{gather}
E_1^{-k,2\dim X+2k}
(R\Gamma_c(X, \cC(\Omega_{X_{\bullet}}(\log \cM_{X_{\bullet}}))), \delta W)
\simeq
\bigoplus_{\vr \in \bZ^k_{\ge \ve}}
\coh^{2\dim X_{\vr}}_c
(X_{\vr}, \Omega_{X_{\vr}})
\label{eq:54} \\
E_1^{-k-1,2\dim X+2k}
(R\Gamma_c(X, \cC(\Omega_{X_{\bullet}}(\log \cM_{X_{\bullet}}))), \delta W)
\simeq
\bigoplus_{\vr \in \bZ^k_{\ge \ve}}
\coh^{2\dim X_{\vr}-2}_c
(\widetilde{D_{\vr}}, \varepsilon \otimes_{\bZ} \Omega_{\widetilde{D_{\vr}}})
\label{eq:71}
\end{gather}
induced from the morphisms \eqref{eq:53}
and \eqref{eq:70} respectively.
\end{lem}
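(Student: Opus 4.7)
The plan is to start from the formula \eqref{eq:42} for the $E_1$-terms of the spectral sequence associated to $(R\Gamma_c(X, \cC(\Omega_{X_{\bullet}}(\log \cM_{X_{\bullet}}))), \delta W)$, substitute the specified bidegrees, apply Lemma \ref{lem:3} to decompose $\varepsilon_{\vr} \otimes_{\bZ} \gr_m^W\Omega_{X_{\vr}}(\log \cM_{X_{\vr}})$ for $m = |\vr|$ and $m = |\vr|+1$, and then extract the isomorphism by a dimension count. A key input, read off from the local description in \ref{para:1}, is that each connected component of $X_{\vr}$ has pure dimension $\dim X + k - |\vr|$, so $\widetilde{D_{\vr}}^l$ is smooth of pure dimension $\dim X_{\vr} - l = \dim X + k - |\vr| - l$.

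For the first isomorphism, substituting $(p,q) = (-k, 2\dim X+2k)$ in \eqref{eq:42} yields cohomological degree $2\dim X + 2k - |\vr|$ and weight $|\vr|$. Applying Lemma \ref{lem:3} with $m = |\vr|$, the $l = 0$ summand is $\varepsilon_{\vr} \otimes_{\bZ} \bigwedge^{|\vr|}L_{\vr} \otimes_{\bZ} \Omega_{X_{\vr}}[-|\vr|] = \varepsilon_{\vr} \otimes_{\bZ} \varepsilon_{\vr} \otimes_{\bZ} \Omega_{X_{\vr}}[-|\vr|]$, which is identified with $\Omega_{X_{\vr}}[-|\vr|]$ via $\vartheta_{\vr}$; after the shift, the required cohomological degree becomes $2\dim X + 2k - 2|\vr| = 2\dim X_{\vr}$. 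For $l \ge 1$, the same bookkeeping asks for the same cohomological degree on the compactly supported hypercohomology of $\Omega_{\widetilde{D_{\vr}}^l}$, but this exceeds $2\dim \widetilde{D_{\vr}}^l$, so the contribution vanishes. Since the morphism \eqref{eq:53} is by construction the projection onto the $l=0$ summand followed by $\vartheta_{\vr}$, the claimed isomorphism \eqref{eq:54} follows.

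For the second isomorphism, substituting $(p,q) = (-k-1, 2\dim X + 2k)$ gives weight $|\vr|+1$ and cohomological degree $2\dim X + 2k - |\vr| - 1$. Applying Lemma \ref{lem:3} with $m = |\vr|+1$, the $l=0$ summand vanishes because $\bigwedge^{|\vr|+1}L_{\vr} = 0$ as $\rank L_{\vr} = |\vr|$. The $l=1$ summand is $i_{\ast}(i^{-1}(\varepsilon_{\vr} \otimes_{\bZ} \bigwedge^{|\vr|}L_{\vr}) \otimes_{\bZ} \varepsilon \otimes_{\bZ} \Omega_{\widetilde{D_{\vr}}}[-|\vr|-1])$, which is identified with $i_{\ast}(\varepsilon \otimes_{\bZ} \Omega_{\widetilde{D_{\vr}}}[-|\vr|-1])$ via $\vartheta_{\vr}$; after the shift, the required cohomological degree on $\Omega_{\widetilde{D_{\vr}}}$ is $2\dim X + 2k - 2|\vr| - 2 = 2\dim \widetilde{D_{\vr}}$. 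For $l \ge 2$, the required cohomological degree exceeds $2\dim \widetilde{D_{\vr}}^l$, so the contribution vanishes. Since \eqref{eq:70} is the projection onto the $l=1$ summand composed with $i^{-1}(\vartheta_{\vr} \otimes \id)$, the isomorphism \eqref{eq:71} follows.

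Each individual step is routine; the chief technical obligation is to track the shifts and the identifications $\vartheta_{\vr}$ carefully, and to rely on the local description in \ref{para:1} for the dimension $\dim X_{\vr} = \dim X + k - |\vr|$, which underlies every vanishing assertion above.
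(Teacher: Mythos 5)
Your proposal is correct and follows essentially the same route as the paper: combine the $E_1$-description \eqref{eq:42} with the decomposition \eqref{eq:46} of Lemma \ref{lem:3}, kill all summands with $l$ too large by comparing the cohomological degree with $2\dim \widetilde{D_{\vr}}^l$ (using $\dim X_{\vr}=\dim X-|\vr|+k$), and eliminate the $l=0$ term in the second case via $\rank L_{\vr}=|\vr|$. The only cosmetic difference is that the paper writes out the argument for \eqref{eq:71} and dispatches \eqref{eq:54} as ``similar,'' whereas you do the two cases in the opposite order.
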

\begin{proof}
Combining \eqref{eq:42} and \eqref{eq:46},
we have
\begin{equation}
\begin{split}
E_1^{-k-1,2\dim X+2k}
(R\Gamma_c&(X, \cC(\Omega_{X_{\bullet}}(\log \cM_{X_{\bullet}}))), \delta W) \\
&\simeq
\bigoplus_{\vr \in \bZ^k_{\ge \ve}}
\bigoplus_{l=0}^{|\vr|+1}
\coh^{2\dim X-2|\vr|+2k-2}_c
(\widetilde{D_{\vr}}^l,
i_l^{-1}(
\varepsilon_{\vr} \otimes_{\bZ}
\bigwedge^{|\vr|+1-l}L_{\vr}) \otimes_{\bZ}
\varepsilon^l \otimes_{\bZ} \Omega_{\widetilde{D_{\vr}}^l}).
\end{split}
\end{equation}
Because the inequalities
$2\dim X-2|\vr|+2k-2 \le 2\dim \widetilde{D_{\vr}}^l=2\dim X-2|\vr|+2k-2l$
implies $l \le 1$,
we obtain \eqref{eq:71}
from the equality $\rank L_{\vr}=|\vr|$.
Similar argument shows \eqref{eq:54}.
\end{proof}

\begin{lem}
\label{lem:12}
Let $X=\bigcup_{\alpha \in A}V_{\alpha}$ be an open covering of $X$.
Then the canonical morphism
\begin{equation}
\label{eq:72}
\begin{split}
\bigoplus_{\alpha \in A}
E_1^{-k-1,2\dim X+2k}
(R\Gamma_c&(V_{\alpha},
\cC(\Omega_{X_{\bullet}}(\log \cM_{X_{\bullet}}))), \delta W) \\
&\longrightarrow
E_1^{-k-1,2\dim X+2k}
(R\Gamma_c(X, \cC(\Omega_{X_{\bullet}}(\log \cM_{X_{\bullet}}))), \delta W)
\end{split}
\end{equation}
is surjective.
\end{lem}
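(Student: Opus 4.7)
The plan is to use the isomorphism \eqref{eq:71} to reduce the surjectivity of \eqref{eq:72} to a statement about top compactly supported cohomology of open subsets of smooth connected complex manifolds, which then follows from a direct integration argument.

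First I would verify that the construction of the filtered complex $(\cC(\Omega_{X_{\bullet}}(\log \cM_{X_{\bullet}})), \delta W)$ and the isomorphism \eqref{eq:71} are functorial with respect to the open immersions $V_{\alpha} \hookrightarrow X$. Under this functoriality, the morphism \eqref{eq:72} sits in a commutative square whose vertical arrows are \eqref{eq:71} applied to $V_{\alpha}$ and to $X$, and whose bottom horizontal arrow is the direct sum, over $\vr \in \bZ^k_{\ge \ve}$, of the extension-by-zero maps from $\bigoplus_{\alpha}\coh^{2\dim X_{\vr}-2}_c(a_{\vr}^{-1}(V_{\alpha}) \cap \widetilde{D_{\vr}}, \varepsilon \otimes_{\bZ} \Omega_{\widetilde{D_{\vr}}})$ to $\coh^{2\dim X_{\vr}-2}_c(\widetilde{D_{\vr}}, \varepsilon \otimes_{\bZ} \Omega_{\widetilde{D_{\vr}}})$. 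It thus suffices to prove surjectivity of each such map for fixed $\vr$.

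Next I would decompose $\widetilde{D_{\vr}}$ into connected components $\widetilde{D_{\vr}} = \coprod_Y Y$, each a smooth connected complex manifold of complex dimension $d_{\vr} := \dim X_{\vr}-1$. Because $\varepsilon = \varepsilon^1$ is the trivial local system $\bZ$ on the $1$-fold normalization (no ordering ambiguity for a single component in Deligne's convention), and by the holomorphic Poincar\'e lemma together with Poincar\'e duality, each summand $\coh^{2d_{\vr}}_c(Y, \varepsilon \otimes \Omega_Y) \simeq \coh^{2d_{\vr}}_c(Y, \bC) \simeq \bC$ is one-dimensional, generated by the trace $\operatorname{tr}_Y$ given by integrating a compactly supported $C^{\infty}$ representative of top degree. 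Hence it remains to show, for each pair $(\vr, Y)$, that at least one of the extension-by-zero maps $\coh^{2d_{\vr}}_c(a_{\vr}^{-1}(V_{\alpha}) \cap Y, \Omega) \to \coh^{2d_{\vr}}_c(Y, \Omega)$ is nonzero.

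Fix such a pair $(\vr, Y)$ and choose any $\alpha$ with $V_{\alpha} \cap a_{\vr}(Y) \ne \emptyset$, which exists because $\{V_{\alpha}\}$ covers $X$ and $a_{\vr}(Y) \ne \emptyset$. Setting $U := a_{\vr}^{-1}(V_{\alpha}) \cap Y$, a nonempty open subset of the connected complex $d_{\vr}$-manifold $Y$, the factorisation $\operatorname{tr}_Y \circ j_! = \operatorname{tr}_U$ (where $j \colon U \hookrightarrow Y$) forces $j_!$ to be nonzero: $\operatorname{tr}_U$ is surjective onto $\bC$, as one sees by integrating any compactly supported bump form of top degree in $U$, so $j_!$ cannot vanish. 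Since the target of $j_!$ is one-dimensional, $j_!$ is surjective, establishing the lemma. The principal technical point is the functoriality of \eqref{eq:71} in the first step, which I expect to follow routinely from the constructions in the preceding sections but requires a careful trace through the definitions of $\res_{\vr}$ and the decomposition in Lemma \ref{lem:3}.
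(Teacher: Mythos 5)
Your proposal is correct and follows essentially the same route as the paper: both reduce via \eqref{eq:71} to the surjectivity of the extension-by-zero maps into the top-degree compactly supported cohomology of $\widetilde{D_{\vr}}$. The paper concludes directly from the surjection of sheaves $\bigoplus_{\alpha}(j_{\alpha})_{!}(\varepsilon \otimes_{\bZ} \Omega_{\widetilde{D_{\vr}}})|_{i^{-1}(V_{\alpha})} \longrightarrow \varepsilon \otimes_{\bZ} \Omega_{\widetilde{D_{\vr}}}$ (i.e.\ top-degree right-exactness of $H_c$), whereas you make the same point explicit by passing to connected components and using the trace; the two justifications are interchangeable.
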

\begin{proof}
By the isomorphisms \eqref{eq:71}
for $X$ and $V_{\alpha}$,
the morphism \eqref{eq:72} is identified with
the direct sum of the morphisms
\begin{equation}
\label{eq:116}
\bigoplus_{\alpha \in A}
\coh_c^{2\dim \widetilde{D_{\vr}}}
(i^{-1}(V_{\alpha}), \varepsilon \otimes_{\bZ} \Omega_{\widetilde{D_{\vr}}})
\longrightarrow
\coh_c^{2\dim \widetilde{D_{\vr}}}
(\widetilde{D_{\vr}}, \varepsilon \otimes_{\bZ} \Omega_{\widetilde{D_{\vr}}}),
\end{equation}
induced from the surjection
$\bigoplus_{\alpha \in A}(j_{\alpha})_{!}
(\varepsilon \otimes_{\bZ} \Omega_{\widetilde{D_{\vr}}})|_{i^{-1}(V_{\alpha})}
\longrightarrow
\varepsilon \otimes_{\bZ} \Omega_{\widetilde{D_{\vr}}}$,
where $j_{\alpha}$ denotes the open immersion
$i^{-1}(V_{\alpha}) \hookrightarrow D_{\vr}$
for every $\alpha \in A$.
Therefore the morphism \eqref{eq:116} is surjective.
\end{proof}

\begin{defn}
\label{notn:5}
We set
$\epsilon(a)=(-1)^{a(a-1)/2}$
for $a \in \bZ$
as in \cite[(3.3)]{Guillen-NavarroAznarCI}
and \cite[\RomI-14]{Peters-SteenbrinkMHS}.
\end{defn}

\begin{defn}
\label{defn:15}
A morphism
$\Theta \colon
E_1^{-k,2\dim X+2k}
(R\Gamma_c(X, \cC(\Omega_{X_{\bullet}}(\log \cM_{X_{\bullet}}))), \delta W)
\longrightarrow \bC$
is defined by
\begin{equation}
\label{eq:19}
\Theta
=\bigoplus_{\vr \in \bZ^k_{\ge \ve}}
\epsilon(|\vr|-k)(2\pi\sqrt{-1})^{|\vr|-k}\int_{X_{\vr}}
\colon
\bigoplus_{\vr \in \bZ^k_{\ge \ve}}
\coh^{2\dim X_{\vr}}_c
(X_{\vr}, \Omega_{X_{\vr}})
\longrightarrow
\bC
\end{equation}
via the isomorphism \eqref{eq:54}.
\end{defn}

\begin{lem}
\label{lem:13}
$\Theta \cdot d_1=0$,
where $d_1$ is the morphism of $E_1$-terms
of the spectral sequence \eqref{eq:56}.
\end{lem}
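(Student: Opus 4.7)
The plan is to reduce to a local computation via Lemma \ref{lem:12} and verify a sign-and-constant identity. Because the Čech-type differentials $\delta_i$ that generate $d_1$ are given by local stalk formulas, and because the integration in $\Theta$ is compatible with extension by zero associated to open immersions $V \hookrightarrow X$, both $\Theta$ and $d_1$ commute with the maps induced by $V \hookrightarrow X$ on the corresponding $E_1$-terms. Hence by the surjectivity of \eqref{eq:72}, it suffices to verify $\Theta_V \cdot d_{1,V} = 0$ when $V = (U, \cM_U)$ is a local model as in \ref{para:1}.

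On such a local model, with partition $\Lambda = \coprod_{i=1}^{k}\Lambda_i$ and coordinates $\{x_\lambda\}_{\lambda \in \Lambda}$, the isomorphisms \eqref{eq:71} and \eqref{eq:54} present $E_1^{-k-1,2\dim X+2k}$ and $E_1^{-k,2\dim X+2k}$ as explicit direct sums of top compactly supported cohomology groups $\coh_c^{\mathrm{top}}(D[\ula'], \Omega)$: the source is indexed by triples $(\vr, \ula, \mu)$ with $\vr \in \bZ^k_{\ge \ve}$, $\ula \in S_\vr(\Lambda)$, $\mu \in \Lambda \setminus \ula$, the corresponding component being $D[\ula \cup \{\mu\}] \subset \widetilde{D_\vr}$, and the target by pairs $(\vr', \ula')$ with $\ula' \in S_{\vr'}(\Lambda)$. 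Since the exterior derivative preserves the weight filtration $W$ on $\Omega_{X_\vr}(\log \cM_{X_\vr})$, the $E_1$-differential $d_1$ is induced entirely by the Čech-type piece $\sum_{i=1}^{k}\delta_i$. Combining Remark \ref{rmk:11}, Lemma \ref{lem:3}, the $\overline\chi$ wedge of Lemma \ref{lem:28}, and the bilinear forms $\vartheta_\vr$ of Definition \ref{defn:6}, the map $d_1$ can be described explicitly as, on each $(\vr, \ula, \mu)$-summand of the source, the natural identification of $D[\ula \cup \{\mu\}] \subset \widetilde{D_\vr}$ with the component $D[\ula \cup \{\mu\}] \subset X_{\vr + \ve_{i(\mu)}}$, twisted by an explicit scalar.

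Applying $\Theta$ then expresses $\Theta \cdot d_1$ as a finite $\bC$-linear combination of integrals $\int_{D[\ula \cup \{\mu\}]} \zeta_{\vr, \ula, \mu}$.
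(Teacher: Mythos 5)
Your reduction to a local model via Lemma \ref{lem:12} matches the paper, but the key step of your argument contains a genuine error. You assert that, because the exterior derivative preserves the weight filtration $W$ on $\Omega_{X_{\vr}}(\log \cM_{X_{\vr}})$, the differential $d_1$ on $E_1$-terms is induced entirely by the \v{C}ech-type piece $\sum_{i=1}^k\delta_i$. This is false. For a filtered complex, the $E_1$-differential is the connecting homomorphism of
$0 \to \gr^{\delta W}_{m-1} \to (\delta W)_m/(\delta W)_{m-2} \to \gr^{\delta W}_m \to 0$,
and the fact that the de Rham differential preserves $W$ (rather than strictly dropping it) is precisely what makes its contribution to this connecting map the Gysin morphism $\gamma$ on each $X_{\vr}$ --- this is the classical computation of $d_1$ for $(\Omega(\log D), W)$. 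So on the summand indexed by $(\vr,\ula,\lambda)$, $d_1(\omega)$ has \emph{two} components: a Gysin term $\pm\gamma(\omega)\in\coh^{2\dim D[\ula]}_c(D[\ula],\Omega_{D[\ula]})$ staying at level $\vr$, and the \v{C}ech term $\pm\omega$ landing on the component $D[\ula\cup\{\lambda\}]$ of $X_{\vr+\ve_i}$. The lemma holds only because these two contributions cancel after applying $\Theta$, via the identity $\int_{D[\ula]}\gamma(\omega)+(2\pi\sqrt{-1})\int_{D[\ula]\cap D_{\lambda}}\omega=0$ together with $\epsilon(a+1)=(-1)^a\epsilon(a)$, which accounts for the factors $(2\pi\sqrt{-1})^{|\vr|-k}$ and the signs built into Definition \ref{defn:15}. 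If $d_1$ really consisted of the \v{C}ech piece alone, $\Theta\cdot d_1(\omega)$ would be a nonzero multiple of $\int_{D[\ula]\cap D_{\lambda}}\omega$ and the lemma would be false.

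A secondary but also real defect: your argument stops at the point of expressing $\Theta\cdot d_1$ as a linear combination of integrals and never establishes that this combination vanishes. Since the vanishing is exactly the content of the lemma and rests on the Gysin--restriction cancellation above, the proposal is missing its essential step rather than merely abbreviating it.
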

\begin{proof}
In this proof,
we use $E_1^{p,q}$
instead of
$E_1^{p,q}
(R\Gamma_c(X, \cC(\Omega_{X_{\bullet}}(\log \cM_{X_{\bullet}}))), \delta W)$
for short.
Since Lemma \ref{lem:12} enables us
to compute $\Theta \cdot d_1$ locally,
we may work on a local model
$(U, \cM_U)$ in \ref{para:1}.
We use the notation in \ref{para:1}.
In addition, we fix an total order on $\Lambda$
and identify $\varepsilon(\ula)$ with $\bZ$
by fixing the base
$\ve_{\lambda_1} \wedge \ve_{\lambda_2} \wedge \dots \wedge \ve_{\lambda_m}$
for $\ula=\{\lambda_1, \lambda_2, \dots, \lambda_m\}$
with $\lambda_1 < \lambda_2 < \dots < \lambda_m$.
Then
\begin{gather}
E_1^{-k,2\dim U+2k}
\simeq
\bigoplus_{\vr \in \bZ^k_{\ge \ve}}
\bigoplus_{\ula \in S_{\vr}(\Lambda)}
\coh^{2\dim D[\ula]}_c
(D[\ula], \Omega_{D[\ula]}) \\
E_1^{-k-1,2\dim U+2k}
\simeq
\bigoplus_{\vr \in \bZ^k_{\ge \ve}}
\bigoplus_{\ula \in S_{\vr}(\Lambda)}
\bigoplus_{\lambda \in \Lambda \setminus \ula}
\coh^{2\dim D[\ula]-2}_c
(D[\ula] \cap D_{\lambda}, \Omega_{D[\ula] \cap D_{\lambda}})
\end{gather}
by \eqref{eq:54} and \eqref{eq:71}.
We fix
$\vr \in \bZ^k_{\ge \ve}, \ula \in S_{\vr}(\Lambda)$
and $\lambda \in \Lambda \setminus \ula$.
For any
\begin{equation}
\omega
\in
\coh^{2\dim D[\ula]-2}_c
(D[\ula] \cap D_{\lambda}, \Omega_{D[\ula] \cap D_{\lambda}})
\end{equation}
we have
\begin{equation}
\begin{split}
d_1(\omega)
=(-1)^{|\vr|-k+j}&\gamma(\omega)
+(-1)^j\omega \\
&\in
\coh^{2\dim D[\ula]}_c
(D[\ula], \Omega_{D[\ula]})
\oplus
\coh^{2\dim D[\ula]-2}_c
(D[\ula] \cap D_{\lambda}, \Omega_{D[\ula] \cap D_{\lambda}})
\end{split}
\end{equation}
as in the proof of \cite[Lemma 7.10]{FujisawaPLMHS},
where $\gamma$ denotes the Gysin morphism
for $D[\ula] \cap D_{\lambda}$ in $D[\ula]$
as in \cite[4.2]{FujisawaPLMHS}
and where
$\ula \cup \{\lambda\}
=\{\lambda_0, \lambda_1, \dots, \lambda=\lambda_j, \dots, \lambda_m\}$
with $\lambda_0 < \lambda_1 < \dots < \lambda_m$.
Then
\begin{equation}
\begin{split}
\Theta(d_1(\omega))
&=(-1)^{|\vr|-k+j}\epsilon(|\vr|-k)(2\pi\sqrt{-1})^{|\vr|-k}
\int_{D[\ula]}\gamma(\omega) \\
&\phantom{(-1)^{|\vr|-k+j}\epsilon(|\vr|-k)}
+(-1)^j\epsilon(|\vr|-k+1)(2\pi\sqrt{-1})^{|\vr|-k+1}
\int_{D[\ula] \cap D_{\lambda}}\omega \\
&=(-1)^{|\vr|-k+j}\epsilon(|\vr|-k)(2\pi\sqrt{-1})^{|\vr|-k}
(\int_{D[\ula]}\gamma(\omega)
+(2\pi\sqrt{-1})\int_{D[\ula] \cap D_{\lambda}}\omega) \\
&=0
\end{split}
\end{equation}
by $\epsilon(a+1)=(-1)^a\epsilon(a)$
and by \cite[\S 2 (b)]{GriffithsSchmid}.
\end{proof}

\section{Products}
\label{sec:constr-bilin-forms}

In this section,
we construct two products;
one is the morphism
$A_{\bC} \otimes_{\bC} \Omega_X \longrightarrow A_{\bC}$
in Definition \ref{defn:22}
and the other
$A_{\bC} \otimes_{\bC} A_{\bC}
\longrightarrow \cC(\Omega_{X_{\bullet}}(\log \cM_{X_{\bullet}}))[k]$
in Definition \ref{defn:13}.
The construction of the first one
is straightforward.
To define the second,
we use the morphism $\tau$
on $\cC(\Omega_{X_{\bullet}}(\log \cM_{X_{\bullet}}))$.

\begin{defn}
\label{defn:22}
Morphisms of $\bC$-sheaves
given by
\begin{equation}
(\bC[\vu] \otimes_{\bC} \Omega^p_X(\log \cM_X)) \otimes_{\bC} \Omega^q_X
\ni
(P \otimes \omega) \otimes \eta
\mapsto
P \otimes \omega \wedge \eta
\in
\bC[\vu] \otimes_{\bC} \Omega^{p+q}_X(\log \cM_X)
\end{equation}
for all $p,q$
define a morphism of complexes
$(\bC[\vu] \otimes_{\bC} \Omega_X(\log \cM_X)) \otimes_{\bC} \Omega_X
\longrightarrow
\bC[\vu] \otimes_{\bC} \Omega_X(\log \cM_X)$,
which sends
$W(I)_m(\bC[\vu] \otimes_{\bC} \Omega_X(\log \cM_X)) \otimes_{\bC} \Omega_X$
and 
$L(I)_m(\bC[\vu] \otimes_{\bC} \Omega_X(\log \cM_X)) \otimes_{\bC} \Omega_X)$
to $W(I)_m(\bC[\vu] \otimes_{\bC} \Omega_X(\log \cM_X))$
and $L(I)_m(\bC[\vu] \otimes_{\bC} \Omega_X(\log \cM_X)))$
for all $I \subset \ski$ and $m \in \bZ$ respectively.
Thus a morphism of complexes
\begin{equation}
\label{eq:11}
\overline{\Psi} \colon
A_{\bC} \otimes_{\bC} \Omega_X \longrightarrow A_{\bC}
\end{equation}
is induced.
This morphism satisfies
$\overline{\Psi}(L(I)_mA_{\bC} \otimes_{\bC} \Omega_X)
\subset L(I)_mA_{\bC}$
for all $I \subset \ski$ and $m \in \bZ$.
The morphism
$\gr_m^LA_{\bC} \otimes_{\bC} \Omega_X
\longrightarrow \gr_m^LA_{\bC}$
induced from $\overline{\Psi}$
is denoted by $\gr_m^L\overline{\Psi}$.
The morphism $\overline{\Psi}$
induces a morphism
\begin{equation*}
\coh^{a,b}(X, \overline{\Psi})
\colon
\coh^a(X, A_{\bC}) \otimes_{\bC} \coh^b(X, \Omega_X)
\longrightarrow
\coh^{a+b}(X, A_{\bC})
\end{equation*}
as in Definition \ref{defn:10}.
For $\omega \in \coh^a(X, A_{\bC})$ and $\eta \in \coh^b(X, \Omega_X)$,
the element
$\coh^{a,b}(X, \overline{\Psi})(\omega \otimes \eta) \in \coh^{a+b}(X, A_{\bC})$
is simply denoted by $\omega \cup \eta$
and called the cup product of $\omega$ and $\eta$.
\end{defn}

\begin{rmk}
It is trivial that the diagram
\begin{equation}
\begin{CD}
\Omega_{X/\ast}(\log (\cM_X/\bN^k)) \otimes_{\bC} \Omega_X
@>>> \Omega_{X/\ast}(\log (\cM_X/\bN^k)) \\
@V{\theta \otimes \id}VV @VV{\theta}V \\
A_{\bC} \otimes_{\bC} \Omega_X @>>{\overline{\Psi}}> A_{\bC}
\end{CD}
\end{equation}
is commutative,
where the top horizontal arrow
is the morphism defined in Definition \ref{defn:25},
and $\theta$ is the morphism defined in \eqref{eq:28}.
Therefore the morphism $\cup c(\cL)$ in \eqref{eq:108}
is identified with the morphism
\begin{equation}
\cup c(\cL) \colon
\coh^a(X, A_{\bC}) \longrightarrow \coh^{a+2}(X, A_{\bC})
\end{equation}
via the isomorphisms \eqref{eq:115}
induced by $\theta$.
\end{rmk}

The following lemma
computes $\gr_m^L\overline{\Psi}$
via the isomorphism \eqref{eq:67}.

\begin{lem}
\label{lem:7}
For $\vq, \vr \in \bN^k$
satisfying the conditions in \eqref{eq:61},
and for
\begin{equation}
\vu^{\vq} \otimes \vv \otimes \omega
\in
(a_{\vr})_{\ast}(\varepsilon_{\vr}
\otimes_{\bZ}
\Omega^{p-m-2|\vq|}_{X_{\vr}}), 
\quad
\eta
\in
\Omega^q_X,
\end{equation}
the image of
$(\vu^{\vq} \otimes \vv \otimes \omega) \otimes \eta$
by the morphism $\gr_m^L\overline{\Psi}$
via the identification \eqref{eq:67} is
\begin{equation}
\vu^{\vq} \otimes \vv \otimes \omega \wedge (a_{\vr})^{\ast}\eta
\in
(a_{\vr})_{\ast}(\varepsilon_{\vr}
\otimes_{\bZ}
\Omega^{p+q-m-2|\vq|}_{X_{\vr}}).
\end{equation}
\end{lem}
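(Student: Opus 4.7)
The plan is to reduce the verification to a direct local computation, using the local description of $\res_{\vr}$ in \ref{para:2} combined with the standard compatibility of classical Poincar\'e residues with wedging by regular forms.

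First I would observe that the morphism $\overline{\Psi}$ acts trivially on the polynomial variables: on the lift $\bC[\vu] \otimes_{\bC} \Omega_X(\log \cM_X) \otimes_{\bC} \Omega_X$ it sends $(\vu^{\vq} \otimes \omega) \otimes \eta$ to $\vu^{\vq} \otimes (\omega \wedge \eta)$. Via the direct sum decomposition \eqref{eq:105}, $\overline{\Psi}$ is therefore the direct sum over $\vq \in \bN^k$ of the morphisms $(A^p_{\bC})_{\vq} \otimes_{\bC} \Omega^q_X \longrightarrow (A^{p+q}_{\bC})_{\vq}$ induced by the wedge product on $\Omega_X(\log \cM_X)$ modulo $\sum_{i} W(i)_{q_i}$. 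Since wedging by an element of $\Omega_X$ does not increase the weight filtration $W$, passing to $\gr_m^L$ and using \eqref{eq:16} reduces the problem to describing, on the graded piece $\gr_{m+2|\vq|+k}^W\bigl(\Omega_X(\log \cM_X)/\sum_{i} W(i)_{q_i}\bigr)$, the morphism induced by $\omega \otimes \eta \mapsto \omega \wedge \eta$.

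Next I would invoke Lemma \ref{lem:25}, under which this graded piece is identified with $\bigoplus (a_{\vr})_{\ast}(\varepsilon_{\vr} \otimes_{\bZ} \Omega_{X_{\vr}})[-|\vr|]$ via the residue morphism $\res_{\vr}$. The key point is then the identity
\begin{equation}
\res_{\vr}(\omega \wedge \eta) = \res_{\vr}(\omega) \wedge a_{\vr}^{\ast}\eta
\end{equation}
for $\omega \in \Omega_X(\log \cM_X)$ and $\eta \in \Omega_X$. To verify this I would work on a local model $(U, \cM_U)$ as in \ref{para:1} and apply the decomposition $\res_{\vr} = \sum_{\ula \in S_{\vr}(\Lambda)} \res^{\ula}$ from \ref{para:2}. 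For each $\ula$, the classical Poincar\'e residue $\res^{\ula}$ satisfies $\res^{\ula}(\omega \wedge \eta) = \res^{\ula}(\omega) \wedge \eta|_{D[\ula]}$ whenever $\eta$ is a section of $\Omega_{\bC^{\Lambda} \times \bC^l}$ with no log pole along $D$, which is the content of the standard compatibility of Poincar\'e residues with wedging by regular forms (cf.\ \cite[3.1]{DeligneII}). Summing over $\ula \in S_{\vr}(\Lambda)$ gives the displayed identity.

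Chaining these three observations together yields exactly the formula claimed in the lemma, with no additional sign: the $\vu^{\vq}$ factor is untouched, the class $\vv \in \varepsilon_{\vr}$ remains on the left (it is produced by $\res_{\vr}$ before the wedge by $a_{\vr}^{\ast}\eta$ on its right), and the index shifts (the $[k]$ in the definition of $A_{\bC}$, the shift by $2|\vq|+k$ in the definition of $L$ on $(A^p_{\bC})_{\vq}$, and the shift by $[-m-2|\vq|]$ on the right-hand side of \eqref{eq:67}) all depend only on $\vq$ and match up coherently. There is no real obstacle beyond this routine bookkeeping; the only substantive input is the compatibility of the Poincar\'e residue with the wedge product, which is classical.
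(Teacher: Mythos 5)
Your proposal is correct and is essentially the paper's own argument: the paper's proof of Lemma \ref{lem:7} is just the one-line remark ``by the direct computation in the local situation \ref{para:1}'', and your reduction through \eqref{eq:105}, Lemma \ref{lem:25}, and the compatibility $\res^{\ula}(\omega \wedge \eta)=\res^{\ula}(\omega)\wedge \eta|_{D[\ula]}$ of the classical Poincar\'e residue with wedging by forms without log poles is exactly that computation, spelled out. The sign bookkeeping is also handled correctly, since placing the $\dlog x_{\lambda}$ factors on the left throughout makes the residue identity hold without extra signs.
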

\begin{proof}
By the 
direct computation
in the local situation \ref{para:1}.
\end{proof}

\begin{defn}
A morphism of $\cO_X$-modules
\begin{equation}
\cC(\dlog t_i\wedge) \colon
\cC(\Omega_{X_{\bullet}}(\log \cM_{X_{\bullet}}))^n
\longrightarrow
\cC(\Omega_{X_{\bullet}}(\log \cM_{X_{\bullet}}))^{n+1}
\end{equation}
is defined by
$\cC(\dlog t_i\wedge)
=
\bigoplus_{\vr \in \bZ^k_{\ge \ve}}
(-1)^{|\vr|-k}(a_{\vr})_{\ast}(\id \otimes \dlog t_i\wedge)$
for all $n$.
Then these morphisms define a morphism of filtered complexes
\begin{equation}
\cC(\dlog t_i\wedge) \colon
(\cC(\Omega_{X_{\bullet}}(\log \cM_{X_{\bullet}})), \delta W)
\longrightarrow
(\cC(\Omega_{X_{\bullet}}(\log \cM_{X_{\bullet}}))[1], \delta W[-1]).
\end{equation}
The equality
$\cC(\dlog t_i \wedge) \cdot \cC(\dlog t_j \wedge)
+\cC(\dlog t_j \wedge) \cdot \cC(\dlog t_i \wedge)=0$
holds for all $i, j \in \ski$.
\end{defn}

\begin{defn}
For $\vq \in \bN^k$,
a morphism of $\cO_X$-modules
\begin{equation}
\res_{\vq+\ve} \colon
\Omega^{n+k}_X(\log \cM_X)/
\sum_{i=1}^{k}W(i)_{q_i}
\longrightarrow
\cC(\Omega_{X_{\bullet}}(\log \cM_{X_{\bullet}}))^n
\end{equation}
is obtained
as the composite of the morphism $\res_{\vq+\ve}$ in \eqref{eq:57}
and the inclusion
$(a_{\vq+\ve})_{\ast}
(\varepsilon_{\vq+\ve}
\otimes_{\bZ}
\Omega^{n-|\vq|}_{X_{\vq+\ve}}(\log \cM_{X_{\vq+\ve}}))
\hookrightarrow
\cC(\Omega_{X_{\bullet}}(\log \cM_{X_{\bullet}}))^n$.
By the identification in \eqref{eq:58},
a morphism of $\cO_X$-modules
\begin{equation}
\res=
\bigoplus_{\vq \in \bN^k}
(-1)^{k|\vq|}\res_{\vq+\ve} \colon
A^n_{\bC}
\longrightarrow
\cC(\Omega_{X_{\bullet}}(\log \cM_{X_{\bullet}}))^n
\end{equation}
is defined for all $n \in \bZ$.
Lemma \ref{lem:5}
implies
$\res(L_mA_{\bC}) \subset
(\delta W)_m\cC(\Omega_{X_{\bullet}}(\log \cM_{X_{\bullet}}))^n$
for all $m \in \bZ$.
\end{defn}

\begin{defn}
\label{defn:13}
A morphism of $\bC$-sheaves
$\Psi \colon
A^p_{\bC} \otimes_{\bC} A^q_{\bC}
\longrightarrow
\cC(\Omega_{X_{\bullet}}(\log \cM_{X_{\bullet}}))^{p+q}$
is defined by
$\Psi=\tau \cdot (\res \otimes \res)$
for all $p,q$.
Moreover, we set
\begin{equation}
\widetilde{\Psi}
=\cC(\dlog t_1 \wedge) \cdots \cC(\dlog t_k \wedge) \cdot \Psi
\colon
A^p_{\bC} \otimes_{\bC} A^q_{\bC}
\longrightarrow
\cC(\Omega_{X_{\bullet}}(\log \cM_{X_{\bullet}}))^{p+q+k}
\end{equation}
for all $p,q$.
\end{defn}

\begin{lem}
The morphism
$\widetilde{\Psi}$
gives us a morphism of filtered complexes
\begin{equation}
\widetilde{\Psi} \colon
(A_{\bC} \otimes_{\bC} A_{\bC}, L)
\longrightarrow
(\cC(\Omega_{X_{\bullet}}(\log \cM_{X_{\bullet}}))[k], (\delta W)[-k]),
\end{equation}
where the filtration $L$ on $A_{\bC} \otimes_{\bC} A_{\bC}$
is defined
as in Definition \textup{\ref{defn:11}}.
\end{lem}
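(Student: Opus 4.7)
The plan is to verify separately the two properties constituting the statement: compatibility with the filtrations, and compatibility with the differentials (as a morphism into the shifted complex $\cC(\Omega_{X_{\bullet}}(\log \cM_{X_{\bullet}}))[k]$).

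For the filtrations, I would argue that by Lemma \ref{lem:5} together with the definitions of $L$ on $A_{\bC}$ and of $\delta W$ on $\cC(\Omega_{X_{\bullet}}(\log \cM_{X_{\bullet}}))$, the residue morphism satisfies $\res(L_m A_{\bC}) \subset (\delta W)_m \cC(\Omega_{X_{\bullet}}(\log \cM_{X_{\bullet}}))$. Consequently $\res \otimes \res$ carries
\[
L_m(A_{\bC} \otimes_{\bC} A_{\bC}) = \sum_{a+b=m} L_a A_{\bC} \otimes_{\bC} L_b A_{\bC}
\]
into $(\delta W)_m(\cC \otimes_{\bC} \cC)$. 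The product $\tau$ is filtration-preserving by construction, so $\Psi$ sends $L_m$ to $(\delta W)_m$. Each factor $\cC(\dlog t_i \wedge)$ is a morphism of filtered complexes $(\cC, \delta W) \to (\cC[1], \delta W[-1])$, hence raises the $\delta W$ index by one. Composing $k$ such factors, $\widetilde{\Psi}(L_m) \subset (\delta W)_{m+k}\cC = ((\delta W)[-k])_m \cC[k]$, as required.

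For compatibility with differentials, since $\tau$ is already a morphism of complexes on $\cC \otimes_{\bC} \cC$ and each $\cC(\dlog t_i \wedge)$ anti-commutes with $d_{\cC}$, the essential content is comparing $d_{\cC} \circ \res$ with $\res \circ d_{A_{\bC}}$. The differential on $A_{\bC}$ decomposes as $d_0 + \sum_{i=1}^{k} d_i$ with $d_i(P \otimes \omega) = u_i P \otimes \dlog t_i \wedge \omega$, while that on $\cC$ decomposes into an $(a_{\vr})_{\ast}(\id \otimes d)$ part and a sum of $\delta_i$. On the direct summand $\bC \vu^{\vq} \otimes (A_{\bC}^n)_{\vq}$, the fact that $\res_{\vq+\ve}$ is a morphism of complexes (into a complex shifted by $[-|\vq+\ve|]$) matches the $d_0$-piece with the $(a_{\vr})_{\ast}(\id \otimes d)$-piece on $\cC$ up to an overall sign $(-1)^k$. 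To match $\res \circ d_i$ with the $\delta_i$-contribution for $i = 1, \dots, k$, I would establish the key local identity
\[
\delta_i \circ \res_{\vq+\ve} = \res_{\vq+\ve+\ve_i} \circ (\dlog t_i \wedge)
\]
(with the correct signs) by working on a local model $(U, \cM_U)$ as in \ref{para:1} and comparing the explicit formulas of \ref{para:2} with Remark \ref{rmk:11}. Combining both matchings yields $d_{\cC} \circ \res = (-1)^k \res \circ d_{A_{\bC}}$; together with the $k$-fold anti-commutation of $\cC(\dlog t_1 \wedge) \cdots \cC(\dlog t_k \wedge)$ with $d_{\cC}$, this gives $d_{\cC} \widetilde{\Psi} = (-1)^k \widetilde{\Psi} d_{A \otimes A}$, which is precisely the condition for $\widetilde{\Psi}$ to be a morphism of complexes into $\cC[k]$.

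The main obstacle is the sign bookkeeping: the shift in the target of $\res_{\vr}$, the sign in the tensor-product differential of \ref{para:6}, the sign $(-1)^{|\vr|-k}$ built into the differential of $\cC$, and the sign $(-1)^{k|\vq|}$ in the definition of $\res$ all interact nontrivially. Conceptually nothing new is needed beyond what Sections \ref{sec:proof-theor-refthm:1} and \ref{sec:a Cech type complex} have established---$\tau$ and each $\cC(\dlog t_i \wedge)$ have already been checked to satisfy the relevant compatibilities, and the residual compatibility of $\res$ with $d_{A_{\bC}}$ reduces to the local identity above---but the verification demands a careful computation in the local model, parallel to the sign computations already appearing in Remark \ref{rmk:11} and the construction of $\tau$.
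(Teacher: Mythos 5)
Your treatment of the filtrations is correct and is exactly what the paper intends by ``by definition, we clearly have \dots''. The gap is in the compatibility with differentials. The key local identity you propose, $\delta_i \cdot \res_{\vq+\ve} = \res_{\vq+\ve+\ve_i}\cdot(\dlog t_i\wedge)$, is false. In a local model $t_i=\prod_{\lambda\in\Lambda_i}x_\lambda$, so $\dlog t_i=\sum_{\lambda\in\Lambda_i}\dlog x_\lambda$, and when one takes the residue of $\dlog t_i\wedge\omega$ along a stratum $D[\umu]$, only the summands $\dlog x_\lambda$ with $\lambda\in\umu$ are consumed by the residue (these produce $\delta_i\cdot\res_{\vr}$); the remaining summands survive as logarithmic forms on the stratum. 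The correct identity is the one the paper proves in the lemma immediately following, namely
\begin{equation}
\res_{\vr+\ve_i}\cdot(\dlog t_i\wedge)
=(-1)^{|\vr|+1}(a_{\vr+\ve_i})_{\ast}(\id\otimes\dlog t_i\wedge)\cdot\res_{\vr+\ve_i}
+\delta_i\cdot\res_{\vr},
\end{equation}
and the first term on the right is genuinely nonzero in general (already for $k=1$, $\Lambda_1=\{1,2\}$ and $\omega$ containing a component $\dlog x_1\wedge\dlog x_2\wedge\alpha$). Consequently your intermediate claim $d_{\cC}\cdot\res=(-1)^k\res\cdot d_{A_{\bC}}$ is false: neither $\res$ nor $\Psi$ is a morphism of complexes up to sign, and your argument as structured would ``prove'' that it is.

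The point you are missing is that the prefactor $\cC(\dlog t_1\wedge)\cdots\cC(\dlog t_k\wedge)$ in the definition of $\widetilde{\Psi}$ is not a passive factor that merely anti-commutes with $d$; it is the mechanism that repairs the failure of $\Psi$ to be a chain map. The defect terms $(a_{\vr+\ve_i})_{\ast}(\id\otimes\dlog t_i\wedge)\cdot\res_{\vr+\ve_i}$ are of the form $\pm\cC(\dlog t_i\wedge)\cdot(\cdots)$; they pass through $\tau$ to terms of the same form (using the compatibility of $\id\otimes\dlog t_i\wedge$ with $\overline{\chi}\otimes\wedge$), and are then annihilated by the prefactor because $\cC(\dlog t_i\wedge)^2=0$. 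A correct proof must therefore first establish the two-term identity above, compute the precise defect of $\Psi$, and only then verify that post-composition with $\cC(\dlog t_1\wedge)\cdots\cC(\dlog t_k\wedge)$ kills every defect term while anti-commuting appropriately with the interior differentials.
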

\begin{proof}
By definition,
we clearly have
$\widetilde{\Psi}(L_m(A_{\bC} \otimes_{\bC} A_{\bC})^n)
\subset \delta W_{m+k}\cC(\Omega_{X_{\bullet}}(\log \cM_{X_{\bullet}}))^{n+k}$
for all $m,n \in \bZ$.
The following lemma
implies that $\widetilde{\Psi}$ is a morphism of complexes.
\end{proof}

\begin{lem}
For $\vr \in \bZ^k_{\ge \ve}$,
\begin{equation}
\res_{\vr+\ve_i} \cdot \dlog t_i \wedge
=
(-1)^{|\vr|+1}(a_{\vr+\ve_i})_{\ast}(\id \otimes \dlog t_i\wedge)
\cdot \res_{\vr+\ve_i}
+\delta_i \cdot \res_{\vr}
\end{equation}
for all $i=\ki$.
\end{lem}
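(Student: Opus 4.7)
The plan is to reduce to a local computation on a local model $(U, \cM_U)$ as in \ref{para:1}. With partition $\Lambda=\coprod_{i=1}^{k}\Lambda_i$ and $t_i=\prod_{\lambda \in \Lambda_i}x_\lambda$, we have $\dlog t_i=\sum_{\mu \in \Lambda_i}\dlog x_\mu$. Using the decomposition from \ref{para:2}, $\res_{\vr+\ve_i}=\sum_{\ubeta \in S_{\vr+\ve_i}(\Lambda)}\res^\ubeta$ and $\res_\vr=\sum_{\ula \in S_\vr(\Lambda)}\res^\ula$, so the identity splits, for each $\ubeta$ and each $\mu \in \Lambda_i$, into a claim about $\res^\ubeta(\dlog x_\mu \wedge \omega)$. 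These terms naturally separate according to whether $\mu \notin \ubeta$ or $\mu \in \ubeta$.

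For $\mu \notin \ubeta$, the form $\dlog x_\mu$ has no pole along $D[\ubeta]$ but it is a legitimate global log form on $X_{\vr+\ve_i}$, and moving it past the $|\ubeta|=|\vr|+1$ implicit $\dlog$-factors in the Poincar\'e residue $\res^\ubeta$ yields
\begin{equation}
\res^\ubeta(\dlog x_\mu \wedge \omega)
=(-1)^{|\vr|+1}(\id \otimes \dlog x_\mu|_{D[\ubeta]}\wedge)\res^\ubeta(\omega).
\end{equation}
Summing over $\ubeta$ and over $\mu \in \Lambda_i$ with the restriction $\mu \notin \ubeta$ gives only part of $(a_{\vr+\ve_i})_{\ast}(\id \otimes \dlog t_i \wedge)\res_{\vr+\ve_i}(\omega)$; the difference from the full quantity $(-1)^{|\vr|+1}(a_{\vr+\ve_i})_{\ast}(\id \otimes \dlog t_i \wedge)\res_{\vr+\ve_i}(\omega)$ thus consists of analogous terms with $\mu \in \ubeta \cap \Lambda_i$.

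The remaining contribution comes from terms with $\mu \in \ubeta \cap \Lambda_i$ on both sides. Write $\omega=\omega'+\dlog x_\mu \wedge \omega''$ with $\omega'$ free of $\dlog x_\mu$, so that $\dlog x_\mu \wedge \omega=\dlog x_\mu \wedge \omega'$; expanding the residue $\res^\ubeta$ on the left and tracking the permutation that moves $\dlog x_\mu$ into its sorted position inside $\ubeta$, together with the $(\id \otimes \dlog x_\mu \wedge)\res^\ubeta(\omega)$ term on the right, we obtain a precise identity expressing the net residual contribution in terms of $\res^{\ubeta \setminus \{\mu\}}(\omega)|_{D[\ubeta]}$. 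Reindexing by $\ula=\ubeta \setminus \{\mu\} \in S_\vr(\Lambda)$ (so that $\mu$ ranges over $\Lambda_i \setminus \ula$) and comparing with the explicit local description of $\delta_i$ from Remark \ref{rmk:11}, namely the direct sum of $\sum_{\lambda \in \Lambda_i \setminus \ula}(\ve_\lambda \wedge) \otimes ((-)|_{D[\ula \cup \{\lambda\}]})$, identifies this residual contribution with $\delta_i \cdot \res_\vr(\omega)$.

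The main obstacle is the consistent tracking of signs. There are three sources: (i) the Poincar\'e-residue convention that produces the $(-1)^{|\vr|+1}$ sign in the commuting identity of the second paragraph, (ii) the wedge-order sign incurred when permuting $\dlog x_\mu$ to its sorted position within $\ubeta$ while extracting $\res^{\ubeta \setminus \{\mu\}}$, and (iii) the implicit sign built into $\delta_i$ via the definition of $\overline\chi(\overline{f}_x^\flat)$ and $\overline t_i \wedge$ in Definition \ref{defn:31}. The critical verification is that (ii) and (iii) combine without residue to the $\delta_i\cdot\res_\vr$ term, while (i) supplies precisely the stated $(-1)^{|\vr|+1}$ in the first RHS term.
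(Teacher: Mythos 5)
Your proposal is correct and follows essentially the same route as the paper, whose proof consists of reducing to the local model of \ref{para:1} and then citing Lemma 3.9 of \cite{FujisawaPLMHS} for exactly the local residue computation you outline (splitting $\dlog t_i=\sum_{\mu\in\Lambda_i}\dlog x_\mu$ and $\res_{\vr+\ve_i}=\sum_{\ubeta}\res^{\ubeta}$, then treating $\mu\notin\ubeta$ and $\mu\in\ubeta$ separately). The sign bookkeeping you defer does close up: for $\mu=\beta_j\in\ubeta$ the two right-hand contributions carry the factors $(-1)^{|\vr|+1}$ and $(-1)^{|\vr|}$ on the $\dlog x_\mu\wedge\zeta$ part, which cancel and leave precisely $\res^{\ubeta}(\dlog x_\mu\wedge\omega)$.
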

\begin{proof}
We may work in the local situation \ref{para:1}.
Then the proof is similar to \cite[Lemma 3.9]{FujisawaPLMHS}.
\end{proof}

\begin{rmk}
\label{rmk:15}
The composite of the canonical morphism
$\Omega_X \longrightarrow \Omega_X(\log \cM_X)$
and the morphism \eqref{eq:73}
gives us a morphism of complexes
$\Omega_X \longrightarrow A_{\bC}$.
We can easily check that the diagram
\begin{equation}
\begin{CD}
A_{\bC}^p \otimes \Omega_X^q @>{\overline{\Psi}}>> A_{\bC}^{p+q} \\
@VVV @VV{\res}V \\
A_{\bC}^p \otimes_{\bC} A_{\bC}^q
@>>{\Psi}> \cC(\Omega_{X_{\bullet}}(\log \cM_{X_{\bullet}}))^{p+q}
\end{CD}
\end{equation}
is commutative,
where the left vertical arrow is the tensor product of the identity
and the morphism $\Omega_X \longrightarrow A_{\bC}$ above.
However, we will not use this commutativity in this paper.
\end{rmk}

\begin{rmk}
The morphism
$\Omega_{X/\ast}(\log (\cM_X/\bN^k)) \otimes_{\bC}
\Omega_{X/\ast}(\log (\cM_X/\bN^k))
\longrightarrow \Omega_{X/\ast}(\log (\cM_X/\bN^k))$
defined by taking the wedge product
is compatible with $\Psi$, that is,
the diagram
\begin{equation}
\begin{CD}
\Omega^p_{X/\ast}(\log (\cM_X/\bN^k)) \otimes_{\bC}
\Omega^q_{X/\ast}(\log (\cM_X/\bN^k))
@>>>
\Omega^{p+q}_{X/\ast}(\log (\cM_X/\bN^k)) \\
@V{\theta \otimes \theta}VV @VV{\theta}V \\
A_{\bC}^p \otimes_{\bC} A_{\bC}^q @.
A_{\bC}^{p+q} \\
@V{\Psi}VV @VV{\res}V \\
\cC(\Omega_{X_{\bullet}}(\log \cM_{X_{\bullet}}))^{p+q}
@=
\cC(\Omega_{X_{\bullet}}(\log \cM_{X_{\bullet}}))^{p+q}
\end{CD}
\end{equation}
is commutative,
where $\theta$ is the morphism defined in \eqref{eq:28}.
Here we omit the proof
because this fact is not needed later.
\end{rmk}

\begin{rmk}
By Remark \ref{rmk:13},
$\Psi$ is compatible with the isomorphism
$A_{\bC} \otimes_{\bC} A_{\bC} \simeq A_{\bC} \otimes_{\bC} A_{\bC}$
exchanging the left and right hand sides
defined in \ref{para:6}.
\end{rmk}

\begin{para}
Here, we compute the morphism
\begin{equation}
\label{eq:60}
\gr_{-a}^LA_{\bC} \otimes_{\bC} \gr_a^LA_{\bC}
\longrightarrow
\gr_k^{\delta W}\cC(\Omega_{X_{\bullet}}(\log \cM_{X_{\bullet}}))[k]
\longrightarrow
\bigoplus_{\vr \in \bZ^k_{\ge \ve}}
(a_{\vr})_{\ast}\Omega_{X_{\vr}}[-2|\vr|+2k]
\end{equation}
given by the composition of $\gr_{-a,a}^L\widetilde{\Psi}$
and the morphism \eqref{eq:53} shifted by $k$.
\end{para}

\begin{lem}
\label{lem:14}
For $(\vq, \vr), (\vq', \vr') \in \bN^k \times \bN^k$
satisfying the conditions in \eqref{eq:61}
for $m=-a$ and for $m=a$ respectively,
the restriction of the morphism \eqref{eq:60}
on the direct summand
\begin{equation}
\bC\vu^{\vq} \otimes_{\bC}
(a_{\vr})_{\ast}
(\varepsilon_{\vr} \otimes_{\bZ} \Omega_{X_{\vr}})[a-2|\vq|]
\otimes_{\bC}
(\bC \vu^{\vq'}
\otimes_{\bC}
(a_{\vr'})_{\ast}
((\varepsilon_{\vr'} \otimes_{\bZ} \Omega_{X_{\vr'}}))[-a-2|\vq'|]
\end{equation}
via the isomorphism \eqref{eq:67} is zero unless
$\vr=\vr'=\vq+\vq'+\ve$.
For the case of $\vr=\vr'=\vq+\vq'+\ve$,
it coincides with the composite
of the following five morphisms of complexes;
the isomorphism
\begin{equation}
\begin{split}
\bC\vu^{\vq} \otimes_{\bC}
(a_{\vr})_{\ast}
(\varepsilon_{\vr} \otimes_{\bZ} \Omega_{X_{\vr}})&[a-2|\vq|]
\otimes_{\bC}
\bC \vu^{\vq'}
\otimes_{\bC}
(a_{\vr})_{\ast}
(\varepsilon_{\vr} \otimes_{\bZ} \Omega_{X_{\vr}})[-a-2|\vq'|] \\
&\simeq
((a_{\vr})_{\ast}
(\varepsilon_{\vr} \otimes_{\bZ} \Omega_{X_{\vr}})
\otimes_{\bC}
(a_{\vr})_{\ast}
(\varepsilon_{\vr} \otimes_{\bZ} \Omega_{X_{\vr}}))[-2|\vr|+2k]
\end{split}
\end{equation}
given by \eqref{eq:80},
the canonical morphism shifted by $-2|\vr|+2k$
\begin{equation}
\begin{split}
((a_{\vr})_{\ast}
(\varepsilon_{\vr} \otimes_{\bZ} \Omega_{X_{\vr}})
\otimes_{\bC}
&(a_{\vr})_{\ast}
(\varepsilon_{\vr} \otimes_{\bZ} \Omega_{X_{\vr}}))[-2|\vr|+2k] \\
&\longrightarrow
(a_{\vr})_{\ast}
((\varepsilon_{\vr} \otimes_{\bZ} \Omega_{X_{\vr}})
\otimes_{\bC}
(\varepsilon_{\vr} \otimes_{\bZ} \Omega_{X_{\vr}}))[-2|\vr|+2k],
\end{split}
\end{equation}
the isomorphism
\begin{equation}
(a_{\vr})_{\ast}
((\varepsilon_{\vr} \otimes_{\bZ} \Omega_{X_{\vr}})
\otimes_{\bC}
(\varepsilon_{\vr} \otimes_{\bZ} \Omega_{X_{\vr}}))[-2|\vr|+2k]
\simeq
(a_{\vr})_{\ast}
(\varepsilon_{\vr} \otimes_{\bZ} \varepsilon_{\vr}
\otimes_{\bZ}
\Omega_{X_{\vr}} \otimes_{\bC} \Omega_{X_{\vr}})[-2|\vr|+2k]
\end{equation}
induced by exchanging the middle terms,
the morphism
\begin{equation}
\begin{split}
(-1)^{|\vq|}(a_{\vr})_{\ast}
(&\vartheta_{\vr} \otimes \wedge)[-2|\vr|+2k] \\
&\colon
(a_{\vr})_{\ast}
(\varepsilon_{\vr} \otimes_{\bZ} \varepsilon_{\vr}
\otimes_{\bZ}
\Omega_{X_{\vr}} \otimes_{\bC} \Omega_{X_{\vr}})[-2|\vr|+2k]
\longrightarrow
(a_{\vr})_{\ast}
\Omega_{X_{\vr}}[-2|\vr|+2k],
\end{split}
\end{equation}
and the inclusion
$(a_{\vr})_{\ast}
\Omega_{X_{\vr}}[-2|\vr|+2k],
\hookrightarrow
\bigoplus_{\vs \in \bZ^k_{\ge \ve}}
(a_{\vs})_{\ast}\Omega_{X_{\vs}}[-2|\vs|+2k]$.
\end{lem}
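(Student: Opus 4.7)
The plan is to reduce to the local model of \ref{para:1} and verify the statement by direct computation in local coordinates $(x_\lambda)_{\lambda \in \Lambda}$. Under the local decomposition $U_\vr = \coprod_{\ula \in S_\vr(\Lambda)} D[\ula]$ and the local form of the residue isomorphism (Lemma \ref{lem:25}), an element $\omega_1$ of the $(\vq, \vr)$-summand is written as $\omega_1 = \sum_\ula \omega_{1,\ula}$ with $\omega_{1,\ula}$ supported on $D[\ula]$. Applying $\res = (-1)^{k|\vq|}\res_{\vq+\ve}$ gives $(-1)^{k|\vq|}\sum_{\umu \subset \ula} \res^\umu(\omega_{1,\ula})$ in terms of the local Poincar\'e residues of \ref{para:2}, each term being a form on $D[\umu]$ for $\umu \in S_{\vq+\ve}(\Lambda)$ with $-a + |\vq|$ log poles along $D[\ula \setminus \umu]$; similarly for $\omega_2$ with indices $(\ula', \umu')$. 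The product $\tau$ then requires $|\umu \cap \umu' \cap \Lambda_i| = 1$ for all $i$ (from the definition of $\overline{\chi}$), producing terms on $D[\unu]$ with $\unu := \umu \cup \umu' \in S_{\vq+\vq'+\ve}(\Lambda)$, with residual log poles along $(\ula \setminus \umu) \cup (\ula' \setminus \umu')$.

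For the vanishing analysis, after applying $\cC(\dlog t_1 \wedge) \cdots \cC(\dlog t_k \wedge)$ (which contributes $k$ additional log poles, one per $\Lambda_i$, from $\dlog t_i = \sum_{\lambda \in \Lambda_i}\dlog x_\lambda$) and projecting via the shifted morphism \eqref{eq:53} onto the $l = 0$ part of Lemma \ref{lem:3} for $m = |\unu|$, all log poles on $D[\unu]$ must lie in the collapsed direction $\{\dlog x_\lambda\}_{\lambda \in \unu}$. This forces $\ula \setminus \umu \subset \umu' \setminus \umu$ and $\ula' \setminus \umu' \subset \umu \setminus \umu'$, giving $r_i \leq q_i + q_i' + 1$ and $r_i' \leq q_i + q_i' + 1$. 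The alternating nature of the wedge requires distinct collapsed log directions per $\Lambda_i$, so the per-$\Lambda_i$ log count must saturate $|\unu \cap \Lambda_i| = q_i + q_i' + 1$, yielding $r_i + r_i' = 2(q_i + q_i' + 1)$. Combined, these conditions force $r_i = r_i' = q_i + q_i' + 1$ and $\ula = \ula' = \unu$, establishing the vanishing unless $\vr = \vr' = \vq + \vq' + \ve$.

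For the non-vanishing case, once $\vr = \vr' = \vq + \vq' + \ve$ is imposed, the sum over admissible pairs $(\umu, \umu')$ with $\umu \cup \umu' = \unu$ and $|\umu \cap \umu' \cap \Lambda_i| = 1$ contributes $\prod_i \frac{(q_i+q_i'+1)!}{q_i!\,q_i'!}$ terms, which cancels exactly against the combinatorial factor $\prod_i \frac{q_i!\,q_i'!}{(q_i+q_i'+1)!}$ appearing in $\tau$. The main obstacle is then the careful bookkeeping of signs: $(-1)^{k|\vq|}$ and $(-1)^{k|\vq'|}$ from $\res$, the Koszul shift sign from \eqref{eq:80}, the sign $(-1)^{(|\vs|-k)(p-|\vr|+k)}$ in $\tau$, the factor $(-1)^{|\unu|-k}$ per application of $\cC(\dlog t_i \wedge)$, and the signs in $\chi(\umu, \unu \setminus \umu)$ implicit in $\overline{\chi}$ must combine precisely to the sign $(-1)^{|\vq|}$ in front of $\vartheta_\vr \otimes \wedge$, matching the composition stated in the lemma and confirming the image lies in the $\vr$-summand of the target.
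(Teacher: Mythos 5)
Your proposal is correct and takes essentially the same route as the paper: the paper's entire proof is the remark that the question is of local nature and that the argument of Lemma 6.13 of \cite{FujisawaPLMHS} applies, and that argument is exactly the local-model computation you carry out --- tracking which pairs $(\umu, \umu')$ of residue indices survive the projection onto the $l=0$ summand of $\gr_k^{\delta W}$, deducing $\vr=\vr'=\vq+\vq'+\ve$ and $\ula=\ula'=\unu$ from the pole count per $\Lambda_i$, and cancelling the number of admissible pairs against the combinatorial factor in $\tau$. The one step you name but do not execute, the verification that all the accumulated signs combine to the single factor $(-1)^{|\vq|}$ in front of $\vartheta_{\vr}\otimes\wedge$, is precisely the computation the paper also delegates to the cited reference, so your write-up is no less complete than the paper's own proof.
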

\begin{proof}
Since the question is of local nature,
we can apply
the same argument as in the proof of \cite[Lemma 6.13]{FujisawaPLMHS}.
\end{proof}

\section{A bilinear form on $V_{\bC}$}
\label{sec:bilin-form}

In this section,
we define a bilinear form
on $V_{\bC}=\bigoplus_{a,b}E_1^{a,b}(A_{\bC}, L)$
by using the product
$\widetilde{\Psi}$
and the morphism $\Theta$
constructed in Sections \ref{sec:constr-bilin-forms}
and \ref{sec:a Cech type complex} respectively.
Then we can check that
$E_1(A_{\bC}, L)$
satisfies the conditions
to be a polarized differential $\bZ \oplus \bZ^k$-graded
Hodge-Lefschetz module,
which will be introduced in the next section.

\begin{assump}
In this section,
the \ssls degeneration
$f \colon (X, \cM_X) \longrightarrow (\ast, \bN^k)$
is assumed to be projective
and $X$ to be of pure dimension.
\end{assump}

\begin{defn}
\label{defn:20}
A finite dimensional filtered $\bC$-vector space $(V_{\bC}, F)$
is defined by
\begin{equation}
(V_{\bC}, F)=\bigoplus_{a,b \in \bZ}(E_1^{a,b}(A_{\bC}, L), F).
\end{equation}
The direct sum of the morphisms $d_1$
of the $E^1$-terms
gives us an endomorphism of $(V_{\bC},F)$
denoted by the same letter $d_1$.
Moreover, we set
\begin{equation}
V_{\bQ}=
\image(\bigoplus_{a,b \in \bZ}E^{a,b}_1(\alpha)
\colon
\bigoplus_{a,b \in \bZ}E_1^{a,b}(A_{\bQ}, L)
\longrightarrow
\bigoplus_{a,b \in \bZ}E_1^{a,b}(A_{\bC}, L)
=V_{\bC}),
\end{equation}
which is a finite dimensional $\bQ$-subspace of $V_{\bC}$
with the property $\bC \otimes_{\bQ}V_{\bQ}=V_{\bC}$.
By definition, $V_{\bQ}$ is preserved by $d_1$.
\end{defn}

\begin{para}
By \eqref{eq:67},
\begin{equation}
\label{eq:74}
(V_{\bC}, F)
\simeq
\bigoplus
\bC \vu^{\vq} \otimes_{\bC}
(\coh^j(X_{\vr}, \varepsilon_{\vr} \otimes_{\bZ} \Omega_{X_{\vr}}),
F[-|\vr|+|\vq|+k])
\end{equation}
where the direct sum is taken over the index set
\begin{equation}
\label{eq:68}
\{(\vq, \vr, j) \in \bN^k \times \bN^k \times \bZ
\mid \vr \ge \vq+\ve\}
\end{equation}
and the filtration $F$ on the right hand side
is the usual Hodge filtration on
$\coh^j(X_{\vr}, \varepsilon_{\vr} \otimes_{\bZ} \Omega_{X_{\vr}})$.
In particular,
\begin{equation}
\label{eq:75}
V_{\bC}=\bigoplus_{\vr \in \bN^k, j \in \bZ}
\bC[\vu]/(u_1^{r_1}, \dots, u_k^{r_k})
\otimes_{\bC}
\coh^j(X_{\vr}, \varepsilon_{\vr} \otimes_{\bZ} \Omega_{X_{\vr}}),
\end{equation}
as $\bC$-vector spaces.
\end{para}

\begin{defn}
\label{defn:18}
Under the identification \eqref{eq:74},
a filtered $\bC$-subspace $(V_{\bC}^{j_0, \vj}, F)$ of $(V_{\bC},F)$
is defined by
\begin{equation}
\label{eq:64}
(V_{\bC}^{j_0, \vj}, F)
\simeq
\bigoplus_{-\vr+2\vq+\ve=\vj}
\bC \vu^{\vq} \otimes_{\bC}
(\coh^{j_0+\dim X-|\vr|+k}
(X_{\vr}, \varepsilon_{\vr} \otimes_{\bZ} \Omega_{X_{\vr}}),
F[-|\vr|+|\vq|+k])
\end{equation}
for $j_0 \in \bZ$ and for $\vj \in \bZ^k$.
Moreover, a $\bQ$-subspace $V_{\bQ}^{j_0,\vj}$ of $V_{\bQ}$
is defined by
$V_{\bQ}^{j_0,\vj}=V_{\bQ} \cap V_{\bC}^{j_0,\vj}$.
\end{defn}

\begin{rmk}
By definition, we have
\begin{equation}
\label{eq:4}
(V_{\bC}, F)=\bigoplus_{j_0 \in \bZ, \vj \in \bZ^k}(V_{\bC}^{j_0, \vj}, F),
\qquad
(E_1^{a,b}(A_{\bC}, L), F)
=
\bigoplus_{|\vj|=a}(V_{\bC}^{a+b-\dim X, \vj}, F).
\end{equation}
Moreover, $(V_{\bQ}^{j_0,\vj}, (V_{\bC}^{j_0,\vj}, F))$
is a $\bQ$-Hodge structure
of weight $j_0-|\vj|+\dim X$.
In fact, we have an identification
as $\bQ$-Hodge structures
\begin{equation}
\label{eq:87}
V_{\bQ}^{j_0,\vj}
\simeq
\bigoplus_{-\vr+2\vq+\ve=\vj}
\bQ\vu^{\vq} \otimes_{\bQ}
\coh^{j_0+\dim X-|\vr|+k}
(X_{\vr}, \varepsilon_{\vr} \otimes_{\bZ} \bQ)(-|\vr|+|\vq|+k)
\end{equation}
by the canonical quasi-isomorphism
$\bQ \simeq \kos_{X_{\vr}}(\cO^{\ast}_{X_{\vr}})$
as in \ref{item:41} (cf. \cite[Corollary 1.15]{FujisawaMHSLSD})
and by \eqref{eq:86},
where $(-|\vr|+|\vq|+k)$ stands for the Tate twist as usual.
\end{rmk}

\begin{rmk}
\label{rmk:10}
Let $I \subset \ski$.
The filtration $L(I)$ on $A_{\bC}$ induces
the filtration $L(I)$ on $E_1^{a,b}(A_{\bC}, L)$.
Thus we obtain a filtration $L(I)$ on $V_{\bC}$.
We have
$L(I)_lV_{\bC}=\bigoplus_{|\vj_I| \ge -l}V_{\bC}^{j_0,\vj}$
by Lemma \ref{lem:25}.
By definition $d_1$ preserves $L(I)$ for all $I$.
\end{rmk}

\begin{lem}
\label{lem:18}
There exist the unique endomorphisms
$d'_i$ of $(V_{\bC},F)$ for $i=\ki$
such that
\begin{enumerate}
\item
\label{item:13}
$d_1=\sum_{i=1}^{k}d'_i$
\item
\label{item:14}
$d'_i(V_{\bC}^{j_0,\vj}) \subset V_{\bC}^{j_0+1, \vj+\ve_i}$
for all $j_0 \in \bZ$ and $\vj \in \bZ^k$.
\end{enumerate}
They satisfy
$d'_id'_j+d'_jd'_i=0$ for all $i,j \in \ski$.
Moreover, they preserve the subspace $V_{\bQ}$ and
$d'_i \colon V_{\bQ}^{j_0,\vj} \longrightarrow V_{\bQ}^{j_0+1,\vj+\ve_i}$
is a morphism of $\bQ$-Hodge structures for all $i \in \ski$.
\end{lem}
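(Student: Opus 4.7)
The plan is to construct each $d'_i$ by tracking the $i$-th component of the differential on $A_{\bC}$. Recall that the differential $d$ on $\bC[\vu] \otimes_{\bC} \Omega_X(\log \cM_X)$ decomposes as $d = \sum_{i=0}^{k} d_i$ with $d_0 = \id \otimes d$ and $d_i(P \otimes \omega) = u_i P \otimes \dlog t_i \wedge \omega$ for $i \geq 1$, and this decomposition passes verbatim to $A_{\bC}$. Using \eqref{eq:104}, one checks that $d_0$ preserves $L$ while each $d_i$ with $i \geq 1$ satisfies $d_i(L_m A_{\bC}) \subset L_{m-1}A_{\bC}$. Since $d_0$ provides the $E_0$-differential, the anticommutation relations \eqref{eq:25} imply that each $d_i$ with $i \geq 1$ induces a well-defined morphism $d'_i \colon E_1^{a,b}(A_{\bC}, L) \to E_1^{a+1,b}(A_{\bC}, L)$; the standard identification of $d_1$ with the filtration-lowering-by-one component then gives $d_1 = \sum_{i=1}^k d'_i$, and the relation $d'_i d'_j + d'_j d'_i = 0$ descends from the same \eqref{eq:25}.

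The principal task is the verification of (\ref{item:14}). I would use the isomorphism \eqref{eq:67} together with its local description in \ref{para:2}: locally the identity $\dlog t_i = \sum_{\lambda \in \Lambda_i} \dlog x_\lambda$ shows that $\dlog t_i \wedge$ acts on the Poincar\'e residue piece indexed by $\ula \in S_{\vr}(\Lambda)$ as a sum of restriction morphisms to the pieces indexed by $\ula \cup \{\lambda\} \in S_{\vr + \ve_i}(\Lambda)$ for $\lambda \in \Lambda_i \setminus \ula$. Hence $d_i$ sends the summand of \eqref{eq:67} indexed by $(\vq, \vr)$ into the summand indexed by $(\vq + \ve_i, \vr + \ve_i)$; through the definition \eqref{eq:64} this is exactly the transition $(j_0, \vj) \mapsto (j_0 + 1, \vj + \ve_i)$ asserted in (\ref{item:14}) (note in particular that the cohomological degree $j_0 + \dim X - |\vr| + k$ is unchanged). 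Uniqueness then follows at once from the direct sum decomposition $V_{\bC} = \bigoplus_{j_0, \vj} V_{\bC}^{j_0, \vj}$.

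For the $\bQ$-rationality and the Hodge-structure claim, a parallel morphism $d_i$ on $\bQ[\vu] \otimes_{\bQ} \kos_X(\cM_X)$ was already built into the construction of $A_{\bQ}$; using \eqref{eq:66} the equality $\alpha \cdot d_i = d_i \cdot \alpha$ is immediate (the factor of $2\pi\sqrt{-1}$ introduced by $\dlog t_i \wedge$ cancelling against the one in the definition of $\alpha$), so the induced $d'_i$ preserves $V_{\bQ}$. A direct check shows $d_i(F^p A_{\bC}) \subset F^p A_{\bC}$, so $d'_i$ preserves the Hodge filtration on each $E_1^{a,b}$; and since $V_{\bQ}^{j_0, \vj}$ is pure of weight $j_0 + \dim X - |\vj|$ by \eqref{eq:87} and this weight is preserved under $(j_0, \vj) \mapsto (j_0 + 1, \vj + \ve_i)$, we conclude that $d'_i$ is a morphism of pure $\bQ$-Hodge structures. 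The main technical hurdle is the local residue computation in the middle paragraph; everything else is formal.
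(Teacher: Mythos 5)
Your identification of $d'_i$ with the map induced on $E_1$-terms by the summand $d_i = u_i\cdot\otimes(\dlog t_i\wedge)$ of the differential is incorrect, and this is a genuine gap: with that definition the identity $d_1=\sum_{i=1}^k d'_i$ fails. Writing $d=d_0+\sum_{i=1}^k d_i$ with $d_0=\id\otimes d$ preserving $L$ and each $d_i$ ($i\ge 1$) lowering $L$ by one, the differential $d_1$ of $E_r(A_{\bC},L)$ is the connecting homomorphism of $0\to\gr^L_{m-1}\to L_m/L_{m-2}\to\gr^L_m\to 0$; for a lift $x$ of a class $\bar x$ with $d_0x\in L_{m-1}$ one gets $d_1[\bar x]=[d_0x]+\sum_i[d_ix]$, and the term $[d_0x]$ is the connecting homomorphism of the weight filtration for the de Rham differential alone. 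Under the residue isomorphism \eqref{eq:67} this term is the Gysin morphism sending the summand indexed by $(\vq,\vr)$ to the summands indexed by $(\vq,\vr-\ve_i)$, and it is nonzero in general: already for $k=1$ the $E_1$-differential of the Steenbrink weight spectral sequence is a sum of Gysin \emph{and} restriction maps, not of restriction maps alone. The ``standard identification of $d_1$ with the filtration-lowering-by-one component'' that you invoke is valid only when the filtration splits compatibly with $d_0$ (as for the column filtration of a double complex); the filtration $W$ on $\Omega_X(\log\cM_X)$ does not split in this way.

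The lemma nevertheless holds, because the Gysin component $(\vq,\vr)\mapsto(\vq,\vr-\ve_i)$ also carries $V_{\bC}^{j_0,\vj}$ into $V_{\bC}^{j_0+1,\vj+\ve_i}$ (it raises the cohomological degree on $X_{\vr-\ve_i}$ by $2$ while lowering $|\vr|$ by one); but the correct $d'_i$ is the full $(j_0+1,\vj+\ve_i)$-graded component of $d_1$, i.e.\ Gysin plus restriction, not the restriction part alone. The paper's proof makes no chain-level identification at all: since $d_1$ preserves $L(i)$ for every $i$ and raises $|\vj|$ by one, the second equality of \eqref{eq:4} forces $d_1(V_{\bC}^{j_0,\vj})\subset\bigoplus_{i}V_{\bC}^{j_0+1,\vj+\ve_i}$, and $d'_i$ is \emph{defined} as the corresponding graded component; uniqueness is then automatic, anticommutativity follows from $d_1^2=0$ by comparing graded components, and preservation of $V_{\bQ}$ and $F$ is inherited from $d_1$. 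Your final paragraph (rationality, compatibility with $F$, constancy of the weight $j_0-|\vj|+\dim X$) is fine once $d'_i$ is defined this way, but the first two paragraphs of your argument need to be replaced.
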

\begin{proof}
Since $d_1$ preserves the filtration $L(i)$ for all $i$,
we have
$d_1(V_{\bC}^{j_0, \vj}) \subset
\bigoplus_{|\vj'|=|\vj|+1, \vj' \ge \vj}V_{\bC}^{j_0+1, \vj'}$
by the second equality of \eqref{eq:4}.
The conditions $\vj' \ge \vj$ and $|\vj'|=|\vj|+1$
imply $\vj'=\vj+\ve_i$ for some $i \in \ski$.
Thus we obtain the unique morphisms $d_i'$
satisfying \ref{item:13} and \ref{item:14}.
Because $d_1$ preserves $V_{\bQ}$ and $F$,
then so does $d'_i$ for each $i \in \ski$.
Therefore
$d'_i \colon V_{\bQ}^{j_0,\vj} \longrightarrow V_{\bQ}^{j_0+1,\vj+\ve_i}$
is a morphism of $\bQ$-Hodge structures.
The equality $d_1^2=0$
implies $d'_id'_j+d'_jd'_i=0$ for all $i,j \in \ski$.
\end{proof}

\begin{defn}
\label{defn:16}
The morphism
$\nu_i \colon (A_{\bC}, L, F) \longrightarrow (A_{\bC}, L[2], F[-1])$
induces a morphism
\begin{equation}
E_r(\nu_i) \colon
(E_r^{a,b}(A_{\bC}, L), F) \longrightarrow (E_r^{a+2,b-2}(A_{\bC}, L), F[-1])
\end{equation}
for $i=\ki$.
By taking direct sum for all $a,b \in \bZ$,
we obtain
$E_1(\nu_i)
\colon (V_{\bC}, F) \longrightarrow (V_{\bC}, F[-1])$.
We set $l_i=(2\pi\sqrt{-1})E_1(\nu_i)$ for $i=\ki$.
\end{defn}

\begin{lem}
\label{lem:19}
The following holds:
\begin{enumerate}
\item
\label{item:29}
$l_i(V_{\bQ}) \subset V_{\bQ}$ for all $i \in \ski$.
\item
\label{item:35}
$l_i(V_{\bC}^{j_0,\vj}) \subset V_{\bC}^{j_0,\vj+2\ve_i}$
for all $i \in \ski$.
\item
\label{item:38}
$l_i \colon
(V_{\bQ}^{j_0,\vj}, (V_{\bC}^{j_0,\vj}, F))
\longrightarrow
(V_{\bQ}^{j_0,\vj+2\ve_i}, (V_{\bC}^{j_0,\vj+2\ve_i}, F[-1]))$
is a morphism of $\bQ$-Hodge structures.
\item
\label{item:30}
$l_id'_j=d'_jl_i$ for all $i,j \in \ski$.
\item
\label{item:36}
$l_il_j=l_jl_i$ for all $i,j \in \ski$.
\item
\label{item:37}
For any $i \in \ski$,
$\vj=(j_1, \dots, j_k) \in \bZ^k$ with $j_i > 0$,
and $j_0 \in \bZ$,
the morphism
$l_i^{j_i} \colon
V^{j_0, -\vj}_{\bC} \longrightarrow V^{j_0, -\vj+2j_i\ve_i}_{\bC}$
is an isomorphism.
\end{enumerate}
\end{lem}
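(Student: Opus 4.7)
The plan is to verify the six claims by reducing everything to explicit computations on the direct sum decompositions \eqref{eq:74}, \eqref{eq:75}, and \eqref{eq:87}, together with the commutativity \eqref{eq:77}. Since $\nu_i$ on $A_{\bC}$ is induced by multiplication by $u_i$ on $\bC[\vu] \otimes_{\bC} \Omega_X(\log \cM_X)$, most items reduce to routine bookkeeping.

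I would dispatch \ref{item:29}, \ref{item:35} and \ref{item:38} as follows. Taking $E_1$ in \eqref{eq:77} shows that $l_i$ preserves $\image E_1(\alpha) = V_{\bQ}$, which is \ref{item:29}. Under \eqref{eq:74}, $\nu_i$ sends $\vu^{\vq} \otimes \omega$ to $\vu^{\vq+\ve_i} \otimes \omega$, leaving $\vr$ and $j_0$ unchanged and transforming the constraint $-\vr+2\vq+\ve=\vj$ into $-\vr+2(\vq+\ve_i)+\ve=\vj+2\ve_i$, giving \ref{item:35}. For \ref{item:38}: the filtration shift in \eqref{eq:64} for the target summand indexed by $(\vq+\ve_i,\vr)$ is $F[-|\vr|+|\vq|+k+1]$, one greater than that of the source summand, so $l_i(F^pV_{\bC}^{j_0,\vj}) \subset F^{p-1}V_{\bC}^{j_0,\vj+2\ve_i}$, which is precisely the condition for a morphism of filtered vector spaces when the target carries $F[-1]$. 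The weight $j_0+\dim X-|\vj|$ balances on both sides, so \ref{item:38} follows.

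For \ref{item:36}: $\nu_i$ and $\nu_j$ both act by multiplication by elements of $\bC[\vu]$, so they commute on $A_{\bC}$; passing to $E_1$ and scaling by $(2\pi\sqrt{-1})$ gives the claim. For \ref{item:30}: a direct check on each term $d_l$ of the differential of $A_{\bC}$ shows $\nu_i d = d\nu_i$, whence $l_i d_1 = d_1 l_i$ on $V_{\bC}$. Decomposing $d_1 = \sum_j d'_j$ according to the bigrading of Lemma \ref{lem:18}\ref{item:14}, using \ref{item:35}, and appealing to the uniqueness of this decomposition yields $l_id'_j=d'_jl_i$ for each $j$.

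Finally, \ref{item:37} is where the only nontrivial work appears. By \eqref{eq:64}, $V_{\bC}^{j_0,-\vj}$ is the direct sum over pairs $(\vq,\vr) \in \bN^k \times \bN^k$ satisfying $\vr = 2\vq+\ve+\vj$ and $\vr \ge \vq+\ve$; since $j_i > 0$, the $m=i$ component of these constraints reduces to $q_i \ge 0$. Similarly, $V_{\bC}^{j_0,-\vj+2j_i\ve_i}$ decomposes over $(\vq'',\vr'')$ with $\vr'' = 2\vq''+\ve+\vj-2j_i\ve_i$ and $\vr'' \ge \vq''+\ve$, and for $m=i$ this forces $q''_i \ge j_i$. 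The substitution $\vr'' = \vr$, $\vq'' = \vq+j_i\ve_i$ yields a bijection between the two index sets, under which $l_i^{j_i}$ acts as $(2\pi\sqrt{-1})^{j_i}$ times the identity on the common factor $\coh^{j_0+\dim X-|\vr|+k}(X_{\vr},\varepsilon_{\vr}\otimes_{\bZ}\Omega_{X_{\vr}})$. Hence $l_i^{j_i}$ is an isomorphism. The main obstacle is this index bookkeeping in \ref{item:37}; everything else is formal.
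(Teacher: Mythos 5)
Your proof is correct and follows the same route as the paper, which simply observes that via the decomposition \eqref{eq:75} the operator $E_1(\nu_i)$ is identified with $\bigoplus (u_i\cdot)\otimes\id$ and deduces all six items from that; your index bookkeeping for \ref{item:37} is exactly the verification the paper leaves implicit. The only cosmetic remark is that in \ref{item:38} the two sides have equal weight only after accounting for the Tate twist built into $F[-1]$ (and the $(2\pi\sqrt{-1})$ in the definition of $l_i$, cf.\ \eqref{eq:87}), but your filtration computation is the substantive point and it is right.
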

\begin{proof}
\ref{item:29} follows from \eqref{eq:77}.
By definition,
$E_1(\nu_i)$ is identified with
$\bigoplus (u_i \cdot) \otimes \id$
via the isomorphism \eqref{eq:75}.
where $(u_i \cdot)$ denotes the morphsim
defined by the multiplication by $u_i$ in $\bC[\vu]$.
Therefore we obtain
\ref{item:35}, \ref{item:38}, \ref{item:36} and \ref{item:37}.
Since $E_1(\nu_i)$ commutes with $d_1$ by definition,
we obtain \ref{item:30}.
\end{proof}

\begin{notn}
\label{notn:7}
We take an ample invertible sheaf $\cL$ on $X$.
Then the cohomology class $c(\cL) \in \coh^2(X, \Omega_X)$
is defined in \ref{notn:2}.
For any $\vr \in \bZ^k_{\ge \ve}$,
we set $\cL_{\vr}=a_{\vr}^{\ast}\cL$,
which is an ample invertible sheaf on $X_{\vr}$
because $a_{\vr} \colon X_{\vr} \longrightarrow X$ is finite.
Moreover the usual first Chern class
$c_1(\cL_{\vr}) \in \coh^2(X_{\vr}, \bZ)$
is sent to $-(2\pi\sqrt{-1})^{-1}a_{\vr}^{\ast}c(\cL)$
by the morphism induced from
$\bZ \hookrightarrow \bC \simeq \Omega_{X_{\vr}}$
as in \cite[(2.2.5)]{DeligneII}.
We usually identify
$c_1(\cL_{\vr})$
and $-(2\pi\sqrt{-1})^{-1}a_{\vr}^{\ast}c(\cL)$
in $\coh^2(X_{\vr}, \Omega_{X_{\vr}})$.
\end{notn}

\begin{defn}
\label{defn:17}
The morphism $\overline{\Psi}$ in \eqref{eq:11}
induces a morphism
\begin{equation}
E_r(\overline{\Psi}) \colon
E_r^{a,b}(A_{\bC}, L) \otimes_{\bC} \coh^d(X, \Omega_X)
\longrightarrow E_r^{a,b+d}(A_{\bC}, L)
\end{equation}
as in Definition \ref{defn:12},
where $\Omega_X$ is equipped with the trivial filtration.
By using $c(\cL) \in \coh^2(X, \Omega_X)$ above,
a morphism
$l_0 \colon E_1^{a,b}(A_{\bC}, L) \longrightarrow E_1^{a,b+2}(A_{\bC}, L)$
is defined by
$l_0(\omega)=-(2\pi\sqrt{-1})^{-1}E_1(\overline{\Psi})(\omega \otimes c(\cL))$
for $\omega \in E_1^{a,b}(A_{\bC}, L)$.
\end{defn}

\begin{lem}
\label{lem:10}
We have the following:
\begin{enumerate}
\item
\label{item:32}
$l_0(V_{\bQ}) \subset V_{\bQ}$.
\item
\label{item:1}
$l_0(V_{\bC}^{j_0,\vj}) \subset V_{\bC}^{j_0+2,\vj}$
for all $p, j_0 \in \bZ, \vj \in \bZ^k$.
\item
\label{item:31}
$l_0 \colon
(V_{\bQ}^{j_0,\vj},(V_{\bC}^{j_0,\vj}, F))
\longrightarrow
(V_{\bQ}^{j_0+2,\vj}, (V_{\bC}^{j_0+2,\vj}, F[1]))$
is a morphism of $\bQ$-Hodge structures.
\item
\label{item:33}
$l_0d'_j=d'_jl_0$ for all $i \in \ski$.
\item
\label{item:15}
$l_0l_i=l_il_0$ for all $i \in \ski$.
\item
\label{item:28}
$l_0^{j_0} \colon V_{\bC}^{-j_0, \vj} \longrightarrow V_{\bC}^{j_0, \vj}$
is an isomorphism
for all $j_0 \in \bpZ$ and $\vj \in \bZ^k$.
\end{enumerate}
\end{lem}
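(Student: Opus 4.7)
The overall strategy is to exploit the explicit bigraded decompositions of $V_{\bC}$ given by \eqref{eq:64} and \eqref{eq:87}, together with the formula for $\gr_m^L\overline{\Psi}$ in Lemma \ref{lem:7}, to reduce each assertion to a classical fact about the coherent/de Rham cohomology of the smooth projective varieties $X_{\vr}$ equipped with the ample line bundles $\cL_{\vr}=a_{\vr}^{\ast}\cL$. The recurring point is that on the summand indexed by $(\vq, \vr)$ in \eqref{eq:87}, the operator $l_0$ is nothing but the cup product with $c_1(\cL_{\vr})$, an integral class of Hodge type $(1,1)$.

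For \ref{item:32}, \ref{item:1}, and \ref{item:31}: Lemma \ref{lem:7} shows that $\gr_m^L\overline{\Psi}$ sends the summand indexed by $(\vq, \vr)$ to itself by $\omega \mapsto \omega \wedge a_{\vr}^{\ast}\eta$, so cupping with $c(\cL)\in\coh^2(X,\Omega_X)$ preserves the $(\vq,\vr)$-decomposition and sends $V_{\bC}^{j_0,\vj}$ into $V_{\bC}^{j_0+2,\vj}$, proving \ref{item:1}. The normalization constant $-(2\pi\sqrt{-1})^{-1}$ in the definition of $l_0$ makes it act on each summand of \eqref{eq:87} as cup product with $c_1(\cL_{\vr})$, which is integral (giving \ref{item:32}) and is a morphism of $\bQ$-Hodge structures of type $(1,1)$ in the Tate-twisted sense (giving \ref{item:31}).

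For \ref{item:33} and \ref{item:15}: I first prove the global relations $d_1 l_0 = l_0 d_1$ and $l_0 l_i = l_i l_0$ on $V_{\bC}$, and then bootstrap. Since $\overline{\Psi}$ is a morphism of filtered complexes and $c(\cL)$ lies in the $d_1$-kernel of the trivial-filtration spectral sequence of $\Omega_X$, the compatibility formula \eqref{eq:106} applied with $K_2=\Omega_X$ carrying the trivial filtration gives $d_1 l_0 = l_0 d_1$. Writing $d_1=\sum_i d'_i$ by Lemma \ref{lem:18}, and observing that $l_0$ preserves each $V_{\bC}^{j_0,\vj}$ while $d'_i$ increases the $i$-th coordinate of $\vj$ by $1$, the contributions to $l_0 d'_i - d'_i l_0$ lie in pairwise disjoint summands $V_{\bC}^{j_0+3,\vj+\ve_i}$ and hence each vanishes separately, proving \ref{item:33}. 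For \ref{item:15}, the product $\overline{\Psi}$ in Definition \ref{defn:22} is manifestly $\bC[\vu]$-linear in its first argument, so $\overline{\Psi}\cdot(\nu_i\otimes\id)=\nu_i\cdot\overline{\Psi}$; passing to $E_1$-terms yields $l_i l_0 = l_0 l_i$.

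For \ref{item:28}: Using $\dim X_{\vr}=\dim X-|\vr|+k$ for $\vr\ge\ve$, the identification \eqref{eq:64} expresses $V_{\bC}^{\pm j_0,\vj}$ as a direct sum over the \emph{same} index set $\{(\vq,\vr)\mid -\vr+2\vq+\ve=\vj\}$ of the cohomology groups $\coh^{\pm j_0+\dim X_{\vr}}(X_{\vr},\varepsilon_{\vr}\otimes_{\bZ}\Omega_{X_{\vr}})$. By the parts already established, $l_0^{j_0}$ is a direct sum of nonzero scalar multiples of cup product with $c_1(\cL_{\vr})^{j_0}$. Since each $X_{\vr}$ is smooth and projective (finite over the projective $X$), and $\cL_{\vr}$ is ample, the classical hard Lefschetz theorem, applied component by component, gives the required isomorphism for every $0<j_0\le\dim X_{\vr}$; for larger $j_0$ both source and target vanish for cohomological-dimension reasons. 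The main obstacle throughout is purely bookkeeping: carefully tracking the bigrading $(j_0,\vj)$, the signs coming from \eqref{eq:80} and Lemma \ref{lem:14}, and the Tate twists in \eqref{eq:87}; no new geometric input beyond Lemma \ref{lem:7} and classical hard Lefschetz on each $X_{\vr}$ is needed.
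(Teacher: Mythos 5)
Your proposal is correct and follows essentially the same route as the paper: identify $l_0$ via Lemma \ref{lem:7} and the decomposition \eqref{eq:75} with $\id \otimes \cup c_1(\cL_{\vr})$ on each summand, deduce \ref{item:32}, \ref{item:1}, \ref{item:31}, \ref{item:15} from this description (using \eqref{eq:87} for rationality), get \ref{item:33} from the $d_1$-compatibility \eqref{eq:106} combined with the grading argument, and get \ref{item:28} from classical hard Lefschetz on each smooth projective $X_{\vr}$ with $\dim X_{\vr}=\dim X-|\vr|+k$. No substantive difference from the paper's argument.
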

\begin{proof}
Under the identification \eqref{eq:75},
$l_0$ is identified with
$\id \otimes \cup c_1(\cL_{\vr})$ by Lemma \ref{lem:7},
where $\cup$ denotes the cup product
induced from the morphism
$\varepsilon_{\vr} \otimes_{\bZ} \Omega_{X_{\vr}} \otimes \Omega_{X_{\vr}}
\xrightarrow{\id \otimes \wedge}
\varepsilon_{\vr} \otimes_{\bZ} \Omega_{X_{\vr}}$.
Then \ref{item:1} and \ref{item:15} are trivial.
The commutativity of $l_0$ with $d_1$ by \eqref{eq:106}
together with \ref{item:1}
implies \ref{item:33}.
Because $c_1(\cL_{\vr}) \in \coh^2(X_{\vr}, \bZ)$,
we obtain \ref{item:32}
via the identification \eqref{eq:87}.
From the Hodge theory for
$\coh^{\ast}(X_{\vr}, \varepsilon_{\vr} \otimes_{\bZ} \Omega_{X_{\vr})}$
we obtain \ref{item:31} and \ref{item:28}.
Here we note that $\dim X_{\vr}=\dim X-|\vr|+k$.
\end{proof}

\begin{defn}
\label{defn:2}
As in Definition \ref{defn:12},
the morphism $\widetilde{\Psi}$
induces a morphism
\begin{equation}
E_r(\widetilde{\Psi}) \colon
E_r^{a,b}(A_{\bC}, L) \otimes_{\bC} E_r^{c,d}(A_{\bC}, L)
\longrightarrow
E_r^{a+c-k,b+d+2k}(
\cC(\Omega_{X_{\bullet}}(\log \cM_{X_{\bullet}})), \delta W)
\end{equation}
because
$E_r^{p,q}(
\cC(\Omega_{X_{\bullet}}(\log \cM_{X_{\bullet}}))[k], (\delta W)[-k])
=
E_r^{p-k,q+2k}(
\cC(\Omega_{X_{\bullet}}(\log \cM_{X_{\bullet}})), \delta W)$.
Then we define a morphism
$S \colon
E_1^{a,b}(A_{\bC}, L) \otimes_{\bC} E_1^{c,d}(A_{\bC}, L)
\longrightarrow \bC$ by
\begin{equation}
S=
\begin{cases}
\epsilon(-a-b)\Theta \cdot E_1(\widetilde{\Psi})
&\qquad \text{if $a+c=0$ and $b+d=2\dim X$} \\
0 &\qquad \text{otherwise}, 
\end{cases}
\end{equation}
where $\Theta$ is the morphism defined in Definition \ref{defn:15}
and $\epsilon(-a-b)$ is given in Definition \ref{notn:5}.
Then $S$ induces a bilinear form
$V_{\bC} \otimes_{\bC} V_{\bC} \longrightarrow \bC$,
which is denoted by the same letter $S$.
\end{defn}

\begin{lem}
\label{lem:22}
$S \cdot (d_1 \otimes \id)=S \cdot (\id \otimes d_1)$.
on $E_1^{a,b}(A_{\bC}, L) \otimes_{\bC} E_1^{c,d}(A_{\bC}, L)$
\end{lem}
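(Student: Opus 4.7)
The plan is to reduce everything to the two input facts available: (i) $\widetilde{\Psi}$ is a morphism of filtered complexes, hence $E_1(\widetilde{\Psi})$ commutes with the $E_1$-differentials up to the Leibniz rule of \eqref{eq:106}, and (ii) $\Theta \cdot d_1=0$ by Lemma \ref{lem:13}. The compatibility of $S$ with $d_1$ on the two factors will then follow by a sign bookkeeping involving $\epsilon(-a-b)$.

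First I would observe that both $S\cdot(d_1\otimes\id)$ and $S\cdot(\id\otimes d_1)$ vanish on $E_1^{a,b}(A_{\bC},L)\otimes E_1^{c,d}(A_{\bC},L)$ unless $a+c=-1$ and $b+d=2\dim X$, so one may fix these bidegrees and work with $x\in E_1^{a,b}(A_{\bC},L)$, $y\in E_1^{c,d}(A_{\bC},L)$. Applying \eqref{eq:106} with $r=1$ to the morphism
\[
\widetilde{\Psi}\colon (A_{\bC}\otimes_{\bC}A_{\bC},L)\longrightarrow
(\cC(\Omega_{X_{\bullet}}(\log\cM_{X_{\bullet}}))[k],(\delta W)[-k])
\]
gives the identity
\[
d_1 \cdot E_1(\widetilde{\Psi})(x\otimes y)
= E_1(\widetilde{\Psi})(d_1x\otimes y)
+(-1)^{a+b}E_1(\widetilde{\Psi})(x\otimes d_1y)
\]
in $E_1^{a+c-k+1,b+d+2k}(\cC(\Omega_{X_{\bullet}}(\log\cM_{X_{\bullet}})),\delta W)$.

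Second, composing with $\Theta$ and invoking Lemma \ref{lem:13} (the $\Theta\cdot d_1=0$ relation) kills the left hand side, so
\[
\Theta\cdot E_1(\widetilde{\Psi})(d_1x\otimes y)
= -(-1)^{a+b}\,\Theta\cdot E_1(\widetilde{\Psi})(x\otimes d_1y).
\]
By the very definition of $S$, the left member equals $\epsilon(-a-b-1)^{-1}S(d_1x\otimes y)$ and the right member equals $\epsilon(-a-b)^{-1}S(x\otimes d_1y)$. The required identity therefore reduces to the sign check
\[
\epsilon(-a-b)=-(-1)^{a+b}\,\epsilon(-a-b-1),
\]
which, setting $n=-a-b$, is $\epsilon(n)=(-1)^{n+1}\epsilon(n-1)$; this is immediate from $\epsilon(n)=(-1)^{n(n-1)/2}$ since the exponents differ by $n-1\equiv n+1\pmod 2$.

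No deep obstacle is expected: the content is entirely in the Leibniz formula \eqref{eq:106} for the product of $E_1$-terms together with $\Theta\cdot d_1=0$. The only delicate point, and the one I would take care to spell out, is that the sign $\epsilon(-a-b)$ in Definition \ref{defn:2} is tuned precisely so that the $(-1)^{a+b}$ produced by \eqref{eq:106} is absorbed, which is the reason $S$ is defined with this particular sign convention rather than a plain sign.
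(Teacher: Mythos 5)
Your proof is correct and follows essentially the same route as the paper: reduce to the bidegrees $a+c=-1$, $b+d=2\dim X$, apply the Leibniz formula \eqref{eq:106} to $E_1(\widetilde{\Psi})$, kill the $d_1\cdot E_1(\widetilde{\Psi})$ term via $\Theta\cdot d_1=0$ (Lemma \ref{lem:13}), and absorb the residual $(-1)^{a+b}$ with the identity $\epsilon(-a-b)=(-1)^{a+b+1}\epsilon(-a-b-1)$. The only cosmetic difference is that the paper records an extra factor $(-1)^k$ on the $d_1\cdot E_1(\widetilde{\Psi})$ side coming from the shift $[k]$ in the target, which is immaterial here since that term is annihilated by $\Theta$.
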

\begin{proof}
By definition, we may assume $a+c=-1, b+d=2\dim X$.
From \eqref{eq:106}, we have
\begin{equation}
E_1(\widetilde{\Psi}) \cdot (d_1 \otimes \id)
+(-1)^{a+b}E_1(\widetilde{\Psi}) \cdot (\id \otimes d_1)
=(-1)^kd_1 \cdot E_1(\widetilde{\Psi}),
\end{equation}
where
$d_1$ on the right hand side
is the morphism of $E_1$-terms for
$(\cC(\Omega_{X_{\bullet}}(\log \cM_{X_{\bullet}}), \delta W)$.
Because $\Theta \cdot d_1=0$ by Lemma \ref{lem:13},
the conclusion is obtained
from $\epsilon(-a-b)=(-1)^{a+b+1}\epsilon(-a-b-1)$.
\end{proof}

\begin{lem}
\label{lem:6}
The restriction of $S$
to the direct summand
\begin{equation}
\label{eq:85}
(\bC\vu^{\vq} \otimes_{\bC}
\coh^j(X_{\vr}, \varepsilon_{\vr} \otimes_{\bZ} \Omega_{X_{\vr}}))
\otimes_{\bC}
(\bC\vu^{\vq'} \otimes_{\bC}
\coh^{j'}(X_{\vr'}, \varepsilon_{\vr'} \otimes_{\bZ} \Omega_{X_{\vr'}}))
\end{equation}
of $V_{\bC} \otimes_{\bC} V_{\bC}$ via the identification \eqref{eq:75}
is zero 
unless $\vr=\vr'=\vq+\vq'+\ve$ and $j+j'=2\dim X_{\vr}$.
For the case of $\vr=\vr'=\vq+\vq'+\ve$ and $j+j'=2\dim X_{\vr}$,
the restriction of $S$
to the direct summand \eqref{eq:85} coincides with
\begin{equation}
(-1)^{|\vq|}\epsilon(-j)(2\pi\sqrt{-1})^{|\vr|-k}
\int_{X_{\vr}} \cdot \coh^{j,j'}(X_{\vr}, \vartheta_{\vr} \otimes \wedge),
\end{equation}
where $\vartheta_{\vr} \otimes \wedge$
denotes the composite
\begin{equation}
(\varepsilon_{\vr} \otimes_{\bZ} \Omega_{X_{\vr}})
\otimes_{\bC}
(\varepsilon_{\vr} \otimes_{\bZ} \Omega_{X_{\vr}})
\simeq
(\varepsilon_{\vr} \otimes_{\bZ} \varepsilon_{\vr})
\otimes_{\bZ}
(\Omega_{X_{\vr}} \otimes_{\bC} \Omega_{X_{\vr}})
\xrightarrow{\vartheta_{\vr} \otimes \wedge}
\Omega_{X_{\vr}}
\end{equation}
by abuse of notation.
\end{lem}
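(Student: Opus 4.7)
The plan is to chase the summand \eqref{eq:85} through the composite
\[
S = \epsilon(-a-b)\,\Theta\cdot E_1(\widetilde{\Psi})
\]
step by step, using Lemma \ref{lem:14} as the essential computational input and then applying the explicit formula for $\Theta$ from Definition \ref{defn:15}.

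First I would locate the summand \eqref{eq:85} on the level of $E_1$-terms. From the identifications \eqref{eq:67}/\eqref{eq:74}, the factor $\bC\vu^{\vq}\otimes\coh^{j}(X_{\vr},\varepsilon_{\vr}\otimes_{\bZ}\Omega_{X_{\vr}})$ sits in $E_1^{a,b}(A_{\bC},L)$ precisely for $a=-|\vr|+2|\vq|+k$ and $b=j-2|\vq|$; analogously for $(\vq',\vr',j')$ giving $(c,d)$. The first observation is then that $S$ vanishes on this summand unless $a+c=0$ and $b+d=2\dim X$, which by a short computation is equivalent to $|\vq|+|\vq'|=|\vr|-k=|\vr'|-k$ and $j+j'=2\dim X_{\vr}$ (using $\dim X_{\vr}=\dim X-|\vr|+k$). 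This gives the two necessary constraints, but vanishing outside $\vr=\vr'=\vq+\vq'+\ve$ is not yet seen at this level.

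Next I would apply Lemma \ref{lem:14}, whose statement is tailored exactly to this situation. The composite $\gr_{-a,a}^{L}\widetilde{\Psi}$ followed by the projection \eqref{eq:53} onto the direct summand $\bigoplus_{\vs}(a_{\vs})_{\ast}\Omega_{X_{\vs}}[-2|\vs|+2k]$ of $\gr_{k}^{\delta W}\cC[k]$ is zero outside $\vr=\vr'=\vq+\vq'+\ve$, and for this value coincides with the explicit pentuple composite described in Lemma \ref{lem:14}, whose non-shift content is $(-1)^{|\vq|}(a_{\vr})_{\ast}(\vartheta_{\vr}\otimes\wedge)$ after the middle-term swap and the canonical map $\otimes_{\bC}\to(a_{\vr})_{\ast}(-\otimes-)$. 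Passing to $E_1$-terms, this composite becomes $(-1)^{|\vq|}\coh^{j,j'}(X_{\vr},\vartheta_{\vr}\otimes\wedge)$ landing in $\coh^{2\dim X_{\vr}}(X_{\vr},\Omega_{X_{\vr}})$, which is identified with the relevant summand of $E_1^{-k,2\dim X+2k}(\cC,\delta W)$ via \eqref{eq:54}. Then Definition \ref{defn:15} gives $\Theta=\epsilon(|\vr|-k)(2\pi\sqrt{-1})^{|\vr|-k}\int_{X_{\vr}}$ on this summand, producing
\[
\epsilon(-a-b)\,\epsilon(|\vr|-k)\,(-1)^{|\vq|}\,(2\pi\sqrt{-1})^{|\vr|-k}\int_{X_{\vr}}\cdot\coh^{j,j'}(X_{\vr},\vartheta_{\vr}\otimes\wedge).
\]

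The last step is the sign reconciliation: I need $\epsilon(-a-b)\,\epsilon(|\vr|-k)=\epsilon(-j)$ (no extra sign enters from \eqref{eq:80}, because the only nonzero contribution of the shift identification comes with $p=0$ or is absorbed into the $(-1)^{|\vq|}$ already recorded in Lemma \ref{lem:14}; this should be checked directly). Since $a+b=j-|\vr|+k$, this reduces to the elementary identity $\epsilon(|\vr|-k-j)\epsilon(|\vr|-k)=\epsilon(-j)$, which follows from $\epsilon(m+n)=\epsilon(m)\epsilon(n)(-1)^{mn}$ once we observe that $m=-j$, $n=|\vr|-k$ gives $\epsilon(-j+(|\vr|-k))\epsilon(|\vr|-k)=\epsilon(-j)(-1)^{(|\vr|-k)(|\vr|-k-1)}\cdot(-1)^{j(|\vr|-k)}\cdots$, which is indeed $\epsilon(-j)$ modulo a sign that must be tracked carefully.

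The main obstacle is precisely this sign bookkeeping: the shift convention \eqref{eq:80}, the sign $(-1)^{|\vq|}$ appearing inside Lemma \ref{lem:14}, the prefactor $\epsilon(-a-b)$ in the definition of $S$, and the prefactor $\epsilon(|\vr|-k)$ inside $\Theta$ must all be consolidated and shown to collapse to the single sign $(-1)^{|\vq|}\epsilon(-j)$ claimed by the lemma. Everything else is a bookkeeping exercise built on the explicit form of $\widetilde{\Psi}$, $\Theta$, and Lemma \ref{lem:14}.
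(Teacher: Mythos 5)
Your strategy is exactly the paper's: locate the summand in the appropriate bidegree of $E_1^{a,b}(A_{\bC},L)$, feed it through Lemma \ref{lem:14} and the projection \eqref{eq:53}, apply $\Theta$ from Definition \ref{defn:15}, and reconcile the signs. The paper's entire proof is the two-line observation that $a=2|\vq|-|\vr|+k$, $b=j-2a+2|\vq|$, and that $(-1)^{a(a+b)}\epsilon(-a-b)\epsilon(|\vr|-k)=\epsilon(-j)$. So there is no divergence of method.

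However, the sign bookkeeping --- which is the whole content of the lemma --- contains a genuine error and is left unfinished. First, your bidegree is wrong: from \eqref{eq:67} one gets $j=2a+b-2|\vq|$, i.e.\ $b=j-2a+2|\vq|=j-2|\vq|+2(|\vr|-k)$, not $b=j-2|\vq|$; consequently $a+b=j+|\vr|-k$, and your ``short computation'' of the vanishing constraints does not come out as stated. Second, and more seriously, your parenthetical claim that no extra sign enters from \eqref{eq:80} is false. On classes of cohomological degree $a+b$ in the first factor, the shift identification in the first step of Lemma \ref{lem:14} contributes $(-1)^{(a+b)(c-2|\vq'|)}=(-1)^{a(a+b)}=(-1)^{(|\vr|-k)(j+1)}$, and this factor is indispensable: a direct computation with $n=|\vr|-k$ gives
\begin{equation}
\epsilon(-j-n)\,\epsilon(n)=(-1)^{n(j+1)}\epsilon(-j),
\end{equation}
so your proposed identity $\epsilon(-a-b)\epsilon(|\vr|-k)=\epsilon(-j)$ fails whenever $|\vr|-k$ is odd and $j$ is even (for instance). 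It is precisely the extra $(-1)^{a(a+b)}$ that cancels $(-1)^{n(j+1)}$ and yields $\epsilon(-j)$. Since you end by deferring the verification (``modulo a sign that must be tracked carefully''), the one nontrivial assertion of the lemma --- the exact constant $(-1)^{|\vq|}\epsilon(-j)(2\pi\sqrt{-1})^{|\vr|-k}$ --- is not actually established in your argument.
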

\begin{proof}
Note that
$\bC\vu^{\vq} \otimes_{\bC}
\coh^j(X_{\vr}, \varepsilon_{\vr} \otimes_{\bZ} \Omega_{X_{\vr}})
\subset E_1^{a,b}(A_{\bC}, L)$
for $a=2|\vq|-|\vr|+k$ and $b=j-2a+2|\vq|=j-a+|\vr|-k$
as in the second equality in \eqref{eq:4}.
From Lemma \ref{lem:14},
we obtain the conclusion
because
$(-1)^{a(a+b)}\epsilon(-a-b)\epsilon(|\vr|-k)
=(-1)^{(|\vr|-k)(j+|\vr|-k)}\epsilon(-j-|\vr|+k)\epsilon(|\vr|-k)
=\epsilon(-j)$.
\end{proof}

\begin{cor}
$S$ is $(-1)^{\dim X}$-symmetric.
\end{cor}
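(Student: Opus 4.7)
The plan is to verify the $(-1)^{\dim X}$-symmetry of $S$ by direct computation on the explicit direct sum decomposition \eqref{eq:75}, using the formula supplied by Lemma \ref{lem:6}. By bilinearity it suffices to check the identity $S(y \otimes x) = (-1)^{\dim X} S(x \otimes y)$ on pairs of elements $x \in \bC\vu^{\vq} \otimes_{\bC} \coh^j(X_{\vr}, \varepsilon_{\vr} \otimes_{\bZ} \Omega_{X_{\vr}})$ and $y \in \bC\vu^{\vq'} \otimes_{\bC} \coh^{j'}(X_{\vr'}, \varepsilon_{\vr'} \otimes_{\bZ} \Omega_{X_{\vr'}})$. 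Both $S(x \otimes y)$ and $S(y \otimes x)$ vanish unless $\vr = \vr' = \vq + \vq' + \ve$ and $j + j' = 2\dim X_{\vr}$, so I may restrict to this case.

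Under these conditions, taking the ratio of the two expressions given by Lemma \ref{lem:6} produces three sign contributions: a factor $(-1)^{|\vq'|-|\vq|}$ coming from the prefactor $(-1)^{|\vq|}$, a factor $\epsilon(-j')/\epsilon(-j)$ coming from the $\epsilon$-factor, and a sign $\sigma$ arising from swapping the two arguments of the cup product $\coh^{j,j'}(X_{\vr}, \vartheta_{\vr} \otimes \wedge)$. Since $\vartheta_{\vr}$ is symmetric (Definition \ref{defn:6}) and the wedge product is graded commutative, the underlying product $\vartheta_{\vr} \otimes \wedge$ on $\varepsilon_{\vr} \otimes_{\bZ} \Omega_{X_{\vr}}$ is graded commutative with respect to total degree (with $\varepsilon_{\vr}$ placed in degree zero), so the standard argument for graded commutativity of cup products yields $\sigma = (-1)^{jj'}$.

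The final step is to check that the product of these three signs equals $(-1)^{\dim X}$. This reduces to elementary congruences modulo $2$, using the relations $|\vq'| - |\vq| \equiv |\vr| - k$ (from $\vq + \vq' + \ve = \vr$), $\epsilon(-j)\epsilon(-j') \equiv j + \dim X_{\vr}$ (a direct computation from $\epsilon(n) = (-1)^{n(n-1)/2}$ combined with $j + j' = 2\dim X_{\vr}$), the fact that $j(1+j')$ is always even since $1+j'$ has the opposite parity to $j$, and the identity $|\vr| - k + \dim X_{\vr} = \dim X$. There is no conceptual obstacle here: the argument is pure sign bookkeeping once Lemma \ref{lem:6} is in hand. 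The only point requiring some attention is verifying that no extra signs arise from the $\varepsilon_{\vr}$ factors when applying the Koszul sign rule to interchange the middle terms, but this follows at once from their placement in degree zero.
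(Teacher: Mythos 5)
Your proposal is correct and is essentially the argument the paper intends: the corollary is stated immediately after Lemma \ref{lem:6} precisely because it follows from that lemma by the sign bookkeeping you carry out, and your three sign contributions combine as claimed ($(-1)^{|\vr|-k}\cdot(-1)^{j+\dim X_{\vr}}\cdot(-1)^{jj'}=(-1)^{|\vr|-k+\dim X_{\vr}}=(-1)^{\dim X}$ using $\dim X_{\vr}=\dim X-|\vr|+k$).
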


\begin{lem}
\label{lem:20}
We have the following:
\begin{enumerate}
\item
\label{item:16}
$S(V_{\bC}^{j_0, \vj} \otimes_{\bC} V_{\bC}^{j_0',\vj'})=0$
unless $j_0+j_0'=0$ and $\vj+\vj'=\vo$.
\item
\label{item:17}
$S \cdot (l_i \otimes \id)+S \cdot (\id \otimes l_i)=0$
for all $i \in \ski$.
\item
\label{item:18}
$S \cdot (l_0 \otimes \id)+S \cdot (\id \otimes l_0)=0$.
\item
\label{item:19}
$S \cdot (d'_i \otimes \id)=S \cdot (\id \otimes d'_i)$
for all $i \in \ski$.
\item
\label{item:40}
$S(F^pV_{\bC} \otimes_{\bC} F^qV_{\bC})=0$
if $p+q > \dim X$.
\end{enumerate}
\end{lem}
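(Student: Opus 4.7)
All five items will follow by direct calculation from the explicit pairing formula in Lemma \ref{lem:6} together with the summand-wise description of the operators $l_i$, $l_0$, $d'_i$ under \eqref{eq:75}. Recall that Lemma \ref{lem:6} gives a nonzero pairing on $\bC\vu^{\vq}\otimes\coh^j\otimes\bC\vu^{\vq'}\otimes\coh^{j'}$ only when $\vr=\vr'=\vq+\vq'+\ve$ and $j+j'=2\dim X_{\vr}$, with sign $(-1)^{|\vq|}\epsilon(-j)$ and factor $(2\pi\sqrt{-1})^{|\vr|-k}\int_{X_{\vr}}$.

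For \ref{item:16}, substituting the defining equations $-\vr+2\vq+\ve=\vj$ and $-\vr'+2\vq'+\ve=\vj'$ into the matching conditions $\vr=\vr'=\vq+\vq'+\ve$ yields $\vj+\vj'=\vo$ immediately, while $j+j'=2\dim X_{\vr}$ combined with $j=j_0+\dim X-|\vr|+k$ forces $j_0+j_0'=0$. Items \ref{item:17} and \ref{item:18} amount to feeding the known actions of $l_i$ and $l_0$ into the pairing formula: by construction $l_i=(2\pi\sqrt{-1})E_1(\nu_i)$ multiplies the polynomial factor by $(2\pi\sqrt{-1})u_i$, so applying it on the left raises $|\vq|$ by one and flips the sign $(-1)^{|\vq|}$ while applying it on the right does not, producing cancellation; for $l_0$, by Lemma \ref{lem:10} together with the proof of Lemma \ref{lem:7} the action on cohomology is cup product with $c_1(\cL_{\vr})$, raising $j$ by two, and the identity $\epsilon(-j-2)=-\epsilon(-j)$ combined with the graded commutativity of cup product against the even-degree class $c_1(\cL_{\vr})$ yields the required opposite signs.

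Item \ref{item:19} will follow by combining \ref{item:16} with Lemma \ref{lem:22}. Since $d'_i$ shifts the bigrading by $(1,\ve_i)$, on the summand $V_{\bC}^{j_0,\vj}\otimes V_{\bC}^{-j_0-1,-\vj-\ve_i}$ only the $j=i$ term of $d_1=\sum_j d'_j$ can produce a nonzero contribution to either side of $S\cdot(d_1\otimes\id)=S\cdot(\id\otimes d_1)$, so the global identity of Lemma \ref{lem:22} localizes to $S\cdot(d'_i\otimes\id)=S\cdot(\id\otimes d'_i)$ summand by summand.

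Finally, for \ref{item:40}, I will track the Hodge filtration shift in \eqref{eq:64}: an element of $F^pV_{\bC}^{j_0,\vj}$ lives in $F^{p-|\vr|+|\vq|+k}$ of the corresponding cohomology group, so after using $\vr=\vq+\vq'+\ve$ the wedge of two such classes sits in $F^{p+q-|\vr|+k}\coh^{2\dim X_{\vr}}(X_{\vr},\Omega_{X_{\vr}})$. Since this target is one-dimensional of Hodge type $(\dim X_{\vr},\dim X_{\vr})$, integration over $X_{\vr}$ vanishes as soon as $p+q-|\vr|+k>\dim X_{\vr}=\dim X-|\vr|+k$, that is, $p+q>\dim X$. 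The main obstacle across the five items is not conceptual but notational: keeping the Tate twist convention of \eqref{eq:64} straight in \ref{item:40}, and tracking the two independent sources of sign $(-1)^{|\vq|}$ and $\epsilon(-j)$ correctly in \ref{item:17} and \ref{item:18}; once these are untangled, every part reduces to mechanical bookkeeping against Lemma \ref{lem:6}.
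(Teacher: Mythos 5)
Your proposal is correct and follows essentially the same route as the paper: every item is read off from the explicit summand-wise pairing formula of Lemma \ref{lem:6} together with the identifications of $l_i$, $l_0$ and $d'_i$ under \eqref{eq:75}, with \ref{item:19} obtained by localizing Lemma \ref{lem:22} using \ref{item:16}. Your sign bookkeeping for \ref{item:17} (the flip of $(-1)^{|\vq|}$) and \ref{item:18} (the identity $\epsilon(-j-2)=-\epsilon(-j)$), and the filtration count for \ref{item:40}, match the paper's intended computations.
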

\begin{proof}
For \ref{item:16},
it suffices to consider the cases of
\begin{gather}
\vr=2\vq-\vj+\ve, \vr'=2\vq'-\vj'+\ve,
\vr=\vr'=\vq+\vq'+\ve, \\
j_0-|\vr|+\dim X+k+j_0'-|\vr|+\dim X+k=2(\dim X-|\vr|+k)
\end{gather}
in \eqref{eq:64},
by Lemma \ref{lem:6}.
Then these equalities imply
$\vj+\vj'=\vo$ and $j_0+j_0'=0$.
Since $E_1(\nu_i)$ is identified with
the morphism $\bigoplus (u_i \cdot) \otimes \id$
under the isomorphism \eqref{eq:75},
we can easily check \ref{item:17}
from Lemma \ref{lem:6}.
Similarly, \ref{item:1} implies \ref{item:18}
by $\epsilon(-j-2)=-\epsilon(-j)$.
The equality
$S \cdot (d_1 \otimes \id)=S \cdot (\id \otimes d_1)$ in Lemma \ref{lem:22}
combined with \ref{item:16}
implies \ref{item:19}.
We can easily check \ref{item:40}
by \eqref{eq:64} and Lemma \ref{lem:6}.
\end{proof}

\begin{lem}
\label{lem:11}
$S(V_{\bQ} \otimes V_{\bQ}) \subset \bQ$.
\end{lem}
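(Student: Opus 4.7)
The plan is to reduce the rationality claim to the explicit formula for $S$ from Lemma \ref{lem:6}, and check that the resulting integral is rational. By Lemma \ref{lem:20}, the direct sum decomposition \eqref{eq:4}, and bilinearity, it suffices to prove $S(v \otimes v') \in \bQ$ for $v \in V_{\bQ}^{j_0,\vj}$ and $v' \in V_{\bQ}^{-j_0,-\vj}$.

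The rational description \eqref{eq:87} expresses such a $v$ as a sum of terms $(2\pi\sqrt{-1})^{-|\vr|+|\vq|+k}\vu^{\vq} \otimes \omega_{dR}$, with $\omega \in \coh^{j_0+\dim X-|\vr|+k}(X_{\vr}, \varepsilon_{\vr} \otimes \bQ)$ a rational Betti class and $\omega_{dR}$ its image under the de Rham comparison, and similarly for $v'$ with $\vq', \vr'$ and $\omega'$. The Tate twist $(-|\vr|+|\vq|+k)$ in \eqref{eq:87} is forced by combining the factor $(2\pi\sqrt{-1})^{|\vq|+k}$ in the definition of $\alpha$ (Definition \ref{defn:5}), the factor $(2\pi\sqrt{-1})^{-|\vr|}$ comparing the Koszul and de Rham residues (Lemma \ref{lem:8}), and the quasi-isomorphism $\bQ_{X_{\vr}} \simeq \kos_{X_{\vr}}(\cO^*_{X_{\vr}})$ for the trivial log structure (cf. \ref{item:41} applied to $X_{\vr}$).

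Applying Lemma \ref{lem:6} to such $v$ and $v'$, only the summands with $\vr = \vr' = \vq+\vq'+\ve$ and $j+j' = 2\dim X_{\vr}$ contribute. Under these constraints the total $(2\pi\sqrt{-1})$-exponent appearing in $S(v\otimes v')$ is
\begin{equation*}
(-|\vr|+|\vq|+k)+(-|\vr|+|\vq'|+k)+(|\vr|-k) = |\vq|+|\vq'|+k-|\vr| = 0,
\end{equation*}
using $|\vr|=|\vq|+|\vq'|+k$, so all $(2\pi\sqrt{-1})$ factors cancel and $S(v\otimes v') = (-1)^{|\vq|}\epsilon(-j)\int_{X_{\vr}} \omega_{dR} \cup \omega'_{dR}$, where the cup product is induced by $\vartheta_{\vr}\otimes\wedge$. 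Since $\vartheta_{\vr}$ is $\bZ$-valued, $\omega \cup \omega' \in \coh^{2\dim X_{\vr}}(X_{\vr}, \bQ)$ is a rational Betti class of top degree on the compact complex manifold $X_{\vr}$, and integration pairs it against the integer fundamental class, yielding a rational number. The main technical point is the $(2\pi\sqrt{-1})$-bookkeeping in the second paragraph — establishing \eqref{eq:87} with precisely the Tate twist shown, so that the exponents cancel exactly; once this is in place, the cancellation and the rationality of the integral are automatic.
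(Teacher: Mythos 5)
Your proof is correct and follows essentially the same route as the paper's: the paper also identifies $V_{\bQ}$ with $\bigoplus \bQ\vu^{\vq}\otimes_{\bQ}\coh^j(X_{\vr},\varepsilon_{\vr}\otimes_{\bZ}\bQ)(-|\vr|+|\vq|+k)$ via \eqref{eq:87} and then invokes Lemma \ref{lem:6}. You have merely made explicit the $(2\pi\sqrt{-1})$-exponent cancellation $(-|\vr|+|\vq|+k)+(-|\vr|+|\vq'|+k)+(|\vr|-k)=0$ that the paper leaves to the reader.
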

\begin{proof}
Under the isomorphism \eqref{eq:74},
$V_{\bQ}$ is identified with
$\bigoplus \bQ \vu^{\vq} \otimes_{\bQ}
\coh^j(X_{\vr}, \varepsilon_{\vr} \otimes_{\bZ} \bQ)(-|\vr|+|\vq|+k)$
by \eqref{eq:87}.
Therefore Lemma \ref{lem:6} implies the conclusion.
\end{proof}

\begin{defn}
For $j_0 \in \bN$ and $\vj \in \bN^k$,
we set
\begin{equation}
V^{-j_0, -\vj}_{\bC, 0}
=V^{-j_0, -\vj}_{\bC} \cap \bigcap_{i=0}^k \kernel(l_i^{j_i+1}), \qquad
V_{\bQ,0}^{-j_0,-\vj}
=V_{\bQ} \cap V^{-j_0, -\vj}_{\bC, 0}.
\end{equation}
Then, together with the induced filtration $F$ on $V^{-j_0, -\vj}_{\bC, 0}$,
the data $(V^{-j_0, -\vj}_{\bQ, 0}, (V^{-j_0, -\vj}_{\bC, 0}, F))$
is a $\bQ$-Hodge structure of weight $-j_0+|\vj|+\dim X$.
\end{defn}

\begin{lem}
\label{lem:21}
The bilinear form
$S \cdot (\id \otimes C l_0^{j_0}l_1^{j_1} \dots l_k^{j_k})$
on $V^{-j_0, -\vj}_{\bQ, 0}$
is symmetric and positive definite,
where $C$ denotes the Weil operator
of a $\bQ$-Hodge structure
$(V^{-j_0, -\vj}_{\bQ, 0}, (V^{-j_0, -\vj}_{\bC, 0},F))$.
\end{lem}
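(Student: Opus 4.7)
The plan is to reduce the pairing on $V_{\bQ,0}^{-j_0,-\vj}$ to the classical Hodge--Riemann bilinear form on primitive cohomology of a single smooth projective variety. First, I will use the explicit description of the $l_i$'s to identify $V_{\bC,0}^{-j_0,-\vj}$. Under \eqref{eq:75}, the proof of Lemma \ref{lem:19} shows that each $l_i$ for $i \ge 1$ corresponds, up to the factor $2\pi\sqrt{-1}$, to multiplication by $u_i$ on the polynomial factor. For an element of $V_{\bC}^{-j_0,-\vj}$, the constraint $\vr = 2\vq + \vj + \ve$ forces $q_i = (r_i - j_i - 1)/2$, so vanishing of $l_i^{j_i+1}$ is equivalent to $q_i = 0$, hence $r_i = j_i + 1$. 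Intersecting over all $i = 1,\dots,k$ gives $\vq = \vo$, $\vr = \vj + \ve$, so
\begin{equation}
V_{\bC}^{-j_0,-\vj} \cap \bigcap_{i=1}^{k}\kernel(l_i^{j_i+1})
\simeq
\coh^{\dim X_{\vj+\ve} - j_0}(X_{\vj+\ve}, \varepsilon_{\vj+\ve} \otimes_{\bZ} \Omega_{X_{\vj+\ve}}).
\end{equation}
Intersecting further with $\kernel(l_0^{j_0+1})$ and using the identification of $l_0$ with cup product by $c_1(\cL_{\vj+\ve})$ from the proof of Lemma \ref{lem:10}, together with the classical hard Lefschetz on $X_{\vj+\ve}$ (which is projective, being a finite cover of a closed subvariety of $X$), I conclude
\begin{equation}
V_{\bC,0}^{-j_0,-\vj}
\simeq
P^{\dim X_{\vj+\ve} - j_0}(X_{\vj+\ve}, \varepsilon_{\vj+\ve} \otimes_{\bZ} \Omega_{X_{\vj+\ve}}),
\end{equation}
the $\cL_{\vj+\ve}$-primitive part of the middle-degree-shifted cohomology. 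The rational structure $V_{\bQ,0}^{-j_0,-\vj}$ corresponds to the primitive part of $\coh^{\dim X_{\vj+\ve} - j_0}(X_{\vj+\ve}, \varepsilon_{\vj+\ve} \otimes_{\bZ} \bQ)$ tensored by the appropriate Tate twist, via \eqref{eq:87}.

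Next, I evaluate the composite $l_0^{j_0}l_1^{j_1}\cdots l_k^{j_k}$ on an element $1 \otimes \eta \in V_{\bC,0}^{-j_0,-\vj}$. The $l_i$'s for $i \ge 1$ successively multiply the polynomial factor, yielding $(2\pi\sqrt{-1})^{|\vj|}\vu^{\vj} \otimes \eta$, which lies in the direct summand of $V_{\bC}^{-j_0,\vj}$ indexed by $(\vq,\vr) = (\vj, \vj+\ve)$; then $l_0^{j_0}$ replaces $\eta$ by $c_1(\cL_{\vj+\ve})^{j_0} \cup \eta$ (as in Lemma \ref{lem:10} and Lemma \ref{lem:7}). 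Applying Lemma \ref{lem:6} with $\vq = \vo$, $\vq' = \vj$, $\vr = \vr' = \vj + \ve$, writing $\eta_s = e \otimes \eta_s'$ for a generator $e$ of $\varepsilon_{\vj+\ve}$ (with $\vartheta_{\vj+\ve}(e \otimes e) \in \bpZ$ by the positive-definiteness in Definition \ref{defn:9}), and putting $w = \dim X_{\vj+\ve} - j_0$, I obtain
\begin{equation}
S(1 \otimes \eta_1, Cl_0^{j_0}l_1^{j_1}\cdots l_k^{j_k}(1\otimes \eta_2))
=
\kappa \cdot \epsilon(-w)(2\pi\sqrt{-1})^{2|\vj|}
\int_{X_{\vj+\ve}} \eta_1' \wedge c_1(\cL_{\vj+\ve})^{j_0} \wedge C\eta_2',
\end{equation}
where $\kappa = \vartheta_{\vj+\ve}(e \otimes e) \in \bpZ$.

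The final step is to identify the right-hand side with the classical Hodge--Riemann polarization on the $\cL_{\vj+\ve}$-primitive part of $\coh^w(X_{\vj+\ve}, \varepsilon_{\vj+\ve} \otimes_{\bZ} \bC)$. For $\eta_1 = \eta_2 = \eta \in V_{\bQ,0}^{-j_0,-\vj}$, real implies $\overline{\eta'} = \eta'$ (modulo the Tate-twist scaling by $(2\pi\sqrt{-1})^{-|\vj|}$, which is precisely absorbed by the $(2\pi\sqrt{-1})^{2|\vj|}$ factor above once we rewrite both arguments in terms of their genuine $\bQ$-representatives), and the expression becomes a positive multiple of
\begin{equation}
\epsilon(w)\int_{X_{\vj+\ve}} \eta' \wedge c_1(\cL_{\vj+\ve})^{j_0} \wedge C\overline{\eta'},
\end{equation}
which is the standard positive-definite Hermitian form on primitive $w$-cohomology of a projective manifold polarized by $c_1(\cL_{\vj+\ve})$ (cf.\ \cite{GriffithsSchmid}); the sign identity $\epsilon(-w) = (-1)^w\epsilon(w)$ is absorbed by the sign of $C$ on conjugate pairs. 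Rationality of the pairing on $V_{\bQ,0}^{-j_0,-\vj}$ follows from Lemma \ref{lem:11}, and symmetry follows from the $(-1)^{\dim X}$-symmetry of $S$ together with the fact that $C$ commutes with real conjugation and $l_0, l_i$ are morphisms of Hodge structures (Lemmas \ref{lem:19}, \ref{lem:10}).

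The main obstacle is the bookkeeping: one must verify that the compound sign $\epsilon(-w)$ appearing from Lemma \ref{lem:6}, the powers $(2\pi\sqrt{-1})^{|\vr|-k} = (2\pi\sqrt{-1})^{|\vj|}$ from $S$ and $(2\pi\sqrt{-1})^{|\vj|}$ from the $l_i$'s, together with the Tate twist by $(-|\vj|)$ in the identification \eqref{eq:87}, combine exactly to match the classical sign convention making the primitive Hodge--Riemann form positive definite on the real structure, and that the positive integer $\vartheta_{\vj+\ve}(e\otimes e)$ from Definition \ref{defn:9} does not interfere with positivity.
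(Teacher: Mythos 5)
Your proposal is correct and follows essentially the same route as the paper's proof: identify $V^{-j_0,-\vj}_{\bC,0}$ with the $\vq=\vo$, $\vr=\vj+\ve$ summand (forced by $\kernel(l_i^{j_i+1})$) intersected with $\kernel(l_0^{j_0+1})$, i.e.\ the $\cL_{\vj+\ve}$-primitive part, then combine the explicit formula of Lemma \ref{lem:6} with the classical Hodge--Riemann relations on $X_{\vj+\ve}$. The only cosmetic caveat is that $\varepsilon_{\vj+\ve}$ need not admit a global generator $e$, so the pairing $\vartheta_{\vj+\ve}$ should be invoked as a positive definite form on the rank-one local system rather than via a chosen basis, exactly as the paper does.
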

\begin{proof}
Since $E_1(\nu_i)$ is identified with
the morphism $\bigoplus (u_i \cdot) \otimes \id$
via the isomorphism \eqref{eq:75},
the equality as $\bQ$-Hodge structures
\begin{equation}
V^{-j_0, -\vj}_{\bQ,0}
=\bQ \vu^{\vo}
\otimes_{\bQ}
\coh^{-j_0-|\vj|+\dim X}(X_{\vj+\ve},
\varepsilon_{\vj+\ve} \otimes_{\bZ} \bQ)(-|\vj|)
\cap \kernel(l_0^{j_0+1})
\end{equation}
can be easily seen.
We note that
$l_1^{j_1} \dots l_k^{j_k}$
is identified with the multiplication
by $\vu^{\vj} \otimes (2\pi\sqrt{-1})^{|\vj|}$.
Then Lemma \ref{lem:6} and the classical Hodge theory on $X_{\vj+\ve}$
imply the conclusion
because
$l_0$ is identified with the cup product
$\cup (2\pi\sqrt{-1})c_1(\cL_{\vj+\ve})$
on $\coh^{\ast}(X_{\vj+\ve}, \varepsilon_{\vj+\ve} \otimes_{\bZ} \bQ)$
by Lemma \ref{lem:7}.
\end{proof}

\begin{rmk}
In fact, we can check that the bilinear form
$(2\pi\sqrt{-1})^{j_0-|\vj|-\dim X}
S \cdot (\id \otimes l_0^{j_0}l_1^{j_1} \dots l_k^{j_k})$
is a polarization of the $\bQ$-Hodge structure
$V^{-j_0,-\vj}_{\bQ, 0}$
in the sense of Deligne
\cite[D\'efinition (2.1.15)]{DeligneII}.
\end{rmk}

\section{Multi-graded Hodge-Lefschetz modules}
\label{sec:multi-graded-hodge}

In this section,
we introduce the notion of a multi-graded Hodge-Lefschetz modules,
which slightly generalize the notion
of a bigraded Hodge-Lefschetz module
in \cite[Section 4]{Guillen-NavarroAznarCI}
(cf. \cite[Section 4]{SaitoMorihikoMHP} and \cite[11.3.2]{Peters-SteenbrinkMHS}).
Then, we prove Proposition \ref{prop:1},
which is a key tool for the proofs
of Theorems \ref{thm:4}, \ref{thm:2} and \ref{thm:5}
in Section \ref{sec:proofs-main-theorems}.

\begin{defn}
Let $A$ be a finite set.
A $\bZ^A$-graded Lefschetz module
$(V, \{l_a\}_{a \in A})$
consists of
a finite dimensional $\bZ^A$-graded $\bR$-vector space
$V=\bigoplus_{\vj \in \bZ^A}V^{\vj}$
and a family of endomorphisms $l_a$ of $V$
satisfying the following conditions:
\begin{enumerate}
\item
$l_al_b=l_bl_a$ for all $a,b \in A$.
\item
$l_a(V^{\vj}) \subset V^{\vj+2\ve_a}$ for all $a \in A$.
\item
\label{item:20}
For all $a \in A$,
the morphism
$l_a^{j_a}: V^{-\vj} \longrightarrow V^{-\vj+2j_a\ve_a}$
is an isomorphism
for all $\vj=\sum_{a \in A}j_a\ve_a \in \bZ^A$
with $j_a > 0$.
\end{enumerate}
A $\bZ^A$-graded Lefschetz module $(V, \{l_a\}_{a \in A})$
is called a $\bZ^A$-graded Hodge-Lefschetz module
if $V^{\vj}$ is an $\bR$-Hodge structure of certain weight
and $l_a \colon V^{\vj} \longrightarrow V^{\vj+2\ve_a}$
is a morphism of $\bR$-Hodge structures of certain type
(cf. \cite[(1.2) Definition]{GriffithsSchmid},
\cite[1.2.9]{ElZeinbook})
for all $\vj \in \bZ^A$ and $a \in A$.
We set
$V^{-\vj}_0 =V^{-\vj} \cap \bigcap_{a \in A}\kernel(l_a^{j_a+1})$
for $\vj \in \bN^A$.
Then $V^{-\vj}_0$ is a sub $\bR$-Hodge structure of $V^{-\vj}$.
Taking direct sum of the Weil operator of $V^{\vj}$
for all $\vj \in \bZ^A$,
we obtain an endomorphism $C$ of $V$.
\end{defn}

\begin{rmk}
\label{rmk:6}
As in \cite[(4.1)]{Guillen-NavarroAznarCI},
the $\bZ^A$-graded Lefschetz modules
correspond bijectively to the finite dimensional representations
of $SL(2,\bR)^A \simeq SL(2,\bR)^{|A|}$.
We set
\begin{equation}
w=
\left(
\begin{array}{rc}
0 & 1 \\
-1 & 0
\end{array}
\right)
\in SL(2,\bR).
\end{equation}
Moreover, $\vw_A \in SL(2,\bR)^A$
is the image of $w$ by the diagonal map $SL(2,\bR) \hookrightarrow SL(2,\bR)^A$.
\end{rmk}

\begin{defn}
For a $\bZ^A$-graded Hodge-Lefschetz module
$(V, \{l_a\}_{a \in A})$,
a polarization is an $\bR$-linear map
$S: V \otimes_{\bR} V \longrightarrow \bR$
satisfying the following conditions:
\begin{enumerate}
\item
\label{item:21}
$S(V^{\vj} \otimes_{\bR} V^{\vj'})=0$
if $\vj+\vj' \not= 0$.
\item
$S \colon V^{-\vj} \otimes_{\bR} V^{\vj} \longrightarrow \bR$
is a morphism of $\bR$-Hodge structures of certain type.
\item
\label{item:22}
$S \cdot (l_a \otimes \id)+S \cdot (\id \otimes l_a)=0$
for all $a \in A$.
\item
\label{item:23}
The bilinear form $S \cdot (\id \otimes C \prod_{a \in A}l_a^{j_a})$
on $V^{-\vj}_0$
is symmetric and positive definite
for all $\vj=\sum_{a \in A}j_a\ve_a \in \bN^A$.
\end{enumerate}
\end{defn}

\begin{rmk}
\label{rmk:7}
Under the conditions \ref{item:21} and \ref{item:22},
the condition \ref{item:23} is equivalent to the condition
that the bilinear form on $V$
defined by $S(x \otimes C\vw_A y)$ is symmetric and positive definite.
We can check this equivalence by computation
similar to \cite[(4.3) Proposition]{Guillen-NavarroAznarCI}.
Note that $C$ commutes with the action of $\vw_A$.
In fact, $C$ commutes with the action of $SL(2,\bR)^A$
because $C$ preserves the $\bZ^A$-grading of $V$
and commutes with $l_a$ for all $a \in A$.
\end{rmk}

Next, we define the notion of
a differential of a polarized $\bZ^A$-graded Hodge-Lefschetz module.
Because one distinguished component of $\bZ^A$
plays a special role
for the notion of a differential,
we replace $\bZ^A$ by $\bZ \oplus \bZ^A$
in the definition below.

\begin{defn}
A differential
of a polarized $\bZ \oplus \bZ^A$-graded Hodge-Lefschetz module
\begin{equation}
(V=\bigoplus_{j_0 \in \bZ, \vj \in \bZ^A}V^{j_0, \vj},
\{l_0, \{l_a\}_{a \in A}\}, S),
\end{equation}
is a family of $\bR$-linear maps
$d_a: V \longrightarrow V$ for $a \in A$
satisfying the following conditions:
\begin{enumerate}
\item
$d_a(V^{j_0, \vj}) \subset V^{j_0+1, \vj+\ve_a}$
for $a \in A$.
\item
$d_a \colon V^{j_0,\vj} \longrightarrow V^{j_0+1,\vj+\ve_a}$
is a morphism of $\bR$-Hodge structures of certain type.
\item
\label{item:24}
$d_ad_b+d_bd_a=0$ for all $a,b \in A$.
\item
$d_al_0=l_0d_a$ and $d_al_b=l_bd_a$ for all $a, b \in A$.
\item
\label{item:25}
$S \cdot (d_a \otimes \id)=S \cdot (\id \otimes d_a)$
for all $a \in A$.
\end{enumerate}
\end{defn}

\begin{rmk}
For the case of $|A|=1$,
a polarized differential $\bZ \oplus \bZ$-graded Hodge-Lefschetz module
is nothing but a polarized differential bigraded Hodge-Lefschetz module
in \cite[Section 4]{Guillen-NavarroAznarCI}.
\end{rmk}

\begin{defn}
Let $(V, \{l_0, \{l_a\}_{a \in A}\}, S, \{d_a\}_{a \in A})$
be a polarized differential $\bZ \oplus \bZ^A$-graded Hodge-Lefschetz module.
For $B \subset A$
and for $\vc=\sum_{a \in B}c_a \ve_a \in \bR^B$,
we set $d_B=\sum_{a \in B}d_a$
and $l_B(\vc)=\sum_{a \in B}c_al_a$.
Then $d_B^2=0$ by \ref{item:24}.
Moreover, by setting
\begin{equation}
\label{eq:83}
V^{j_0,j_1,\vj'}
=\bigoplus_{|\vj_B|=j_1, \vj_{A \setminus B}=\vj'}
V^{j_0, \vj},
\end{equation}
for $j_0, j_1 \in \bZ$ and $\vj' \in \bZ^{A \setminus B}$,
we have
$V=\bigoplus V^{j_0,j_1,\vj'}$,
where the direct sum is taken
over all $j_0, j_1 \in \bZ, \vj' \in \bZ^{A \setminus B}$.
Then $d_B(V^{j_0,j_1,\vj'}) \subset V^{j_0+1,j_1+1,\vj'}$
and $\coh(V, d_B)=\kernel(d)/\image(d)$
carries the natural direct sum decomposition
\begin{equation}
\label{eq:81}
\coh(V, d_B)
=\bigoplus_{j_0, j_1, \vj'}
\coh(V, d_B)^{j_0,j_1,\vj'}
\end{equation}
by setting
\begin{equation}
\label{eq:91}
\coh(V, d_B)^{j_0,j_1,\vj'}
=
V^{j_0,j_1,\vj'} \cap\kernel(d_B)
\bigl/
V^{j_0, j_1, \vj'} \cap \image(d_B).
\end{equation}
Because of \ref{item:24}--\ref{item:25},
the morphisms $l_0$, $l_B(\vc)$, $l_a$, $d_a$ ($a \in A \setminus B$)
and $S$ commute with $d_B$
and descend to $\coh(V, d_B)$,
denoted by the same letters.
We set $d_1=0$ on $\coh(V, d_B)$.
\end{defn}

\begin{prop}
\label{prop:1}
Equipped with the direct sum decomposition \eqref{eq:81},
\begin{equation}
\label{eq:84}
(\coh(V, d_B), \{l_0, l_B(\vc), \{l_a\}_{a \in A \setminus B}\},
S, \{d_1, \{d_a\}_{a \in A \setminus B}\})
\end{equation}
is a polarized differential $\bZ \oplus \bZ \oplus \bZ^{A \setminus B}$-graded
Hodge-Lefschetz module
if $c_a > 0$ for all $a \in B$.
\end{prop}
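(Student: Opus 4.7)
The plan is to deduce Proposition \ref{prop:1} from the classical cohomology theorem on polarized differential bigraded Hodge-Lefschetz modules \cite[Section 4]{Guillen-NavarroAznarCI} (cf.\ \cite[Section 4]{SaitoMorihikoMHP}) by first re-expressing $V$ on the coarser $\bZ \oplus \bZ \oplus \bZ^{A \setminus B}$-grading \eqref{eq:83}. The key intermediate claim is that $V$, equipped with this coarser grading, with operators $(l_0, l_B(\vc), \{l_a\}_{a \in A \setminus B})$, polarization $S$, and differentials $(d_B, \{d_a\}_{a \in A \setminus B})$, is already a polarized differential $\bZ \oplus \bZ \oplus \bZ^{A \setminus B}$-graded Hodge-Lefschetz module; the proposition then follows by taking cohomology with respect to $d_B$ while treating $\{l_a, d_a\}_{a \in A \setminus B}$ as inert parameters.

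The first step is to verify this intermediate claim. The shift, commutation, and Hodge-morphism properties for $(l_0, l_B(\vc), \{l_a\}_{a \in A \setminus B})$ transfer directly from the finer grading. The essential Lefschetz isomorphism
\begin{equation*}
l_B(\vc)^{j_1} \colon V^{j_0, -j_1, \vj'} \longrightarrow V^{j_0, j_1, \vj'}
\qquad (j_1 > 0)
\end{equation*}
is proved by representation theory: each $V^{j_0,\cdot,\vj'}$ is a finite-dimensional $SL(2,\bR)^B$-representation of weight $\vj_B \in \bZ^B$ in the $B$-factors; rescaling each individual $sl_2$-triple by $c_a > 0$ (replacing $l_a$ by $c_a l_a$ and the lowering operator $f_a$ by $c_a^{-1}f_a$) and taking the diagonal embedding $SL(2,\bR) \hookrightarrow SL(2,\bR)^B$ yields an $sl_2$-triple whose raising operator is $l_B(\vc)$ and whose semisimple element acts on $V^{j_0,\vj_B,\vj'}$ as $|\vj_B|$, so the claimed Lefschetz isomorphism reduces to the standard weight-pairing isomorphism for $SL(2,\bR)$-representations. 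For the polarization, the vanishing \ref{item:21}, the morphism-of-Hodge-structures property, and the anti-commutation \ref{item:22} for $l_B(\vc)$ reduce by $\bR$-linearity to the corresponding conditions for the individual $l_a$; positive-definiteness on the coarser primitive subspace $V^{-j_0, -j_1, -\vj'}_0$ follows from a Cattani-Kaplan-type argument (cf.\ \cite{CKS}), namely that the monodromy weight filtration of $l_B(\vc)$ for $\vc$ in the positive cone coincides with the $|\vj_B|$-filtration and gives a polarized Hodge-Lefschetz structure; this is invoked via Remark \ref{rmk:7} by comparing the product Weyl element $\vw_B$ with the Weyl element of the new diagonal $SL(2,\bR)$.

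With the intermediate claim established, the classical Hodge-theoretic argument (harmonic decomposition via the Laplacian $d_B d_B^\ast + d_B^\ast d_B$ with $d_B^\ast$ the adjoint built from the new total Weyl element) produces a harmonic lift $\coh(V, d_B) \hookrightarrow V$ compatible with all operators and with $S$. The inert operators $\{l_a, d_a\}_{a \in A \setminus B}$ commute with $d_B$ (the $d_a$ via the anti-commutation \ref{item:24}, which is exactly the relation ensuring a well-defined descent to $\coh(V, d_B)$), preserve the $\bZ^{A \setminus B}$-grading, and preserve the polarization $S$ by axiom \ref{item:25}; hence they descend to $\coh(V, d_B)$ and give the full polarized differential $\bZ \oplus \bZ \oplus \bZ^{A \setminus B}$-graded Hodge-Lefschetz structure \eqref{eq:84}, with $d_1 = 0$ rendering the axioms involving the first slot trivial. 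The main obstacle is the representation-theoretic content of the first step, especially the positive-definiteness of the polarization on the coarser primitive subspace, where the hypothesis $c_a > 0$ enters essentially and where careful sign-tracking in the comparison between $\vw_B$ and the new diagonal Weyl element is required.
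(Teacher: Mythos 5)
Your proposal is correct and follows essentially the same route as the paper: rescale using $c_a>0$, use the diagonal embedding $SL(2,\bR)\hookrightarrow SL(2,\bR)^B$ to obtain hard Lefschetz for $l_B(\vc)$ on the collapsed grading, transfer positivity between the fine and coarse primitive decompositions via Remark \ref{rmk:7} (the diagonal Weyl element acting as $\vw_B$), and invoke the cohomology theorem for polarized differential bigraded Hodge--Lefschetz modules for the passage to $\coh(V,d_B)$. The only cosmetic difference is that the paper strips off the inert $A\setminus B$ directions by twisting $S$ with $\vw_{A\setminus B}$ so as to quote \cite[(4.5) Th\'eor\`em]{Guillen-NavarroAznarCI} verbatim for a bigraded module, rather than rerunning the harmonic argument with those extra operators present.
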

\begin{proof}
Because $c_a > 0$,
the condition \ref{item:23} is satisfied
for $\{c_al_a\}_{a \in B} \cup \{l_a\}_{a \in A \setminus B}$.
Therefore we may assume $\vc=\ve_B$
by replacing $l_a$ with $c_al_a$.

First, we treat the case of $A=B$.
In this case, the $\bZ \oplus \bZ$-grading \eqref{eq:83} for $B=A$
corresponds to the representation of $SL(2,\bR) \times SL(2,\bR)$
induced from the inclusion
$SL(2,\bR) \times SL(2,\bR) \hookrightarrow SL(2,\bR) \times SL(2,\bR)^A$,
where the first factor is the identity of $SL(2,\bR)$
and the second factor $SL(2,\bR) \hookrightarrow SL(2,\bR)^A$
is the diagonal map.
Then the action of $(w,w) \in SL(2,\bR) \times SL(2,\bR)$ on $V$
is the same as the action of $(w, w_A) \in SL(2,\bR) \times SL(2,\bR)^A$.
Therefore $(V=\bigoplus V_A^{j_0, j_1}, \{l_0, l_A\}, S, d_A)$
is a polarized differential bigraded Hodge-Lefschetz module.
By applying \cite[(4.5) Th\'eor\`em]{Guillen-NavarroAznarCI},
we obtain \ref{item:20} and \ref{item:23}
for $\coh(V, d_A)$.

Next, we treat the general case.
Note that $l_a$ on $\coh(V, d_B)$ for $a \in A \setminus B$
trivially satisfies the condition \ref{item:20}.
Moreover, the endomorphism $C$ commutes with $d_B$
and descends to $\coh(V, d_B)$,
which coincides with the endomorphism $C$ of $\coh(V,d_B)$.
The $\bZ \oplus \bZ^B$-grading
$V=\bigoplus_{j_0, \vj'}(\bigoplus_{\vj_B=\vj'} V^{j_0,\vj})$
gives us a $\bZ \oplus \bZ^B$-graded Lefschetz module
$(V, \{l_0, \{l_a\}_{a \in B}\})$,
which corresponds to the representation
of $SL(2,\bR) \times SL(2,\bR)^B$
induced from the injection
defined by
$SL(2,\bR) \times SL(2,\bR)^B \ni (g_0, g)
\mapsto (g_0, g, \id)
\in SL(2,\bR) \times SL(2,\bR)^B \times SL(2,\bR)^{A \setminus B}
\simeq SL(2,\bR) \times SL(2,\bR)^A$.
By using
$\vw'=(\id, \id, \vw_{A \setminus B})
\in SL(2,\bR) \times SL(2,\bR)^B \times SL(2,\bR)^{A \setminus B}$,
we set
$S_B(x \otimes y)=S(x \otimes \vw' y)$
for $x,y \in V$.
Then the bilinear form $S_B$
satisfies the condition \ref{item:23}
as in Remark \ref{rmk:7}
and
$(V, \{l_0, \{l_a\}_{a \in B}\}, S_B, \{d_a\}_{a \in B})$
is a polarized differential $\bZ \oplus \bZ^B$-graded Hodge-Lefschetz module.
Therefore
$(\coh(V, d_B), \{l_0, l_B(\ve_B)\}, S_B)$
is a polarized bigraded Hodge-Lefschetz module as proved above.
Thus $l_0, l_B$ satisfy the condition \ref{item:20}
for $\coh(V, d_B)$
equipped with the direct sum decomposition \eqref{eq:81}.
Because
$S(x \otimes C(w,\vw_A)y)=S_B(x \otimes C(w,\vw_B)y)$,
the bilinear form $S$ on $\coh(V, d_B)$
satisfies the desired condition \ref{item:23}.
\end{proof}

\section{Proof of Theorems \ref{thm:3}, \ref{thm:4}, \ref{thm:2} and \ref{thm:5}}
\label{sec:proofs-main-theorems}

First, we prove the following lemma,
which slightly generalize Lemma 3.17 of \cite{Fujino-Fujisawa}.

\begin{lem}
\label{lem:24}
Let
$((A_{\bQ},W^f,W), (A_{\bC},W^f,W,F), \alpha)$
be a filtered $\bQ$-mixed Hodge complex
and $\nu: A_{\bC} \longrightarrow A_{\bC}$
a morphism of complexes
preserving the filtration $W^f$
and satisfying the condition
$\nu(W_mA_{\bC}) \subset W_{m-2}A_{\bC}$
for all $m$.
If the morphism
$\coh^n(\gr_m^{W^f}\nu)^l$
induces an isomorphism
\begin{equation}
\gr_l^{W[-m]}\coh^n(\gr_m^{W^f}A_{\bC})
\overset{\simeq}{\longrightarrow}
\gr_{-l}^{W[-m]}\coh^n(\gr_m^{W^f}A_{\bC})
\end{equation}
for all $l \in \bpZ$ and $m,n \in \bZ$,
then we have the following\textup{:}
\begin{enumerate}
\item
\label{item:26}
The spectral sequence $E_r^{p,q}(A_{\bC}, W^f)$ degenerates at $E_2$-terms.
\item
\label{item:27}
The morphism $\coh^n(\nu)^l$
induces an isomorphism
\begin{equation}
\gr_{l+m}^W\gr_m^{W^f}\coh^n(A_{\bC})
\overset{\simeq}{\longrightarrow}
\gr_{-l+m}^W\gr_m^{W^f}\coh^n(A_{\bC})
\end{equation}
for all $l \in \bpZ$ and $m,n \in \bZ$.
\end{enumerate}
\end{lem}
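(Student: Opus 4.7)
The plan is to equip the $E_1$-terms of the spectral sequence $E_r^{p,q}(A_{\bC}, W^f)$ with an $sl_2$-action using the hypothesis, and then exploit the compatibility of this action with the differential $d_1$ and with the mixed Hodge structure to deduce both assertions.

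First I would unpack the filtered $\bQ$-MHC structure: for each $m \in \bZ$, the triple $(\gr_m^{W^f}A, W[-m], F)$ is a cohomological $\bQ$-MHC in the classical sense, so El Zein's theorem equips $\coh^n(\gr_m^{W^f}A_{\bC})$ with a natural $\bQ$-MHS whose weight filtration is induced by $W[-m]$ (up to an overall shift). Identifying $E_1^{p,q}(A_{\bC}, W^f) = \coh^{p+q}(\gr_{-p}^{W^f}A_{\bC})$ as this MHS, the hypothesis says that the induced nilpotent endomorphism $E_1(\nu)$ has $W[p]$ as its monodromy weight filtration. By the Jacobson-Morozov theorem this yields an $sl_2$-action on each $E_1^{p,q}$ compatible with the MHS. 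The differential $d_1$ is a morphism of MHS (by the general theory of MHCs together with the filtered structure of $(A, W^f, W)$) and commutes with $E_1(\nu)$ since $\nu$ preserves $W^f$; hence $d_1$ is $sl_2$-equivariant.

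For claim \ref{item:26}, the $E_2$-degeneration follows from the $sl_2$-symmetry combined with mixed Hodge purity on graded pieces. The primitive decomposition of each $E_1^{\bullet, q}$ is preserved by $d_1$, and the cohomology $E_2^{p,q}$ inherits both an $sl_2$-action and a MHS. Following the argument used to prove \cite[Lemma 3.17]{Fujino-Fujisawa} in the analogous one-variable case, the higher differentials $d_r$ ($r \geq 2$) commute with $E_r(\nu)$ and would shift weights in a manner incompatible with both the $sl_2$-structure and the MHS, which forces them to vanish. Claim \ref{item:27} is then a consequence of the $E_2$-degeneration: one has
\begin{equation}
\gr_m^{W^f}\coh^n(A_{\bC}) \simeq E_2^{-m, n+m}(A_{\bC}, W^f),
\end{equation}
realized as a subquotient of the MHS $\coh^n(\gr_m^{W^f}A_{\bC})$ under the $d_1$-differentials, and the endomorphism induced from $\coh^n(\nu)$ on this subquotient agrees with the restriction of $\coh^n(\gr_m^{W^f}\nu)$. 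Since monodromy weight filtrations descend to $\nu$-stable subquotients by uniqueness, the monodromy filtration of this restricted endomorphism is the filtration induced by $W[-m]$, which matches the description of $W$ on $\gr_m^{W^f}\coh^n(A_{\bC})$.

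The main obstacle will be the $E_2$-degeneration in claim \ref{item:26}: while the $sl_2$-action on the $E_1$-terms is straightforwardly constructed from the hypothesis, establishing the vanishing of $d_r$ for $r \geq 2$ requires a careful weight-bidegree analysis. One must track how the weight filtration on the $E_r$-terms inherited from $W[p]$ is shifted by each $d_r$, and argue via semisimplicity of $sl_2$-representations together with Hodge-theoretic purity that the only possibility consistent with the MHS structure is vanishing. This is precisely the technical heart of the lemma and where I would most closely follow the Fujino-Fujisawa template.
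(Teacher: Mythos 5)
Your overall plan --- read the hypothesis at the $E_1$-level, use that $\nu$ commutes with the differentials, and kill $d_r$ for $r\ge 2$ by a weight argument --- is the same strategy the paper follows, but the decisive mechanism is missing, and you yourself flag the vanishing of the higher differentials as ``the technical heart'' without supplying it. The engine in the paper is El Zein's theorem for \emph{filtered} mixed Hodge complexes: $d_r \colon E_r^{p,q}(A_{\bC},W^f) \to E_r^{p+r,q-r+1}(A_{\bC},W^f)$ is \emph{strictly} compatible with $W_{\rec}$ on the source and $W_{\rec}[1]$ on the target. Strictness of $d_1$ transports the hypothesis from $E_1$ to $E_2$, so that $W_{\rec}[p]$ is the monodromy weight filtration of $E_2(\nu)$ on $E_2^{p,q}$. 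Since $d_2$ commutes with $E_2(\nu)$ it preserves these monodromy filtrations, i.e.\ it sends $(W_{\rec})_mE_2^{p,q}=(W_{\rec}[p])_{m+p}E_2^{p,q}$ into $(W_{\rec}[p+2])_{m+p}E_2^{p+2,q-1}=(W_{\rec}[1])_{m-1}E_2^{p+2,q-1}$; this drop by one, against strict compatibility with $(W_{\rec},W_{\rec}[1])$ and finiteness of the filtration, forces $d_2=0$, and one iterates. Your proposed substitute does not close this gap: $d_1$ commutes with the single nilpotent operator $E_1(\nu)$, but there is no reason it is equivariant for a full $sl_2$-triple produced by Jacobson--Morozov separately on each bidegree (the weight filtrations on source and target of $d_1$ are centered differently), so ``semisimplicity of $sl_2$-representations plus purity'' is not an argument one can actually run here without reintroducing exactly the strictness statement above.

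There is also a concrete error in your treatment of \eqref{item:27}: the assertion that ``monodromy weight filtrations descend to $\nu$-stable subquotients by uniqueness'' is false in general. For $V=\bQ^2$ with $Ne_2=e_1$, $Ne_1=0$, the $N$-stable subspace $\kernel(N)$ carries the trivial monodromy filtration of $N|_{\kernel(N)}=0$, which is not the filtration induced from $W(N)$ on $V$. Descent of monodromy filtrations to subquotients requires strictness of the relevant maps --- which is precisely what has to be established, not assumed. The paper avoids this by deducing \eqref{item:27} from \eqref{item:26} via \cite[Lemma 3.17]{Fujino-Fujisawa} (equivalently, from the identification $\gr_m^{W^f}\coh^n(A_{\bC})\simeq E_2^{-m,n+m}$ together with the fact, already proved en route to \eqref{item:26}, that $W_{\rec}[-m]$ is the monodromy filtration of $E_2(\nu)$ there).
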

\begin{proof}
In this proof,
we write $E_r^{p,q}=E_r^{p,q}(A_{\bC}, W^f)$ for short.
The morphism of $E_r$-terms
$d_r \colon E_r^{p,q} \longrightarrow E_r^{p+r,q-r+1}$
is strictly compatible with
$W_{\rec}$ on the left hand side and $W_{\rec}[1]$ on the right
by \cite[6.1.8 Th\'eor\`em]{ElZeinbook}.
On the other hand,
the morphism $\nu$ induces a morphism of the spectral sequences
$E_r(\nu) \colon E_r^{p,q} \longrightarrow E_r^{p,q}$.
Via the identification
$E_1^{p,q} \simeq \coh^{p+q}(\gr_{-p}^{W^f}A_{\bC})$,
the assumption implies that
$E_1(\nu)^l$ induces an isomorphism
$\gr_l^{W[p]}E_1^{p,q}
\overset{\simeq}{\longrightarrow}
\gr_{-l}^{W[p]}E_1^{p,q}$
for all $l \in \bpZ$ and $p,q \in \bZ$.
Then the strictness of $d_1$ above
implies that $E_2(\nu)^l$
induces an isomorphism
$\gr_l^{W_{\rec}[p]}E_2^{p,q}
\overset{\simeq}{\longrightarrow}
\gr_{-l}^{W_{\rec}[p]}E_2^{p,q}$
for all $l \in \bpZ$,
that is, $W_{\rec}[p]$ is the monodromy weight filtration
of $E_2(\nu)$ on $E_2^{p,q}$
for all $p,q \in \bZ$.
Because $d_2$ commutes with $E_2(\nu)$,
the monodromy weight filtration of $E_2(\nu)$
is preserved by $d_2$.
Namely, $d_2 \colon E_2^{p,q} \longrightarrow E_2^{p+2,q-1}$
preserves $W_{\rec}[p]$ on the left hand side
and $W_{\rec}[p+2]$ on the right.
Therefore
\begin{equation}
d_2((W_{\rec})_mE_2^{p,q})
=d_2(W_{\rec}[p]_{m+p}E_2^{p,q})
\subset (W_{\rec}[p+2])_{m+p}E_2^{p+2,q-1}
=(W_{\rec}[1])_{m-1}E_2^{p+2,q-1}
\end{equation}
for all $m \in \bZ$.
Thus we obtain $d_2=0$ on $E_2^{p,q}$ for all $p,q \in \bZ$,
because of the strict compatibility of $d_2$
with $W_{\rec}$ on $E_2^{p,q}$ and $W_{\rec}[1]$ on $E_2^{p+2,q-1}$.
Repeating this procedure inductively,
we obtain $d_r=0$ for all $r \ge 2$.
Once \ref{item:26} is obtained,
\ref{item:27} follows from
Lemma 3.17 of \cite{Fujino-Fujisawa}.
\end{proof}

\begin{proof}[Proof of
Theorems \textup{\ref{thm:3}}, \textup{\ref{thm:4}},
\textup{\ref{thm:2}} and \textup{\ref{thm:5}}]
\label{para:12}
A \ssls degeneration
$f \colon (X, \cM_X) \longrightarrow (\ast, \bN^k)$
is assumed to be projective.
Moreover, we may assume
that $X$ is of pure dimension
by considering the connected components.
We fix $I \subset \ski$ and $\vc=(c_i)_{i=1}^k \in (\bpR)^k$,
and set $J=\ski \setminus I$.
Morphisms of complexes
$\nu(\vc), \nu_J(\vc_J) \colon A_{\bC} \longrightarrow A_{\bC}$
are defined by $\nu(\vc)=\sum_{i=1}^{k}c_i\nu_i$
and $\nu_J(\vc_J)=\sum_{i \in J}c_i\nu_i$.
Recall that these morphisms induces $N(\vc)$ and $N_J(\vc_J)$
on $\coh^{\ast}(X, A_{\bC})$.

Let $V_{\bQ}$ and $V_{\bQ}^{j_0,\vj}$
be as in Definitions \ref{defn:20} and \ref{defn:18}.
We set $V_{\bR}=\bR \otimes V_{\bQ}$
and $V_{\bR}^{j_0,\vj}=\bR \otimes V_{\bQ}^{j_0,\vj}$.
Then $V_{\bC}=\bC \otimes_{\bR} V_{\bR}$
and $V_{\bC}^{j_0,\vj}=\bC \otimes_{\bR} V_{\bR}^{j_0,\vj}$.
Thus we obtain
\begin{equation}
\label{eq:89}
(V_{\bR}=\bigoplus_{j_0 \in \bZ, \vj \in \bZ^k}V_{\bR}^{j_0,\vj},
\{l_0, \{l_i\}_{i=1}^k\}, S, \{d'_i\}_{i=1}^k),
\end{equation}
which is
a polarized differential $\bZ \oplus \bZ^k$-graded Hodge-Lefschetz module
by Lemmas \ref{lem:18}, \ref{lem:19}, \ref{lem:10},
\ref{lem:20}, \ref{lem:11} and \ref{lem:21}.
We set
\begin{equation}
V_{\bR}^{j_0,j_1,j_2}
=\bigoplus_{|\vj_J|=j_1, |\vj_I|=j_2}V_{\bR}^{j_0,\vj}, \quad
V_{\bC}^{j_0,j_1,j_2}
\bigoplus_{|\vj_J|=j_1, |\vj_I|=j_2}V_{\bC}^{j_0,\vj}, \quad
d'_J=\sum_{i \in J}d'_i, \quad
d'_I=\sum_{i \in I}d'_i,
\end{equation}
and $l_I(\vc_I)=\sum_{i \in I}c_il_i$ for $I \subset \ski$.
We use $l(\vc)$ instead of $l_{\ski}(\vc)$.
Then we have
\begin{equation}
V_{\bC}=\bigoplus_{j_0,j_1,j_2 \in \bZ}V_{\bC}^{j_0,j_1,j_2}, \quad
d'_J(V_{\bC}^{j_0,j_1,j_2}) \subset V_{\bC}^{j_0+1,j_1+1,j_2}, \quad
d'_I(V_{\bC}^{j_0,j_1,j_2})
\subset V_{\bC}^{j_0+1,j_1,j_2+1}
\end{equation}
and $d_1=d'_J+d'_I$
by definition.

Since $L(I) \ast L(J)=L$ on $A_{\bC}$
by Corollary \ref{cor:1},
we have the identifications
\begin{equation}
\label{eq:101}
\bigoplus_{p+q=a}E_1^{p,q+b}(\gr_{-q}^{L(I)}A_{\bC}, L(J))
\simeq
E_1^{a,b}(A_{\bC}, L)
\simeq
\bigoplus_{|\vj|=a}V_{\bC}^{a+b-\dim X, \vj}
\end{equation}
for all $a,b$,
which induces
\begin{equation}
\bigoplus_{p+q=a, p \ge -l}E_1^{p,q+b}(\gr_{-q}^{L(I)}A_{\bC}, L(J))
\simeq
L(J)_lE_1^{a,b}(A_{\bC}, L)
\simeq
\bigoplus_{|\vj|=a, |\vj_J| \ge -l}V_{\bC}^{a+b-\dim X, \vj}
\end{equation}
for all $l$
as in \ref{para:9} and Remark \ref{rmk:10}.
Therefore
\begin{equation}
\label{eq:95}
V_{\bC}^{j_0,j_1,j_2}
\simeq
E_1^{j_1,j_0-j_1+\dim X}(\gr_{-j_2}^{L(I)}A_{\bC}, L(J))
\end{equation}
for all $j_0,j_1,j_2 \in \bZ$.
We denote the morphism of $E_1$-terms
of $E_r^{p,q}(\gr_m^{L(I)}A_{\bC}, L(J))$
by $\widetilde{d_1}$ for a while.
Then we obtain
$\widetilde{d_1} \colon
V_{\bC}^{j_0,j_1,j_2} \longrightarrow V_{\bC}^{j_0+1,j_1+1,j_2}$
via the identification \eqref{eq:95}.
On the other hand,
the morphism
\begin{equation}
\gamma \colon
(\gr_m^{L(I)}A_{\bC}, L(J))
\longrightarrow
(\gr_{m-1}^{L(I)}A_{\bC}[1], L(J))
\end{equation}
in the filtered derived category
as in \ref{para:10}
induces a morphism
$E_1(\gamma) \colon
V_{\bC}^{j_0,j_1,j_2} \longrightarrow V_{\bC}^{j_0+1,j_1,j_2+1}$
via the identification \eqref{eq:95}.
Because $d_1=\widetilde{d_1}+E_1(\gamma)$ by Lemma \ref{lem:23},
$\widetilde{d_1}=d'_J$ and $E_1(\gamma)=d'_I$.
Since $L(J)=L[-m]$ on $\gr_m^{L(I)}A_{\bC}$
by Corollary \ref{cor:1},
we have
\begin{equation}
\label{eq:94}
\begin{split}
\bigoplus_{|\vj'|=j_2}\bC \otimes_{\bR} \coh(V_{\bR}, d'_J)^{j_0,j_1,\vj'}
&\simeq
V_{\bC}^{j_0,j_1,j_2} \cap \kernel(d'_J)
\bigl/V_{\bC}^{j_0,j_1,j_2} \cap \image(d'_J) \\
&\simeq
E_2^{j_1,j_0-j_1+\dim X}(\gr_{-j_2}^{L(I)}A_{\bC}, L(J)) \\
&\simeq
\gr_{-j_1}^{L[j_2]}\coh^{j_0+\dim X}(X, \gr_{-j_2}^{L(I)}A_{\bC})
\end{split}
\end{equation}
from \eqref{eq:91},
the equality $d'_J=\widetilde{d_1}$
and the $E_2$-degeneracy \ref{item:8}.
In particular,
\begin{equation}
\label{eq:96}
\bC \otimes_{\bR} \coh(V_{\bR}, d_1)^{j_0,j_1}
\simeq
\gr_{-j_1}^L\coh^{j_0+\dim X}(X, A_{\bC})
\end{equation}
as the case of $I=\emptyset$.
In the identification \eqref{eq:94},
the morphism $\id \otimes d'_I$
on the first term
$\bigoplus\bC \otimes_{\bR} \coh(V_{\bR}, d'_J)^{j_0,j_1,\vj'}$
is identified with $\gr_{-j_1}^{L[j_2]}\coh^{j_0+\dim X}(X,\gamma)$
on the last term.
Moreover, under the identification
$\coh^{j_0+\dim X}(X, \gr_{-j_2}^{L(I)}A_{\bC})
\simeq E_1^{j_2, j_0-j_2+\dim X}(A_{\bC}, L(I))$,
the morphism $\coh^{j_0+\dim X}(X,\gamma)$
is identified with the morphism of $E_1$-terms
of the spectral sequence $E_r^{p,q}(A_{\bC}, L(I))$.
By \ref{item:10}, the morphism of $E_1$-terms
$E_1^{j_2, j_0-j_2+\dim X}(A_{\bC}, L(I))
\longrightarrow E_1^{j_2+1, j_0-j_2+\dim X}(A_{\bC}, L(I))$
is strictly compatible with $L[j_2]$ and $L[j_2+1]$,
we obtain the identification
\begin{equation}
\label{eq:100}
\bC \otimes_{\bR} \coh(\coh(V_{\bR}, d'_J), d'_I)^{j_0,j_1,j_2}
\simeq
\gr_{-j_1}^{L_{\rec}[j_2]}E_2^{j_2,j_0-j_2+\dim X}(A_{\bC}, L(I))
\end{equation}
for all $j_0,j_1,j_2 \in \bZ$.

In the identifications \eqref{eq:94}--\eqref{eq:100},
\begin{equation}
\label{eq:99}
(\coh(V_{\bR}, d'_J), \{l_0,l_J(\vc_J),\{l_i\}_{i \in I}\},
S, \{d_1=0, \{d'_i\}_{i \in I}\})
\end{equation}
is a $\bZ \oplus \bZ \oplus \bZ^I$-graded
Hodge-Lefschetz module,
\begin{equation}
(\coh(V_{\bR}, d_1),
l_0, l(\vc), S)
\end{equation}
is a $\bZ \oplus \bZ$-graded Hodge-Lefschetz module,
and further,
\begin{equation}
(\coh(\coh(V_{\bR},d'_J), d'_I),\{l_0, l_J(\vc_J), l_I(\vc_I)\}, S)
\end{equation}
is a $\bZ \oplus \bZ \oplus \bZ$-graded Hodge-Lefschetz module
by Proposition \ref{prop:1}.

Under the identification \eqref{eq:96},
the morphism $l_0$ is identified with the morphism induced by
$(2\pi \sqrt{-1})(\cup c(\cL))$.
Therefore
$(\cup c(\cL))^i$ induces an isomorphism
\begin{equation}
\gr_l^L\coh^{-i+\dim X}(X, A_{\bC})
\overset{\simeq}{\longrightarrow}
\gr_l^L\coh^{i+\dim X}(X, A_{\bC})
\end{equation}
for all $i \in \bpZ$ and $l \in \bZ$.
Hence we obtain Theorem \ref{thm:5}.

Under the identification \eqref{eq:94},
$l_J(\vc_J)$ on the first term
$\bigoplus \bC \otimes_{\bR} \coh(V_{\bR}, d'_J)^{j_0,j_1,\vj'}$
is identified with the morphism induced from
$(2\pi\sqrt{-1})\coh^{j_0+\dim X}(X, \gr_{-j_2}^{L(I)}\nu_J(\vc_J))$.
Because
$\gr_m^{L(I)}\nu_J(\vc_J)=\gr_m^{L(I)}\nu(\vc)$
on $\gr_m^{L(I)}A_{\bC}$,
the morphism $\coh^j(X, \gr_m^{L(I)}\nu(\vc))^l$
induces an isomorphism
\begin{equation}
\gr_l^{L[-m]}\coh^j(X, \gr_m^{L(I)}A_{\bC})
\overset{\simeq}{\longrightarrow}
\gr_{-l}^{L[-m]}\coh^j(X, \gr_m^{L(I)}A_{\bC})
\end{equation}
for all $l \in \bpZ$ and $m \in \bZ$.
Therefore we obtain Theorems \ref{thm:3} and \ref{thm:2}
by Lemma \ref{lem:24}.

Under the identification \eqref{eq:100},
the morphism $l_I(\vc_I)$ is identified with
$\gr_{-j_2}^{L_{\rec}[j_2]}E_2(\nu_I(\vc_I))$.
Moreover, $L(J)=L[-m]$ on $\gr_m^{L(I)}A_{\bC}$ implies
$L(J)=L[p]$ on $E_1^{p,q}(A_{\bC}, L)$
and $L(J)_{\rec}=L_{\rec}[p]$ on $E_2^{p,q}(A_{\bC}, L(I))$.
Then
$\gr_m^{L(J)_{\rec}}E_2(\nu_I(\vc_I))^l$
induces an isomorphism
\begin{equation}
\gr_m^{L(J)_{\rec}}E_2^{-l,i+l}(A_{\bC}, L(I))
\overset{\simeq}{\longrightarrow}
\gr_m^{L(J)_{\rec}}E_2^{l,i-l}(A_{\bC}, L(I))
\end{equation}
for all $l \in \bpZ$ and $i, m \in \bZ$.
Thus $E_2(\nu_I(\vc_I))^l$
induces an isomorphism
\begin{equation}
E_2^{-l,i+l}(A_{\bC}, L(I))
\overset{\simeq}{\longrightarrow}
E_2^{l,i-l}(A_{\bC}, L(I))
\end{equation}
for all $l \in \bpZ$ and $i \in \bZ$.
Therefore we obtain Theorem \ref{thm:4}
by the $E_2$-degeneracy in Theorem \ref{thm:3}.
\end{proof}

\providecommand{\bysame}{\leavevmode\hbox to3em{\hrulefill}\thinspace}
\providecommand{\MR}{\relax\ifhmode\unskip\space\fi MR }
\providecommand{\MRhref}[2]{%
  \href{http://www.ams.org/mathscinet-getitem?mr=#1}{#2}
}
\providecommand{\href}[2]{#2}

\end{document}